\numberwithin{equation}{section}
\newtheorem{theorem}{Theorem}[section]
\newtheorem{main}{Theorem}
\newtheorem{lemma}[theorem]{Lemma}
\newtheorem{proposition}[theorem]{Proposition}
\theoremstyle{definition}
\newtheorem{hypothesis}[theorem]{Hypothesis}
\newtheorem{definition}[theorem]{Definition}
\numberwithin{equation}{section}
\let\c@theorem\c@equation
\renewcommand{\leq}{\leqslant}
\renewcommand{\geq}{\geqslant}
\newcommand{\norm}{\trianglelefteq}
\newcommand{\cin}{\subseteq}
\newcommand{\nin}{\notin}
\newcommand{\set}[1]{\{#1\}}
\newcommand{\gen}[1]{\langle #1 \rangle}
\newcommand{\ol}{\overline}
\renewcommand{\bar}{\ol}
\newcommand{\inv}[1]{#1^{-1}}
\newcommand{\resbar}[2]{#1|_{#2}}
\newcommand{\F}{\mathcal{F}}
\newcommand{\C}{\mathcal{C}}
\newcommand{\E}{\mathcal{E}}
\newcommand{\D}{\mathcal{D}}
\newcommand{\K}{\mathcal{K}}
\newcommand{\M}{\mathcal{M}}
\newcommand{\N}{\mathcal{N}}
\newcommand{\I}{\mathcal{I}}
\renewcommand{\L}{\mathcal{L}}
\renewcommand{\H}{\mathcal{H}}
\renewcommand{\phi}{\varphi}
\newcommand{\elem}{\mathscr{E}}
\newcommand{\id}{\operatorname{id}}
\newcommand{\Hom}{\operatorname{Hom}}
\newcommand{\Iso}{\operatorname{Iso}}
\newcommand{\Aut}{\operatorname{Aut}}
\newcommand{\Out}{\operatorname{Out}}
\newcommand{\Inn}{\operatorname{Inn}}
\newcommand{\Syl}{\operatorname{Syl}}
\newcommand{\Baum}{\operatorname{Baum}}
\newcommand{\hyp}{\mathfrak{hyp}}
\newcommand{\foc}{\mathfrak{foc}}
\title{A characterization of the $2$-fusion system of $L_4(\lowercase{q})$}
\author[J.~Lynd]{Justin Lynd}
\address{Department of Mathematics\\ 
                Rutgers University\\
                110 Frelinghuysen Rd\\
                Piscataway, NJ  08854}
\email{jlynd@math.rutgers.edu}
\subjclass{Primary 20D20, Secondary 20D05}
\keywords{fusion system, standard component}
\date{\today}
\begin{document}
\begin{abstract}
We study a saturated fusion system $\F$ on a finite $2$-group $S$ having a
Baumann component based on a dihedral $2$-group.  Assuming $\F = O^2(\F)$,
$O_2(\F) = 1$, and the centralizer of the component is a cyclic $2$-group, it
is shown that $\F$ is uniquely determined as the $2$-fusion system of
$L_4(q_1)$ for some $q_1 \equiv 3 \pmod{4}$. This should be viewed as a
contribution to a program recently outlined by M. Aschbacher for the
classification of simple fusion systems at the prime $2$.  The corresponding
problem in the component-type portion of the classification of finite simple
groups (the $L_2(q)$, $A_7$ standard form problem) was one of the last to be
completed, and was ultimately only resolved in an inductive context with heavy
artillery. Thanks primarily to requiring the component to be Baumann, our main
arguments by contrast require only $2$-fusion analysis and transfer. We deduce
a companion result in the category of groups.
\end{abstract}
\maketitle

Saturated fusion systems (or Frobenius categories) are categories, defined by
Puig, codifying simultaneously the properties of $G$-conjugacy of $p$-subgroups
in a finite group, and of Brauer pairs associated to a $p$-block of a group
algebra for $G$. The study of $p$-fusion in finite groups began in the last
decade of the 19th century with Burnside and Frobenius. In the latter half of
the twentieth, the analysis of fusion was indispensable for the classification
of finite simple groups. Abstract fusion theory has evolved in the last decade
via the work of many to become the foundation for investigation of spaces which
behave like $p$-completed classifying spaces of finite groups as well as a
natural setting for studying the $p$-local structure of finite groups.  In the
latter setting, the structure theory of saturated fusion systems parallels that
of finite groups. A saturated fusion system has appropriate analogues of
$O_p(G)$, $O^p(G)$ and $O^{p'}(G)$ \cite{BCGLO2007}, a transfer map
\cite{BrotoLeviOliver2003}, normal subgroups and quotients, simplicity,
components, the layer $E(G)$, and the generalized Fitting subgroup
\cite{AschbacherNormal, AschbacherGeneralized}.

With this groundwork in place, Aschbacher has proposed
\cite[Section~II.13-15]{AschbacherKessarOliver2011} a program for the
classification of simple fusion systems at the prime $2$, and has begun to
carry out substantial parts of it \cite{AschbacherGeneration, AschbacherS3Free,
AschbacherF2type, AschbacherFSofCT}. See also work of Henke \cite{Henke2011}
and Welz \cite{Welz2012}. One reason for doing this is to effect a directed
search for new exotic $2$-fusion systems other than the Solomon systems. A more
central aim is to simplify portions of the classification of finite simple
groups. Working in the category of fusion systems provides a clean separation
of the analysis of fusion in a finite group from other considerations, such as
dealing with the obstructions caused by cores of local subgroups (which are not
present in the fusion system setting), and of group recognition from local
structure. This suggests such simplifications might be realized predominantly
in the component-type case. Early examples of this can be seen in Aschbacher's
$E$-balance theorem \cite[Theorem~7]{AschbacherGeneralized} and the Dichotomy
Theorem \cite[II.14.3]{AschbacherKessarOliver2011} for saturated fusion
systems. 

Denote by $J(S)$ the Thompson subgroup of $S$, the subgroup generated by
elementary abelian subgroups of $S$ of maximum rank. The Baumann subgroup
$\Baum(S)$ is $C_S(\Omega_1(ZJ(S)))$ \cite[B.2.2]{AschbacherSmith2004I}. Let
$\nu_2$ be the $2$-adic valuation. In this paper, we make a contribution to
Aschbacher's program by carrying out a standard form problem for the $2$-fusion
system of $L_2(q)$ ($q$ odd). The result is the following characterization of
the $2$-fusion system of $L_4(q_1)$ ($q_1 \equiv 3 \pmod{4}$). 

\begin{main}\label{T:main}
Let $\F$ be a saturated fusion system on the $2$-group $S$ and let $x \in S$
be a fully $\F$-centralized involution.  Set $\C = C_\F(x)$ and $T = C_S(x)$.
Let $\K$ be a subsystem of $\C$.  Assume $\F = O^2(\F)$, $O_2(\F) = 1$, and the
following three items:
\begin{enumerate}[label=\textup{(\arabic{*})}]
\item $\K$ is a perfect normal subsystem of $\C$ on a dihedral group of order $2^k$, 
\item $Q := C_T(\K)$ is cyclic, and
\item $\Baum(S) \leq T$.
\end{enumerate}
Then $S \cong D_{2^k} \wr C_2$, and $\F \cong \F_S(G)$ where $G \cong L_4(q_1)$
for some $q_1 \equiv 3 \pmod{4}$ with $\nu_2(q_1+1) = k-1$. 
\end{main}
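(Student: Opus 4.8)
The plan is to run a standard-form analysis in three stages: identify the component $\K$; reconstruct $\C=C_\F(x)$ and, from it, the Sylow group $S$; and then reconstruct the global fusion of $\F$ and recognize it as that of $L_4(q_1)$.

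\emph{Stage 1: identifying $\K$.} Since $\K=O^2(\K)$ is perfect and saturated on the dihedral group $D:=D_{2^k}$, the classification of saturated fusion systems over dihedral $2$-groups applies: it forces $k\geq 3$ and identifies $\K$ with the $2$-fusion system of $\PSL_2(q_0)$ for some odd $q_0$ with $|\PSL_2(q_0)|_2=2^k$ (when $k=3$ the same system may also be presented via $A_7$, a possibility reconciled at the end). In particular $Z(\K)=1$, and inside $\K$ the two classes of four-subgroups of $D$ are essential with automizer $S_3$. One also reduces at the outset to the case that $\K$ is a proper component of $\C$: the degenerate possibility $\K=\C$ would force $T\cong D$, whence $\Baum(S)\leq T$ gives $|\Baum(S)|\leq 2^k$, which is incompatible with $O_2(\F)=1$ and $O^2(\F)=\F$.

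\emph{Stage 2: reconstructing $\C$, then $S$.} First, $x\notin Z(\F)$, since $\langle x\rangle\norm\F$ would give $\langle x\rangle\leq O_2(\F)=1$; and in fact $x\notin Z(S)$, for otherwise --- $x$ being fully centralized --- every $\F$-conjugate of $x$ would again lie in $Z(S)$, so $\langle x^\F\rangle\leq Z(S)$ would be a nontrivial normal subgroup of $\F$, contradicting $O_2(\F)=1$; hence $T=C_S(x)\subsetneq S$. Next, because $\K\norm\C$ is a component, $x$ centralizes $\K$, so $\langle x\rangle\leq Q=C_T(\K)$, and since $Z(\K)=1$ one gets $D\cap Q=1$, so $DQ=D\times Q\norm T$ with $T/DQ$ embedding in $\Out(\K)$. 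The saturation axioms for $\C$, together with $Q$ cyclic, the bound $\Baum(S)\leq T$ (preventing $T$ from being too small), and $T\subsetneq S$ (preventing $T=S$), pin down $Q\cong C_{2^{k-1}}$ and $|T/DQ|=2$ (a field automorphism of $\K$), hence $T\cong D_{2^k}\times D_{2^k}$ of order $2^{2k}$, and determine $\C$ completely. Now $J(S)\leq\Baum(S)\leq T$, so the Thompson and Baumann subgroups of $S$ are already visible inside $\C$, and one computes $J(S)=J(T)=\Baum(S)=T$. Were $T$ normal in $\F$ it would lie in $O_2(\F)=1$; hence $N_\F(T)\neq\F$ and $\F$ carries essential behaviour not seen in $\C$. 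A failure-of-factorization analysis at $J(S)=T$, together with a transfer argument using $O^2(\F)=\F$ to forbid an index-$2$ subsystem, then forces $[S:T]=2$ and exhibits $S$ as obtained from $T$ by adjoining an involution interchanging the two dihedral direct factors, i.e.\ $S\cong D_{2^k}\wr C_2$. (A larger $S$ would push $\Baum(S)=J(S)$ properly below $T$, violating (3), or would produce a nontrivial normal $2$-subgroup.)

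\emph{Stage 3: reconstructing $\F$.} With $S$ and $\C$ determined, I invoke Alperin's fusion theorem: $\F$ is generated by $\Aut_\F(S)$ together with the $\Aut_\F(E)$ as $E$ ranges over $\F$-essential subgroups. Each such $E$ is $\F$-centric and $\F$-radical with $\Out_\F(E)$ possessing a strongly $2$-embedded subgroup; a direct inspection of the subgroups of $D_{2^k}\wr C_2$ enjoying this property --- constrained on one side by the fusion already forced inside $\C=C_\F(x)$ and on the other by transfer and $\hyp(\F)=S$ --- leaves precisely the essential subgroups of $\F_S(L_4(q_1))$, with the $\F$-automizers forced to be the expected $2'$-extensions carrying strongly $2$-embedded subgroups. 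Since $\F_S(L_4(q))$ depends on $q\equiv 3\pmod 4$ only through $k=\nu_2(q+1)+1$, this yields $\F\cong\F_S(G)$ with $G\cong L_4(q_1)$ for any $q_1\equiv 3\pmod 4$ with $\nu_2(q_1+1)=k-1$; conversely one verifies that such an $\F_S(L_4(q_1))$ does satisfy (1)--(3), so the characterization is non-vacuous, and the companion group-theoretic statement follows by recognizing $G$ from its $2$-local structure.

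\emph{The main obstacle.} I expect the crux to be Stage 2 together with the essential-subgroup bookkeeping of Stage 3: ruling out every $2$-group properly containing $T$ as a candidate for $C_S(x)$, and then showing that the fusion invisible inside $\C$ is forced to glue exactly as in $L_4(q_1)$. Both rest on a finite, explicit accounting of failure-of-factorization modules and transfer maps, with $O^2(\F)=\F$ and $O_2(\F)=1$ doing the elimination; the role of the Baumann hypothesis is precisely to confine $J(S)$ to the involution centralizer $T$ and thereby keep the whole argument within $2$-fusion and transfer --- with no recourse to signalizer functors, $L$-balance, or group-recognition theorems. A lesser technical nuisance is the reconciliation, when $k=3$, of the a priori possibility that $\K$ is presented via $A_7$.
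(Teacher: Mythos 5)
Your Stage 2 is where all the difficulty lives, and the proposal asserts its conclusion rather than proving it. The statement that saturation, $Q$ cyclic, $\Baum(S)\leq T$, and $T\subsetneq S$ together ``pin down $Q\cong C_{2^{k-1}}$ and $|T/DQ|=2$'' is precisely what requires the bulk of the paper (Sections~3--5 and half of Section~6). Concretely, several cases you silently skip must actually be eliminated by hand: (i) you never rule out $|Q|=2$, which is the content of the paper's Theorem~\ref{T:Q>2} and requires a delicate transfer argument; (ii) you assume $T/DQ$ is generated by a field automorphism, but $T$ could a priori contain elements inducing diagonal outer automorphisms of $\K$ (the element $h$ with $R\gen{h}/Q$ dihedral in the paper's notation), and ruling this out is the hardest part of Section~6 (via computing $C_\F(xz)$ and appealing to Oliver's splitting theorem for fusion systems on $D\times D$); (iii) you assume $Q\gen{f}$ is dihedral, but the semidihedral case must be excluded (Lemmas~\ref{L:Qfdihsemidih}--\ref{L:Qfdihedral}); (iv) before any of this, one must separately rule out $x\in Z(S)$ and the possibility that $T$ has $2$-rank $3$ (i.e.\ $T$ contains no $f$-element) --- each occupies its own section. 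The phrase ``failure-of-factorization analysis'' does not supply these arguments.

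There are two further structural gaps. First, your claim that $T/DQ$ embeds in $\Out(\K)$ hides the problem that subsystems of $\C/C_\C(\K)$ containing $\K$ could \emph{a priori} be exotic extensions of $\K$; the paper must invoke tameness (Andersen--Oliver--Ventura) to know that such an extension is realized by a subgroup of $\Aut(L_2(q))$ (Propositions~\ref{P:O^ptame} and \ref{P:TK/Q}). Without this, the description of $T/DQ$ via $\Out(K)$ is not justified. Second, in Stage~3 the ``direct inspection'' of essential subgroups of $D_{2^k}\wr C_2$ and their automizers is far from routine; the paper instead cites Oliver's classification of fusion systems over $2$-groups of sectional rank at most $4$ to nail down $\F$ once $S$ is known, and this is also where the $A_{10}$ alternative for $k=3$ is discarded (using that there $C_T(\K)$ is a four-group, contradicting hypothesis~(2)). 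So while your overall roadmap --- identify $\K$, determine $\C$, determine $S$, then determine $\F$ --- is the right one and matches the paper's at a high level, each transition is a theorem in its own right, and the proposal does not contain the arguments that carry those transitions.
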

\begin{proof}
From Lemma~\ref{L:2rank3or4}, $S$ is of $2$-rank $3$ or $4$.
Proposition~\ref{P:2centralfinal} says that $x$ does not lie in the center of
$S$, while Theorems~\ref{T:2rank3} and \ref{T:Q>2} show that $S$ is of $2$-rank
$4$ and $Q$ is of order at least $4$. Finally, Theorem~\ref{T:2rank4main}
identifies $\F$ and completes the proof of the theorem.
\end{proof}

A $2$-fusion system $\K$ is said to be \emph{perfect} if $\K = O^2(\K)$. Thus,
in Theorem~\ref{T:main}, $\K$ is determined as the unique simple saturated
fusion system on a nonabelian dihedral group of the given order, i.e. as
$\F_2(L_2(q))$ for some (any) $q \equiv \pm 1 \pmod{8}$ with $\nu_2(q^2 - 1) =
k+1$. Hypotheses (1) and (2) are equivalent to specifying the structure of the
generalized Fitting subsystem of $\C$ as $F^*(\C) = Q \times \K$, and they
imply that $\K$ is a \textit{standard subsystem} in the sense of
\cite{AschbacherFSofCT}. See also \cite{AschbacherCopenhagen} for an expository
outline of some of the results in \cite{AschbacherFSofCT}. 
% Hypothesis (2) means in particular that $\K$ should be regarded as a ``standard
% subsystem'' of $\F$ by analogy with \cite{Aschbacher1975}. Currently there is
% no notion of a standard subsystem $\K$ of a fusion system\footnote{Aschbacher
% has now proposed such a definition in \cite{AschbacherCopenhagen}.}, which
% presumably would say in particular that $C_\F(\K)$ is tightly embedded in $\F$.
% Tight embedding is defined and studied in \cite{AschbacherTight}. The main
% obstruction seems to be what one means by the centralizer $C_\F(\K)$, and more
% generally by the normalizer $N_\F(\K)$ of a subsystem. In
% \cite{AschbacherGeneralized}, Aschbacher defines the centralizer of a
% \emph{normal} subsystem, and we rely on
% this to talk about $C_T(\K)$ as in (2).  
% Hypotheses (1) and (2) are equivalent
% to specifying the structure of the generalized Fitting subsystem of $\C$ as
% $F^*(\C) = \F_Q(Q) \times \K$, and $\K$ should be a standard subsystem under
% any definition (as it is under \cite{AschbacherCopenhagen}).
Matthew Welz considers a standard form problem for the fusion system of
$L_2(q)$ in a situation complementary to Theorem~A, where $C_T(\K)$ has
$2$-rank at least $2$, and assuming (1) (but not (3)); see \cite{Welz2012}.

Let $S$ be an arbitrary finite $2$-group, $\F$ a saturated fusion system over
$S$, and $W$ a weakly $\F$-closed subgroup of $S$. Then $\F$ is of
$W$-\emph{characteristic} $2$-\emph{type} if $N_\F(P)$ is constrained for every
fully $\F$-normalized $P \leq S$ with $W \leq N_S(P)$, and of
$W$-\emph{component type} if there is a fully $\F$-centralized involution $x$
with the property that $W \leq C_S(x)$ and $C_\F(x)$ has a component. In the
latter case, we say that a subsystem $\K$ is a $W$-component if it is a
component in some $C_\F(x)$ with the property that $W \leq C_S(x)$. 

Hypothesis (3) is the statement that $\K$ is a $\Baum(S)$-component and thus
$\F$ is of $\Baum(S)$-component type (alternatively, Baumann component type).
The reason one might want to make this restriction in the context of a
classification of simple $2$-fusion systems is detailed in \cite[\S
II.14]{AschbacherKessarOliver2011}. However, see \cite{AschbacherFSofCT} for
updates to the outline of the program, which have the effect of requiring
Theorem~A to be eventually strengthened. As regards our specific situation, (3)
allows us to avoid building the Sylow $2$-subgroups of certain larger groups in
characteristic $2$.  Without (3) for instance, the shadows of fusion systems of
$\Aut(L_5(2))$, $\Aut(U_5(2))$, and $\Aut(Sp_4(4))$ cause difficulties, as
these have involutory automorphisms fixing $L_2(9) \cong \Omega_5(2) \cong
Sp_4(2)'$. In the group case, these, $HS$, and $\Aut(He)$ are identified by
Fritz \cite{Fritz1977} and (independently) by Harris-Solomon
\cite{HarrisSolomon1977} and Harris \cite{Harris1977}. All of these almost
simple groups are of component type. However, all but $\Aut(He)$ (which has a
Baumann component isomorphic to a four-fold cover of $L_3(4)$) are of Baumann
characteristic $2$-type, as are their $2$-fusion systems. More seriously, (3)
permits us to avoid an inductive approach like that taken by Harris
\cite{Harris1981}, where a $K$-group hypothesis was required together with the
solution to a large number of other standard form problems.  It was here that
the natural targets, $L_4(q^{\frac{1}{2}})$ ($q^\frac{1}{2} \equiv 3 \pmod{4}$)
and $U_4(q^{\frac{1}{2}})$ ($q^\frac{1}{2} \equiv 1 \pmod{4})$) appeared
(together with, e.g., $\Aut(\Omega_8^{-}(q^{\frac{1}{4}})$) via an appeal to
Aschbacher's Classical Involution Theorem. We note that $L_4(q_1)$ and
$U_4(q_2)$ have equivalent fusion systems at the prime $2$ whenever their Sylow
$2$-subgroups are isomorphic; this follows from a more general theorem due to
Broto, M\o ller, and Oliver \cite[Theorem~3.3]{BrotoMollerOliver2012}.

For the heart of the arguments in this paper, only elementary 2-group analysis,
fusion, and transfer are required.  Use of transfer is made via the
Thompson-Lyons transfer lemma for fusion systems \cite{LyndTL}; see Subsection
\ref{SS:transfer}.  At the beginning of the analysis, we encounter a difficulty
unique to the fusion system setting in getting ahold on the structure of
subsystems of $\C/C_\C(\K)$ containing (an isomorphic copy) of $\K$, which a
priori contain among them exotic extensions of $\K$.  However, work of
Andersen, Oliver, and Ventura \cite{AOV2012} shows that this is in fact not the
case provided certain higher limits of functors associated to $\K$ vanish; see
Subsection~\ref{S:tame}. In the middle, our presentation in the $2$-central and
$2$-rank $3$ cases follows closely the treatment in \cite{GLS6}, primarily for
the reason that hypothesis (3) of Theorem~\ref{T:main} provides little or no
restriction in these cases. At the end, after determining the structure of $S$,
we apply a piece of Oliver's classification \cite{OliverSectRank4} of fusion
systems on $2$-groups of sectional rank at most $4$ in order to identify $\F$.  

When combined with a theorem of David Mason \cite{Mason1973} and Glauberman's
$Z^*$-theorem, we obtain the following companion of Theorem~\ref{T:main} in the
category of groups. Recall that $Z^*(G)$ is the preimage in $G$ of
$Z(G/O_{2'}(G))$. 
\begin{main}
\label{T:maingrp}
Let $G$ be a fusion simple finite group \textup{(}i.e. with $G = O^2(G)$ and
$Z^*(G) = O_{2'}(G)$\textup{)}, $S \in \Syl_2(G)$, and $x \in \Omega_1(ZJ(S))$.
Assume
\begin{enumerate}[label=\textup{(\arabic{*})}]
\item $C = C_G(x)$ has a perfect normal subgroup $K$ with $K/O_{2'}(K) \cong
    L_2(q)$ \textup{(}$q \equiv \pm 1 \pmod{8}$\textup{)} or $A_7$, and
\item $C_C(K/O_{2'}(K))$ has cyclic Sylow $2$-subgroups.
\end{enumerate}
Then $K/O_{2'}(K) \cong L_2(q)$, $q$ is a square, and $S$ is uniquely
determined by $\nu_2(q^2-1)$. Moreover, $O^{2'}(G/O_{2'}(G)) \cong
L_4(q^{\frac{1}{2}})$ if $q^{\frac{1}{2}} \equiv 3 \pmod{4}$, while
$O^{2'}(G/O_{2'}(G)) \cong U_4(q^{\frac{1}{2}})$ if $q^{\frac{1}{2}} \equiv 1
\pmod{4}$.  
\end{main}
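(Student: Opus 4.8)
The plan is to translate the hypotheses into a statement about the $2$-fusion system $\F := \F_S(G)$, apply Theorem~\ref{T:main}, and then use a classification of the finite simple groups with the resulting Sylow $2$-subgroup --- Mason's theorem \cite{Mason1973} --- to recover a conclusion about $G$. First I would reduce to the case $O_{2'}(G) = 1$. The quotient $G \to \overline{G} := G/O_{2'}(G)$ carries $S$ isomorphically onto a Sylow $2$-subgroup of $\overline{G}$, so the $2$-fusion system is unchanged, and $\overline{G} = O^2(\overline{G})$; since $Z^*(G) = O_{2'}(G)$, the definition of $Z^*$ and Glauberman's $Z^*$-theorem give $Z(\overline{G}) = 1$ and guarantee that $\overline{x}$ is not isolated in $\overline{G}$. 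Coprime action of $\langle x\rangle$ on $O_{2'}(G)$ yields $C_{\overline{G}}(\overline{x}) = \overline{C_G(x)}$, so $\overline{K}$ is perfect and normal in $C_{\overline{G}}(\overline{x})$ with $\overline{K}/O_{2'}(\overline{K}) \cong K/O_{2'}(K)$ and $C_{C_{\overline{G}}(\overline{x})}(\overline{K}/O_{2'}(\overline{K}))$ still of cyclic Sylow $2$-subgroups, while $\overline{x} \in \Omega_1(ZJ(\overline{S}))$. So I assume henceforth that $O_{2'}(G) = 1$; then $F(G) = 1$ and $Z(G) = 1$.

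Next I would check that $\F$ satisfies the hypotheses of Theorem~\ref{T:main}. One has $O^2(\F) = \F$ since $G = O^2(G)$ \cite{BCGLO2007}. Because $x \in \Omega_1(ZJ(S))$, I get $\Baum(S) = C_S(\Omega_1(ZJ(S))) \leq C_S(x) = T$, which is hypothesis~(3); after replacing $x$ by a fully $\F$-centralized $\F$-conjugate if necessary (using the good behaviour of $\Omega_1(ZJ(S))$ under $\F$-conjugation), $x$ is fully $\F$-centralized, so $\C := C_\F(x)$ is saturated and equals $\F_T(C_G(x))$. The image of $K$ in $C_G(x)/O_{2'}(C_G(x))$ contributes a component $\K$ of $\C$ which is a fusion system on a Sylow $2$-subgroup of $K/O_{2'}(K)$; since $L_2(q)$ ($q \equiv \pm1\pmod 8$) and $A_7$ both have nonabelian dihedral Sylow $2$-subgroups and $\F_2(A_7) \cong \F_2(L_2(9))$, this $\K$ is a fusion system on a dihedral group of order $2^k$ for some $k \geq 3$, and $C_T(\K)$ is cyclic by hypothesis~(2). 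A short argument using $Z(G) = 1$ and the component $\K$ in $\C$ then gives $O_2(\F) = 1$: a nontrivial normal $2$-subgroup of $\F$ would be centralized by $J(S) \leq C_G(x)$, hence by $\K$, and so would lie inside the cyclic group $C_T(\K)$, forcing a contradiction. Hence Theorem~\ref{T:main} applies and yields $S \cong D_{2^k}\wr C_2$ together with $\F \cong \F_S(L_4(q_1))$ for some $q_1 \equiv 3 \pmod 4$ with $\nu_2(q_1 + 1) = k-1$.

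It then remains to pass from $\F$ back to $G$. Since $O_2(G) = O_{2'}(G) = Z(G) = 1$ we have $F^*(G) = E(G) \neq 1$, and as $D_{2^k}\wr C_2$ is directly indecomposable for $k \geq 3$ and $F(G) = 1$, the layer $E(G)$ is a single component $L$ with $Z(L) = 1$ and $C_G(L) = 1$, so $L \trianglelefteq G \leq \Aut(L)$; since $S$ has exactly the isomorphism type of a Sylow $2$-subgroup of $L_4(q_1)$ (a parity count on $|D_{2^k}\wr C_2|$ excludes $[G:L]$ being even), $G$ is an extension of $L$ by a group of odd order and $O^{2'}(G) = L$. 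Mason's classification \cite{Mason1973} of the finite simple groups with Sylow $2$-subgroup of type $D_{2^k}\wr C_2$, together with the fact \cite[Theorem~3.3]{BrotoMollerOliver2012} that $L_4(q')$ and $U_4(q'')$ have the same $2$-fusion system whenever their Sylow $2$-subgroups agree, then identifies $L$ as $L_4(q')$ with $q' \equiv 3 \pmod 4$, $\nu_2(q'+1) = k-1$, or as $U_4(q'')$ with $q'' \equiv 1 \pmod 4$, $\nu_2(q''-1) = k-1$. Finally, inspecting involution centralizers in $L_4(q')$, resp.\ $U_4(q'')$: the component of the centralizer of an involution of the type carried by $\Omega_1(ZJ(S))$ is a central extension of $L_2((q')^2)$, resp.\ $L_2((q'')^2)$, and is in particular never an extension of $A_7$; comparing with $K$ gives $K/O_{2'}(K) \cong L_2(q)$ with $q = (q')^2$, resp.\ $q = (q'')^2$, so $q$ is a square and $O^{2'}(G) \cong L_4(q^{\frac12})$ with $q^{\frac12} \equiv 3 \pmod 4$ in the linear case, $O^{2'}(G) \cong U_4(q^{\frac12})$ with $q^{\frac12} \equiv 1 \pmod 4$ in the unitary case. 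Undoing the initial reduction gives the assertion for $G/O_{2'}(G)$.

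The step I expect to be the main obstacle is this last, fusion-to-group, passage: Theorem~\ref{T:main} determines only $\F$, and concluding that $O^{2'}(G/O_{2'}(G))$ is $L_4$ or $U_4$ cannot be done by $2$-fusion alone --- it imports the classification of simple groups with the given $2$-local structure from \cite{Mason1973}, and it is precisely there that the $A_7$ possibility in hypothesis~(1) gets eliminated. A secondary point requiring care is the verification that $O_2(\F) = 1$, and that $x$ may be replaced by a fully $\F$-centralized $\F$-conjugate without sacrificing hypothesis~(3).
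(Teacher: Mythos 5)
Your overall strategy matches the paper's: verify the hypotheses of Theorem~\ref{T:main} for $\F_S(G)$, apply it to obtain $S \cong D_{2^k} \wr C_2$ and $\F \cong \F_2(L_4(q_1))$, and then pass back to $G$ via Mason's classification and an inspection of involution centralizers in $L_4$ and $U_4$.  The reduction to $O_{2'}(G)=1$, the verification that $\Baum(S) \leq T$ and $\F = O^2(\F)$, and the final numerology (identifying $q$ as a square and ruling out $A_7$) are all handled the same way.  But two of the intermediate steps, as you have written them, have genuine gaps.

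First, you write that ``$C_T(\K)$ is cyclic by hypothesis~(2).''  Hypothesis~(2) says that $C_C(K/O_{2'}(K))$ has cyclic Sylow $2$-subgroups, i.e.\ that the \emph{group-theoretic} centralizer $Q_0 := C_T(K/O_{2'}(K))$ is cyclic.  What is needed for Theorem~\ref{T:main} is that Aschbacher's \emph{fusion-system} centralizer $Q := C_T(\K)$ is cyclic, and these need not coincide a priori: one only knows $Q_0 \leq Q$ in general (Lemma~\ref{L:centgrp}(b)).  The paper spends a full paragraph proving $Q = Q_0$; this comes down to showing, via Lemma~\ref{L:centgrp}(c), that no involution of $\Aut(K/O_{2'}(K))$ centralizes ``mod core'' both $N_{\bar{K}}(\bar{P})$ and the normalizer of each four-subgroup of $\bar{P}$, using the explicit description of $\Aut(L_2(q))$ in Lemma~\ref{L:l2qomnibus} and a separate check for $A_7$.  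Your proposal omits this entirely, and it is not a cosmetic point: without $Q = Q_0$ one cannot even get off the ground.

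Second, your argument that $O_2(\F) = 1$ does not hold up.  You assert that a nontrivial normal $2$-subgroup $Y$ of $\F$ ``would be centralized by $J(S)$,'' but there is no reason for this: normality of $Y$ in $S$ (or even in $\F$) does not make $J(S)$ centralize $Y$.  The paper's argument is different and relies on the first point: one first shows $Q = O_2(\C)$ (via $\K = E(\C)$ and \cite[9.12]{AschbacherGeneralized}, which again needs $Q$ cyclic); then $\Omega_1(Z(Y)) \cap T$ is a nontrivial normal $2$-subgroup of $\C$, hence lies in $O_2(\C) = Q$, so equals $\gen{x}$; and finally one gets a contradiction either from $x \notin Z(S)$ when $T < S$, or, when $T = S$, from the $Z^*$-theorem applied to the strongly closed subgroup $\gen{x}$ (this is where the hypothesis $Z^*(G) = O_{2'}(G)$ actually enters, not just as $Z(\bar G) = 1$).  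Your step needs to be replaced by this chain of reasoning, or by something of comparable strength.
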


The proof of Theorem~\ref{T:maingrp}, assuming Theorem~\ref{T:main}, is found
near the beginning of Section~\ref{S:centralizer}.

\subsection*{Notation}
Homomorphisms are applied on the right. We prefer to write conjugation-like
maps in the exponent. For instance, the image of an element $s \in S$ (or
subgroup $P \leq S$) under a morphism $\phi$ in a fusion system is denoted
$s^\phi$ (or $P^\phi$). 
\begin{itemize}
\item $C_n$ is the cyclic group of order $n$, $D_{2^n}$ ($n \geq 2$), $Q_{2^n}$
($n \geq 3)$, $SD_{2^n}$ ($n \geq 4$) are the dihedral, quaternion, and
semidihedral groups, respectively, of order $2^n$.
\item $G^\#$ is the set of nonidentity elements of $G$.
\item $\mathcal{I}_p(G)$ is the set of elements of $G$ of order $p$.
\item $\Omega_n(P) = \gen{x \in P \mid x^{p^n} = 1}$.
\item $\mho^{n}(P) = \gen{x^{p^n} \mid x \in P}$.
\item $\mathscr{E}_{p^n}(P)$ is the set of elementary abelian subgroups of $P$
of order $p^n$.
\item Nonstandardly, for $P$ with $Z(P)$ of order $2$, $P \wr_*
C_2$ is the
quotient of $P \wr C_2$ by its center (a wreathed commuting product).
\end{itemize}

\subsection*{Acknowledgements} 
This work was carried out in large part in the author's Ph.D. thesis at Ohio
State, and was supported by a dissertation fellowship from the Graduate School
at Ohio State. The author would like to express his sincere gratitude to Ron
Solomon for all his help and encouragement over the years. Richard Lyons read
an early version of this paper and offered many probing questions and helpful
comments, for which the author is very appreciative.  The author would like to
thank Michael Aschbacher and Bob Oliver for their willingness in making
available early versions of their papers, as well as Matthew Welz for
stimulating discussions.

\section{Preliminary results}\label{S:background}

General references for group theoretic material are \cite{Gorenstein1980},
\cite{Suzuki1982}, and \cite{GLS6}.

\subsection{Automorphism groups of $p$-groups}\label{SS:general}

We first list some results about automorphism groups of $p$-groups needed later.

\begin{theorem}\label{T:stabnormalseries}
Let $A$ be a $p'$-group of automorphisms of the $p$-group $S$ which stabilizes
a normal series $1 = S_0 \leq S_1 \leq \cdots \leq S_n = S$ and acts trivially
on each factor $S_{i+1}/S_i$. Then $A = 1$.
\end{theorem}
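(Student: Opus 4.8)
The plan is to prove the sharper statement that the set $\mathrm{Stab}$ of \emph{all} automorphisms of $S$ stabilizing the given series and acting trivially on every factor $S_{i+1}/S_i$ is a $p$-group; since $A$ is a $p'$-subgroup of $\mathrm{Stab}$, the desired conclusion $A = 1$ is then immediate. I would argue by induction on the length $n$, the cases $n \leq 1$ being trivial (for $n = 1$ the group acts trivially on $S_1/S_0 = S$).

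For the inductive step, note that ``$A$ stabilizes the series'' means precisely that $A$ fixes each term $S_i$ setwise, so $A$ restricts to a group of automorphisms of $S_{n-1}$ stabilizing the truncated normal series $1 = S_0 \leq \cdots \leq S_{n-1}$ and acting trivially on each of its factors. By the inductive hypothesis $A$ centralizes $S_{n-1}$. It therefore remains to handle the two-term situation in which $A$ acts trivially both on the normal subgroup $N := S_{n-1}$ and on $S/N$. Here, for $\alpha \in A$ set $f_\alpha(s) = s^{-1}s^\alpha$; triviality of $\alpha$ on $S/N$ gives $f_\alpha \colon S \to N$, and triviality on $N$ gives, for $\alpha, \beta \in A$ and $s \in S$, that $s^{\alpha\beta} = (s f_\alpha(s))^\beta = s^\beta f_\alpha(s) = s\, f_\beta(s) f_\alpha(s)$, i.e. $f_{\alpha\beta} = f_\beta \cdot f_\alpha$ in the group $\operatorname{Map}(S, N)$ of all functions $S \to N$ under pointwise multiplication. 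Thus $\alpha \mapsto f_{\alpha^{-1}}$ is a homomorphism $A \to \operatorname{Map}(S,N)$, and it is injective since $f_\alpha \equiv 1$ forces $\alpha = \id$. As $|\operatorname{Map}(S,N)| = |N|^{|S|}$ is a power of $p$, the group $A$ is a $p$-group; being also a $p'$-group, $A = 1$.

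I do not expect a genuine obstacle: the only points needing care are that stabilizing the series yields a well-defined action on the intermediate term $S_{n-1}$ (so that the induction can be applied there), and that the two-term reduction requires no abelianness of $N$ — the target $\operatorname{Map}(S,N)$ is a group in any case, and the homomorphism property uses only that $A$ acts trivially on $N$. As an alternative to the final paragraph, one could instead invoke coprime-action machinery: the chain condition gives $[S_i, \alpha] \leq S_{i-1}$, hence $[S, \alpha; n] = 1$, while for $\alpha$ of order prime to $p$ coprime action yields $[S,\alpha] = [S,\alpha,\alpha] = \cdots$, forcing $[S,\alpha] = 1$; so $A$ has no nontrivial element of prime order and $A = 1$.
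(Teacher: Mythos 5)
Your proof is correct. The paper does not supply an argument here; it simply cites Gorenstein's \emph{Finite Groups}, Theorem~3.2 (a classical result often attributed to P.~Hall). Your two-term reduction via the injective map $\alpha \mapsto f_{\alpha^{-1}}$ into $\operatorname{Map}(S,N) \cong N^{|S|}$ is clean and needs no normality or abelianness of $N$ beyond what you state, and the induction reducing to the two-term case is the standard one. Your alternative at the end (showing $[S,\alpha;n]=1$ from $[S_i,\alpha]\leq S_{i-1}$ and then invoking $[S,\alpha,\alpha]=[S,\alpha]$ for a coprime automorphism $\alpha$) is likewise the route one typically finds in Gorenstein. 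One small wrinkle in your exposition: you announce you will prove the sharper statement that the full stabilizer $\mathrm{Stab}$ is a $p$-group by induction, but in the inductive step you conclude ``$A$ centralizes $S_{n-1}$,'' which already uses that $A$ is a $p'$-group, so what is actually written is the direct induction for the original statement with the $p$-group observation used only in the two-term case. Either organization works; if you do want to carry out the sharper induction as stated, replace that sentence by: by the inductive hypothesis the image of the restriction $\mathrm{Stab}(S) \to \Aut(S_{n-1})$ lands in a $p$-group, and the kernel of the restriction consists of automorphisms trivial on $N$ and on $S/N$, which the two-term argument embeds in $\operatorname{Map}(S,N)$; so $\mathrm{Stab}(S)$ is $p$-by-$p$, hence a $p$-group.
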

\begin{proof}
See for example \cite[Theorem~3.2]{Gorenstein1980}.
\end{proof}

A finite group is \emph{indecomposable} if it is not the direct product of two
proper subgroups.
\begin{proposition}\label{P:krullschmidt}
An automorphism of a direct product of indecomposable finite groups permutes
the commutator subgroups of the factors.
\end{proposition}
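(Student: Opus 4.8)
The plan is to deduce this from the strong (``normal'') form of the Krull--Schmidt theorem, which compares two decompositions of a finite group into directly indecomposable factors not merely up to isomorphism, but up to a \emph{central} automorphism. Recall that an automorphism $\theta$ of a group $G$ is central if $g^{-1}g^{\theta} \in Z(G)$ for every $g \in G$.

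Write $G = G_1 \times \cdots \times G_n$ with each $G_i$ indecomposable, and let $\alpha \in \Aut(G)$. First I would note that applying $\alpha$ produces a second decomposition $G = G_1^{\alpha} \times \cdots \times G_n^{\alpha}$ of $G$ into indecomposable direct factors. By the Krull--Schmidt theorem in its normal form (see \cite{Suzuki1982} or \cite{Gorenstein1980}), there are a permutation $\sigma$ of $\{1,\dots,n\}$ and a central automorphism $\theta$ of $G$ with $G_i^{\alpha} = G_{\sigma(i)}^{\theta}$ for all $i$. Next I would record the elementary fact that a central automorphism fixes $[G,G]$ pointwise: for $g,h \in G$ write $g^{\theta} = gz$ and $h^{\theta} = hw$ with $z,w \in Z(G)$; then $[g^{\theta},h^{\theta}] = [gz,hw] = [g,h]$ since the central elements $z,w$ cancel, and such commutators generate $[G,G]$. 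In particular $\theta$ fixes $[G_{\sigma(i)},G_{\sigma(i)}]$, which lies in $[G,G]$.

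Combining these observations finishes the proof: $[G_i,G_i]^{\alpha} = [G_i^{\alpha},G_i^{\alpha}] = [G_{\sigma(i)}^{\theta},G_{\sigma(i)}^{\theta}] = [G_{\sigma(i)},G_{\sigma(i)}]^{\theta} = [G_{\sigma(i)},G_{\sigma(i)}]$, so $\alpha$ carries the commutator subgroup of the $i$-th factor onto that of the $\sigma(i)$-th factor. (Abelian indecomposable factors, whose commutator subgroups are trivial, cause no trouble: the trivial subgroup is fixed by every automorphism.) The only point requiring care is to invoke the version of Krull--Schmidt that supplies a central automorphism relating the two indecomposable decompositions, rather than merely an isomorphism of factors; this is exactly what is needed, since the commutator subgroups of the $G_i$ are intrinsic to $G$ and, by the computation above, are unmoved by central automorphisms. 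I do not anticipate any genuine obstacle beyond citing this form of the theorem correctly.
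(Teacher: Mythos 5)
Your proof is correct and follows exactly the route the paper indicates: the paper states only that the proposition ``is a consequence of the Krull--Schmidt theorem'' (citing Suzuki~2.4.8 and deferring details to Oliver's Proposition~3.1), and your argument supplies precisely that deduction. The key observations---that the strong form of Krull--Schmidt furnishes a \emph{central} automorphism $\theta$ matching the two indecomposable decompositions $\{G_i\}$ and $\{G_i^\alpha\}$, and that a central automorphism fixes $[G,G]$ (hence each $[G_j,G_j]$) pointwise---are exactly what is needed, and the computation $[gz,hw]=[g,h]$ for $z,w\in Z(G)$ is correct. One small point worth being explicit about if you write this up: you are invoking the Remak form of Krull--Schmidt (the version that produces a central automorphism, not merely pairwise isomorphisms of factors), so the citation must be to a source that states it in that strength; Suzuki's Theorem~2.4.8 does.
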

\begin{proof}
This is a consequence of the Krull-Schmidt theorem for finite groups, found in
\cite[Theorem~2.4.8]{Suzuki1982}.  A proof of the current statement is given in
\cite[Proposition~3.1]{OliverSplitting}.  
\end{proof}

The next lemma describes the outer automorphism group of a nonabelian dihedral
$2$-group $D$, and lists a couple of additional statements which express the
fact that a noncentral involution of $D$ cannot be a commutator or a square in
a $2$-group containing $D$ as a normal subgroup.  

\begin{lemma}\label{L:autD}
Let $D$ be a $2$-group isomorphic to $D_{2^{k+1}}$ for some $k \geq 2$. Fix the
presentation $\gen{b,c \mid b^2 = c^{2^k} = 1, b^{-1}cb = c^{-1}}$ for $D$ and
let $C = \gen{c}$ be the cyclic maximal subgroup of $D$. Let $S$ be any
$2$-group containing $D$ as a normal subgroup. Then
\begin{enumerate}[label=\textup{(\alph{*})}]
\item $\Out(D) \cong A \times B$ where $A \cong C_2$ and $B \cong C_{2^{k-2}}$
is the kernel of the action of $\Out(D)$ on the $D$-classes of four subgroups of $D$.
\item $[S,S] \leq C_S(C)$, and 
\item if $S_0$ is the preimage of $\Omega_1(S/C_S(D)D)$ in $S$, then
$\mho^1(S_0) \leq C_S(D)C$.
\end{enumerate}
\end{lemma}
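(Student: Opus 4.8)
The plan is to work throughout with the homomorphism $\rho\colon S \to \Aut(D)$ induced by conjugation, whose kernel is $C_S(D)$, and to understand $\Aut(D)$ explicitly. First I would compute $\Aut(D)$ directly from the presentation $\gen{b,c \mid b^2 = c^{2^k}=1,\ b^{-1}cb=c^{-1}}$. Any automorphism must send the characteristic cyclic maximal subgroup $C = \gen c$ to itself, so $c \mapsto c^i$ with $i$ odd, and must send $b$ to an element of $D \setminus C$, i.e.\ $b \mapsto bc^j$ for some $j$; conversely every such assignment (with $i$ odd) extends to an automorphism. This gives $|\Aut(D)| = 2^{k-1}\cdot 2^k = 2^{2k-1}$, with $\Inn(D)$ of order $2^k$, hence $|\Out(D)| = 2^{k-1}$. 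For part (a) I would then identify the subgroup structure: the $c \mapsto c^{-1}$, $b\mapsto b$ automorphism together with inner automorphisms accounts for the action on the maximal subgroup $C$ up to a factor $C_{2^{k-2}}$ (the image of $i \mapsto c^i$ in $(\dsz/2^k)^\times/\{\pm1\} \cong C_{2^{k-2}}$), while the $b \mapsto bc^j$ part is absorbed into $\Inn(D)$. A clean way to see the direct factorization $\Out(D)\cong A\times B$ with $A\cong C_2$ is to note that $D$ has exactly two conjugacy classes of four-subgroups (generated by reflections), $\Out(D)$ acts on this two-element set, the kernel $B$ of this action consists precisely of the classes of $c \mapsto c^i$ with $i \equiv 1 \pmod 4$ (an index-$2$ cyclic subgroup, $\cong C_{2^{k-2}}$), and the swap is realized by the order-$2$ outer automorphism induced by $b \mapsto bc$ conjugation on the half — giving a complement $A$.

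For part (b), since $S/C_S(D)$ embeds in $\Out(D)$ via $\rho$ followed by the quotient by $\Inn(D)$ — wait, more carefully: $\rho(S)$ contains $\rho(D)=\Inn(D)$ as a normal subgroup, and $\rho(S)/\Inn(D) \hookrightarrow \Out(D)$, which is abelian by (a). Hence $[\rho(S),\rho(S)] \leq \Inn(D)$, and in fact $[\rho(S),\rho(S)]$ maps into the derived subgroup of whatever subgroup of $\Aut(D)$ it is, but the key point is cleaner: I claim $[S,S]$ centralizes $C$. Every generator $\rho(s)$ acts on $C=\gen c$ by $c \mapsto c^{\pm i}$ for some unit $i$; composing two such, the sign and the unit multiply, and since the unit group $(\dsz/2^k)^\times$ is abelian, any commutator $[\rho(s),\rho(t))]$ acts trivially on $C$. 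Thus $[S,S] \leq \rho^{-1}(C_{\Aut(D)}(C)) = C_S(C)$, proving (b).

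For part (c), let $\bar S = S/C_S(D)D$; by (a), $\bar S \hookrightarrow \Out(D)/(\text{image of }\Inn) = \Out(D)$ which is $A \times B$ with $A\cong C_2$, $B\cong C_{2^{k-2}}$ cyclic, so $\Omega_1(\Out(D))$ has order $4$ (when $k\geq 3$; order $2$ when $k=2$). Let $S_0$ be the preimage of $\Omega_1(\bar S)$. I want to show $\mho^1(S_0)\leq C_S(D)C$. Equivalently, for $s \in S_0$, $s^2$ acts on $D$ as an element of $\Inn(D)\cdot\{c\mapsto c^{\pm1}\}$ — i.e.\ $\rho(s^2)$ lies in the subgroup $C\langle c\mapsto c^{-1}\rangle$ of $\Aut(D)$, which is exactly $\rho(C_S(D)C)$'s... hmm, rather $C_S(D)C$ maps to $\langle \rho(c)\rangle = $ the inner automorphisms from $C$, plus the contribution of $C_S(D)$. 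Let me restate the target: $\rho(\mho^1(S_0))$ should lie in the subgroup $N$ of $\Aut(D)$ generated by $\Inn(D)$ and the automorphism $\tau\colon c\mapsto c^{-1}, b\mapsto b$ — but that is not quite $\rho(C_S(D)C)$ either. The correct reading: $\mho^1(S_0)\leq C_S(D)C$ means every square from $S_0$ lies in $C_S(D)C$, and an element lies in $C_S(D)C$ iff it centralizes $D$ modulo acting as a power of $\rho(c)$. So I must show $\rho(s^2)\in \Inn(D)$ for $s\in S_0$, i.e.\ $S_0/C_S(D) \to \Out(D)$ kills all squares — but that is false in general since $B$ is cyclic of large order. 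The resolution must be that $S_0$ is defined so that $\bar S_0 := S_0/C_S(D)D = \Omega_1(\bar S)$ has exponent $2$, hence $\mho^1(S_0)\leq C_S(D)D$; then combine with (b): $\mho^1(S_0) = [\mho^1(S_0) \cap C_S(D)D]$, and any element of $C_S(D)D$ that is a square of something in $S_0$... Actually $\mho^1(S_0)\leq C_S(D)D$ is automatic from exponent $2$ of $\bar S_0$; to upgrade $D$ to $C$ I would intersect with $[S,S]$-type information or note $s^2$ for $s\in S_0$ acts on $D$ via an element of $\Inn(D)$ of the form $\rho(d)$ with $d\in D$, and $d$ can be taken in $C$ because the $A\cong C_2$ and the $b\mapsto bc^j$ parts contribute trivially to the square.

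The hard part — and the step I expect to be the main obstacle — is precisely this last refinement in (c): showing that the square of an element of $S_0$ not only lands in $C_S(D)D$ but in the smaller $C_S(D)C$, which amounts to controlling the "reflection part" $b\mapsto bc^j$ of conjugation and the order-$2$ outer factor $A$ under squaring. I would handle it by picking $s\in S_0$, writing $\rho(s)$ explicitly as (unit action on $C$)$\circ$($b\mapsto bc^j$), using that $\rho(s) \bmod \Inn(D)$ has order $\leq 2$ to constrain the unit $i$ to $i^2\equiv 1\pmod{2^k}$ up to the cyclic-$B$ ambiguity — so $i\equiv \pm1\pmod{2^{k-1}}$ roughly — and then computing $\rho(s^2)$ directly: its action on $c$ is $c\mapsto c^{i^2}$ with $i^2\equiv 1\pmod{2^{\,?}}$, landing in $\gen{\rho(c)}$, and its action on $b$ differs from an inner automorphism by a power of $c$. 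Tracking the $2$-adic valuations carefully (this is where $\nu_2$ and the precise value $k$ enter) closes the argument; everything else is the routine automorphism computation sketched above.
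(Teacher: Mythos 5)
Your proposal is structurally correct and takes a route for part (c) that is genuinely different from, and arguably cleaner than, the paper's. Where the paper splits into three cases according to whether the outer automorphism induced by $s$ is trivial, lies in the coset $B[\eta]$, or is an involution in $B$, you parametrize $\rho(s)$ by $(i,j)$ with $\rho(s)(c)=c^i$, $\rho(s)(b)=bc^j$, compute $\rho(s^2)(c)=c^{i^2}$ directly, and observe that the $S_0$ hypothesis forces $\rho(s^2)\in\Inn(D)$, hence $i^2\equiv\pm1\pmod{2^k}$. That single computation subsumes all three of the paper's cases. Parts (a) and (b) are essentially equivalent to the paper's arguments (for (b) the paper uses the exact sequence $1\to C_S(C)\to S\to\Aut_S(C)\to1$ with $\Aut(C)$ abelian, which is the same content as your observation that the action on $C$ factors through an abelian group).

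That said, the exposition of (c) has problems you should be aware of. The middle of the paragraph repeatedly confuses the hypothesis with the goal: you write ``I must show $\rho(s^2)\in\Inn(D)$ for $s\in S_0$\ldots but that is false in general,'' when in fact $\rho(s^2)\in\Inn(D)$ is precisely the defining condition on $S_0$ (i.e.\ $s^2\in C_S(D)D$), and the thing to be shown is the \emph{stronger} containment $\rho(s^2)\in\rho(C)=\Inn_C(D)$. You eventually untangle this, but the text as written would not survive a careful reading. More importantly, the one step you flag as ``the hard part'' and promise to close by ``tracking 2-adic valuations'' is left with a placeholder modulus $2^{?}$. The actual resolution requires no case analysis and no delicate valuation tracking: since $i$ is odd, $i^2\equiv1\pmod4$, while $-1\not\equiv1\pmod4$, so the constraint $i^2\equiv\pm1\pmod{2^k}$ forces $i^2\equiv1\pmod{2^k}$. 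Thus $\rho(s^2)$ fixes $C$ pointwise; being an inner automorphism of $D$ fixing $C$, it is inner by an element of $C_D(C)=C$, whence $s^2\in C_S(D)C$. Supplying that one line turns your sketch into a complete and, in my view, tidier proof of (c) than the paper's.
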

\begin{proof}
For (a), $A$ may be generated by the class $[\eta]$ of the automorphism $\eta$
sending $b \mapsto c^{-1}b$ and inverting $c$, and $B$ may be generated by the
class $[\phi]$ of $\phi$ centralizing $b$ and sending $c \mapsto c^5$. 

For (b), fix a $2$-group $S$ containing $D$ as a normal subgroup. Then $C \norm
S$ as $C$ is a characteristic subgroup of $D$.  Hence (b) follows from the
exact sequence $1 \to C_S(C) \to S \to \Aut_S(C) \to 1$ and the fact that
$\Aut(C)$ is abelian.

Let $S_0$ be the preimage of $\Omega_1(S/C_S(D)D)$ in $S$ as in (c).  Thus,
$S_0$ consists of the elements of $S$ which square into $C_S(D)D$. Let $s \in
S_0$. If $s \in C_S(D)D$ we have that $s$ squares into $C_S(D)\mho^1(D) =
C_S(D)\mho^1(C)$, so we may assume that $s$ induces a nontrivial (involutory) outer
automorphism of $D$. If $s$ induces some element of the coset $B[\eta]$ as in
(a), then $s$ centralizes no noncentral involution of $D$, and hence $s^2 \in
C_S(D)C$ as claimed. So we may further assume that $s$ induces an involutory
outer automorphism in $B$. Then $c^s = cz$ or $c^{-1}z$, and so $s^2 \in
C_{C_S(D)D}(C) = C_S(D)C$ as claimed.  
\end{proof}

\begin{lemma}\label{L:autDwr2}
Suppose $k \geq 3$ and let $D$ be a $2$-group isomorphic to $Q_{2^{k+1}}$,
$SD_{2^{k+1}}$, $C_2 \times D_{2^k}$, $D_{2^k} \times D_{2^k}$, or $D_{2^k} \wr
C_2$.  Then $\Aut(D)$ is a $2$-group.  
\end{lemma}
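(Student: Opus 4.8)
The plan is to handle the five isomorphism types of $D$ essentially case by case, showing in each that $\Aut(D)$ has odd part trivial. The unifying tool is Theorem~\ref{T:stabnormalseries}: it suffices to produce, in each case, a characteristic normal series of $D$ all of whose factors have $2$-group automorphism groups, and on which the odd part of $\Aut(D)$ must act trivially factor-by-factor — or, more crudely, to exhibit enough characteristic subgroups to pin down any odd-order automorphism as the identity. For $D = Q_{2^{k+1}}$ and $D = SD_{2^{k+1}}$ the maximal cyclic subgroup $C$ is characteristic (it is the unique cyclic subgroup of index $2$, since the other index-$2$ subgroups are quaternion/dihedral, hence noncyclic, for $k \geq 3$), so one gets the exact sequence $1 \to C_D(C) \to \Aut(D)|_{\text{relevant}} \to \Aut(C)$; here $\Aut(C) \cong \Aut(C_{2^k})$ is a $2$-group, and one checks the kernel of the action on $C$ is also a $2$-group by examining how an automorphism trivial on $C$ can move the remaining coset generator $b$ (it can only send $b \mapsto b c^{i}$ with constraints forcing a $2$-power order). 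This is classical — it is the statement that the outer automorphism groups of $Q_{2^{k+1}}$ and $SD_{2^{k+1}}$ are $2$-groups — and can be cited or dispatched quickly.

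For the three remaining (decomposable or wreathed) cases, I would invoke Proposition~\ref{P:krullschmidt}: an automorphism of a direct product of indecomposables permutes the commutator subgroups of the factors. For $D = C_2 \times D_{2^k}$, the indecomposable factors are $C_2$ and $D_{2^k}$ with commutator subgroups $1$ and $[D_{2^k}, D_{2^k}] \cong C_{2^{k-1}}$ (of order $\geq 4$ since $k \geq 3$); since these have different orders, any automorphism fixes each commutator subgroup, hence (after a short argument) stabilizes the decomposition, so $\Aut(D)$ embeds in $\Aut(C_2) \times \Aut(D_{2^k}) \rtimes (\text{something})$, and each piece is a $2$-group by Lemma~\ref{L:autD}(a) (or rather its consequence that $\Aut(D_{2^k})$ is a $2$-group for $k \geq 3$). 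For $D = D_{2^k} \times D_{2^k}$, Proposition~\ref{P:krullschmidt} says $\Aut(D)$ permutes the two isomorphic commutator subgroups; the kernel of this permutation action (index $2$) preserves each factor up to the standard complications, landing inside a group built from $\Aut(D_{2^k})$'s and the swap, all $2$-groups; the full group is then an extension of $C_2$ by a $2$-group. The wreath case $D = D_{2^k} \wr C_2$ is the most involved: as an abstract group it is $(D_{2^k} \times D_{2^k}) \rtimes C_2$; one first shows the base group $B = D_{2^k} \times D_{2^k}$ is characteristic (e.g. as $\mho^{?}$ or via being generated by, or related to, elements whose centralizers are large — concretely $B$ can be recovered as the subgroup generated by all elements of order dividing $2^{k-1}$ lying in... this needs care), then argues on $B$ via Proposition~\ref{P:krullschmidt} as before and controls the action on the quotient $C_2$.

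The main obstacle will be the wreath product case $D_{2^k} \wr C_2$, specifically establishing that the base group $B = D_{2^k} \times D_{2^k}$ is characteristic in $D$ and then bounding the automorphisms that normalize $B$. Unlike the pure direct products, $D$ here has extra elements of order $2$ (the "wreathing" involutions $t$ and their products with base elements) and one must make sure an odd-order automorphism cannot mix the base with these, nor induce an odd-order map on $B$ itself compatible with the $C_2$-action. Once $B$ is shown characteristic, the residual analysis is: $\Aut(D)$ restricts to $\Aut(B)$, whose odd part is trivial by the $D_{2^k} \times D_{2^k}$ case just done together with Theorem~\ref{T:stabnormalseries}; an automorphism trivial on $B$ is then determined by its effect on a transversal element $t$, and $t$ can only go to $t\cdot b$ for $b \in B$ with the square/order constraints forcing $2$-power order. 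So the real work — and the place where a genuine (if elementary) argument is needed rather than a citation — is the characteristicity of the base group and the handling of the noncentral involutions outside $B$; I would expect to spend most of the proof there, with the other four cases being short.
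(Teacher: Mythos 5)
The paper's own proof is a one‑line citation: it points to Lemma~\ref{L:autD}, to \cite[23.3]{AschbacherFGTSecond}, and to Theorem~\ref{T:stabnormalseries} applied to ``an appropriate normal series,'' without invoking Proposition~\ref{P:krullschmidt}. Your plan shares the use of Theorem~\ref{T:stabnormalseries} and of Lemma~\ref{L:autD} and is a perfectly reasonable way to expand the paper's terse argument, but it has a genuine gap exactly where you flag one: you never actually establish that the base subgroup $B = D_{2^k}\times D_{2^k}$ is characteristic in $D = D_{2^k}\wr C_2$, and that is the crux of the hardest (and, for the paper, most relevant) case. The guess ``$B$ can be recovered as the subgroup generated by all elements of order dividing $2^{k-1}$'' does not work: the wreathing involution $t$, and in fact many elements of $D-B$, also have small order, so that subgroup is all of $D$. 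The clean fix is to observe that $B = J(D)$. Indeed $B$ has $2$-rank $4$ and is generated by the elementary abelian subgroups $V_1\times V_2$ with $V_i\in\elem_{2^2}(D_i)$, so $J(B)=B$; while for any involution $u\in D-B$ one has $C_B(u)$ equal to the diagonal $\{(g,g^u)\}\cong D_{2^k}$ of $2$-rank $2$, so $C_D(u)\cong C_2\times D_{2^k}$ has $2$-rank only $3$, and therefore no $\elem_{2^4}$-subgroup of $D$ meets $D-B$. Hence $J(D)=J(B)=B$ is characteristic, and the rest of your sketch (restrict to $\Aut(B)$, handle the transversal coset) goes through.

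Two smaller soft spots, both of which can be repaired but are currently asserted rather than proved. For $C_2\times D_{2^k}$, knowing from Proposition~\ref{P:krullschmidt} that an odd-order $\alpha$ fixes $[B,B]$ does \emph{not} by itself ``stabilize the decomposition,'' since the $C_2$ direct factor is not unique ($\langle a\rangle$ and $\langle az\rangle$ both work); you need to argue, e.g., that $\alpha$ fixes $Z(D)$ pointwise (because $\alpha$ permutes the two $C_2$-factors and permutes $\langle z\rangle=[B,B]\cap Z(D)$, and an odd-order permutation of a $2$-element set is trivial, while $\Aut(C_{2^{k-2}})$ is a $2$-group), and then, for $k\ge 4$, pass to the $\alpha$-invariant subgroup $C_D([B,B]) = \langle a\rangle\times C$ and run Theorem~\ref{T:stabnormalseries} on $1 < [B,B] < \langle a\rangle\times C < D$; the case $k=3$ needs its own short computation (e.g.\ the set of order-$4$ elements determines a characteristic $C_2\times C_4$). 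Similarly, in the $D_{2^k}\times D_{2^k}$ case, ``up to the standard complications'' conceals the step that stabilizing $\{[D_1,D_1],[D_2,D_2]\}$ does not immediately stabilize $\{D_1,D_2\}$; one way to close this is to pass to the $\alpha$-invariant subgroups $C_D([D_i,D_i])$ as above (for $k\ge 4$) or to quote the full Krull–Schmidt uniqueness (decompositions differ by a central, hence $2$-power-order, automorphism), which is stronger than Proposition~\ref{P:krullschmidt} as stated.
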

\begin{proof}
This follows from Lemma~\ref{L:autD} and \cite[23.3]{AschbacherFGTSecond}
after applying Theorem~\ref{T:stabnormalseries} to an appropriate normal
series of $D$. 
\end{proof}

\subsection{Fusion systems}\label{SS:fusion}

We assume familiarity with basic definitions and results regarding fusion
systems, as can be found in \cite{AschbacherKessarOliver2011} and
\cite{CravenTheory}, and our notation follows the former for the most part. We
also assume the reader has a working knowledge of Aschbacher's normal
subsystems \cite{AschbacherNormal}, components, and generalized Fitting
subsystem \cite{AschbacherGeneralized}.  

For a group $G$, write $c_g\colon x \mapsto g^{-1}xg$ for the conjugation
homomorphism induced by $g \in G$. For subgroups $H$ and $K$ denote by
$\Hom_G(H,K) = \{c_g \mid g^{-1}Hg \leq K\}$ the set of group homomorphisms
from $H$ to $K$ induced by conjugation by elements of the group $G$. Write
$\Aut_G(H)$ for $\Hom_G(H,H)$. When $\phi\colon H \to K$ is any isomorphism, we
write the induced map from $\Aut(H) \to \Aut(K)$ as $\alpha \mapsto
\alpha^\phi$.

We refer to \cite[I.2.5]{AschbacherKessarOliver2011} for the definition of a
\emph{saturated} fusion system in the form used in this paper. Thus $\F$ is
saturated if it satisfies the Sylow and extension axioms. The extension axiom
says that given an isomorphism $\phi \in \Hom_\F(P,Q)$ with $Q$ fully
$\F$-centralized, there is a morphism $\tilde{\phi} \in \Hom_\F(N_\phi, S)$,
such that $\tilde{\phi}|_P = \phi$. Here,
\[
N_\phi = \{s \in N_S(P) \mid (c_s)^\phi \in \Aut_S(Q)\}. 
\]
Note $C_S(P)P \leq N_\phi$ for any such isomorphism $\phi$, so every map $P \to
Q$ with $Q$ fully $\F$-centralized must extend to $C_S(P)P$; we will apply the
extension axiom in this special case quite often.  A fusion system of a finite
group is saturated \cite[Theorem~2.3]{AschbacherKessarOliver2011}.

We will often say an element $x \in S$ is fully $\F$-centralized if $\gen{x}$
is fully $\F$-centralized, especially when $x$ is an involution.  Following
Aschbacher, we will sometimes write $\F^c$, $\F^{r}$, and $\F^{f}$ for the set
of $\F$-centric, $\F$-radical, and fully $\F$-normalized subgroups of $S$,
respectively. Concatenation in the superscript denotes the intersection of the
relevant sets. For example, $\F^{cr}$ is the set of subgroups which are both
$\F$-centric and $\F$-radical.  A saturated fusion system $\F$ is determined by
the $\F$-automorphism groups of the subgroups which lie in $\F^{fcr}$. This is
Alperin's fusion theorem for saturated fusion systems
\cite[A.10]{BrotoLeviOliver2003}.

\begin{lemma}[Burnside]\label{L:BurnsideLemma}
Let $\F$ be a saturated fusion system on the $p$-group $S$, and suppose that
$T$ is a weakly $\F$-closed subgroup of $S$. Then any morphism in $\F$ between
subgroups of $Z(T)$ lies in $N_\F(T)$. 
\end{lemma}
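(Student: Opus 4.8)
The plan is to prove Burnside's fusion theorem in the fusion system setting by reducing an arbitrary morphism between subgroups of $Z(T)$ to the normalizer $N_\F(T)$ via the saturation axioms. So let $\phi \in \Hom_\F(P,Q)$ with $P, Q \leq Z(T)$; we want $\phi$ to extend to a morphism of $N_\F(T)$, i.e.\ to some $\tilde\phi \in \Hom_\F(TP, TQ) = \Hom_\F(T,T)$ (using $P, Q \leq Z(T)$ so that $TP = TQ = T$) stabilizing $T$. First I would replace $Q$ by a fully $\F$-centralized $\F$-conjugate: choose $\psi \in \Hom_\F(Q, S)$ with $Q^\psi = R$ fully $\F$-centralized, and observe that since $R$ is $\F$-conjugate to a subgroup of $Z(T)$ and $T$ is weakly $\F$-closed, any morphism $T \to S$ extending a map into $R$ must carry $T$ to a group containing $R$ in its center; in particular $C_S(R) \geq T$, and by full centralization $|C_S(R)| \geq |C_S(Q)| \geq |T|$ already forces the relevant inclusions to be equalities along the way. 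The point of passing to $R$ is that the extension axiom applies to maps with fully $\F$-centralized target.

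Next, the key step: since $R = Q^\psi$ is fully $\F$-centralized, both $\psi \colon Q \to R$ and $\phi\psi \colon P \to R$ have targets that are fully $\F$-centralized, so by the extension axiom each extends over $C_S(P)P$, respectively $C_S(Q)Q$. Because $P, Q \leq Z(T)$ we have $T \leq C_S(P) \cap C_S(Q)$, so in fact $\psi$ extends to $\hat\psi \in \Hom_\F(T, S)$ and $\phi\psi$ extends to $\widehat{\phi\psi} \in \Hom_\F(T, S)$ — here one has to check $N_\psi$ and $N_{\phi\psi}$ contain $T$, which holds because $T$ is abelian... no: one uses that $C_S(Q)Q \leq N_\psi$ always, and $T \leq C_S(Q)Q$. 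Now $T^{\hat\psi}$ and $T^{\widehat{\phi\psi}}$ are subgroups of $S$ that are $\F$-conjugate to $T$, hence equal to $T$ by weak $\F$-closure. Thus $\hat\psi, \widehat{\phi\psi} \in \Aut_\F(T)$ with $T^{\hat\psi} = T^{\widehat{\phi\psi}} = T$, so both lie in $N_\F(T)$ by definition of the normalizer. Finally set $\tilde\phi = \widehat{\phi\psi} \circ \hat\psi^{-1}$: this is a morphism of $N_\F(T)$ (which is a category, closed under composition and inverses of its isomorphisms), it stabilizes $T$, and restricted to $P$ it gives $\phi\psi\psi^{-1} = \phi$. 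Hence $\phi \in \Hom_{N_\F(T)}(P,Q)$.

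The main obstacle is the bookkeeping around the extension axiom: one must be careful that the ``extend over $C_S(P)P$'' statement as recorded in the excerpt (the parenthetical remark after the definition of $N_\phi$: ``$C_S(P)P \leq N_\phi$ for any such isomorphism $\phi$'') is applied to isomorphisms onto fully $\F$-centralized targets, and that $T \leq C_S(P)P$ — which is immediate from $P \leq Z(T)$ so that $T$ centralizes $P$. A second small point is that one should double-check $\hat\psi$ restricted to $T$ is genuinely an $\F$-automorphism of $T$ and not merely a homomorphism $T \to S$ with image properly containing $T$; this is exactly where weak $\F$-closure of $T$ is used, and it is the only place in the argument that hypothesis is needed. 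Everything else is formal manipulation inside the category $\F$, so I expect the write-up to be short.
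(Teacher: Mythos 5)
Your argument is correct and is essentially identical to the paper's: replace $Q$ by a fully $\F$-centralized conjugate $R$, extend $\psi$ over $C_S(Q)Q$ and $\phi\psi$ over $C_S(P)P$ (you have the two swapped in your ``respectively'' clause, a harmless typo), restrict both to $T$ and invoke weak $\F$-closure to obtain elements of $\Aut_\F(T)$, and compose to recover $\phi$. The digression in your first paragraph about $C_S(R) \geq T$ is unnecessary and somewhat muddled (you cannot conclude $R \leq Z(T)$ before constructing the extension of $\psi$; it is only afterward, via weak closure, that $R = Q^{\hat\psi} \leq T^{\hat\psi} = T$), but this does not affect the validity of the main line of argument.
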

\begin{proof}
Suppose $P$ and $Q$ are subgroups of $Z(T)$, and let $\phi \in \Iso_\F(P,Q)$.
Let $\psi \in \Iso_\F(Q,Q')$ with $Q'$ fully $\F$-centralized.  By the
extension axiom, $\phi\psi$ and $\psi$ have extensions to $C_S(P)$ and $C_S(Q)$
respectively, and these subgroups both contain $T$. Restricting these
extensions to $T$ and using the fact that $T$ is weakly $\F$-closed, we get
automorphisms $\alpha$, $\beta \in \Aut_\F(T)$ such that $(\alpha\beta^{-1})|_P
= \phi$, which is what was to be shown.
\end{proof}

The most important weakly closed subgroups for our purposes are the Thompson
subgroup $J(S)$ generated by elementary abelian subgroups of $S$ of maximum
rank, and the Baumann subgroup $\Baum(S) = C_S(\Omega_1(Z(J(S))))$
\cite[B.2.2]{AschbacherSmith2004I}. Each of these are weakly closed in any
fusion system over $S$. 

We now turn to a discussion of the hyperfocal subsystem $O^p(\F)$ and the
residual subsystem $O^{p'}(\F)$ of a saturated fusion system.  The focal
subgroup of $\F$ is defined as
\[
\foc(\F) = \gen{[s,\phi] \mid \phi \in \Hom_{\F}(\gen{s},
S)} \leq S.
\]
The hyperfocal subgroup of $\F$ is
\[
\hyp(\F) =  \gen{[s,\phi]
\mid s \in P \leq S \mbox{ and } \varphi \in O^p(\Aut_\F(P))} \leq S.
\]
These are analogues of $[G,G] \cap S$ and $O^p(G) \cap S$.

\begin{lemma}\label{L:fochyp}
Let $\F$ be a saturated fusion system on the $p$-group $S$ and let $T$ be a
strongly $\F$-closed subgroup of $S$. Then
\begin{enumerate}[label=\textup{(\alph{*})}]
\item $\hyp(\F) \leq \foc(\F)$ and both subgroups are strongly $\F$-closed,
\item the quotient $\F/T$ is the fusion system of the $p$-group $S/T$ if and
only if $T \geq \hyp(\F)$, 
\item the quotient $\F/T$ is the fusion system of the abelian $p$-group $S/T$
if and only if $T \geq \foc(\F)$, and
\item if $S/\foc(\F)$ is cyclic, then $\foc(\F) = \hyp(\F)$.
\end{enumerate}
\end{lemma}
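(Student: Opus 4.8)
\textbf{Proof proposal for Lemma~\ref{L:fochyp}.}

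The plan is to prove the three parts in order, leaning on the two descriptions of $\foc(\F)$, Alperin's fusion theorem, and the classification of quotients by strongly closed subgroups referenced above. For part (a), the inclusion $\hyp(\F) \leq \foc(\F)$ is immediate from the definitions: every generator $[s,\phi]$ of $\hyp(\F)$ with $\phi \in O^p(\Aut_\F(P))$ is in particular a generator of $\foc(\F)$ of the second type, since $O^p(\Aut_\F(P)) \leq \Aut_\F(P)$. For strong closure, I would argue that $\foc(\F)$ is strongly $\F$-closed directly: if $R \leq \foc(\F)$ and $\psi \in \Hom_\F(R,S)$, then by Alperin's fusion theorem $\psi$ factors as a composite of restrictions of $\F$-automorphisms of fully normalized centric radical subgroups, so it suffices to check that $\foc(\F)$ is invariant under each $\Aut_\F(P)$; but for $\alpha \in \Aut_\F(P)$ and $s \in P \cap \foc(\F)$ one has $s^\alpha = s[s,\alpha] \in \foc(\F)$ since $[s,\alpha]$ is itself a defining generator. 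The same argument works for $\hyp(\F)$ once one knows $O^p(\Aut_\F(P))$ is invariant under $\Aut_\F(P)$-conjugation, which holds since $O^p(-)$ is characteristic; the extra subtlety here is matching up generators across the factorization, i.e. checking that $[s,\alpha] \in \hyp(\F)$ for $\alpha \in \Aut_\F(P)$ (not just $O^p$), which follows because $[P,\Aut_\F(P)] = [P, O^p(\Aut_\F(P))]$ as $\Aut_\F(P)/O^p(\Aut_\F(P))$ is a $p$-group acting on the $p$-group $P$ (so it contributes nothing new modulo $[P,O^p(\Aut_\F(P))]$ by coprime-type considerations applied to successive factors—more precisely, $\Aut_S(P)$ is a Sylow $p$-subgroup and one reduces to generators coming from $p'$-elements).

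For part (b), I would invoke the cited structure theory: the quotient $\F/T$ for $T$ strongly $\F$-closed is the fusion system of $S/T$ precisely when $T$ contains the subgroup generated by all ``$p'$-commutators,'' which is exactly $\hyp(\F)$ by the hyperfocal subgroup theorem for fusion systems; and $\F/T$ is the fusion system of an abelian $p$-group iff additionally all commutators $[s,\phi]$ die, i.e. iff $T \geq \foc(\F)$, since $\F/T = \F_{S/T}(S/T)$ is trivial (all morphisms are inclusions) exactly when no nonidentity element is moved, which by the focal subgroup theorem means $S/T$ has trivial focal subgroup, equivalently $S/T$ abelian and $\F/T$ the fusion system of that abelian group. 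One must be slightly careful to phrase ``$\F/T$ is the fusion system of $S/T$'' so that it includes the case $\F/T = \F_{S/T}(S/T)$ with $S/T$ possibly nonabelian; the reference to \cite[II.4,5]{AschbacherKessarOliver2011} and \cite[Chapter~5]{CravenTheory} supplies the precise statement that $\F/\hyp(\F) = \F_{S/\hyp(\F)}(S/\hyp(\F))$ and that this is minimal with that property. The upshot of (b) combined with (a) is that $\F/\foc(\F)$ is the fusion system of the abelian group $S/\foc(\F)$.

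For part (c), suppose $S/\foc(\F)$ is cyclic. By (b), $\F/\hyp(\F) = \F_{S/\hyp(\F)}(S/\hyp(\F))$, and by Theorem~\ref{T:isomorphismtheorems} we have $\F/\foc(\F) \cong (\F/\hyp(\F))/(\foc(\F)/\hyp(\F))$, which is the fusion system of the group $(S/\hyp(\F))/(\foc(\F)/\hyp(\F)) \cong S/\foc(\F)$, a cyclic $p$-group. Now $\foc(\F)/\hyp(\F)$ is a strongly $(\F/\hyp(\F))$-closed subgroup of the abelian $p$-group $S/\hyp(\F)$; I claim it is generated by commutators of the form $[\bar s, \bar\phi]$ for $\bar\phi$ ranging over $\Aut_{\F/\hyp(\F)}$, since $\foc$ is, by definition, generated by such commutators and passes to the quotient. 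But in the fusion system of an abelian $p$-group $A := S/\hyp(\F)$, \emph{all} automorphism groups of subgroups are trivial ($\Aut_A(P) = \Inn(P) = 1$, and saturation forces $\Aut_{\F/\hyp(\F)}(P)$ to have $\Aut_A(P)$ as a Sylow $p$-subgroup, but a priori there could be $p'$-automorphisms)... so the cyclicity of $S/\foc(\F)$ is doing real work here. The key point, and the main obstacle, is this: $S/\hyp(\F)$ is abelian with $S/\foc(\F)$ cyclic, so $\foc(\F)/\hyp(\F)$ has cyclic quotient in the abelian group $S/\hyp(\F)$; one then shows that a $p'$-automorphism of $A$ acting with $[A,\alpha] \leq \foc(\F)/\hyp(\F)$ and trivially on the cyclic quotient $A/(\foc(\F)/\hyp(\F))$ must stabilize the normal series $1 \leq \foc(\F)/\hyp(\F) \leq A$ with trivial action on $A/(\foc(\F)/\hyp(\F))$, and acts on $\foc(\F)/\hyp(\F)$; but $\foc(\F)/\hyp(\F)$ injects into the cyclic group $A$ hence is cyclic, so any $p$-power-order piece of its automorphism action together with Theorem~\ref{T:stabnormalseries} forces $O^p(\Aut_{\F/\hyp(\F)}(P)) = 1$ for all $P$—whence $\hyp(\F/\hyp(\F)) = 1$, i.e. $\foc(\F/\hyp(\F)) = \hyp(\F/\hyp(\F)) = 1$, which pulls back to $\foc(\F) = \hyp(\F)$. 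I expect the delicate bookkeeping to be in verifying that cyclicity of $S/\foc(\F)$ (rather than of $S/\hyp(\F)$) suffices to kill all the relevant $p'$-automorphisms; the cleanest route may be to observe directly that $\hyp(\F)$ and $\foc(\F)$ have the same image in $S/\Phi(S)$ when $S/\foc(\F)$ is cyclic (a single generator mod $\foc(\F)$ means $\foc(\F)$ already contains $\Phi(S)$ times one generator, forcing $\foc(\F) \supseteq \mho^1(S)[S,S]$-type content that $\hyp(\F)$ also contains), and then bootstrap from $S/\Phi(S)$ down the lower central / Frattini series using Theorem~\ref{T:stabnormalseries}.
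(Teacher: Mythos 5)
Your parts (a) and (b) match the paper's approach in spirit; the paper simply calls (a) ``straightforward,'' and your (b) uses the same reduction to Alperin's fusion theorem. However, I flag two problems. In (a), the asserted identity $[P,\Aut_\F(P)] = [P, O^p(\Aut_\F(P))]$ is false in general (take $\F = \F_S(S)$ with $S$ nonabelian and $P = S$: the left side is $[S,S]$, the right side is trivial), and the surrounding ``coprime-type'' reasoning does not rescue it. What you actually need is weaker and true: for $P \in \F^{fcr}$, write $\alpha = \beta c_t$ with $\beta \in O^p(\Aut_\F(P))$ and $t \in N_S(P)$ (possible since $\Aut_S(P)$ is Sylow and covers $\Aut_\F(P)/O^p(\Aut_\F(P))$); then for $s \in \hyp(\F) \cap P$, $s^\alpha = (s^\beta)^t$, and $s^\beta \in \hyp(\F)$ by definition while the $S$-conjugation is handled by normality of $\hyp(\F)$ in $S$. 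That is, you want $s^\alpha \in \hyp(\F)$ for $s$ \emph{already in} $\hyp(\F)$, not $[s,\alpha] \in \hyp(\F)$ for all $s$.

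Part (c) has a genuine gap. You write ``in the fusion system of an abelian $p$-group $A := S/\hyp(\F)$,'' and later ``$\foc(\F)/\hyp(\F)$ injects into the cyclic group $A$''; but $S/\hyp(\F)$ is \emph{not} known to be abelian or cyclic --- if it were abelian, then by (b) $\hyp(\F) \geq \foc(\F)$ and the lemma would hold with no hypothesis on $S/\foc(\F)$. The cyclicity hypothesis has to enter differently. The paper's route is short: set $S^+ = S/\hyp(\F)$, $\F^+ = \F/\hyp(\F)$; by (b) and Theorem~\ref{T:isomorphismtheorems}, $\foc(\F)^+ = \foc(\F^+)$; since $\F^+ = \F_{S^+}(S^+)$ (by (b), applied with $T = \hyp(\F)$), the focal subgroup of $\F^+$ is just $[S^+,S^+]$; hence $S^+/[S^+,S^+] \cong S/\foc(\F)$ is cyclic, and a $p$-group with cyclic abelianization is cyclic, so $[S^+,S^+]=1$, i.e. $\foc(\F) = \hyp(\F)$. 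Your sketch never identifies $\foc(\F^+)$ as $[S^+,S^+]$, conflates ``no $p'$-automorphisms'' (which already holds once $\F^+$ is the fusion system of a group) with the abelian case, and the closing paragraph about $S/\Phi(S)$ and Theorem~\ref{T:stabnormalseries} is not a proof. You should replace (c) with the two observations: $\foc(\F^+) = [S^+,S^+]$, and a finite $p$-group with cyclic abelianization is abelian.
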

\begin{proof}
Part (a) is straightforward.  We present a concise proof for (b) (and (c)) due
to Craven.  By Alperin's fusion theorem, $\F/T$ is the fusion system of the
$p$-group $S/T$ if and only if there are no $p'$-automorphisms in $\F/T$ of
subgroups of $S/T$.  Under the surjective morphism $\F \to \F/T$, this happens
if and only if, for each subgroup $P$ of $S$, each $p'$-automorphism $\alpha$
of $P$, we have $[P, \alpha] \leq T$.  Since $O^p(\Aut_\F(P))$ is generated by
the $p'$ elements in $\Aut_\F(P)$, we conclude that $\F/T$ is the fusion system
of $S/T$ if and only if $T \geq \hyp(\F) = \gen{[P, O^p(\Aut_\F(P))] \mid P
\leq S}$. A similar argument establishes that in addition, $S/T$ is abelian
$p$-group if and only if $T \geq \foc(\F)$.

Now suppose $S/\foc(\F)$ is cyclic as in (d). Set $S^+ = S/\hyp(\F)$ and $\F^+
= \F/\hyp(\F)$. Then parts (b), (c) and Theorem~E of \cite{Craven2010} imply that
$\foc(\F)^+ = \foc(\F^+)$, and the latter is just the commutator subgroup
$[S^+,S^+]$ because $\F^+$ is the fusion system of the $p$-group $S^+$.
Therefore, the commutator quotient $S^+/[S^+,S^+] = S^+/\foc(\F)^+ \cong
S/\foc(\F)$ is cyclic. It then follows from \cite[5.1.2]{Gorenstein1980} that
$[S^+,S^+]=1$, i.e. $\foc(\F) = \hyp(\F)$ as claimed.  
\end{proof}

There is exactly one saturated subsystem of $\F$ for each overgroup $T$ of
$\hyp(\F)$ in $S$ \cite[Theorem~4.3]{BCGLO2007}.  Such a subsystem is normal if
and only if $T$ is normal in $S$ \cite[Section~7]{AschbacherGeneralized}. The
subsystem over $\hyp(\F)$ is the \emph{hyperfocal subsystem} and is denoted
$O^{p}(\F)$. 

We use in an essential way following theorem of Aschbacher, which allows one to
consider the (internal) product of a $p$-group with a normal subsystem. See
also \cite{Henke2013} for a simplification of Aschbacher's construction and
proof of saturation.
\begin{theorem}[{\cite[8.21]{AschbacherGeneralized}}]\label{T:Aschppower}
Let $\F$ be a saturated fusion system on the $p$-group $S$ and $\F_0$ a normal
subsystem on the strongly $\F$-closed subgroup $S_0 \leq S$.  For each subgroup
$X$ of $S$ containing $S_0$, there is a saturated subsystem ${\F_0}X$ of $\F$
with the following properties.
\begin{enumerate}[label=\textup{(\alph{*})}]
\item $\F_0 \norm \F_0X$,
\item $\F_0X/S_0 \cong \F_{X^+}(X^+)$ where $X^+ = X/S_0$, and
\item the map $Y \mapsto \F_0Y$ is a bijection between the set of subgroups $Y
\leq S$ containing $S_0$ and the set of saturated subsystems of $\F_0S$
containing $\F_0$.
\end{enumerate}
\end{theorem}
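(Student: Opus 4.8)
The plan is to exhibit $\F_0 T$ explicitly, dispose of the formal parts (a) and (b), and then concentrate on the only hard point, which is saturation. Since $S_0$ is strongly $\F$-closed the quotient $\ol\F := \F/S_0$ on $\ol S := S/S_0$ is a saturated fusion system, and by Lemma~\ref{L:surjectionstronglyclosed} the image $\ol T := T/S_0$ is strongly $\ol\F$-closed; in particular the fusion system $\F_{\ol T}(\ol T)$ of the $p$-group $\ol T$ is a subsystem of $\ol\F$. I would take $\F_0 T$ to be the subsystem of $\F$ on $T$ whose morphism sets are
\[
\Hom_{\F_0 T}(P,Q)\;=\;\{\,\phi\in\Hom_\F(P,Q)\;\mid\;\ol\phi\in\Hom_{\ol T}(\ol P,\ol Q)\,\},
\]
i.e. the full preimage over $T$ of $\F_{\ol T}(\ol T)$; morally $\F_0 T$ is the subsystem generated by $\F_0$ together with the conjugation maps $c_t$ ($t\in T$), the fusion-theoretic analogue of joining a $p$-subgroup to a normal subgroup in a finite group. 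One checks at once that this is a fusion system on $T$ containing $\F_0$ (morphisms of $\F_0$ lie between subgroups of $S_0$ and so are trivial modulo $S_0$) and all of $\Hom_T(-,-)$, that $S_0$ is strongly $\F_0 T$-closed, and that $\F_0 T/S_0=\F_{\ol T}(\ol T)$ — the content of (b). Granting saturation, (a) also follows: weak normality of $\F_0$ in $\F_0 T$ is the restriction of weak normality of $\F_0$ in $\F$, and the normality condition on $\Aut_{\F_0}(S_0)$ is obtained by restricting the extension supplied by $\F_0\norm\F$ from $\Aut_\F(C_S(S_0)S_0)$ to $\Aut_{\F_0 T}(C_T(S_0)S_0)$ — the restriction lands in $T$ because $T$ is strongly $\F$-closed, and is trivial modulo $S_0$, hence lies in $\F_0 T$.

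The substance of the proof is the saturation of $\F_0 T$, which I would establish by verifying the Sylow and extension axioms in the form of \cite[I.2.5]{AschbacherKessarOliver2011}, following the template of the construction of subsystems of $p$-power index in \cite[Theorem~4.3]{BCGLO2007} but relative to the normal subsystem $\F_0$ rather than to the trivial one. The mechanism is a two-stage lift along $\F_0 T\to\F_0 T/S_0=\F_{\ol T}(\ol T)$: modulo $S_0$ we are in an inner system, in which every subgroup is fully normalized and morphisms lift to conjugations by elements of $T$, while the genuinely nontrivial behaviour takes place inside $S_0$ and is governed by the saturation of $\F_0$ (which holds because $\F_0$ is normal, hence saturated). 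Concretely, given $P\leq T$ one first conjugates in $\F_0 T$ so that the $S_0$-internal part of $P$ sits in fully $\F_0$-normalized position, and then studies $\Aut_{\F_0 T}(P)$ via the (for $P$ in such position, exact) sequence
\[
1\longrightarrow\{\alpha\in\Aut_\F(P)\mid[P,\alpha]\leq S_0\}\longrightarrow\Aut_{\F_0 T}(P)\longrightarrow\Aut_{\ol T}(\ol P)\longrightarrow 1,
\]
whose quotient is a $p$-group onto which $\Aut_T(P)$ maps and whose kernel — automorphisms of $P$ acting trivially on $P/(P\cap S_0)$ — is controlled by the saturation of $\F_0$; this yields that an appropriately chosen representative is fully automized and fully centralized. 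For the extension axiom one takes $\phi\in\Iso_{\F_0 T}(P,Q)$ with $Q$ fully $\F_0 T$-centralized, writes $\ol\phi=c_{\ol t}$ with $t\in T$, replaces $\phi$ by $\phi c_t^{-1}$ (now trivial modulo $S_0$), extends this map using the saturation of $\F_0$, and verifies that the resulting morphism extends $\phi$ over the domain $N_\phi$ computed in $\F_0 T$ — here one uses that $N_\phi$ behaves well under reduction modulo $S_0$ and that, $T$ being strongly $\F$-closed, extensions in $\F$ restrict to $\F_0 T$.

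For (c): once $\F_0 T$ is known saturated with $\F_0 T/S_0$ the fusion system of the $p$-group $\ol T$, Lemma~\ref{L:fochyp}(b) gives $\hyp(\F_0 T)\leq S_0\leq T$, so \cite[Theorem~4.3]{BCGLO2007} makes the saturated subsystems of $\F_0 T$ of $p$-power index correspond bijectively to the subgroups $X$ with $\hyp(\F_0 T)\leq X\leq T$, the one on $X$ being $\F_0 X$ (the preimage construction with $X$ in place of $T$, which contains $\F_0$ precisely when $S_0\leq X$). This restricts to the bijection asserted in (c) once one observes that any saturated $\D$ with $\F_0\leq\D\leq\F_0 T$ has $\D/S_0$ a saturated subsystem of the $p$-group system $\F_{\ol T}(\ol T)$, hence of trivial hyperfocal subgroup, hence (Lemma~\ref{L:fochyp}(b) again) equal to the fusion system of its own underlying $p$-group — forcing $\hyp(\D)\leq S_0$ and $\D=\F_0 X$ for $X$ the underlying group of $\D$. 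The step I expect to be the real obstacle is the saturation of $\F_0 T$, and within it the receptivity (extension-axiom) verification: modulo $S_0$ the system is inner and transparent, inside $S_0$ it is governed by $\F_0$, but a morphism $\phi\colon P\to Q$ with $P$ not contained in $S_0$ is of neither type, and reconciling the extension domains and automizer Sylow subgroups computed in $\F_0$, in $\F_{\ol T}(\ol T)$, and in $\F_0 T$ is delicate. This is precisely the portion that Henke \cite{Henke2013} later streamlined by realising $\F_0 T$ inside a locality.
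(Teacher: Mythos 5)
The paper does not prove Theorem~\ref{T:Aschppower}: it is quoted verbatim from Aschbacher (\cite[8.21]{AschbacherGeneralized}), and the text points to Henke \cite{Henke2013} for a streamlined construction. So there is no internal proof to compare against; I can only assess your proposal on its own terms, and unfortunately the very first step --- the definition of $\F_0 T$ as the full preimage of $\F_{\ol T}(\ol T)$ under $\F\to\F/S_0$ --- is wrong. That preimage is in general strictly larger than the intended system. The danger is exactly the one you gesture at when you call this ``morally the subsystem generated by $\F_0$ together with the $c_t$''; the preimage is not that subsystem, because it scoops up every $p'$-automorphism $\alpha\in\Aut_\F(P)$ of a subgroup $P\leq S_0$ with $[P,\alpha]\leq S_0$, regardless of whether $\alpha$ lies in $\F_0$ or is a $T$-conjugation, since such an $\alpha$ induces the identity on $\ol P$. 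A concrete counterexample: take $G=S_4$, $p=2$, $N=V_4$ the normal Klein four group, $S=D_8\in\Syl_2(G)$, $\F=\F_S(G)$, $\F_0=\F_{S_0}(N)=\F_{V_4}(V_4)$ (the inner system of $V_4$, which is normal in $\F$), and $T=S$. The correct $\F_0T$ is $\F_T(NT)=\F_S(S)$, the inner system of $D_8$, with $\Aut_{\F_0T}(V_4)=\Aut_S(V_4)\cong C_2$. But $\ol T=S/V_4\cong C_2$ is abelian with $\Aut(C_2)=1$, so \emph{every} morphism of $\F_S(S_4)$ between subgroups of $S$ induces an $\ol S$-conjugation modulo $V_4$; your preimage therefore equals all of $\F_S(S_4)$, which has $\Aut(V_4)\cong S_3$. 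This system visibly fails part (c): the only $X$ with $S_0\leq X\leq T$ are $V_4$ and $S$, but $\F_S(S_4)$ has at least four distinct saturated subsystems containing $\F_0$, namely $\F_{V_4}(V_4)$, $\F_{V_4}(A_4)$, $\F_S(S)$, and $\F_S(S_4)$.

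The counterexample also exposes the gap in your argument for (c). You reduce a saturated $\D$ with $\F_0\leq\D\leq\F_0T$ to the statement $\hyp(\D)\leq S_0$ and then invoke \cite[Theorem~4.3]{BCGLO2007}, but that theorem classifies subsystems of $p$-\emph{power index in a fixed ambient system} $\F_0T$, which is the condition $\Aut_\D(P)\geq O^p(\Aut_{\F_0T}(P))$ for all $P$; this is strictly stronger than $\hyp(\D)\leq S_0$. In the example above, $\D=\F_{V_4}(V_4)$ satisfies $\hyp(\D)=1\leq S_0$ but does not have $2$-power index in $\F_S(S_4)$ (its $\Aut(V_4)$ is trivial, not $\geq C_3$), so BCGLO's bijection does not see it, while $\D=\F_{V_4}(A_4)$ does have $2$-power index but is not of the form $\F_0X$. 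In short: the intuition that $\F_0T$ should be the analogue of $NT$ for $N\norm G$ is exactly right, but the preimage shortcut does not produce it, and the real difficulty (which Aschbacher addresses by an explicit construction via normalizers of $\F_0$-centric subgroups, and Henke by building a locality) is precisely to cut the preimage down to the morphisms genuinely generated by $\F_0$ and $\Hom_T(-,-)$ while preserving saturation.
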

In the particular case where $\F_0 = O^p(\F_0)$, Theorem~\ref{T:Aschppower}
gives that $\F_0 = O^p(\F_0X)$, since $\hyp(\F_0) \leq \hyp(\F_0X) \leq S_0$ by
(b) and Lemma~\ref{L:fochyp}(b).

In \cite[Theorem~5.4]{BCGLO2007} (see also \cite[Theorem~6.11]{Puig2006}) the
authors give a description of the fusion subsystems of ``index prime to $p$'':
they are in one-to-one correspondence with overgroups of a certain subgroup
$\Gamma \leq \Aut_\F(S)$ containing $\Aut_S(S)$, and hence there is a unique
minimal one, the \emph{residual subsystem} of $\F$, denoted $O^{p'}(\F)$. The
following corollary of this result will suffice for our purposes.

\begin{proposition}\label{C:pprimeindexcor}
Let $S$ be a finite $p$-group with automorphism group a $p$-group. Then $\F =
O^{p'}(\F)$ for every saturated fusion system $\F$ on $S$.
\end{proposition}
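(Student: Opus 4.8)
The plan is to read the result off from the classification of subsystems of index prime to $p$ recalled just above. By \cite[Theorem~5.4]{BCGLO2007}, the saturated subsystems of $\F$ of index prime to $p$ are in bijection with the subgroups $\Gamma$ of $\Aut_\F(S)$ lying between a certain fixed subgroup containing $\Aut_S(S)$ and all of $\Aut_\F(S)$, the bijection carrying a subsystem $\F_0$ to $\Aut_{\F_0}(S)$ and carrying $\F$ itself to the top group $\Aut_\F(S)$; in particular the correspondence is injective. So I would aim to show that every saturated subsystem $\F_0$ of $\F$ of index prime to $p$ already satisfies $\Aut_{\F_0}(S) = \Aut_\F(S)$: injectivity then forces $\F_0 = \F$, and taking $\F_0 = O^{p'}(\F)$ yields the assertion.

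For that equality of automorphism groups, since $\Aut_{\F_0}(S) \leq \Aut_\F(S)$ always holds it is enough to obtain the reverse inclusion. Unwinding the definition of $p$-prime index with the test subgroup $P = S$ gives $\Aut_{\F_0}(S) \geq O^{p'}(\Aut_\F(S))$. Now $\Aut_\F(S)$ is a subgroup of $\Aut(S)$, hence a finite $p$-group by hypothesis, and $O^{p'}(H) = H$ for every finite $p$-group $H$ --- it is generated by its elements of $p$-power order, that is, by all of its elements. Hence $O^{p'}(\Aut_\F(S)) = \Aut_\F(S)$, and the reverse inclusion follows.

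I do not expect a genuine obstacle here: the argument is entirely formal once the index-prime-to-$p$ theory is available, the only moving part being the triviality that $O^{p'}$ does nothing to a $p$-group. The one point needing a little care is to invoke the full bijective statement of \cite[Theorem~5.4]{BCGLO2007} --- so that agreement of the $S$-automorphism groups forces equality of the subsystems themselves --- rather than merely the existence of a unique minimal subsystem $O^{p'}(\F)$; equivalently, one can argue directly with $O^{p'}(\F)$, noting that $\Aut_{O^{p'}(\F)}(S) \geq O^{p'}(\Aut_\F(S)) = \Aut_\F(S)$ and that $O^{p'}(\F) = \F$ as soon as these two groups coincide.
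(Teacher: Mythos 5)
Your argument is correct and is exactly the intended one: the paper gives no separate proof, calling the statement a ``corollary'' of the BCGLO/Puig description of subsystems of index prime to $p$ recalled just above it, and your proof simply carries out that reading-off, using that $\Aut_\F(S)$ is a $p$-group (being a subgroup of $\Aut(S)$) so $O^{p'}(\Aut_\F(S)) = \Aut_\F(S)$, and then invoking the injectivity in \cite[Theorem~5.4]{BCGLO2007} to conclude $O^{p'}(\F) = \F$.
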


The following lemma lists the relationship between the hyperfocal and residual
subsystems, surjective morphisms, and direct products
\cite[I.6.5]{AschbacherKessarOliver2011} we will need. 

\begin{lemma}\label{L:O^pbasic}
Suppose that $\F$ is a saturated fusion system on the $p$-group $S$. 
\begin{enumerate}[label=\textup{(\alph{*})}]
\item If $\theta\colon \F \to \F^+$ is a surjective morphism of fusion systems, then
$\theta(O^p(\F)) = O^p(\F^+)$ and $\theta(O^{p'}(\F)) = O^{p'}(\F^+)$.
\item If $\F = \F_1 \times \F_2$ with each $\F_i$ saturated, then $O^p(\F)
= O^{p}(\F_1) \times O^p(\F_2)$ and $O^{p'}(\F) = O^{p'}(\F_1) \times
O^{p'}(\F_2)$. 
\end{enumerate}
\end{lemma}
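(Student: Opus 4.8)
The plan is to deduce both parts from the classification of normal subsystems of $p$-power index, resp.\ of index prime to $p$, in \cite[Theorems~4.3, 5.4]{BCGLO2007} together with \cite[Section~7]{AschbacherGeneralized}: recall that $O^p(\F)$ is the unique saturated normal subsystem of $\F$ supported on $\hyp(\F)$ (the minimal strongly $\F$-closed overgroup of $\hyp(\F)$ in the relevant correspondence), and $O^{p'}(\F)$ is the unique saturated normal subsystem associated to the minimal subgroup $\Gamma = \Gamma(\F) \leq \Aut_\F(S)$ controlling a subsystem of index prime to $p$. Two standard facts are used throughout: the image of a normal subsystem under a surjective morphism of saturated fusion systems is again normal, and a direct product of normal (resp.\ saturated) subsystems is normal (resp.\ saturated) \cite[I.6]{AschbacherKessarOliver2011}.

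For part (a): by \cite[II.2, II.5]{AschbacherKessarOliver2011} we may assume $S_0 := \ker(\theta)$ is strongly $\F$-closed, $\F^+ = \F/S_0$, and $\theta$ is the natural projection; then $\theta$ restricts to a surjection $\Aut_\F(P) \twoheadrightarrow \Aut_{\F^+}(P^+)$ whenever $P \geq S_0$, and this surjection carries $O^p$ onto $O^p$ and $O^{p'}$ onto $O^{p'}$. First I would identify $\theta(\hyp(\F))$: by Lemma~\ref{L:fochyp}(b), Theorem~\ref{T:isomorphismtheorems}, and Lemma~\ref{L:surjectionstronglyclosed}, the subgroup $\hyp(\F)S_0/S_0 = \theta(\hyp(\F))$ is the smallest strongly $\F^+$-closed subgroup $T^+$ with $\F^+/T^+$ the fusion system of a $p$-group, i.e.\ $\theta(\hyp(\F)) = \hyp(\F^+)$; likewise $\theta(\foc(\F)) = \foc(\F^+)$. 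Now $\theta(O^p(\F))$ is a normal subsystem of $\F^+$ supported on $\theta(\hyp(\F)) = \hyp(\F^+)$, and one checks it has $p$-power index (this is where surjectivity of $\theta$ on automorphism groups, its compatibility with $O^p$, and compatibility of the correspondence of \cite[Theorem~4.3]{BCGLO2007} with $\theta$ all enter), so by uniqueness $\theta(O^p(\F)) = O^p(\F^+)$. The argument for $O^{p'}$ is parallel: $\theta$ carries $\Gamma(\F) = O^{p'}(\Aut_\F(S))\Aut_S(S)$ onto $\Gamma(\F^+) \leq \Aut_{\F^+}(S^+) = \theta(\Aut_\F(S))$, and the correspondence of \cite[Theorem~5.4]{BCGLO2007} is compatible with $\theta$, giving $\theta(O^{p'}(\F)) = O^{p'}(\F^+)$.

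For part (b), write $S = S_1 \times S_2$. The crux is multiplicativity of the data classifying these subsystems. First, $\hyp(\F_1 \times \F_2) = \hyp(\F_1) \times \hyp(\F_2)$ and $\foc(\F_1 \times \F_2) = \foc(\F_1) \times \foc(\F_2)$: arguing via Alperin's fusion theorem one computes the hyperfocal (focal) subgroup using only $\F$-centric $\F$-radical subgroups, these are exactly the products $P_1 \times P_2$ of $\F_i$-centric $\F_i$-radical subgroups, $\Aut_{\F_1 \times \F_2}(P_1 \times P_2) = \Aut_{\F_1}(P_1) \times \Aut_{\F_2}(P_2)$, and $O^p(A \times B) = O^p(A) \times O^p(B)$, so the generating commutators $[P_1 \times P_2,\, O^p(\Aut_{\F_1}(P_1)) \times O^p(\Aut_{\F_2}(P_2))]$ split as $[P_1, O^p(\Aut_{\F_1}(P_1))] \times [P_2, O^p(\Aut_{\F_2}(P_2))]$. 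Likewise $\Gamma(\F_1 \times \F_2) = \Gamma(\F_1) \times \Gamma(\F_2)$ under $\Aut_{\F_1 \times \F_2}(S) = \Aut_{\F_1}(S_1) \times \Aut_{\F_2}(S_2)$. Then $O^p(\F_1) \times O^p(\F_2)$ is a saturated normal subsystem of $\F_1 \times \F_2$ supported on $\hyp(\F_1) \times \hyp(\F_2) = \hyp(\F_1 \times \F_2)$ and of $p$-power index (verified on the centric-radical product subgroups), hence equals $O^p(\F_1 \times \F_2)$ by uniqueness; and $O^{p'}(\F_1) \times O^{p'}(\F_2)$ is the saturated normal subsystem of $\F_1 \times \F_2$ on $S_1 \times S_2$ associated to $\Gamma(\F_1) \times \Gamma(\F_2) = \Gamma(\F_1 \times \F_2)$, hence equals $O^{p'}(\F_1 \times \F_2)$.

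I expect the main obstacle to be the verification in part (a) that $\theta(O^p(\F))$ has $p$-power index in $\F^+$ — and, equivalently, that $\theta$ respects the correspondences of \cite{BCGLO2007} — since one must control the $\F^+$-automorphism groups of subgroups of $\hyp(\F^+)$ whose $\theta$-preimages lie in $\hyp(\F)S_0$ rather than in $\hyp(\F)$. Concretely this reduces to showing $\theta(O^p(\F)\,T) = O^p(\F^+)(TS_0/S_0)$ for strongly $\F$-closed overgroups $T \geq \hyp(\F)$, which I would extract from the explicit description of these subsystems in \cite[Theorem~4.3]{BCGLO2007} (see also Theorem~\ref{T:Aschppower}) together with surjectivity of $\theta$ on automorphism groups of overgroups of $S_0$; alternatively, one can argue via the preimage $\theta^{-1}(O^p(\F^+))$, which is readily seen to be a normal subsystem of $\F$ of $p$-power index, hence to contain $O^p(\F)$. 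In part (b) the analogous index verification is routine once one reduces to product subgroups via Alperin's fusion theorem, the only mild subtlety being the (standard) description of the $\F$-centric $\F$-radical subgroups of a direct product.
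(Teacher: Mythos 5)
The paper disposes of this lemma in two sentences. For part (a), it observes that a surjective morphism of fusion systems is surjective on morphism sets, so $\theta$ carries $\Aut_\F(P)$ onto $\Aut_{\F^+}(P\theta)$ (for $P \geq \ker\theta$) and hence carries $O^p(\Aut_\F(P))$ onto $O^p(\Aut_{\F^+}(P\theta))$ (full invariance); Alperin's fusion theorem then shows the generating data for $O^p(\F)$ and $O^{p'}(\F)$ map onto the generating data for $O^p(\F^+)$ and $O^{p'}(\F^+)$. For part (b), it simply cites \cite[Proposition~3.4]{AOV2012}.

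Your route through the BCGLO correspondences for subsystems of $p$-power index and index prime to $p$ is a genuinely different (and heavier) argument, and you are right to flag where it creaks. The obstacle you identify in part (a) is real: to verify that $\theta(O^p(\F))$ has $p$-power index in $\F^+$ you must check $\Aut_{\theta(O^p(\F))}(P^+) \geq O^p(\Aut_{\F^+}(P^+))$ for \emph{all} $P^+ \leq \hyp(\F^+)$, and the $\theta$-preimage of such a $P^+$ lies in $\hyp(\F)S_0$ rather than in $\hyp(\F)$, so the inclusion $\Aut_{O^p(\F)}(P) \geq O^p(\Aut_\F(P))$ for $P \leq \hyp(\F)$ does not transfer directly. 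Your fallback via $\theta^{-1}(O^p(\F^+))$ only produces the containment $\theta(O^p(\F)) \leq O^p(\F^+)$; the reverse containment is exactly the index verification you were trying to avoid, so as written that alternative does not close the gap on its own. The paper's argument sidesteps all of this by never passing through the classification of subsystems of $p$-power index: surjectivity on morphisms plus full invariance of $O^p(-)$ and $O^{p'}(-)$ on finite groups is already enough to match up the Alperin generators, with no support-size bookkeeping needed. Your proof of part (b) via multiplicativity of $\hyp$, $\foc$ and $\Gamma$ over direct products (reduced to $\F$-centric $\F$-radical product subgroups) is a correct direct argument and a nice supplement to the paper's bare citation of \cite[Proposition~3.4]{AOV2012}, though it inherits from part (a) the same reliance on the uniqueness statements of \cite[Theorems~4.3, 5.4]{BCGLO2007}, where the paper again only needs the generator description.
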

\begin{proof}
As a surjective morphism of fusion systems is surjective on morphisms, part (a)
follows from Alperin's fusion theorem and the fact that $O^p(G)$ and
$O^{p'}(G)$ are fully invariant subgroups.  Part (b) is
\cite[Proposition~3.4]{AOV2012}.
\end{proof}

Later in Section~\ref{S:2rank4Q>2}, we will make use of the following special
case of a more general theorem of Oliver.

\begin{theorem}[\!{\cite[Theorem~C]{OliverSplitting}}]\label{T:OliverSplitting}
Let $\F = O^2(\F)$ be a saturated fusion system on a direct product $D_1 \times
D_2$ of two nonabelian dihedral $2$-groups of the same order. Then $\F = \F_1
\times \F_2$ where $\F_i = O^2(\F_i)$ is a fusion system on $D_i$. 
\end{theorem}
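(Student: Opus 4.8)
The target statement (Theorem~\ref{T:OliverSplitting}) asserts that a perfect saturated fusion system $\F$ on $D = D_1 \times D_2$, with $D_1 \cong D_2$ nonabelian dihedral of order $2^k$, splits as a product $\F_1 \times \F_2$ of perfect systems on the factors. Since this is quoted verbatim from Oliver's paper, my job is to reconstruct the natural line of argument. The plan is to show that $D_1$ and $D_2$ are each weakly (indeed strongly) $\F$-closed, and then that the $\F$-automorphisms and fusion respect the product decomposition, so that $\F$ is forced to be the direct product of its "restrictions."

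First I would pin down the normal/characteristic structure of $D$. Write $C_i = \gen{c_i}$ for the cyclic maximal subgroup of $D_i$, so $C = C_1 \times C_2$ is characteristic in $D$ (it is generated by all elements whose centralizer is noncyclic of a certain type, or can be characterized via $\Omega$/$\mho$ invariants), hence strongly $\F$-closed; likewise $Z(D) = Z(D_1) \times Z(D_2) \cong C_2 \times C_2$ is characteristic. The key point is that by Proposition~\ref{P:krullschmidt} (Krull--Schmidt), any automorphism of $D$ permutes the two derived subgroups $[D_1,D_1] = C_1^2$-type pieces --- more precisely permutes the indecomposable factors $D_1, D_2$ --- so $\Aut(D)$ has a subgroup of index at most $2$ stabilizing each $D_i$. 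Since $D_1 \not\cong$ is irrelevant here ($D_1 \cong D_2$), a swap is a priori possible in $\Aut(D)$, and the crux is to rule out that $\F$ realizes such a swap, using perfectness.

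The main steps, in order: (1) Using Lemma~\ref{L:autDwr2} (which tells us $\Aut(D_1 \times D_2)$ and related groups are $2$-groups when $k \ge 3$) together with Proposition~\ref{C:pprimeindexcor}, control the $\F$-automorphism groups of the relevant centric radical subgroups; the only $\F^{fcr}$ subgroups not contained in a proper "parabolic-like" subgroup will turn out to involve the four-subgroups of each $D_i$ and their products. (2) Show each $D_i$ is strongly $\F$-closed: since $D_i$ is normal in $D$ with $D/D_i \cong D_i$, and any morphism must preserve $C$, the product structure, and cannot fuse a noncentral involution of $D_i$ to one of $D_j$ for $i \ne j$ in a way compatible with perfectness --- here one invokes that $\F$ perfect forces $\foc(\F) = S$, and analyzes how involutions in the four-subgroups get fused, using Burnside's Fusion Theorem (Lemma~\ref{L:BurnsideLemma}) applied to the weakly closed subgroups $J(D)$ or $C$. (3) Once $D_1, D_2$ are strongly $\F$-closed, form the restricted subsystems $\F_i = \F|_{D_i}$ (the full subcategory on subgroups of $D_i$, or the normalizer/product construction via Theorem~\ref{T:Aschppower}), verify each is saturated and that $\F \supseteq \F_1 \times \F_2$; then by Alperin's fusion theorem, counting $\F$-automorphisms of $D$, $C$, and the centric radicals, deduce $\F = \F_1 \times \F_2$. (4) Finally, use Lemma~\ref{L:O^pbasic}(b): $\F$ perfect (i.e. $\F = O^2(\F)$ with $\foc(\F) = S$) forces each $\F_i = O^2(\F_i)$, i.e. each $\F_i$ is perfect, since $\hyp(\F) = \hyp(\F_1) \times \hyp(\F_2)$ and this must equal $D_1 \times D_2$.

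The main obstacle is step (2): ruling out a "diagonal" fusion system that genuinely swaps the two dihedral factors (the fusion-theoretic analogue of a wreath product $D_1 \wr C_2$ sitting inside $\Aut$). The swap automorphism lies in $\Aut(D)$, so one cannot exclude it purely group-theoretically; the argument must use saturation together with $O^2(\F) = \F$. The natural approach is: if $\F$ contained such a swap, then $\F$ would contain (be "based on") a copy of the fusion system of $D_1 \wr C_2$ or $D_1 \wr_* C_2$, and one shows this contradicts perfectness --- a wreathed system has a nontrivial abelian quotient (transfer to the base), so $\foc(\F) \ne S$, contradicting $\F = O^2(\F)$. Making the transfer/hyperfocal bookkeeping precise here, identifying exactly which quotient obstructs perfectness, is where the real work lies; everything else is bookkeeping with characteristic subgroups and Alperin's theorem.
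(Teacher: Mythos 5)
The paper does not prove this statement: it is quoted verbatim as \cite[Theorem~C]{OliverSplitting}, and the only original content is the short remark that immediately follows the theorem in the source. That remark does exactly two things: (i) it observes that Oliver's Theorem~C carries the additional hypothesis $\F = O^{2'}(\F)$, which is automatic here since $\Aut(D_1 \times D_2)$ is a $2$-group when $k \geq 3$ (Lemma~\ref{L:autDwr2}), so Proposition~\ref{C:pprimeindexcor} applies; (ii) it deduces perfectness of the factors $\F_i$ from Lemma~\ref{L:O^pbasic}(b) applied to the product decomposition. You correctly spotted and incorporated both of those observations, so that slice of your write-up matches the paper exactly.

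The remainder of your proposal --- a from-scratch reconstruction of Oliver's proof --- is something the paper never attempts and is a substantially larger task than the one at hand. As a sketch it is at best suggestive. Your step~(2), showing each $D_i$ is strongly $\F$-closed, is the entire content of Oliver's theorem and you leave it open (``where the real work lies''). Note also that the ``wreath product obstruction'' you gesture at does not quite make sense in this setting: a wreathed system would live over $D_{2^k} \wr C_2$, not over $D_1 \times D_2$; and a global swap of the factors is already impossible because $\Aut(S)$ is a $2$-group, forcing $\Aut_\F(S) = \Inn(S)$ which preserves each $D_i$. The genuine difficulty is ruling out ``diagonal'' fusion coming from $\Aut_\F(P)$ for proper $P < S$, and neither the transfer heuristic nor the Krull--Schmidt observation you cite dispatches that. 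For the purposes of this paper, the correct move is exactly what the author makes: cite Oliver's Theorem~C and verify that its hypotheses are satisfied.
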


The hypotheses of Oliver's theorem require that $\F = O^{2'}(\F)$, but this
holds in the case stated above by Lemma~\ref{L:autDwr2} and
Proposition~\ref{C:pprimeindexcor}.  Also, we must have $\F_i = O^2(\F_i)$ by
Lemma~\ref{L:O^pbasic}(b).

\subsection{Centralizers}\label{SS:centralizer}

A suitable definition of the centralizer of an arbitrary subsystem of a fusion
system is not yet available.  However, Aschbacher has shown that in the case
that a subsystem $\E$ is normal in $\F$, one can define the centralizer
$C_\F(\E)$, which enjoys many of the properties one would like.  This system is
based on a strongly $\F$-closed subgroup $C_S(\E)$.

\begin{theorem}[\!\!{\cite[(6.7)]{AschbacherGeneralized}}]\label{T:C_S(E)}
Let $\F$ be a saturated fusion system on $S$ and let $\E$ be a normal subsystem
on $T$. Let $\mathcal{X}$ denote the set of subgroups $X \leq C_S(T)$ for which
$C_\F(X)$ contains $\E$. Then $\mathcal{X}$ has a unique maximal element,
denoted by $C_S(\E)$ and called the \emph{centralizer in} $S$ \emph{of} $\E$.
Moreover, $C_S(\E)$ is strongly $\F$-closed, and there is a normal subsystem
$\C_\F(\E)$ of $\F$ based on $\C_S(\E)$. 
\end{theorem}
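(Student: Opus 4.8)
This is \cite[(6.7)]{AschbacherGeneralized}; I outline the strategy of Aschbacher's proof. Write $\mathcal{X}$ for the set in the statement, namely the set of subgroups $X\leq C_S(T)$ with $\E\subseteq C_\F(X)$. Two observations are immediate: $1\in\mathcal{X}$, and $\mathcal{X}$ is closed under passage to subgroups, since if $Y\leq X\in\mathcal{X}$ then any extension of an $\E$-morphism over $XA$ that is trivial on $X$ restricts to an extension over $YA$ trivial on $Y$. So, to produce the unique maximal element it is enough to show that $\mathcal{X}$ is closed under forming joins inside $C_S(T)$; one then sets $C_S(\E):=\langle X\mid X\in\mathcal{X}\rangle$.

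Strong $\F$-closure of $C_S(\E)$ I would obtain from a transport argument. First, $C_S(T)$ is strongly $\F$-closed because $T$ is: an $\F$-morphism $\phi$ defined on a subgroup of $C_S(T)$ extends over $T$, the extension fixes $T$ setwise by order, and hence carries elements centralizing $T$ to elements centralizing $T$. Next, using the weak normality of $\E$, one transports the defining condition of $\mathcal{X}$ along $\F$-maps: given $X\in\mathcal{X}$ and $\phi\in\Hom_\F(X,S)$, extend $\phi$ to $\hat\phi$ on $XT$ with $T^{\hat\phi}=T$, pull an $\E$-morphism back through $\beta:=\hat\phi|_T\in\Aut_\F(T)$ (legitimate by weak normality), extend the result fixing $X$ (possible as $X\in\mathcal{X}$), and push that extension forward through $\hat\phi$; this produces an $\F$-extension of the original $\E$-morphism trivial on $X^\phi$, so $X^\phi\in\mathcal{X}$. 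Hence $\mathcal{X}$ is stable under $\F$-conjugacy, and in particular its unique maximal element $C_S(\E)$ is strongly $\F$-closed.

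The heart of the matter, and where I expect the real work to lie, is the closure of $\mathcal{X}$ under joins. Fix $X_1,X_2\in\mathcal{X}$ and $X=\langle X_1,X_2\rangle\leq C_S(T)$. The class of $\E$-morphisms that admit an $\F$-extension trivial on $X$ is closed under composition, restriction, and inclusions, so by Alperin's fusion theorem applied inside $\E$ it suffices to treat $\alpha\in\Aut_\E(P)$ with $P\in\E^{fcr}$: one must upgrade $\F$-automorphism extensions of $\alpha$ over $X_1P$ and over $X_2P$, trivial on $X_1$ respectively $X_2$, to one over $XP$ trivial on $X$. This is a gluing problem, and I would attack it with the extension and Sylow axioms: after replacing $P$ (and transporting $X$) so that the relevant subgroup is fully $\F$-centralized, the $\F$-extensions of $\alpha$ over $C_S(P)P\supseteq XP$ form a coset of a $p$-subgroup of an $\F$-automorphism group, and a Frattini/Sylow argument selects one trivial on $X$, using that the two given extensions are already trivial on $X_1$ and on $X_2$. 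Finally, once $C_S(\E)$ is available, the normal subsystem $\C_\F(\E)$ on $C_S(\E)$ is produced from the theory of normal subsystems of \cite{AschbacherNormal}: it is, in essence, the largest normal subsystem of $\F$ on $C_S(\E)$ commuting with $\E$ in Aschbacher's sense, and the remaining work is to verify that it is both saturated and normal. I expect the two substantive obstacles to be exactly this gluing step and the saturation check for $\C_\F(\E)$.
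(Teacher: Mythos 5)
Your proposal identifies the right skeleton---closure of $\mathcal{X}$ under joins gives the unique maximal element, strong closure follows by transport along $\F$-maps---but your route through the crucial step is genuinely different from Aschbacher's, and the step you flag as the ``heart of the matter'' is not actually how he resolves it. The paper, immediately after citing the theorem, records Aschbacher's construction: for each $U \in \E^c \cap \F^f$, one passes to a model $G(U)$ of the constrained system $N_\F(UC_S(U))$ and the unique normal subgroup $H(U) \leq G(U)$ modeling $N_\E(U)$, forms $I = \bigcap_U C_{N_S(U)}(H(U))$, and then sets $C_S(\E) = \bigcap_{\phi \in \Aut_\F(TC_S(T))} I^\phi$. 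This is a ``top-down'' definition of the candidate subgroup by an explicit formula, motivated by the fact that $\E$ is generated by the $\Aut_\E(U)^\phi$; the models turn the question of ``which elements centralize $\E$'' into ordinary group-theoretic centralizer computations, and the gluing difficulty you anticipate never arises in that form.

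Your ``bottom-up'' approach---show $\mathcal{X}$ is closed under pairwise joins by directly splicing extensions---would give a cleaner proof if it worked, but the Sylow/Frattini sketch you give does not obviously close the gap. Given $X_1, X_2 \in \mathcal{X}$ and $\alpha \in \Aut_\E(P)$, you have two cosets of $p$-groups of extensions (one parametrizing extensions over $X_1P$ trivial on $X_1$, one for $X_2$), and the assertion that a Sylow argument ``selects one trivial on $X$'' presupposes, in effect, that these cosets have a common refinement over $XP$; that is precisely the nontrivial compatibility that needs to be proved, not something the Sylow axiom hands you. You are candid that this is where the real work lies, and indeed it is: this is exactly the obstruction Aschbacher circumvents by constructing $C_S(\E)$ explicitly via models and then verifying maximality and strong closure for that specific subgroup. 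Your transport argument for strong closure and the observation that $\mathcal{X}$ is subgroup-closed are both fine and essentially forced; the substantive divergence, and the gap, is concentrated entirely in the join-closure step.
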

 
Most of the time we will use this characterization of $C_S(\E)$, but for the
proof of Theorem~\ref{T:maingrp}, it is more natural to use Aschbacher's
initial description \cite[6.1]{AschbacherGeneralized} of $C_S(\E)$ as follows.

First we recall some terminology and some ideas from
\cite[Section~4]{AschbacherNormal}. A saturated fusion system $\F$ over $S$ is
\emph{constrained} if $C_S(O_p(\F)) \leq O_p(\F)$. If $\F$ is constrained, then
it has a \emph{model} $G$ \cite[III.5.10]{AschbacherKessarOliver2011}.  That
is, $G$ is a finite group with $S \in \Syl_p(G)$ and $C_G(O_p(G)) \leq O_p(G)$
such that $\F = \F_S(G)$. Any two models for $\F$ are isomorphic by an
isomorphism which is the identity on $S$; we refer to this as the strong
uniqueness of models.

Now let $\E$ a normal subsystem of $\F$ on $T$. Let $U \in \E^c \cap \F^f$.
Then $U \in \E^{fc}$ and $\E(U) := N_\E(U)$ is saturated and constrained system
on $N_T(U)$. Also $UC_S(U) \in \F^{fc}$, and so $\D(U) := N_\F(UC_S(U))$ is
saturated and constrained system on $N_S(U)$. Furthermore $\E(U) \norm \D(U)$.
Let $G(U)$ be a model for $\D(U)$, and $H(U)$ be the unique normal subgroup of
$G(U)$ which is a model for $\E(U)$ \cite[Theorem~1]{AschbacherNormal}. Then
$C_{N_S(U)}(H(U))$ is a well-defined subgroup of $S$ by strong uniqueness of
$G(U)$. Aschbacher defines 
\[ 
I = \bigcap_{U \in \E^c \cap \F^f} C_{N_S(U)}(H(U))
\]
and 
\[
C_S(\E) = \bigcap_{\phi \in \Aut_\F(TC_S(T))} I^\phi.
\]
This is motivated by the fact that $\E = \gen{\Aut_\E(U)^\phi \mid U \in \E^c
\cap \F^f,  \phi \in \Aut_\F(T)}$ \cite[Theorem~3]{AschbacherNormal} and
that the restriction map $\Aut_\F(TC_S(T)) \to \Aut_\F(T)$ is surjective.

The model version of the construction of centralizers makes clear the
connection between the centralizer of a normal subgroup in a finite group, and
the corresponding centralizer at the level of fusion systems. We record this
connection as follows.

\begin{lemma}\label{L:centgrp}
Let $G$ be a finite group and $N$ a normal subgroup of $G$. Let $S \in
\Syl_p(G)$, $T = S \cap N$, $\F = \F_S(G)$, and $\E = \F_T(N)$.  Let $\H(N) =
\{N_N(V) \mid  V \in \E^{fc}\}$. Then 
\begin{enumerate}[label=\textup{(\alph{*})}]
\item $C_S(\E)$ is the largest subgroup $Y$ of $S$ for which $[H, Y] \leq
O_{p'}(H)$ for every $H \in \H(N)$,
\item $C_S(N/O_{p'}(N)) \leq C_S(\E)$, and
\item $C_S(\E) = C_S(N/O_{p'}(N))$ if $\Aut(N/O_{p'}(N))$ contains
no element of order $p$ centralizing $H/O_{p'}(H)$ for every $H \in
\H(N/O_{p'}(N))$. (Here, $T$ and $\E$ are identified with
$\bar{T} := TO_{p'}(N)/O_{p'}(N)$ and $\F_{\bar{T}}(N/O_{p'}(N))$ via the
canonical isomorphisms.)
\end{enumerate}
\end{lemma}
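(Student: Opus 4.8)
The plan is to unwind Aschbacher's explicit construction of $C_S(\E)$, recalled just above, in the concrete case $\F=\F_S(G)$, $\E=\F_T(N)$, and then to deduce (b) and (c) from (a). Fix $U\in\E^c\cap\F^f$. An element of $N_S(U)$ normalizes $S$ and $C_G(U)$, hence normalizes $C_S(U)=S\cap C_G(U)$, so $N_S(UC_S(U))=N_S(U)$; this is a Sylow $p$-subgroup of $N_G(U)$ because $U$ is fully $\F$-normalized, and $C_S(U)\in\Syl_p(C_G(U))$ since fully normalized subgroups are fully centralized. As $UC_S(U)\in\F^{fc}$, the system $\D(U)=N_\F(UC_S(U))=\F_{N_S(U)}(N_G(UC_S(U)))$ is constrained, so strong uniqueness of models gives $G(U)\cong N_G(UC_S(U))/O_{p'}(N_G(UC_S(U)))$ by an isomorphism fixing $N_S(U)$; likewise $\E(U)=N_\E(U)=\F_{N_T(U)}(N_N(U))$ is constrained (since $U\in\E^c$ forces $C_T(U)\leq U\leq O_p(N_\E(U))$), so its model $H(U)\norm G(U)$ satisfies $H(U)\cong N_N(U)/O_{p'}(N_N(U))$ by an isomorphism fixing $N_T(U)$. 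The step I would verify next is that under these identifications the conjugation action of $N_S(U)\leq G(U)$ on $H(U)$ is precisely the conjugation action of $N_S(U)\leq N_G(U)$ on $N_N(U)/O_{p'}(N_N(U))$ (note $N_S(U)$ normalizes $N_N(U)=N\cap N_G(U)$ and its characteristic $p'$-subgroup); this is a matching of the two models inside the common group $N_G(U)/O_{p'}(N_N(U))$, together with a Frattini argument comparing $N_N(U)$ and $N_N(UC_S(U))$ modulo their $p'$-cores. Granting it,
\[
C_{N_S(U)}(H(U))=\{\, y\in N_S(U)\mid [N_N(U),y]\leq O_{p'}(N_N(U)) \,\}.
\]

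To prove (a), assemble $I=\bigcap_{U}C_{N_S(U)}(H(U))$ and $C_S(\E)=\bigcap_{\phi\in\Aut_\F(TC_S(T))}I^\phi$. Since $T=S\cap N$ is strongly $\F$-closed it lies in $\E^c\cap\F^f$, and $T\norm S$ with $T\leq N_N(T)$ forces $I\leq C_{N_S(T)}(H(T))\leq C_S(T)$; hence the outer intersection makes sense and $C_S(\E)\leq C_S(T)$. Conversely, any $Y\leq S$ with $[H,Y]\leq O_{p'}(H)$ for all $H\in\H(N)$ lies in $C_S(T)$ (apply this with $H=N_N(T)$, using $T\norm S$), normalizes every $N_N(V)$ for $V\in\E^{fc}$ (as $Y\leq C_S(T)\leq C_S(V)$ and $C_G(V)$ normalizes $N_N(V)$), and these $Y$ form a subgroup; each such $Y$ lies in $N_S(V)$ for every $V\in\E^{fc}$. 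Using Aschbacher's generating set $\E=\gen{\Aut_\E(U)^\phi\mid U\in\E^c\cap\F^f,\ \phi\in\Aut_\F(T)}$, the surjectivity of $\Aut_\F(TC_S(T))\to\Aut_\F(T)$, and that $\Aut_\F(T)$ permutes the $\E$-centric subgroups of $T$, one checks that $\bigcap_\phi I^\phi$ is exactly $\{\, y\in C_S(T)\mid [N_N(V),y]\leq O_{p'}(N_N(V))\text{ for all }V\in\E^{fc} \,\}$, i.e. the largest subgroup claimed in (a). Reconciling the index set $\{(U,\phi)\}$ with the full family $\H(N)$ is the part needing the most care.

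For (b), put $Y=C_S(N/O_{p'}(N))$; for $V\in\E^{fc}$ we have $Y\leq C_S(T)\leq C_S(V)$, so $Y$ normalizes $N_N(V)$ and $[N_N(V),Y]\leq [N,Y]\cap N_N(V)\leq O_{p'}(N)\cap N_N(V)\leq O_{p'}(N_N(V))$, whence $Y\leq C_S(\E)$ by (a). For (c), take $y\in C_S(\E)$ and write $\overline{N}=N/O_{p'}(N)$; then $y$, a $p$-element normalizing $N$ and $O_{p'}(N)$, induces $\alpha\in\Aut(\overline{N})$ of $p$-power order, and for each $V\in\E^{fc}$ reducing $[N_N(V),y]\leq O_{p'}(N_N(V))$ modulo $O_{p'}(N)$ yields $[\overline{N_N(V)},\alpha]\leq O_{p'}(\overline{N_N(V)})$. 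A Frattini argument gives $\overline{N_N(V)}=N_{\overline{N}}(\overline{V})$, and as $V$ runs over $\E^{fc}$ these run over $\H(N/O_{p'}(N))$, so $\alpha$ centralizes $H/O_{p'}(H)$ for every $H\in\H(N/O_{p'}(N))$. A nontrivial $p$-element has a nontrivial power of order $p$ centralizing the same groups, so the hypothesis forces $\alpha=1$, i.e. $y\in C_S(N/O_{p'}(N))$; together with (b), equality holds.

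I expect the main obstacle to be the check highlighted in the first paragraph: faithfully transporting Aschbacher's model-theoretic construction into honest conjugation inside $G$, so that $C_{N_S(U)}(H(U))$ really is $\{y\in N_S(U)\mid [N_N(U),y]\leq O_{p'}(N_N(U))\}$, together with the bookkeeping in the proof of (a) that makes the intersection over $\phi$ coincide with the condition ranging over all of $\H(N)$. Everything downstream — parts (b) and (c) — is then routine commutator and Frattini manipulation.
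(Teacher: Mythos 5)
Your outline of the argument matches the paper's: unwind Aschbacher's model-theoretic construction in the concrete case $\F = \F_S(G)$, $\E = \F_T(N)$, and then deduce (b) and (c) from (a). Parts (b) and (c) are carried out correctly and essentially as in the paper. The forward inclusion in (a) is also essentially right: a subgroup $Y$ satisfying $[H,Y] \leq O_{p'}(H)$ for all $H \in \H(N)$ centralizes $T$ (apply this with $H = N_N(T)$), hence $Y \leq I$, and then invariance of the set $\E^{fc}$ under $\Aut_\F(TC_S(T))$ gives $Y \leq C_S(\E)$.

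The genuine gap is the reverse inclusion of (a), which you explicitly defer: ``one checks that $\bigcap_\phi I^\phi$ is exactly \dots. Reconciling the index set $\{(U,\phi)\}$ with the full family $\H(N)$ is the part needing the most care.'' That reconciliation is precisely the content of the lemma and it does not follow by bookkeeping from the generating set alone. The issue is that $I$ is built only from $U \in \E^c \cap \F^f$, and intersecting over $\phi$ only yields the commutator condition for $V$ lying in the $N_G(TC_S(T))$-orbit of $\E^c \cap \F^f$. To pass to an arbitrary $U \in \E^{fc}$ one first produces, by a Frattini argument, some $V$ in that orbit with $V \in U^\E$, say $V^h = U$ with $h \in N$; the key step is then to factor $h = tg$ with $t \in C_G(V)$ and $g \in C_G(Y)$, which the paper obtains from Theorem~\ref{T:C_S(E)} (since $\E \leq C_\F(C_S(\E))$, the $\E$-morphism $c_h|_V$ extends to one fixing $Y = C_S(\E)$ pointwise). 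Only then does one conclude $[H_U, Y] = [H_V, Y]^g \leq O_{p'}(H_V)^g = O_{p'}(H_U)$. Without invoking $\E \leq C_\F(C_S(\E))$ and this factorization, there is no reason the commutator condition should be stable along arbitrary $\E$-conjugation, and the argument does not close.
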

\begin{proof}
For each $U \in \E^{fc}$, set $H_U = N_N(U)$ for brevity. Let $Y \leq S$ and
assume $[H_U, Y] \leq O_{p'}(H_U)$ for each such $U$. Note then that $Y$
centralizes $T$ because $[T, Y] \leq T \cap O_{p'}(H_T) = 1$.  Fix $U \in \E^c
\cap \F^f \subseteq \E^{fc}$ and set $G_U = N_G(UC_S(U))$. Then $G(U) :=
G_U/O_{p'}(G_U)$ is a model for $N_\F(UC_S(U))$ with Sylow $p$-subgroup $N_S(U)
\geq Y$.  Since $H_U \norm G_U$, we have $O_{p'}(H_U) = O_{p'}(G_U) \cap H_U$.
Hence $H(U) := H_U/O_{p'}(H_U)$ is a normal subgroup of $G(U)$ while being a
model for $N_\E(U)$, and so $Y \leq I$ with $I$ as above. But
$\Aut_\F(TC_S(T))$ acts on $\E^{fc}$, and so $Y \leq C_S(\E)$. 

Now set $Y = C_S(\E)$. Then $Y$ is strongly $\F$-closed, and $[H_V, Y] \leq
O_{p'}(H_V)$ whenever $V \in (\E^c \cap \F^f)^{N_G(TC_S(T))}$ by reversing the
argument in the last paragraph. Let $U \in \E^{fc}$. By a Frattini argument,
there is $V \in (\E^c \cap \F^f)^{N_G(TC_S(T))} \cap U^\E$. Fix such a $V$,
and let $h \in N$ with $V^h = U$. By Theorem~\ref{T:C_S(E)}, there is $g \in
C_G(Y)$ and $t \in C_G(V)$ with $h = tg$. Hence $V^g = U$ and
$[H_U, Y] = [H_V^g, Y^g] = [H_V, Y]^g \leq O_{p'}(H_V)^g
= O_{p'}(H_U)$ as required. This completes the proof of (a). 

Let $X = C_S(N/O_{p'}(N))$. Then $[T, X] \leq T \cap O_{p'}(N) = 1$.  Also,
$[H, X] \leq H \cap O_{p'}(N) \leq O_{p'}(H)$ for any subgroup $H$ of $N$
normalized by $X$, so (b) is immediate from (a). 

For (c), assume that $X < Y$. Then $Y/X$ acts on $N/O_{p'}(N)$ and centralizes
(the image of) $H/O_{p'}(H)$ for each $H \in \H(N)$. Since $O_{p'}(N) \cap H
\leq O_{p'}(H)$, the same is true for each $\bar{H} \in \H(N/O_{p'}(N))$. Now
(c) is clear.
\end{proof}

We will need a lemma examining in a special case how centralizers behave under
quotienting by a strongly closed subgroup. First, the following lemma shows
that for the purposes of computing the centralizer in $S$ of a normal subsystem
$\E$ of $\F$, we may restrict to the subsystem $\E S$ of
Theorem~\ref{T:Aschppower}.

\begin{lemma}\label{L:SEsufficesforC_S(E)}
Suppose $\F$ is a saturated fusion system on $S$ and $\E$ is a normal subsystem
of $\F$ on $T$. Let $Q \leq C_S(T)$. Then $C_{\F}(Q) \geq \E$ if and only if
$C_{\E S}(Q) \geq \E$.
\end{lemma}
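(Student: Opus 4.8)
The plan is first to reformulate both sides of the equivalence in terms of the subgroup $C_S(\E)$ of Theorem~\ref{T:C_S(E)}, and then to locate the content of the lemma in a statement about the morphisms of $\E S$. Since the formation of $C_\F(-)$ is inclusion‑reversing and the set $\{X \leq C_S(T) \mid C_\F(X) \geq \E\}$ is closed under passage to subgroups (an $\F$‑extension fixing $X$ restricts to one fixing any $X' \leq X$), Theorem~\ref{T:C_S(E)} yields that $C_\F(Q) \geq \E$ if and only if $Q \leq C_S(\E)$. As $\E S$ is saturated with $\E \norm \E S$ and Sylow group $S$, the identical reasoning applied inside $\E S$ gives that $C_{\E S}(Q) \geq \E$ if and only if $Q \leq C_S(\E)_{\E S}$, the centralizer of $\E$ in $S$ computed within $\E S$. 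So the lemma amounts to the equality $C_S(\E)_{\E S} = C_S(\E)$.

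The inclusion $C_S(\E)_{\E S} \leq C_S(\E)$ is immediate, since any morphism of $C_{\E S}(Q)$ — together with an $\E S$‑extension fixing $Q$ — is already a morphism of $\F$ with an $\F$‑extension fixing $Q$, so $C_{\E S}(Q) \leq C_\F(Q)$, and applying this with $Q = C_S(\E)_{\E S}$ does it. For the reverse inclusion, assume $Q \leq C_S(\E)$, equivalently $C_\F(Q) \geq \E$, and let $\phi \in \Hom_\E(P_1,P_2)$; I wish to see $\phi \in C_{\E S}(Q)$. As $\E \leq C_\F(Q)$ and $P_1 \leq T \leq C_S(Q)$, there is an extension $\hat\phi \in \Hom_\F(QP_1,QP_2)$ with $\hat\phi|_Q = \id_Q$ and $\hat\phi|_{P_1} = \phi$, and $\hat\phi$ is determined by these two restrictions because $QP_1 = \langle Q,P_1\rangle$. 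The elementary observation that drives the argument is $Q \cap T \leq Z(\E)$: indeed $Q \cap T \leq C_S(T) \cap T = Z(T)$, and any element of $T$ centralizing $\E$ lies in $Z(\E)$. In particular $\phi$ fixes $P_1 \cap Q \leq Q \cap T \leq Z(\E)$ pointwise, so $\hat\phi$ is precisely the descent to $QP_1$ of the morphism $(\phi,\id_Q)$ of the central product $\E *_{Q\cap T}\F_Q(Q)$ of $\E$ with the fusion system $\F_Q(Q)$ of $Q$ amalgamated over the central subgroup $Q\cap T$; this is a saturated subsystem of $\F$ containing $\E$ normally, and it lies inside the subsystem $\E(TC_S(\E))$ of Theorem~\ref{T:Aschppower}(c) (the one for $X = TC_S(\E)$), hence inside $\E S$. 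Thus $\hat\phi$ is an $\E S$‑extension of $\phi$ fixing $Q$, so $\phi \in C_{\E S}(Q)$; as $\phi$ was arbitrary, $\E \leq C_{\E S}(Q)$, which is the reverse inclusion.

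The step I expect to be the genuine obstacle is certifying that $\hat\phi$ lies in $\E S$. Theorem~\ref{T:Aschppower} produces $\E S$ together with its quotient‑ and lattice‑structure, but recognizing a concrete $\F$‑morphism as a morphism of $\E S$ requires a handle on the automorphism groups $\Aut_{\E S}(P)$, i.e.\ on the morphism set of $\E S$ itself. Concretely, one needs the identification of $\E(TC_S(\E))$ with the central product $\E *_{Z(\E)}\F_{C_S(\E)}(C_S(\E))$, which holds because $C_S(\E)$ centralizes $\E$ and comes out of Aschbacher's construction in \cite{AschbacherGeneralized}. An equivalent way to organize this point first invokes Alperin's fusion theorem for $\E$ to reduce to $\phi \in \Aut_\E(R)$ with $R \in \E^{fcr}$; then $Z(T) \leq C_T(R) = Z(R) \leq R$ forces $QR \cap T = R$, so $\hat\phi$ induces the identity on $QR/(QR\cap T)$ — the shape in which the morphism description of $\E S$ is most directly applicable. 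The remaining ingredients (the reformulation, the easy inclusion, and the extension $\hat\phi$) are routine.
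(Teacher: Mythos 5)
Your overall plan is the same as the paper's: reduce (by Alperin's fusion theorem in $\E$) to $\phi\in\Aut_\E(U)$ for $U\in\E^{fc}$, extend $\phi$ to $\hat\phi$ on $QU$ fixing $Q$ pointwise using $\E\leq C_\F(Q)$, and then argue that $\hat\phi$ lies in $\E S$. The easy direction and the reformulation via $C_S(\E)$ are both fine. But the step you flag yourself as ``the genuine obstacle'' is in fact where the proof is incomplete, and the mechanism you propose to fill it does not quite close the loop.

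Concretely, you want to certify $\hat\phi\in\E S$ by recognizing $\hat\phi$ as a morphism of a central product subsystem $\E *_{Q\cap T}\F_Q(Q)$ and then asserting that this central product sits inside $\E(TC_S(\E))\leq\E S$. The problem is that the second assertion is exactly the same type of claim you are trying to prove. Theorem~\ref{T:Aschppower}(c) gives a bijection $X\mapsto\E X$ between overgroups of $T$ in $S$ and saturated subsystems of $\E S$ containing $\E$; it tells you these subsystems exist and how they are indexed, but it does not hand you a morphism-level description of $\E X$ against which you can test whether a given $\F$-morphism like $\hat\phi$, or like a generator $(\psi,c_q)$ of your central product, actually belongs. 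To say ``the central product lies inside $\E(TC_S(\E))$'' you need precisely such a description, and the proposed identification $\E(TC_S(\E))\cong\E *_{Z(\E)}\F_{C_S(\E)}(C_S(\E))$ is not something you can cite from the references used in the paper; it is itself a nontrivial structural statement. Your fallback formulation (that $\hat\phi$ induces the identity on $QR/(QR\cap T)$) points at the right shape of the answer but again appeals to ``the morphism description of $\E S$'' without supplying or citing one.

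The paper's own proof avoids this by citing Aschbacher directly: after the Alperin reduction to $U\in\E^{fc}$, the extension $\tilde\phi$ lies in $\Aut_{\N(C_S(U)U)}(U)$ in the sense of \cite[Notation~8.4]{AschbacherGeneralized}, and $\E S$ is by construction generated by precisely those automorphism groups. That is the concrete handle on the morphisms of $\E S$ that your argument is missing; with it, the check that $\hat\phi\in\E S$ is immediate and the central product detour is unnecessary.
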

\begin{proof}
Let $\phi \in \Hom_\E(U,V)$ for subgroups $U$ and $V$ of $T$. If $\phi$ lies in
$C_{\E S}(Q)$, then it clearly lies in $C_\F(Q)$. Suppose $\phi \in C_\F(Q)$.
Then $\phi$ extends to a morphism $\tilde{\phi} \in \Hom_\F(QU, QV)$ with
$\tilde{\phi}|_Q = \id_Q$, and it suffices to show that $\tilde{\phi} \in \E
S$.  By Alperin's fusion theorem applied in $\E$, it is enough to show this
when $U=V$ and $U \in \E^{fc}$, and we may take $\phi$ and $\tilde{\phi}$ to be
of $p^\prime$-order in this case. Then $\tilde{\phi} \in A^\circ(QU)$ in the
sense of \cite[Definition~1]{Henke2013} and $\E S$ is generated by such
automorphism groups. 
\end{proof}

\begin{lemma}\label{P:centralizermodOp}
Let $\F$ be a saturated fusion system on the $p$-group $S$. Suppose that $\E$
is a normal subsystem of $\F$ on the strongly $\F$-closed subgroup $T$ of $S$.
Set $Q = C_S(\E)$, and assume that $Q \cap T = 1$. Then $C_{S/Q}(\E S/Q) = 1$. 
\end{lemma}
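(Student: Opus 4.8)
The plan is to pass to the quotient and use the fact that centralizers of normal subsystems are themselves normal (Theorem~\ref{T:C_S(E)}), so the statement becomes a uniqueness assertion: if $R/Q$ is a strongly $\F S/Q$-closed subgroup contained in $C_{S/Q}(\E S/Q)$, then $R/Q = 1$. First I would set $\F^+ = \F/Q$, $S^+ = S/Q$, $T^+ = TQ/Q$, and note that the image $\E S/Q$ is a saturated fusion system on $T^+$ which is normal in $\F^+$ (the quotient $\E S/Q$ exists because $Q$ is strongly $\F$-closed, hence strongly $\E S$-closed, and one checks normality survives the quotient; alternatively use that $Q \leq C_S(T)$ so $Q$ normalizes things correctly). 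Since $Q \cap T = 1$, the map $T \to T^+$ is an isomorphism, and the point is to show that $C_{S^+}(\E S^+) = T^+ \cap (\text{something}) $ forces triviality — more precisely, that $C_{S^+}(\E S^+)$, being a strongly $\F^+$-closed subgroup of $S^+$ contained in $C_{S^+}(T^+)$ on which $\E S^+$ acts centrally, pulls back to a subgroup $R$ of $S$ with $Q \leq R$, $R \leq C_S(T)$, and $C_\F(R) \geq \E$ — whence $R \leq C_S(\E) = Q$ by the maximality clause of Theorem~\ref{T:C_S(E)}, so $R = Q$ and the image is trivial.

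The bulk of the work is therefore in justifying the implication ``$R/Q$ central for $\E S/Q$ $\Longrightarrow$ $C_\F(R) \geq \E$''. Here I would use Lemma~\ref{L:SEsufficesforC_S(E)} to reduce to showing $C_{\E S}(R) \geq \E$, i.e. that every morphism of $\E$ extends to a morphism fixing $R$ pointwise. Given $\phi \in \Hom_\E(U,V)$ with $U, V \leq T$, by Alperin's fusion theorem in $\E$ we may take $U = V \in \E^{fc}$ and $\phi \in \Aut_\E(U)$; working inside the model $G(U)$ of $\D(U) = N_\F(UC_S(U))$ as in Aschbacher's construction, $\phi$ lifts to an element of the model-normal subgroup $H(U)$ modelling $N_\E(U)$. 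The hypothesis that $R/Q$ centralizes $\E S/Q$ tells us that in the corresponding model for $\D(U)/Q$ the image of $R$ centralizes the image of $H(U)$; combined with $Q \leq C_S(\E) \leq C_{N_S(U)}(H(U))$ from the definition of $C_S(\E)$, a standard coprime-free argument (or just: $[H(U), R] \leq Q \cap H(U) \leq Q \cap T = 1$ using that $H(U)$ is generated by $U$-conjugates inside $T$-related subgroups, and that $[H(U),Q]=1$) yields $[H(U), R] = 1$, so $R$ centralizes $H(U)$ and hence $\phi$ extends fixing $R$. Running this over all $U$ gives $C_{\E S}(R) \geq \E$, as needed.

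The main obstacle I anticipate is the bookkeeping in that last step: making precise the passage ``$[H(U)^+, R^+] = 1$ and $[H(U), Q] = 1$ imply $[H(U), R] = 1$'' without a coprimality hypothesis. The clean way around this is to avoid lifting commutators and instead argue at the level of subsystems directly: one shows $C_S(\E) / Q$ (computed via Theorem~\ref{T:C_S(E)} applied to $\E S^+ \norm \F^+$) must equal $C_{S^+}(\E S^+)$ by exhibiting both as the largest strongly closed subgroup of $S^+$ acting trivially on all the relevant local models, using that the defining data (the subgroups $U \in \E^c \cap \F^f$, the models $G(U)$, the normal model subgroups $H(U)$) for $\E S^+ \norm \F^+$ are exactly the images of those for $\E S \norm \F$ — this is essentially functoriality of Aschbacher's construction under the surjection $\F \to \F^+$, together with the ``note'' after Theorem~\ref{T:Aschppower} that $O^p(\E) = \E$ implies $O^p(\E S) = \E$ so nothing is lost by replacing $\F$ with $\E S$. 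Once that identification $C_S(\E)/Q = C_{S^+}(\E S^+)$ is in hand, the conclusion is immediate since $C_S(\E) = Q$.
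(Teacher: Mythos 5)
Your overall strategy matches the paper's: reduce via Lemma~\ref{L:SEsufficesforC_S(E)} to $\F = \E S$, take the preimage $Q_1$ of $C_{S/Q}(\E S/Q)$, and show $\E \leq C_\F(Q_1)$ to contradict the maximality clause in Theorem~\ref{T:C_S(E)}. But the technical heart of the argument — how to pass from ``centralizes mod $Q$'' to ``centralizes'' without a coprimality hypothesis — is exactly the piece you flag as an ``obstacle'' and then do not fill.

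Your first attempted fix, the inequality $[H(U), R] \leq Q \cap H(U) \leq Q \cap T = 1$, does not get off the ground: the hypothesis ``$R/Q$ centralizes $\E S/Q$'' is a statement about morphisms in a quotient fusion system, and translating it into $[H(U), R] \leq Q$ inside the model $G(U)$ requires knowing that $Q$ is normal in $G(U)$ and that $G(U)/Q$ is a model for the corresponding normalizer in $\F/Q$ — neither of which is given, and the second of which is not even obviously well-posed since $C_{S/Q}(U^+)$ need not equal $C_S(U)Q/Q$. Your second fix (``functoriality of Aschbacher's construction'') is just a larger version of the same unproven assertion: identifying $C_S(\E)/Q$ with $C_{S/Q}(\E S/Q)$ is precisely the content of the lemma, so appealing to the claim that the defining data of the two constructions ``are exactly the images'' of each other is circular unless you actually prove it, which again runs into the same issue of comparing models.

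The paper supplies the missing step without ever leaving the world of fusion-system morphisms. It reduces to a $p'$-automorphism $\varphi \in \Aut_\E(U)$, lifts the extension $\bar\varphi_1 \in \Aut_{\F/Q}(\bar Q_1\bar U)$ across the morphism bijection $\Hom_\F(U,V) \to \Hom_{\F/Q}(\bar U,\bar V)$ (which is a bijection because $Q \cap T = 1$), takes the $p'$-part $\tilde\varphi \in \Aut_\F(Q_1U)$ of that lift, and then makes the crucial observation that $\Aut_\F(Q)$ is a $p$-group (because $\F = \E S$ implies $\F/T \cong \F_{S/T}(S/T)$, and $\Aut_\F(Q) \cong \Aut_{\F/T}(QT/T)$ since $Q \cap T = 1$). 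Hence the $p'$-element $\tilde\varphi$ restricts trivially to $Q$; since it also stabilizes $1 \leq Q \leq Q_1$ and acts trivially on $Q_1/Q$, Theorem~\ref{T:stabnormalseries} forces $\tilde\varphi|_{Q_1} = \id$. That ``$\Aut_\F(Q)$ is a $p$-group'' observation, followed by the coprime-stabilizer theorem, is the engine you are missing; without some replacement for it, neither of your two proposed routes closes the gap.
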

\begin{proof}
By Lemma \ref{L:SEsufficesforC_S(E)}, we may assume that $\F = \E S$. By
Theorem~\ref{T:C_S(E)}, $Q$ is strongly $\F$-closed.  Let $\theta\colon \F \to
\F/Q$ be the surjective morphism of fusion systems and denote passage to the
quotient by bars. Since $Q$ and $T$ are normal in $S$, we have $[Q, T] \leq Q
\cap T = 1$.  Also, for every $U$, $V \leq T$, 

\begin{align}\label{E:thetainducesbij}
\theta_{U,V}\colon \Hom_{\F}(U,V) \overset{\simeq}{\longrightarrow}
\Hom_{\bar{\F}}(\bar{U}, \bar{V}).
\end{align}
is a bijection since $Q \cap T = 1$.

Suppose the proposition is false and let $\bar{Q}_1 = C_{\bar{S}}(\bar{\E})
\neq 1$, where $Q_1 > Q$ is the preimage of $\bar{Q}_1$ under $\theta$.  By
Theorem~\ref{T:C_S(E)}, $\bar{Q}_1$ is strongly $\bar{\F}$-closed. By
\cite[8.9.1]{AschbacherNormal}, $Q_1$ is strongly $\F$-closed.
Since $T \norm S$, we have $[Q_1, T] \leq T$. But $[Q_1, T] \leq Q$ as well
because $[\bar{Q}_1,\bar{T}]=1$.  Therefore, $[Q_1, T] \leq Q \cap T = 1$.

We will show that $\E \leq C_{\F}(Q_1)$, supplying a contradiction.  Let $U$ be
a fully $\E$-normalized, $\E$-centric subgroup of $T$ and let $\varphi \in
\Aut_{\E}(U)$. Since $U$ is fully $\E$-normalized, we have that $\varphi =
\varphi'c_t$ for some $\varphi' \in O^p(\Aut_\E(U))$ and $t \in T$. If
$\varphi' \in C_{\F}(Q_1)$, then so is $\varphi$ since $Q_1$ centralizes $T$.
Hence we may assume that $\varphi$ has order prime to $p$. 

Let $\bar{\varphi} = (\varphi)\theta$. Then by definition of $\bar{Q}_1$,
$\bar{\varphi}$ extends to $\bar{\varphi}_1 \in
\Aut_{\bar{\F}}(\bar{Q}_1\bar{U})$ such that
$\resbar{\bar{\varphi}_1}{\bar{Q}_1} = \id_{\bar{Q}_1}$. Let $\phi_1$ be a
morphism in $\Aut_\F(Q_1U)$ such that $(\phi_1)\theta = \bar{\varphi}_1$.  Then
by (\ref{E:thetainducesbij}), $\phi_1$ restricts to $\varphi$ on $U$.  Let
$\tilde{\varphi}$ be the $p'$-part of $\phi_1$. As $\varphi$ has order prime to
$p$, $\tilde{\varphi}$ still restricts to $\varphi$ on $U$.  Furthermore,
$\resbar{\phi_1}{Q_1}$ stabilizes the series $1 \leq Q \leq Q_1$ and
centralizes $Q_1/Q$, so the same is true for $\tilde{\varphi}$. Hence as
$O^p(\F) = O^p(\E)$, we have $[Q_1, \tilde{\phi}] = [Q, \tilde{\phi}] \leq T
\cap Q = 1$, so $\tilde{\phi}$ restricts to the identity on $Q_1$ and extends
$\phi$.  By Alperin's fusion theorem, $\E \leq C_\F(Q_1)$ contradicting the
maximality of $Q$.
\end{proof}

\subsection{Thompson-Lyons transfer lemma}\label{SS:transfer}
The main technical tool in the proof of Theorem~\ref{T:main} is the
Thompson-Lyons transfer lemma, proved in \cite{LyndTL}. 

\begin{theorem}
\label{T:TT}
Let $\F$ be a saturated fusion system over a $2$-group $S$ with $\F = O^2(\F)$.
Suppose $T$ is a proper normal subgroup of $S$ with $S/T$ abelian. Let $u \in
S-T$ of least order. If the set of cosets of $T$ containing a fully
$\F$-centralized conjugate of $u$ is linearly independent in $\Omega_1(S/T)$,
then $u$ has a fully $\F$-centralized $\F$-conjugate in $T$. 
\end{theorem}

The reader will notice that the linear independence condition in
Theorem~\ref{T:TT} holds automatically in the case that $S/T$ is cyclic.

Since Theorem~\ref{T:TT} is used so often, we illustrate how it can be applied
together with Alperin's fusion theorem to describe the fusion systems $\F =
O^2(\F)$ on nonabelian $2$-groups of maximal class.
\begin{lemma}\label{L:fsmaxcl}
Let $\F$ be a saturated fusion system on the $2$-group $S$ with $\F = O^2(\F)$.
If $S$ is nonabelian of maximal class, then $\F$ is uniquely determined by $S$
up to isomorphism and one of the following holds.
\begin{enumerate}[label=\textup{(\alph{*})}]
\item $S \cong D_{2^k}$ with $k \geq 3$, and for any odd prime power
$q$ with $\nu_2(q^2-1) = k+1$, we have $\F \cong \F_S(G)$ with $G \cong L_2(q)$,
\item $S \cong SD_{2^k}$ with $k \geq 4$, and for any odd prime power $q
\equiv 3 \pmod 4$ with $\nu_2(q+1)=k-2$, we have $\F \cong \F_S(G)$ with $G
\cong L_3(q)$.
\item $S \cong Q_{2^k}$ with $k \geq 3$, and for any odd prime power $q$
with $\nu_2(q^2-1) = k$, we have $\F \cong \F_S(G)$ with $G \cong SL_2(q)$. 
\end{enumerate}
\end{lemma}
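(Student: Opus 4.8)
The plan is to run through the three isomorphism types of nonabelian $2$-groups of maximal class --- $D_{2^k}$ ($k\geq 3$), $SD_{2^k}$ ($k\geq 4$), and $Q_{2^k}$ ($k\geq 3$) --- and in each case recover $\F$ from Alperin's fusion theorem once its essential subgroups and their automizers are identified. First I would list the candidates for proper members of $\F^{fcr}$. Every proper subgroup of a maximal-class $2$-group is cyclic or again of maximal class, and $\Out$ of a cyclic $2$-group and of $D_{2^j}$ ($j\geq 3$), $SD_{2^j}$ ($j\geq 4$), $Q_{2^j}$ ($j\geq 4$) is a $2$-group by Lemma~\ref{L:autD}(a) and Lemma~\ref{L:autDwr2}; since a proper $\F$-centric $P$ has $\Out_S(P)=N_S(P)/P\neq 1$, none of these subgroups can be $\F$-radical, so the only candidates left are the Klein four subgroups and the $Q_8$-subgroups (the latter not occurring when $S$ is dihedral). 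On a four-subgroup $V$ one has $\Aut_S(V)\cong C_2$ and $\Aut_\F(V)\leq \GL_2(2)\cong S_3$, so $V$ is inessential or has $\Aut_\F(V)\cong S_3$; on a $Q_8$-subgroup $P$ one has $\Out_S(P)\cong C_2$ and $\Out(P)\cong S_3$, so $P$ is inessential or has $\Aut_\F(P)\cong S_4$. Finally, apart from the single group $Q_8$ (where $\Aut(Q_8)\cong S_4$), $\Aut(S)$ is a $2$-group, so $\Out_\F(S)$ is at once a $2$-group and, by the Sylow axiom, a $2'$-group; hence $\Aut_\F(S)=\Inn(S)$ unless $S\cong Q_8$.

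Next I would bring in the hypothesis $\F=O^2(\F)$, i.e.\ $\hyp(\F)=S$. For $S\not\cong Q_8$ I would apply Proposition~\ref{C:TTcyclic} with $T$ the cyclic maximal subgroup $C$ (so $S/C\cong C_2$): an element $u$ of least order in $S-C$ --- a noncentral involution when $S$ is dihedral or semidihedral, an element of order $4$ when $S$ is quaternion --- is $\F$-conjugate to an element of $C$, and running Alperin's fusion theorem backwards, that fusion must be realized inside an essential subgroup containing $u$; by the first paragraph this can only be a four-subgroup $V$ with $\Aut_\F(V)\cong S_3$ (dihedral/semidihedral case) or a $Q_8$-subgroup $P$ with $\Aut_\F(P)\cong S_4$ (quaternion case). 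Taking representatives $u$ from each relevant $S$-class shows that every four-subgroup is essential in the dihedral and semidihedral cases and that every $Q_8$-subgroup is essential in the quaternion case. To also force the $Q_8$-subgroups essential in the semidihedral case, I would use $[V,O^2(\Aut_\F(V))]=V$ for the essential four-subgroups, so that they generate only the dihedral maximal subgroup $D_{2^{k-1}}$ of $SD_{2^k}$, an index-$2$ subgroup --- so $\hyp(\F)=S$ forces a $Q_8$-subgroup to be essential as well, hence all of them. For $S\cong Q_8$, $\F^{fcr}$ has no proper member and the Sylow axiom confines $\Aut_\F(Q_8)$ to $\{\Inn(Q_8),A_4\}$, of which $\hyp(\F)=Q_8$ excludes the first. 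In every case the family $\{\Aut_\F(P):P\in\F^{fcr}\}$ is now completely determined, so $\F$ is unique up to isomorphism by Alperin's fusion theorem.

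It remains to identify $\F$. For each family one checks, from the known $2$-local structure of the relevant linear group, that $L_2(q)$ with $\nu_2(q^2-1)=k+1$, $L_3(q)$ with $q\equiv 3\pmod 4$ and $\nu_2(q+1)=k-2$, and $SL_2(q)$ with $\nu_2(q^2-1)=k$ have Sylow $2$-subgroup of the required isomorphism type and exhibit exactly the essential-subgroup data found above (for instance in $L_3(q)$ a noncentral four-subgroup has normalizer $S_4$ and a $Q_8$-subgroup has normalizer of shape $2\cdot S_4$), and that such prime powers $q$ exist; combined with the uniqueness just established this yields the asserted isomorphism. The step I expect to be the main obstacle is the bookkeeping underlying the middle paragraph: one must pin down how many $S$-conjugacy classes of four-subgroups and of $Q_8$-subgroups each maximal-class $S$ has, verify that these classes are not further fused in $\F$ (using $\Aut_\F(S)=\Inn(S)$ and that no proper $\F^{fcr}$-subgroup properly contains one of them), and compute the index in $S$ of the subgroup generated by a single such class --- so that Proposition~\ref{C:TTcyclic} and $\hyp(\F)=S$ together genuinely force every candidate to be essential with maximal automizer. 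Once that is in place, uniqueness via Alperin and the comparison with $L_2(q)$, $L_3(q)$, $SL_2(q)$ are routine, and the latter may alternatively be read off from the classification of fusion systems on $2$-groups of small sectional rank used elsewhere in the paper.
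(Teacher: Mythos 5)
Your proposal is essentially the paper's argument: you classify the candidates for $\F^{fcr}$ as the four-subgroups and $Q_8$-subgroups using the $2$-group structure of maximal-class $2$-groups and the fact that $\Aut(S)$ is a $2$-group (except for $S\cong Q_8$), then force those candidates to actually be essential with maximal automizer using the Thompson--Lyons transfer lemma (Proposition~\ref{C:TTcyclic}) and $\F=O^2(\F)$, and finally invoke Alperin's fusion theorem. The one step where you genuinely depart from the paper is the $Q_8$-subgroups inside a semidihedral $S$: the paper applies Proposition~\ref{C:TTcyclic} directly with $T=\Omega_1(S)$ (the dihedral maximal subgroup), so that the element $u$ of least order outside $T$ already lies in a $Q_8$ and gets fused to $z_1$ in $C$, whereas you apply the lemma only with $T=C$ (which only sees the four-subgroups) and then argue separately that if no $Q_8$ were essential then $\hyp(\F)$ would be trapped inside $\Omega_1(S)$, contradicting $\hyp(\F)=S$. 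Both routes are valid; the paper's is slightly more uniform (one application of the transfer lemma per class of essential candidate), while yours trades a second invocation of the transfer lemma for a short hyperfocal-subgroup computation. Your ``hence all of them'' at the end of that step does require knowing the $Q_8$'s form a single $S$-class in $SD_{2^k}$ (which is true, as the noncentral involutions of $S$ swap the two $Q_{2^{k-1}}$-classes of order-$4$ elements outside $C$), whereas the paper's route applied to a representative of each $S$-class handles this automatically.
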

\begin{proof}
If $P$ is a subgroup of $S$, then $P$ is cyclic, dihedral, semidihedral, or
quaternion. Hence $P$ has automorphism group a $2$-group unless $P \cong C_2
\times C_2$ or $Q_8$. Hence, if $P$ is a proper $\F$-radical subgroup of $S$,
then $P \cong C_2 \times C_2$ or $Q_8$ with $\Aut_\F(P)$ isomorphic to $S_3$ or
$S_4$, respectively. 

Let $C = \gen{c}$ be the cyclic maximal subgroup of $S$, and let $Z(S) =
\gen{z} \leq C$.  Let $P \in \F^{cr}$ and suppose that $P \cong C_2 \times
C_2$. Let $u \in P - Z(S)$. Then $u$ lies outside the cyclic maximal subgroup
$T$ of $S$. By Theorem~\ref{T:TT}, $u$ is $\F$-conjugate into $T$, and
therefore $\F$-conjugate to $z$. By the above description of the members of
$\F^{cr}$, $P$ is the unique proper $\F$-centric and $\F$-radical subgroup
containing $u$.  Therefore, $u$ is in fact $\Aut_\F(P)$-conjugate to $z$ by a
morphism of order $3$, and $\Aut_\F(P) \cong S_3$. 

Let $P \in \F^{cr}$ with $P \cong Q_8$. Then $S$ is semidihedral or quaternion,
and $P = \gen{u,z_1}$ with $z_1 \in P \cap C$. If $S$ is semidihedral, let $T =
\Omega_1(S)$, the dihedral maximal subgroup of $S$. If $S$ is quaternion, let
$T = C$. In either case, $u$ is of least order outside $T$, and the only elements
of $T$ of order $4$ are $z_1$ and $z_1^{-1}$. Hence by Theorem~\ref{T:TT},
$u$ is $\F$-conjugate to $z_1$. As in the previous paragraph, it follows that
$\Aut_\F(P) \cong S_4$ unless $S=P$, in which case $\Aut_\F(P) \cong A_4$. 

We have determined the automorphism groups $\Aut_\F(P)$ for $P \in \F^{cr}$.
Therefore, $\F$ is uniquely determined by Alperin's fusion theorem, and as is
described in (a)-(c).
\end{proof}

\subsection{Tameness and $p$-power extensions}\label{S:tame}

In this subsection we address the problem of determining the structure of
extensions of fusion systems, which is resolved via recent work of Andersen,
Oliver, and Ventura \cite{AOV2012}. Under the conditions of
Theorem~\ref{T:main}, this problem manifests itself in the determination of an
involution centralizer from the description of its generalized Fitting
subsystem. For suppose given a saturated fusion system on a $2$-group and
assume $\C$ is an involution centralizer on the subgroup $T$ with $F^*(\C) =
O_2(\C)\E$ where $\E$ is quasisimple. One is confronted with the possibility
that $\bar{\C} = \C/O_2(\C)$ is exotic extension of $\bar{\E} = \E/Z(\E)$ even
when the latter is realizable by a simple group.

Let $\F$ be a saturated fusion system on $S$ and $\F_0 = O^p(\F)$ on $S_0$.
Assume $O_p(\F_0) = 1$.  Theorem~A of \cite{AOV2012} says that if $\F_0$ is
strongly tamely realizable by $G_0$ (with $O_{p'}(G) =1$), then $\F$ is
realizable as well. Here we indicate how to follow the proof of that theorem to
obtain the additional information that $G$ may be chosen so that $\F = \F_S(G)$
and $\Inn(G_0) \leq G \leq \Aut(G_0)$, provided $C_S(\F_0) = 1$. 

Roughly, $\F_0$ is \emph{tame} if $\F_0$ is realizable by a finite group $G_0$
such that every outer automorphism of the canonical linking system of $G_0$ is
induced by an outer automorphism of $G_0$. Then $\F_0$ is said to be tamely
realizable by $G_0$. An example of a fusion system $\F_0$ which is tame,
realizable by $G_0$, but not tamely realizable by $G_0$ is obtained with $G_0 =
A_7$, which is missing the ``diagonal automorphism'' present on $\F_2(L_2(q))$,
($q \equiv \pm 7 \pmod{16}$). See Proposition~\ref{P:dihtame}.

The fusion system $\F_0$ is \emph{strongly tame} if, in addition, certain
higher limits of functors associated to $G_0$ vanish; this is expressed by
saying $G_0$ lies in a certain class $\mathfrak{G}(p)$ of finite groups. We
refer the reader to \cite[II.\S~3,4]{AschbacherKessarOliver2011} for background
on linking systems.  We also point the reader to \cite{AOV2012} for details on
tameness and the precise meaning of $\mathfrak{G}(p)$.

\begin{proposition}\label{P:O^ptame}
Let $\F$ be a saturated fusion system on the $p$-group $S$. Suppose that $\F_0
= O^p(\F)$, $C_S(\F_0) = 1$, and that $\F_0$ is strongly tamely realized by
$G_0$ with $O_{p'}(G_0) = 1$. Then $\F$ is realized by a finite group $G$ such
that $\Inn(G_0) \leq G \leq \Aut(G_0)$ and $O^p(G) \leq \Inn(G_0)$.
\end{proposition}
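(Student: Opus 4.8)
The plan is to rerun the proof of \cite[Theorem~A]{AOV2012}, keeping track of the finite group it constructs, and then to use the hypothesis $C_S(\F_0)=1$ to place that group inside $\Aut(G_0)$. Write $S_0$ for the $p$-group underlying $\F_0 = O^p(\F)$; by Lemma~\ref{L:fochyp}(a), $S_0 = \hyp(\F)$ is strongly $\F$-closed, $S/S_0$ is a $p$-group, and $\Aut_{\F_0}(P) \geq O^p(\Aut_\F(P))$ for every $P \leq S_0$, so that $(\F,\F_0)$ is a $p$-power extension in the sense of \cite{AOV2012} (and $O_p(\F_0)=1$ is in force). Let $\L_0 = \L^c_{S_0}(G_0)$ be the centric linking system of $G_0$, which realizes $\F_0$. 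The assumption that $G_0 \in \mathfrak{G}(p)$ strongly tamely realizes $\F_0$ supplies two things: (i) the vanishing of the higher limits that obstruct the construction of a linking system over $S$ extending $\L_0$ along the given $p$-power extension; and (ii) a splitting of the natural map $\kappa_{G_0}\colon \Out(G_0) \to \Out_{\typ}(\L_0)$.

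Following the proof of \cite[Theorem~A]{AOV2012}: the conjugation action of $S$ on $S_0$ determines a homomorphism $S/S_0 \to \Out_{\typ}(\L_0)$; by (i) this lifts to an action that extends $\L_0$ to a linking system $\L$ over $S$ associated to $\F$, and by (ii) the induced outer action of $S/S_0$ on $\L_0$ is realized by a homomorphism $S/S_0 \to \Out(G_0)$. Reassembling the resulting extension datum — the vanishing of the second relevant higher limit again removing the obstruction to a genuine group extension — one obtains, exactly as in loc.\ cit., a finite group $G$ with $G_0 \norm G$, $G/G_0 \cong S/S_0$, $S \in \Syl_p(G)$, $S \cap G_0 = S_0$, $O_{p'}(G)=1$, and $\F = \F_S(G)$.

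It remains to check $\Inn(G_0) \leq G \leq \Aut(G_0)$. As $Z(G_0)_{p'}$ is a normal $p'$-subgroup of $G_0$, it is trivial, so $Z(G_0)$ is a $p$-group and hence $Z(G_0) \leq S_0$. Now $Z(G_0)$ centralizes $\F_0$ in the sense of Theorem~\ref{T:C_S(E)}: any $\F_0$-morphism $c_h\colon P \to Q$ with $P,Q \leq S_0$ and $h \in G_0$ extends to $c_h\colon Z(G_0)P \to Z(G_0)Q$, which restricts to the identity on $Z(G_0)$, so $\F_0 \leq C_\F(Z(G_0))$ and therefore $Z(G_0) \leq C_S(\F_0) = 1$. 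Thus conjugation identifies $G_0$ with $\Inn(G_0) \leq \Aut(G_0)$. Put $N = C_G(G_0)$. Then $N \norm G$ and $N \cap G_0 = Z(G_0) = 1$, so $N$ embeds into $G/G_0 \cong S/S_0$ and is a $p$-group, whence $N \leq O_p(G) \leq S$; since $N^{c_h} = N$ for $h \in G$, the same extension argument gives $\F_0 \leq C_\F(N)$, so $N \leq C_S(\F_0) = 1$. Hence conjugation embeds $G$ into $\Aut(G_0)$ with $\Inn(G_0) \leq G$, and under this identification $\F = \F_S(G)$ with $\Inn(G_0) \leq G \leq \Aut(G_0)$.

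The main obstacle is the middle paragraph: one must work from the construction behind \cite[Theorem~A]{AOV2012}, not merely its statement, to know that the realizing group $G$ can be chosen with $G_0$ normal and $G/G_0 \cong S/S_0$ rather than merely that $\F$ is realizable by some unspecified group. Once $G$ is available in this structural form, the passage into $\Aut(G_0)$ is the short argument above, driven by $C_S(\F_0)=1$ together with $O_{p'}(G_0)=1$.
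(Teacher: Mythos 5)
Your proposal is correct and follows essentially the same route as the paper: both rely on the AOV2012 machinery (the paper cites \cite[Lemma~2.19, Propositions~1.31(a), 2.16]{AOV2012}, while you rerun the proof of their Theorem~A) to produce a realizing group $G$ with $G_0 \norm G$ and $G/G_0$ a $p$-group, and then the same short argument — $O_{p'}(G_0)=1$ forcing $Z(G_0)$ to be a $p$-group, combined with $C_S(\F_0)=1$ to kill $Z(G_0)$ and $C_G(G_0)$ — embeds $G$ between $\Inn(G_0)$ and $\Aut(G_0)$. Your final paragraph's verification that $C_G(G_0)=1$ is in fact spelled out slightly more carefully than the paper's terse ``Now $C_S(G_0)=1$ again follows from $C_S(\F_0)=1$,'' but the content is identical.
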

\begin{proof}
A tame fusion system can always be realized by a finite group with no
nontrivial normal $p'$ subgroups by \cite[Lemma~2.19]{AOV2012}, and if the
fusion system is strongly tame, then the group can be chosen to lie in
$\mathfrak{G}(p)$ as well. Since $G_0 \in \mathfrak{G}(p)$, $\F_0$ has a unique
centric linking system $\L_0$.  In addition, there is a unique centric linking
system $\L$ associated to $\F$ by \cite[Proposition~2.12(a)]{AOV2012}. 

Next observe that $Z(\F) = Z(\F_0) = 1$, since $C_S(\F_0) = 1$.  As
$O_{p'}(G_0) = 1$, any central subgroup of $G_0$ must be a $p$-subgroup of $S$
central in $\F_0$, so it follows that $Z(G_0) = Z(\F_0) = 1$.  Now
\cite[Proposition~1.31(a)]{AOV2012} and \cite[Proposition~2.16]{AOV2012} apply
together to give that $\F$ is tamely realized by a finite group $G$ such that
$G_0 \norm G$ and with $G/G_0 \cong \L/\L_0$, a $p$-group.  Now $C_S(G_0) = 1$
again follows from $C_S(\F_0) = 1$. As $O_{p'}(G_0) = 1$ and $G/G_0$ is a
$p$-group, $O_{p'}(G) = 1$ giving $C_G(G_0) = 1$. Thus $G$ is isomorphic to a
subgroup of $\Aut(G_0)$ containing $\Inn(G_0) \cong G_0$. 
\end{proof}

We will apply Proposition~\ref{P:O^ptame} in the case where $\F_0 \cong
\F_2(L_2(q))$ for appropriate $q$. Hence we need to know

\begin{proposition}\label{P:l2qinL2}
Let $K \cong L_2(q)$ for $q \equiv \pm 1 \pmod{8}$.  Then $K \in
\mathfrak{G}(2)$. 
\end{proposition}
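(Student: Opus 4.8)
The plan is to verify directly that $K$ satisfies the defining conditions for membership in the class $\mathfrak{G}(2)$ of \cite{AOV2012}. Beyond the requirement $O_{2'}(K)=1$, which is immediate since $L_2(q)$ is simple, this asks for the vanishing of the higher limits $\varprojlim^{j}_{\O(\F^c)}(\Z_\F)$ for $j=1,2$, where $\F=\F_S(K)$ for $S\in\Syl_2(K)$ and $\Z_\F$ is the center functor $P\mapsto Z(P)$ on the orbit category of $\F$-centric subgroups. Since $q\equiv\pm 1\pmod 8$ forces $\nu_2(q^2-1)\geq 4$, the group $S$ is dihedral of order $2^k$ with $k=\nu_2(q^2-1)-1\geq 3$. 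The first step is to record $\F^{cr}$: by the proof of Lemma~\ref{L:fsmaxcl}, its members are $S$ together with the ($\F$-classes of) Klein four subgroups $V\leq S$, with $\Aut_\F(V)\cong S_3$ acting on $V\cong\mathbb{F}_2^2$ as the natural $\GL_2(2)$-module; every proper cyclic or maximal-class subgroup of $S$ of order at least $4$ has automorphism group a $2$-group and so is not $\F$-radical.

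Next I would appeal to the standard reduction (as used in \cite{AOV2012} and \cite{BCGLO2007}) expressing $\varprojlim^{*}_{\O(\F^c)}(\Z_\F)$ in terms of the $\F$-centric-radical subgroups and their outer automorphizers. The contribution of $S$ vanishes in positive degrees because $\Out_\F(S)$ is a $2$-group. The contribution of each four-subgroup $V$ is controlled by $H^{*}(S_3;\mathbb{F}_2^2)$, which vanishes in every degree: $H^0=(\mathbb{F}_2^2)^{S_3}=0$ because $\GL_2(2)$ is transitive on nonzero vectors, while restriction to a Sylow $2$-subgroup $C_2\leq S_3$ is injective on positive-degree cohomology and $\mathbb{F}_2^2$ is free as an $\mathbb{F}_2[C_2]$-module (the generating transvection is a single Jordan block), so $H^{j}(C_2;\mathbb{F}_2^2)=0$ for $j\geq 1$. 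Assembling these vanishings gives $\varprojlim^{j}_{\O(\F^c)}(\Z_\F)=0$ for all $j\geq 1$, in particular for $j=1,2$, so $K\in\mathfrak{G}(2)$.

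The main point requiring care is bookkeeping rather than a hard computation: matching the precise form of the definition of $\mathfrak{G}(2)$ in \cite{AOV2012} (which degrees enter, and the absence of any extra component-wise hypothesis for a simple group), and citing the correct vehicle for passing from the vanishing of the $H^{*}(\Out_\F(P);Z(P))$ to that of the higher limits over $\O(\F^c)$. Should \cite{AOV2012} already record that finite simple groups whose Sylow $2$-subgroups have sectional $2$-rank at most two lie in $\mathfrak{G}(2)$, the argument reduces to that citation and the computation above becomes unnecessary.
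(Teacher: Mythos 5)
The paper's own proof of this proposition is a one-line citation: Oliver proves in \cite[Proposition~7.5]{Oliver2006} that classical groups in odd characteristic lie in $\mathfrak{G}(2)$, and alternatively Propositions~4.2 and 4.6(b) of the same reference show that \emph{any} finite simple group of $2$-rank at most $3$ lies in $\mathfrak{G}(2)$. So the closing suspicion in your last paragraph is exactly on the mark --- the correct ``vehicle'' is a direct citation, with the hypothesis ``$2$-rank at most $3$'' rather than ``sectional $2$-rank at most two.''

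Your alternative, direct verification is a genuinely different route, and its shape is right, but there is a real gap in the key step rather than mere bookkeeping. The filtration that reduces $\varprojlim^*_{\O(\F^c)}(\mathcal{Z}_\F)$ to contributions from $\F$-conjugacy classes of centric-radical subgroups does not produce ordinary group cohomology $H^*(\Out_\F(P);Z(P))$; it produces the graded groups $\Lambda^*(\Out_\F(P);Z(P))$ of Jackowski--McClure--Oliver, which are higher limits of atomic functors on the orbit category $\O_2(\Out_\F(P))$. These do not agree with $H^*$ in general (for instance, $\Lambda^*(\Gamma;M)$ vanishes identically whenever $O_2(\Gamma)\neq 1$, regardless of what $H^*(\Gamma;M)$ is). Your computation that $H^*(S_3;\mathbb{F}_2^2)=0$ in all degrees is correct --- the module is free over a Sylow $2$-subgroup and has no $S_3$-fixed points --- but it is not what the filtration lemma delivers, and a separate argument or citation is needed for the vanishing of $\Lambda^*(S_3;\mathbb{F}_2^2)$. (That vanishing is in fact known and underlies the Broto--Levi--Oliver higher-limit computations for dihedral $2$-groups, so the conclusion is salvageable, but as written your reasoning does not establish it.) A second, smaller point: the definition of $\mathfrak{G}(p)$ in \cite{AOV2012} requires the vanishing of higher limits not only for $\mathcal{Z}_\F$ itself but for a family of associated quotient functors, so even after repairing the $\Lambda^*$ issue you would need to run the computation against the full list of conditions rather than just $j=1,2$ for the center functor. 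Given all this, the citation route the paper actually takes is both safer and shorter, and is what you should use.
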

\begin{proof}
It is shown in \cite[Proposition~7.5]{Oliver2006} that classical groups in odd
characteristic lie in $\mathfrak{G}(2)$. Alternatively, Propositions~4.2 and
4.6(b) of \cite{Oliver2006} show that any finite simple group of $2$-rank at
most $3$ lies in $\mathfrak{G}(2)$ from general considerations.
\end{proof}

\section{Structure of the involution centralizer}\label{S:centralizer}

In this section we lay the groundwork for the study of $\F$ as in
Theorem~\ref{T:main} by studying some consequences of
Proposition~\ref{P:O^ptame}, and fixing notation. 
First we record some information about $\Aut(L_2(q))$.
\begin{lemma}\label{L:l2qomnibus}
Fix an odd prime power $q$. Let $K$ be a finite group isomorphic to $L_2(q)$
and $P \in \Syl_2(K)$. Let $H = \Aut(K)$, and $T \in \Syl_2(H)$. Then
\begin{enumerate}[label=\textup{(\alph{*})}]
\item $H = K\gen{h}F$ with $h \in \mathcal{I}_2(T)$, $F$ is cyclic, and
$\Out(K) \cong \gen{h} \times F$, 
\item $P$ is dihedral of order $2^k$ where $\nu_2(q^2-1) = k+1$, 
\item $K\gen{h} \cong PGL_2(q)$ and $P\gen{h}$ is dihedral,
\item $F$ is isomorphic to the Galois group of $GF(q)$ inducing field
automorphisms on $K$,
\item if $F_T := T \cap F$, then $P\gen{h} \cap F_T = 1$, and
$|F_T| \leq 2^{k-2}$, 
\item all involutions of $P$ are $K$-conjugate as are all involutions
of $Ph$,
\item if $f$ is an involution in $F_T := T \cap F$, then $P\gen{f} = P
\times \gen{f}$, $q$ is a square, $C_K(f) \cong PGL_2(q^{1/2})$, and $P
\cap O^2(C_K(f))$ is dihedral of order $2^{k-1}$,  
\item if $f$ is an involution in $F_T := T \cap F$ and $\gen{z} = Z(P)$,
then $[f,h] = (fh)^2 = z$, $P\gen{fh}$ is semidihedral, and all involutions
of $T$ lie in $P \cup Ph \cup Pf$, and
\item no involution of $H$ centralizes $N_K(P)/O_{2'}(N_K(P))$ and
$N_K(V)/O_{2'}(N_K(V))$ for each four subgroup $V$ of $P$.
\end{enumerate}
\end{lemma}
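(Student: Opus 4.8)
The plan is to assemble the list (a)--(h) from the standard structure theory of $L_2(q)$ and its automorphism group, most of which can be found in \cite{Suzuki1982} or \cite{GLS6}. First I would establish (a)--(d). Write $q = p^a$ with $p$ an odd prime. The automorphism group of $K \cong L_2(q)$ is $\mathrm{P\Gamma L}_2(q) = \mathrm{PGL}_2(q)\rtimes\mathrm{Gal}(GF(q)/GF(p))$, the diagonal part having order $2$ and the field part being cyclic of order $a$; this gives (a) with $\langle h\rangle$ the diagonal class and $F$ the field class, and simultaneously gives (c) ($K\langle h\rangle\cong\mathrm{PGL}_2(q)$) and (d) (identification of $F$ with the Galois group). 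For (b), the Sylow $2$-subgroup of $L_2(q)$ is dihedral of order $2^{\nu_2(q^2-1)-1}$, and since in $\mathrm{PGL}_2(q)$ one gets the full dihedral group of order $\nu_2(q^2-1)$ sitting between $\mathrm{PSL}$ and $\mathrm{PGL}$, the claim that $P\langle h\rangle$ is dihedral follows; here $\nu_2(q^2-1)=k+1$ so $|P|=2^k$. Part (f) (all involutions in $P$ are $K$-conjugate, and all involutions of $Ph$ are $K$-conjugate) is the standard fact that $L_2(q)$ has a single class of involutions and $\mathrm{PGL}_2(q)$ has exactly two, the ``extra'' one being represented by an element of $Ph$; a maximal dihedral subgroup then contains representatives of all of them.

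Next I would handle the field-automorphism statements (e), (g), (h). For (e): a field automorphism of $2$-power order normalizes but does not lie in $\mathrm{PGL}_2(q)$, so $P\langle h\rangle\cap F_T=1$, and $|F_T|\le 2^{k-2}$ follows from comparing $\nu_2(a)$ (the $2$-part of the Galois group) with $k=\nu_2(q^2-1)-1$; concretely, if $q\equiv\varepsilon\pmod 4$ then $\nu_2(q-\varepsilon)$ already accounts for most of the $2$-part of $q^2-1$ and an easy $2$-adic estimate bounds $\nu_2(a)$. For (g): an involutory field automorphism $f$ forces $2\mid a$, so $q$ is a square; $C_K(f)\cong L_2(q^{1/2})$ extended by the diagonal is $\mathrm{PGL}_2(q^{1/2})$ by the standard description of centralizers of field automorphisms, and one may choose $P$ so that $f$ normalizes $P$ and centralizes a dihedral subgroup of order $2^{(k-1)+1}/2$; the relation $P\langle f\rangle=P\times\langle f\rangle$ is because $f$ acts trivially on $P$ when $P$ is chosen Galois-stable with entries in the prime field (up to conjugacy), and $P\cap O^2(C_K(f))$ is the Sylow $2$-subgroup of $L_2(q^{1/2})$, dihedral of order $2^{k-1}$ since $\nu_2((q^{1/2})^2-1)=\nu_2(q-1)$ or $\nu_2(q-1)\ge k$... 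I would pin down the exact exponent by the $2$-adic bookkeeping $\nu_2(q-1)+\nu_2(q+1)=k+1$. For (h): with $\langle z\rangle=Z(P)$, the product $fh$ of the field and diagonal involutions satisfies $(fh)^2=[f,h]=z$ by a direct matrix computation (the diagonal automorphism is realized by $\mathrm{diag}(\theta,1)$ for a nonsquare $\theta$, and conjugating by $f$ alters $\theta$ by a Galois twist, yielding the central involution), whence $P\langle fh\rangle$ is semidihedral of order $2^{k+1}$; the claim that every involution of $T$ lies in $P\cup Ph\cup Pf$ then follows from (a) together with the fact that $T/P$ is elementary abelian of rank $\le 2$ generated by the images of $h$ and $f$, so an involution of $T$ projects to $1$, $\bar h$, $\bar f$, or $\bar h\bar f$, and the last coset $Phf$ contains no involution since $P\langle fh\rangle$ is semidihedral (its only involutions lie in the dihedral maximal subgroup $P\langle\text{something}\rangle$, not in the coset $Pfh$).

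The main obstacle I anticipate is the bookkeeping of $2$-adic valuations needed to make (e), (g), (h) precise --- in particular pinning down that $|F_T|\le 2^{k-2}$ and that $P\cap O^2(C_K(f))$ has order exactly $2^{k-1}$ --- since these require carefully tracking how $\nu_2(q-1)$, $\nu_2(q+1)$, and $\nu_2(a)$ interact, and the choice of a Galois-stable Sylow $2$-subgroup $P$ inside a fixed maximal dihedral subgroup of $T$ so that all of $P$, $P\langle h\rangle$, $P\langle f\rangle$, $P\langle fh\rangle$ have the asserted isomorphism types simultaneously. Everything else is a matter of quoting the structure of $\mathrm{P\Gamma L}_2(q)$ and the conjugacy/centralizer facts for field and diagonal automorphisms from \cite{Suzuki1982} and \cite{GLS6}.
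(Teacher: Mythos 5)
The paper's proof of this lemma is a one-line citation: ``This is taken from Chapter~10, Lemma~1.2 of \cite{GLS6}.'' So you are not reconstructing the paper's argument --- you are supplying a proof from scratch that the paper deliberately outsources. Your overall plan (decompose $\Aut(K)\cong\mathrm{P}\Gamma\mathrm{L}_2(q)$ as $PGL_2(q)$ extended by the Galois group, then verify each part via the known conjugacy/centralizer structure of diagonal and field automorphisms plus lifting-the-exponent estimates on $\nu_2(q\pm 1)$ and $\nu_2(a)$) is sound and would work. This is genuinely more self-contained than the paper's approach; the trade-off is that it requires verifying a lot of folklore that GLS6 packages cleanly.

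Two points in your sketch need repair, though neither is fatal. First, in part~(h) you assert that ``$T/P$ is elementary abelian of rank $\le 2$ generated by the images of $h$ and $f$.'' This is false whenever $|F_T|>2$: one has $T/P\cong\gen{\bar h}\times\bar F_T$ with $\bar F_T$ cyclic of order $|F_T|$, which need not be elementary abelian. The conclusion you want is still true, but for a different reason: the only \emph{involutions} of the abelian group $\gen{\bar h}\times\bar F_T$ are $\bar h$, the unique involution $\bar f$ of $\bar F_T$, and $\bar h\bar f$, so any involution of $T$ lies in $P\cup Ph\cup Pf\cup Pfh$, and the last coset is then excluded because $P\gen{fh}$ is semidihedral with all its involutions inside the dihedral maximal subgroup. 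You should phrase it that way. Second, the $2$-adic bookkeeping in (g) is garbled as written (``$\nu_2((q^{1/2})^2-1)=\nu_2(q-1)$ or $\nu_2(q-1)\ge k$''); the clean statement is that for $q=q_0^2$ with $q_0$ odd one has $\nu_2(q_0^2+1)=1$, hence $\nu_2(q-1)=\nu_2(q_0^2-1)=\nu_2(q^2-1)-1=k$, so the Sylow $2$-subgroup of $L_2(q_0)$ has order $2^{k-1}$ exactly. With these fixes your argument closes the gaps you flagged as the ``main obstacle.''
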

\begin{proof}
For (a)-(h), see Chapter 10, Lemma~1.2 of \cite{GLS6}. Point (i) is well-known
and follows from (f)-(h).  Such an involution would have to lie in $C_T(P) =
\gen{f, z}$ where $f \in \I_2(F)$ and $z \in Z(P)$. By point (f) for $z$ and
points (f)-(h) for $f$ and $fz$, each involution in $C_T(P)$ is a noncentral
element in $N_H(V)$ for some four subgroup $V$ of $P$. 
\end{proof}

We are now in a position to prove Theorem~\ref{T:maingrp} assuming
Theorem~\ref{T:main}.
\begin{proof}[Proof of Theorem~\ref{T:maingrp}]
Let $\F = \F_S(G)$. We may assume $\gen{x}$ is fully $\F$-centralized by
choosing $S$ appropriately. Then $T := C_S(x)$ is a Sylow $2$-subgroup of $C =
C_G(x)$.  Set $P = T \cap K \in \Syl_2(K)$, a dihedral group of order $2^k$
with $k \geq 3$, and set $\K = \F_P(K)$. Put $Q = C_T(\K)$, $\bar{K} =
K/O(K)$, and $Q_0 = C_T(\bar{K})$. Then $Q_0$ is a Sylow $2$-subgroup of
$C_C(\bar{K})$, which is cyclic by hypothesis. 

We check the hypotheses of Theorem~\ref{T:main} for $\F$. First we note that
\begin{eqnarray}\label{E:Q=Q0cyclic}
    Q = Q_0 \mbox{ is cyclic }
\end{eqnarray}
by Lemma~\ref{L:centgrp}(b,c) and Lemma~\ref{L:l2qomnibus}(i) in the case $K
\cong L_2(q)$. A similar (but easier) argument establishes this claim in case
$K \cong A_7$.  Now \eqref{E:Q=Q0cyclic} implies (2) of Theorem~\ref{T:main} is
satisfied.

Next, with $\C = C_\F(x)$,
\begin{eqnarray}\label{E:QnormC}
Q = O_2(\C).
\end{eqnarray}
By \eqref{E:Q=Q0cyclic}, $C_\C(\K)$ is the fusion system of the $2$-group $Q$.
By \cite[9.8.2]{AschbacherGeneralized}, $\K = E(\C)$. Now
\cite[9.12]{AschbacherGeneralized} applies to give that $O_2(\C) =
F^*(C_\C(\K)) = Q$, as desired.

Next, we show
\begin{eqnarray}\label{E:Y=1}
O_2(\F) = 1.
\end{eqnarray}
Let $Y = O_2(\F)$ and suppose that $Y \neq 1$. Then $\Omega_1(Z(Y)) \norm \F$.
Hence $\Omega_1(Z(Y)) \cap T$ is (nontrivial) normal in $\C$; this follows for
instance from the Aschbacher-Stancu characterization of normality
\cite[Proposition~4.62, Theorem~5.29]{CravenTheory}. So $1 \neq \Omega_1(Z(Y))
\cap T \leq \Omega_1(Q) = \gen{x}$. Hence $N_S(T) \leq N_S(\Omega_1(Z(Y)) \cap
T) = T$, from which it follows that $S=T$.  Therefore $\Omega_1(Z(Y)) =
\gen{x}$ is strongly $\F$-closed.  This contradicts the $Z^*$-theorem and hence
$Y=1$.

As $\F = O^2(\F)$ by Puig's hyperfocal subgroup theorem \cite{Puig2000}, and
$\Baum(S) \leq C_S(x) =  T$ by assumption, $\F$ satisfies the
hypotheses of Theorem~\ref{T:main}.  Hence $S \cong D_{2^k}\wr C_2$ and $\F
\cong \F_2(L_4(q_1))$ for some $q_1 \equiv 3 \pmod{4}$. Now a theorem of David
Mason \cite[Theorems~1.1, 3.14]{Mason1973} shows that $O^{2'}(G/O(G))$ is
isomorphic with $L_4(q_2)$ for some $q_2 \equiv 3 \pmod{4}$ or with $U_4(q_3)$
for some $q_3 \equiv 1 \pmod{4}$. From the structure of involution centralizers
in $L_4(q_2)$ and $U_4(q_3)$ (see \cite[6.5.2, 6.5.15]{Suzuki1986}) it follows
that $q_2^2 = q$ and $q_3^2 = q$ in the respective cases. This completes the proof
of Theorem~\ref{T:maingrp}.
\end{proof}

We now begin work on the proof of Theorem~\ref{T:main}. The following
hypothesis simply extracts those of Theorem~\ref{T:main} and fixes some
notation. Recall that a perfect fusion system is, by definition, one which
is equal to its hyperfocal subsystem.

\begin{hypothesis}\label{H:main}
Suppose that $\F$ is a saturated fusion system on the $2$-group $S$ with $\F =
O^2(\F)$ and $O_2(\F) = 1$.  Assume $x$ is a fully $\F$-centralized involution
in $S$ with $\Baum(S) \leq C_S(x)$. Write $T = C_S(x)$ and $\C = C_\F(x)$, and
assume that $\K$ is a perfect normal subsystem of $\C$ based on a dihedral
group $P$ of order $2^k$.  Assume $Q = C_T(\K)$ is cyclic. Set $R = QP = Q
\times P$.
\end{hypothesis}

Unless otherwise specified, we assume for the remainder of this paper that $\F$
is a fusion system satisyfing Hypothesis~\ref{H:main}, and we adopt the notation
there. The next proposition allows us to choose a suitable realization of $\K$. 

\begin{proposition}\label{P:dihtame}
For fixed $k$, the subsystem $\K$ is uniquely determined up to isomorphism. Fix
a prime $p \equiv 5 \pmod{8}$, let $q = p^{2^{k-2}}$, and set $K = L_2(q)$.
Then $\K$ is strongly tamely realized by $K$.
\end{proposition}
\begin{proof}
Lemma~\ref{L:fsmaxcl}(a) shows there is a unique fusion system $\K$ on
$P$ with $\K = O^2(\K)$.  That $\K$ is tamely realized by $K$ is the content of
\cite[Proposition~4.3]{AOV2012}. Strong tameness follows from
Proposition~\ref{P:l2qinL2}.
\end{proof}

For the remainder, we fix an odd prime power $q \equiv \pm 1 \pmod{8}$ with
$\nu_2(q^2-1) = k+1$, and we fix $K$ strongly tamely realizing $\K$ as in
Proposition~\ref{P:dihtame}.

Since $\K$ is a normal subsystem of $\C$, for each $T_1 \leq T$ with $P \leq
T_1$ we may form the product $\K T_1$ as in Theorem~\ref{T:Aschppower}. Then
$O^2(\K T_1) = O^2(\K) = \K$.
\begin{proposition}\label{P:TK/Q}
For each $T_1 \leq S$ with $R \leq T_1$, the quotient $\K T_1/Q$ is isomorphic
to the $2$-fusion system of a subgroup of $\Aut(K)$ containing $K$. 
\end{proposition}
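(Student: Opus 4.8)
The plan is to identify $\bar{\F} := \K T_1/Q$ as the $2$-fusion system of a group lying between $K$ and $\Aut(K)$ by means of the tameness machinery, namely Proposition~\ref{P:O^ptame} applied with $G_0 = K = L_2(q)$. Since $Q = C_{T_1}(\K)$ is strongly $\K T_1$-closed by Theorem~\ref{T:C_S(E)}, the quotient $\bar{\F}$ is a saturated fusion system on $\bar{S} := T_1/Q$; I write $\theta\colon \K T_1 \to \bar{\F}$ for the quotient morphism and denote images by bars.

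The first step is to pin down $O^2(\bar{\F})$. By the remark following Theorem~\ref{T:Aschppower} (recalled just before the proposition) one has $O^2(\K T_1) = \K$, so $O^2(\bar{\F}) = \theta(\K) = \bar{\K}$ by Lemma~\ref{L:O^pbasic}(a). Since $Q \cap P = 1$, the morphism $\theta$ restricts to an isomorphism $P \cong \bar{P}$, hence to an isomorphism of fusion systems $\K \cong \bar{\K}$; in particular $O_2(\bar{\K}) = 1$, and $\bar{\K}$ is strongly tamely realized by $K = L_2(q)$ by Propositions~\ref{P:dihtame} and \ref{P:l2qinL2}, with $O_{2'}(K) = 1$ as $K$ is simple. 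It then remains to verify the hypothesis $C_{\bar{S}}(\bar{\K}) = 1$ of Proposition~\ref{P:O^ptame}. For this I would first observe that $C_{T_1}(\K) = Q$: the inclusion $\supseteq$ is part of Hypothesis~\ref{H:main} (as $R = Q \times P$), and for $\subseteq$ one uses the characterization of the centralizer in Theorem~\ref{T:C_S(E)} together with the facts that $\K \norm \C$, $\K \norm \K T_1$, and $\K T_1 \subseteq \C$, so that any subgroup of $T_1$ centralizing $\K$ in $\K T_1$ also centralizes $\K$ in $\C$ and hence lies in $C_T(\K) = Q$. Now Lemma~\ref{P:centralizermodOp}, applied to the normal subsystem $\K$ of $\K T_1$ with centralizer $Q$ and $Q \cap P = 1$, yields $C_{\bar{S}}(\bar{\K}) = 1$.

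With these points in place, Proposition~\ref{P:O^ptame} applies and shows that $\bar{\F} = \K T_1/Q$ is realized by a finite group $G$ with $\Inn(K) \leq G \leq \Aut(K)$; identifying $\Inn(K) \cong K$ (valid since $Z(K) = 1$), this is exactly the assertion. The step demanding real care — and the main obstacle — is the centralizer bookkeeping behind $C_{\bar{S}}(\bar{\K}) = 1$: one must confirm both that passing from $T$ to $T_1$ does not enlarge the centralizer of $\K$, so that it is legitimate to quotient by precisely $Q$, and that the argument of Lemma~\ref{P:centralizermodOp} furnishes triviality of the centralizer in $\bar{S}$ of the image of the normal subsystem $\bar{\K} = O^2(\bar{\F})$, rather than merely of the centralizer of the whole quotient $\bar{\F}$ — which is what the proof of that lemma in fact produces. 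The remaining ingredients — identifying $O^2$, the isomorphism $\K \cong \bar{\K}$, and the strong tameness of $L_2(q)$ — are routine given the preparatory results.
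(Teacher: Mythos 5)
Your proposal matches the paper's own proof step for step: verify the hypotheses of Proposition~\ref{P:O^ptame} with $G_0 = K$, using Lemma~\ref{L:O^pbasic} for the $O^2$ identification, Propositions~\ref{P:dihtame} and \ref{P:l2qinL2} for strong tameness, and Lemma~\ref{P:centralizermodOp} for triviality of the centralizer. Two remarks on your flagged concerns: $Q \leq C_{T_1}(\K)$ is not immediate from $R = Q \times P$, since centralizing $P$ as a group is weaker than centralizing $\K$ as a fusion system and the needed extensions of $\K$-morphisms fixing $Q$ must lie in $\K T_1$ rather than merely in $\C$ --- this follows, for example, from the identification $\K R = \F_Q(Q) \times \K$ via Theorem~\ref{T:Aschppower}(c), and the paper's proof in fact glosses over this point entirely; and you are right that the \emph{proof} of Lemma~\ref{P:centralizermodOp} (which sets $\bar{Q}_1 = C_{\bar{S}}(\bar{\E})$ and derives a contradiction) furnishes $C_{\bar{S}}(\bar{\K}) = 1$, exactly as Proposition~\ref{P:O^ptame} requires, even though the lemma's stated conclusion could be read as the weaker triviality of the centralizer of the entire quotient system.
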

\begin{proof}
We verify the hypotheses of Proposition~\ref{P:O^ptame} with $\F = \K T_1/Q$
and $\F_0 = \K Q/Q \cong \K$.  Denote quotients by $Q$ with bars. By
Proposition~\ref{P:dihtame}, $\K$ is strongly tamely realized by $K$ and
$O_{2'}(K) = 1$.  By Lemma~\ref{L:O^pbasic}, we have that $\bar{\K} =
O^2(\bar{\K T_1})$.  Since $Q \cap P = 1$, $C_{\bar{T}_1}(\bar{\K T_1}) = 1$ by
Lemma \ref{P:centralizermodOp}. Therefore $\K T_1/Q$ is the $2$-fusion system
of a subgroup of $\Aut(K)$ containing $\Inn(K) \cong K$.
\end{proof}

\begin{lemma}\label{L:felements}
Let $f \in T$ be an involution. If $f \in C_T(P)$ but $f \nin R$, then
\begin{enumerate}[label=\textup{(\alph{*})}]
\item $\K P\gen{f}$ is the fusion system of $K\gen{f}$ where
$f$ is an involutory field automorphism of $K$, and
\item if $V$ is a four subgroup of $P$, there exists a unique $i \in
\set{0,1}$ such that $C_\C(fz^i)$ contains $\Aut_{\K}(V)$. In this case, if
$V'$ is another four subgroup of $P$ not $P$-conjugate to $V$, then
$C_\C(fz^{1-i})$ contains $\Aut_{\K}(V')$.
\end{enumerate}
\end{lemma}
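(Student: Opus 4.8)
The statement to prove is Lemma~\ref{L:felements}, concerning an involution $f \in C_T(P) \setminus R$. Part (a) should follow essentially immediately from Proposition~\ref{P:TK/Q} together with Lemma~\ref{L:l2qomnibus}: applying Proposition~\ref{P:TK/Q} with $T_1 = R\gen{f}$ shows that $\K R\gen{f}/Q$ is the $2$-fusion system of a subgroup $H_1$ of $\Aut(K)$ containing $\Inn(K)$, and since $f$ centralizes $P$ but does not lie in $R = Q \times P$, its image $\bar f$ is a nontrivial involution of $H_1/\Inn(K) \cong \Out(K)$ centralizing a Sylow $2$-subgroup $\bar P$ of $K$. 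By Lemma~\ref{L:l2qomnibus}(h), the only involutions of a Sylow $2$-subgroup of $\Aut(K)$ centralizing $\bar P$ and lying outside $\bar P\gen{\bar h}$ are in the coset $\bar P \bar f$ with $\bar f$ an involutory field automorphism; parts (e),(c) of that lemma rule out the $\bar P\gen{\bar h} \cong PGL_2(q)$ possibilities since those do not centralize $\bar P$. So $\bar f$ is an involutory field automorphism, giving (a) after noting that $\K P\gen{f}$ is the restriction of $\K R\gen{f}$ to $P\gen{f}$ and corresponds to $K\gen{f} \le \Aut(K)$.

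For part (b), I would work inside $\K P\gen{f} = \F_{P\gen{f}}(K\gen{f})$ from (a) and use Lemma~\ref{L:l2qomnibus}(g): for an involutory field automorphism $f$, $C_K(f) \cong PGL_2(q^{1/2})$ and $P \cap O^2(C_K(f))$ is dihedral of order $2^{k-1}$. The four subgroups of $P$ fall into exactly two $P$-conjugacy classes (this is where $\Out(D)$ acting on the two classes of four-subgroups, Lemma~\ref{L:autD}(a), enters implicitly), and the key computation is to see which four subgroup $V \le P$ has $\Aut_{\K}(V)$ contained in $C_{\K P\gen{f}}(f)$ versus $C_{\K P\gen{f}}(fz)$. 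Concretely, $\Aut_{\K}(V) \cong S_3$ is realized in $K$ by $N_K(V)$, and $\Aut_{\K}(V) \le C_\C(fz^i)$ precisely when the order-$3$ element of $N_K(V)$ commutes with $fz^i$ — equivalently, when $fz^i$ lies in $C_K(V)$ and is centralized by the order-$3$ automorphism of $V$. Since $f$ centralizes $V$ (it centralizes all of $P$) but $z$ is the nontrivial central element, exactly one of $f, fz$ is inverted (or moved) by the order-$3$ map acting on $Vz^{?}$... more precisely one checks that the order-$3$ element of $N_K(V)$ centralizes $\langle V, f\rangle$ or $\langle V, fz\rangle$ but not both, because $z \in V$ would force a $3$-element to fix $z$, contradicting transitivity on $V^\#$; so exactly one coset representative works, giving existence and uniqueness of $i$. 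For the last clause, if $V'$ is in the other $P$-class, I expect to use that $V$ and $V'$ together with $z$ generate $\Omega_1$ of the relevant subgroup and that the "twisting" element $h$ (or the structure of $\Out(D)$) swaps the roles, forcing $C_\C(fz^{1-i})$ to contain $\Aut_\K(V')$ rather than $C_\C(fz^i)$.

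\textbf{The main obstacle.} The delicate point is the bookkeeping in (b): pinning down \emph{which} of $fz^0, fz^1$ pairs with which $P$-class of four-subgroups, and in particular establishing that the pairing is "orthogonal" — the same index $i$ cannot work for both classes. I would handle this by an explicit matrix computation in $K = L_2(q)$ (or in $SL_2(q^{1/2}) \le SL_2(q)$ via the field automorphism), choosing coordinates so that $P$ is the standard dihedral Sylow subgroup, $z = -I$, $V = \{\pm I, \pm \operatorname{diag}(\lambda, \lambda^{-1})\text{-type}\}$ a four subgroup with its $S_3$-normalizer visible, and $f$ the Frobenius map; then the assertion becomes the statement that an order-$3$ element of $PGL_2(q^{1/2})$ cyclically permuting the three involutions of $V/\langle z\rangle \cdot$ (appropriate coset) commutes with exactly one of the two lifts, which is a finite check. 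The transitivity facts from Lemma~\ref{L:l2qomnibus}(f),(g) do most of the conceptual work; the rest is a matter of not losing track of $z$. I expect (a) to be short and (b) to require a half page of careful but routine $L_2$-arithmetic.
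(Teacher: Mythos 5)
Your approach to part~(a) is essentially the paper's: apply Proposition~\ref{P:TK/Q} (via the bijection of Theorem~\ref{T:Aschppower}(c)) to identify $\K P\gen{f}$ modulo $Q$ with the $2$-fusion system of an extension of $K$ by an involution centralizing a Sylow $2$-subgroup, then read off from Lemma~\ref{L:l2qomnibus} that this extension must be $K\gen{f}$ with $f$ an involutory field automorphism.

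For part~(b) you have the correct ingredients but in a less crystallized form. Your uniqueness argument --- that the order-$3$ element of $N_K(V)$ cannot commute with both $f$ and $fz$ because it would then centralize $z = f\cdot fz$, contradicting transitivity on $V^\#$ --- is sound and arguably more explicit than what the paper says (the paper simply extracts uniqueness of the $P$-class from Lemma~\ref{L:l2qomnibus}(g)). But for the ``orthogonal pairing'' you hedge: you gesture at ``the twisting element $h$ or the structure of $\Out(D)$ swapping roles'' and then propose an explicit matrix computation in $L_2(q)$ as a fallback. The paper dispenses with any computation by making the swapping precise: conjugation $c_h$ by the diagonal element $h$ (with $P\gen{h} \cong$ Sylow of $PGL_2(q)$) is a fusion-preserving automorphism of $\K P\gen{f}$ that \emph{simultaneously} interchanges the two $P$-classes of four-subgroups (Lemma~\ref{L:l2qomnibus}(c)) and interchanges $f$ and $fz$ (because $[f,h]=z$, Lemma~\ref{L:l2qomnibus}(h)). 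The single observation that these two swaps are induced by the \emph{same} automorphism is what forces the opposite index to serve the opposite class, no further bookkeeping required. If you pin down that $c_h$ is the right abstract automorphism and cite Lemma~\ref{L:l2qomnibus}(c,h) for its two properties, your argument closes cleanly without the $L_2$-arithmetic you were anticipating.
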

\begin{proof}
By Theorem~\ref{T:Aschppower}(c) the set of saturated subsystems of $\K T$ is
in one-to-one correspondence with the set of subgroups of $T$ containing $P$
via the bijection $X \mapsto \K X$. Factoring by $Q$ induces an isomorphism of
$\K P\gen{f}$ with the fusion system of an extension of $K$ (by
Proposition~\ref{P:TK/Q}) containing an involution outside $K$ centralizing a
Sylow $2$-subgroup of $K$. This is unique and the required extension by
Lemma~\ref{L:l2qomnibus}, proving (a).

Now by Lemma~\ref{L:l2qomnibus}(g), $C_{\K P\gen{f}}(f)$ contains $\Aut_\K(U)$
for each $U$ in some unique $P$-class of four-subgroups in $P$. Furthermore,
there is an (abstract) $\K P\gen{f}$-fusion preserving isomorphism $c_h$ of
$P\gen{f}$ which swaps the $P$-classes of four-subgroups of $P$ and interchanges
$f$ and $fz$ by Lemma~\ref{L:l2qomnibus}(c,h). So either $\Aut_{\K}(V)$ or
$\Aut_{\K}(V')$ is contained in $C_{\K P\gen{f}}(f)$, and the other $\K$-automorphism
group is contained in $C_{\K P\gen{f}}(fz)$. 
\end{proof}

We write $\K\gen{f}$ in place of $\K P\gen{f}$, for brevity.  In view of
Lemma~\ref{L:felements}, we also make the following definition in the situation
of Hypothesis~\ref{H:main}. 
\begin{definition}\label{D:felement}
We say that an involution $f \in T$ is an $f$-\emph{element} on $\K$ if $f \in
C_T(P)$ but $f \nin R$.
\end{definition}

Viewing $\bar{T} = T/Q$ as a subgroup of the Sylow $2$-subgroup of $\Aut(K)$
via Proposition~\ref{P:TK/Q}, let $\bar{F}$ denote the intersection of
$\bar{T}$ with group of field automorphisms of $\Aut(K)$, and let $F$ be the
preimage in $T$ of $\bar{F}$.  Also, let $F_1$ be the preimage in $T$ of
$\Omega_1(\bar{F})$. Hence, $F_1 \cap R = Q$, $|F_1\colon Q| = 1$ or $2$, and
$[F_1, P] \leq Q \cap P = 1$ by Lemma~\ref{L:l2qomnibus}(g). So $C_T(P) =
F_1Z(P)$ by Lemma~\ref{L:l2qomnibus}(c). 

Let $R_d$ denote the largest subgroup of $T$ containing $R$ for which $R_d/Q$
is dihedral. Thus $R_d$ contains $R$ with index $1$ or $2$.
Proposition~\ref{P:TK/Q} and Lemma~\ref{L:l2qomnibus} give that $\K R_d/Q$ is
the $2$-fusion system of $L_2(q)$ or $PGL_2(q)$, respectively.

\begin{lemma}\label{L:2rank3or4}
Assume Hypothesis~\ref{H:main}. If $T$ contains an $f$-element on $\K$, then
$T$ \textup{(}and hence $S$\textup{)} has $2$-rank $4$ and $J(S) = J(RF_1)$.
Otherwise $T$ \textup{(}and hence $S$\textup{)} has $2$-rank $3$.
\end{lemma}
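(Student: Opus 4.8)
The plan is to pass from $S$ to $T$ using Hypothesis~\ref{H:main}, and then to read off the $2$-rank and the Thompson subgroup of $T$ from the known structure of a Sylow $2$-subgroup of $\Aut(L_2(q))$. First, since $\Omega_1(Z(J(S)))$ is central in $J(S)$ we have $J(S) \leq C_S(\Omega_1(Z(J(S)))) = \Baum(S) \leq T$, so every elementary abelian subgroup of $S$ of maximal rank already lies in $T$; hence $S$ and $T$ have the same $2$-rank and $J(S) = J(T)$, and it suffices to work inside $T$. Next, by Proposition~\ref{P:TK/Q} (with $T_1 = T$) I would identify $\bar T := T/Q$ with a subgroup of a Sylow $2$-subgroup $\bar H$ of $\Aut(K)$, $K = L_2(q)$, so that the image $\bar P$ of $P$ is $\bar H \cap K$, a Sylow $2$-subgroup of $\Inn(K)$. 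By Lemma~\ref{L:l2qomnibus}, $\bar H$ then has a normal subgroup $\bar P\langle\bar h\rangle \cong D_{2^{k+1}}$ (a Sylow $2$-subgroup of $PGL_2(q)$) with $\bar H/\bar P\langle\bar h\rangle$ cyclic, $\bar f$ centralizes $\bar P$, and every involution of $\bar H$ lies in $\bar P \cup \bar P\bar h \cup \bar P\bar f$.

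Since a dihedral group has $2$-rank $2$ and a cyclic group $2$-rank $1$, $\bar H$, hence $\bar T$, has $2$-rank at most $3$; as $Q$ is cyclic with $\Omega_1(Q) = \langle x\rangle$, for any elementary abelian $E \leq T$ we get $\rank(E) = \rank(E \cap Q) + \rank(EQ/Q) \leq 4$, with equality forcing $x \in E$ and $EQ/Q$ of rank $3$. The technical heart is then a description of a rank-$3$ elementary abelian subgroup $\bar E \leq \bar T$. From $\rank(\bar E \cap \bar P\langle\bar h\rangle) \leq 2$ and $\rank(\bar E\,\bar P\langle\bar h\rangle/\bar P\langle\bar h\rangle) \leq 1$ one sees $\bar W := \bar E \cap \bar P\langle\bar h\rangle$ is a four-group, $\bar E = \langle\bar W, \bar e\rangle$ for an involution $\bar e \notin \bar P\langle\bar h\rangle$, and $\bar e \in \bar P\bar f$ by the list above. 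I would first show $\bar W \leq \bar P$: otherwise $\bar W = \langle\bar z, \bar r\rangle$ with $\bar r$ an involution in $\bar P\bar h$, and then $\bar r\bar e$ is a nontrivial involution in the coset $\bar P\bar h\bar f$, which is disjoint from $\bar P \cup \bar P\bar h \cup \bar P\bar f$ — contradicting Lemma~\ref{L:l2qomnibus}(h). Hence $\bar W = V$ is a four-group in $\bar P$; writing $\bar e = \bar p\bar f$ with $\bar p \in \bar P$, the relations $\bar e^2 = 1$, $[\bar e, V] = 1$, together with $[\bar f, \bar P] = 1$ and $C_{\bar P}(V) = V$ (valid as $k \geq 3$), force $\bar p \in V$. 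So $\bar E \leq V\langle\bar f\rangle \leq \bar P\langle\bar f\rangle$, and in particular $\bar f \in \bar E$.

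It then remains to combine these facts. If $T$ contains an $f$-element $f$, then since $x \in Z(T)$, $x \notin P$, $f \notin R$, and $[f,P] = 1$, we have $\langle x, f\rangle \times V \cong C_2^4 \leq T$ for any four-group $V \leq P$, so $T$ has $2$-rank $4$. The existence of an $f$-element also gives $\bar F \neq 1$, so the image of $RF_1 = PF_1$ in $\bar T$ is $\bar P\langle\bar f\rangle$; by the preceding paragraph every rank-$4$ (hence maximal) elementary abelian of $T$ has image in $\bar P\langle\bar f\rangle$, hence lies in $RF_1$, and since $RF_1$ itself has $2$-rank $4$ the maximal elementary abelian subgroups of $T$ and of $RF_1$ coincide; thus $J(T) = J(RF_1)$ and $J(S) = J(RF_1)$. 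If $T$ has no $f$-element, then it cannot have $2$-rank $4$: a rank-$4$ elementary abelian $E$ would by the preceding paragraph contain an involution with image $\bar f$, which then lies in $F_1 \setminus Q$ and centralizes $P$ (since $[F_1, P] = 1$), i.e.\ an $f$-element, a contradiction. Hence $T$ has $2$-rank exactly $3$ (it is at least $3$ since $R = Q \times P \leq T$), and so does $S$.

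The step I expect to be the main obstacle is the claim $\bar W \leq \bar P$: it is exactly the assertion that no involutory field automorphism of $L_2(q)$ centralizes an involution of $PGL_2(q)$ outside $L_2(q)$, and a clean treatment seems to require the precise list in Lemma~\ref{L:l2qomnibus}(h) of which cosets of $\bar P$ in $\bar H$ contain involutions, together with the observation that the ``mixed'' coset $\bar P\bar h\bar f$ is not one of them. The remaining verifications (that $\bar P$ and $\bar P\langle\bar h\rangle$ are normal in $\bar H$ with the stated quotients, that the identification in Proposition~\ref{P:TK/Q} carries $P$ to $\bar H \cap K$, and the elementary rank arithmetic) are routine.
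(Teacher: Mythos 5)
Your proposal is correct and uses the same essential ingredients as the paper: the reduction $J(S) \leq \Baum(S) \leq T$, the identification of $\bar T = T/Q$ inside a Sylow $2$-subgroup of $\Aut(L_2(q))$ via Proposition~\ref{P:TK/Q}, and, as the crucial step, Lemma~\ref{L:l2qomnibus}(h) to exclude involutions from the coset $\bar P\bar h\bar f$. The paper packages the last step a bit differently --- rather than decomposing $\rank(E) = \rank(E\cap Q) + \rank(\bar E)$ and analyzing the four-subgroup $\bar W = \bar E \cap \bar P\langle\bar h\rangle$, it shows directly that any involution $h_1 \in T - RF$ has $C_T(h_1)$ of $2$-rank $3$ (because $C_R(h_1)$ has rank $2$ and $h_1$ centralizes nothing in $RF-R$), so rank-$4$ elementary abelians are trapped in $RF$ and hence in $RF_1$; the two arguments are equivalent and of comparable length.
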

\begin{proof}
As remarked previously, $C_T(P) = F_1Z(P)$. Thus, there is an $f$-element on
$\K$ in $T$ if and only if $F_1 \neq Q$ and $F_1$ splits over $Q$.  If $h_1 \in
\I_2(T-RF)$, then $C_{R}(h_1)$ is of $2$-rank $2$ by
Lemma~\ref{L:l2qomnibus}(c).  Lemma~\ref{L:l2qomnibus}(h) shows that $h_1$
centralizes no involution in $RF-R$.  Hence, $C_T(h_1)$ is of $2$-rank $3$.
Suppose $f$ is an $f$-element of $T$. Then as $f \in C_T(P)$ the $2$-rank of
$RF$ is $4$. Hence, $J(T) = J(RF) = J(RF_1)$ in this case, and $T$ is of
$2$-rank $4$.  If $T$ contains no $f$-element, then $J(RF) = J(R)$ is of
$2$-rank $3$, and so $T$ is of $2$-rank $3$ as well.  Since $J(S) \leq \Baum(S)
\leq T$, we have $J(S) = J(T)$, and the $2$-rank of $S$ is the $2$-rank of $T$.
\end{proof}

\section{The $2$-central case}\label{S:2central}

We begin now the heart of the analysis of a fusion system $\F$ satisfying
Hypothesis~\ref{H:main}.  The objective of the current section is to handle the
case in which $x$ lies in the center of $S$, i.e. in which $S = T$. We assume
$S = T$ throughout this section.  Eventually, in
Proposition~\ref{P:2centralfinal}, we will reach the conclusion that there is
no such $\F$. This section and the next are modeled on the treatment in
\cite{GLS6}, in particular Proposition~3.4 of Chapter~2 and Section~12 of
Chapter~3 there.

Adopt the notation of Section~\ref{S:centralizer} and in particular of
Hypothesis~\ref{H:main}.  Thus $\C = C_\F(x)$, $\K$ is a component of $\C$ on
the dihedral group $P$, $T = C_S(x)$ and $Q = C_T(\K)$. Recall the definition
of $f$-element from Definition~\ref{D:felement} and the definitions of $R_d$
and $F$.  Also set $Z(P) = \gen{z}$, $T_0 = \Omega_1(T)$, and $Z =
\Omega_1(Z(T_0))$.  Directly from the definition of the centralizer and the
fact that $\K$ has a single class of involutions, we have
\begin{align}
\text{all involutions of $P^\#x$ are $\C$-conjugate}
\label{E:allPxconjugate}
\end{align}
We begin with two lemmas which apply throughout this section, after which we
state the main technical result of the present case.
\begin{lemma}\label{L:zweaklyclosedinZ0}
$z$ is weakly $\F$-closed in $Z$. 
\end{lemma}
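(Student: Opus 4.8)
The plan is to identify the subgroup $Z$ well enough that $z$ becomes distinguished in it by an $\F$-invariant property, reduce the assertion to the statement that $\Aut_\F(Z)$ fixes $z$, and then dispose of the residual $\F$-fusion inside $Z$. First I would record that $z \in Z$ and $Z \cap P = \langle z\rangle$. Since the dihedral group $P$ is generated by its involutions, $P = \Omega_1(P) \leq \Omega_1(T) = T_0$; hence $Z = \Omega_1(Z(T_0))$ centralizes $P$, so $Z \leq C_T(P) = F_1\langle z\rangle$, and as $F_1 \cap P \leq F_1 \cap R = Q$ while $Q \cap P = 1$, this already gives $Z \cap P \leq \langle z\rangle$. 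To see $z \in Z$: the image of $z$ generates $Z(R_d/Q)$, so for any $t \in R_d$ the image of $z^t$ in $R_d/Q$ equals that of $z$, and since $z^t$ is an involution this forces $z^t \in \{z, xz\}$; but $z^t = xz$ would make $z$ and $xz$ $\C$-conjugate, which is impossible because $P$ is strongly $\C$-closed (it is the support of $\K$) and $xz \notin P$. Thus $z \in Z(R_d)$; since $z$ also centralizes $F_1 \leq C_T(P)$ and $T_0 \leq R_dF_1$ by Lemma~\ref{L:l2qomnibus}(h), $z$ centralizes $T_0$, so $z \in \Omega_1(Z(T_0)) = Z$ and $Z \cap P = \langle z\rangle$.

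Since we are in the $2$-central case $S = T$, the subgroup $Z = \Omega_1(Z(\Omega_1(S)))$ is characteristic in $S$, hence weakly $\F$-closed. By Burnside's fusion theorem (Lemma~\ref{L:BurnsideLemma}), every $\F$-morphism between subgroups of the abelian group $Z$ is the restriction of an element of $\Aut_\F(Z)$, so it suffices to prove that $\Aut_\F(Z)$ fixes $z$. The stabilizer of $x$ in $\Aut_\F(Z)$ is exactly $\Aut_\C(Z)$, and it fixes $z$: it preserves $Z \cap P = \langle z\rangle$, again because $P$ is strongly $\C$-closed and every automorphism of the dihedral group $P$ fixes $Z(P) = \langle z\rangle$. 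So the whole content of the lemma is that no $\F$-automorphism of $Z$ moves $z$; since $Z$ is elementary abelian of rank at least $2$ (it contains $x$ and $z$), the only danger is an $\F$-automorphism of $Z$ that also moves $x$.

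The main obstacle, then, is to rule out that $z$ is $\F$-conjugate to $x$; once that is done, the possibility $z \sim_\F xz$, and that of any remaining involution of $Z \leq F_1\langle z\rangle$, should fall out from \eqref{E:allPxconjugate} and the fact that $\C$ preserves the strongly closed subgroup $Q$ (Theorem~\ref{T:C_S(E)}). To exclude $z \sim_\F x$ I would pass to a fully $\F$-centralized $\F$-conjugate $z^*$ of $z$; then $|C_S(z^*)| = |C_S(x)| = |S|$ forces $z^* \in Z(S)$ and $C_\F(z^*) \cong C_\F(x) = \C$, so $C_\F(z^*)$ has a component supported on a dihedral group of order $2^k$. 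On the other hand $z$ is an involution of the component $\K$ of $\C$, so $C_\K(z)$ is the $2$-fusion system of $C_{L_2(q)}(z)$, a dihedral group with trivial layer; since $F^*(\C) = \F_Q(Q)\times\K$, an $E$-balance argument then shows $C_\C(z) = C_\F(\langle x,z\rangle)$ has trivial layer, while a second application of $E$-balance to the commuting involutions $x$ and $z$ relates the layer of $C_\F(z^*)$ to that of $C_\C(z)$; reconciling these bounds — and in particular keeping track of whether $x$ induces an inner or a field automorphism on the purported component — produces the contradiction. This last piece of centralizer bookkeeping is the delicate step; the analogous group-theoretic argument is carried out in Chapter~2, Proposition~3.4 and Chapter~3, Section~12 of \cite{GLS6}, on which this section is modeled, and I would follow that template.
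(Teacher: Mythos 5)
Your proposal takes a genuinely different route from the paper's proof, and it is not complete. The paper avoids component-theoretic machinery entirely: having invoked Lemma~\ref{L:BurnsideLemma} with $T_0=\Omega_1(T)$, it assumes $z$ moves, extracts an odd-order $\phi\in\Aut_\F(T_0)$ with $z^\phi\neq z$, and sets $P_i=P^{\phi^i}$. Because $\phi$ has odd order, $z, z^\phi, z^{\phi^2}$ are distinct, so $Z(P_0),Z(P_1),Z(P_2)$ pairwise intersect trivially; a short argument (a nontrivial intersection $P_i\cap P_j$ would force $Z(P_i)=Z(P_j)$) then gives $P_0P_1P_2\cong P_0\times P_1\times P_2$, which has $2$-rank at least $6$, contradicting Lemma~\ref{L:2rank3or4}. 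Your version instead reduces to $\Aut_\F(Z)$ fixing $z$, correctly observes that $\mathrm{Stab}_{\Aut_\F(Z)}(x)=\Aut_\C(Z)$ fixes $z$, and then aims to exclude $z\sim_\F x$ by a layer/$E$-balance analysis. That exchanges a one-paragraph rank computation for the heaviest tool in the component-type toolbox, and it imports exactly the kind of centralizer bookkeeping that the Baumann hypothesis (3) was designed to let the paper sidestep.

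The more serious issue is that the crucial step is left unproved. You write that ``reconciling these bounds $\ldots$ produces the contradiction'' and defer to the GLS template, but that reconciliation \emph{is} the lemma; without it your argument establishes nothing. Moreover, your reduction has a second gap: even granting $z\not\sim_\F x$ and $z\not\sim_\F xz$, the subgroup $Z\leq F_1\langle z\rangle$ may contain $f$-elements (at this point in Section~\ref{S:2central}, before Proposition~\ref{P:2central}, their existence is not excluded), so you would also have to rule out $z\sim_\F f$, $z\sim_\F fx$, etc.; the sentence claiming these ``should fall out from \eqref{E:allPxconjugate} and the fact that $\C$ preserves $Q$'' does not actually do so — \eqref{E:allPxconjugate} concerns $\C$-fusion in $Px$, whereas the automorphisms you must control are precisely the ones \emph{not} in $\C$. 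If you want to run a centralizer-style argument here, you would have to close both of these holes; the paper's rank argument renders the whole enterprise unnecessary.
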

\begin{proof}
As $T_0 = \Omega_1(T)$ is weakly $\F$-closed in $T$, $\Aut_\F(T_0)$ controls
fusion in $Z = \Omega_1Z(T_0)$ by Lemma~\ref{L:BurnsideLemma}.  Suppose $z$ is
not weakly $\F$-closed in $Z$, and let $\phi \in \Aut_\F(T_0)$ such that $z
\neq z^\phi \in Z$. We have $P \norm T$, so $z \in Z(T)$. Thus, we may take
$\phi$ to be of odd order. Set $P_i = P^{\phi^i}$ for $i \in {\bf Z}_{\geq 0}$.
As $P \norm T$ and $P_i = \Omega_1(P_i)$ for all $i$, we have $P_i \norm T_0$
for all $i$. Then
\begin{eqnarray}
Z(P_i) \cap Z(P_j) = 1 \mbox{ for each } i \neq j \in \set{0,1,2}
\label{E:centersintersection}
\end{eqnarray}
because $\phi$ has odd order and $z^\phi \neq z$. Furthermore, $Z(P_i) \leq
Z$ for all $i$.

Let $i \neq j \in \set{0,1,2}$, and suppose that $P_i \cap P_j \neq 1$. As $P_i
\cap P_j \norm P_j$, $Z(P_j) \leq P_i \cap P_j$. Hence $Z(P_j) \leq P_i \cap Z
= Z(P_i)$, contrary to \eqref{E:centersintersection}. As $[P_i, P_j] \leq P_i
\cap P_j$, we have $P_0P_1P_2 \cong P_0 \times P_1 \times P_2$. But $T$ has
$2$-rank at most $4$ by Lemma~\ref{L:2rank3or4}, so this is a contradiction. 
\end{proof}

\begin{lemma}\label{L:zweaklyclosedviaP}
If $\phi \in \Hom_\F(P,T)$, then $z^\phi = z$. 
\end{lemma}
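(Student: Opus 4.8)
The plan is to reduce the statement about an arbitrary morphism $\phi \in \Hom_\F(P,T)$ to the weak closure statement already proved in Lemma~\ref{L:zweaklyclosedinZ0}. The key observation is that $z^\phi$ is an involution lying in $P^\phi$, and since $P^\phi$ is a dihedral group with a unique subgroup of order $2$ that is normal (namely its center), I would first pin down $z^\phi$ as a distinguished element of $P^\phi$. Concretely: $P^\phi \leq T$, and since $\langle z \rangle = Z(P)$ is characteristic in $P$, we get $\langle z^\phi \rangle = Z(P^\phi)$. So it suffices to show that the center of any $\F$-conjugate of $P$ inside $T$ equals $\langle z\rangle$.

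Here is the route I would take. First I would show $z^\phi \in Z$, where $Z = \Omega_1(Z(T_0))$ and $T_0 = \Omega_1(T)$. Since $P \norm T$ and $P = \Omega_1(P)$ consists of elements of order dividing $2$ together with the cyclic part — more precisely $P \leq T_0$ once we note $\Omega_1(P)$ generates $P$ only when $k=2$; in general $P \not\leq T_0$. So I need to be a bit more careful: instead, apply $\phi$ to the center directly. We have $z^\phi \in Z(P^\phi)$ and $P^\phi \norm$ some subgroup; better, I would argue that $z \in Z(T)$ is false in general, but $z$ centralizes $P$ and more. The cleanest approach: since $x \in Z(T)$ and $\C = C_\F(x)$, every morphism in $\Hom_\F(P,T)$ that I want to control should be related back to $\C$. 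But $\phi$ need not be a $\C$-morphism. So instead I would use Alperin-type reasoning together with Lemma~\ref{L:zweaklyclosedinZ0}: extend $\phi$ (after composing with a suitable map to a fully centralized conjugate) using the extension axiom to a morphism on $C_S(P)P \geq \langle x \rangle P$, pushing $x$ to an $\F$-conjugate of $x$; then analyze where $x$ and $z$ go together. Since $\langle x, z\rangle \leq \langle x \rangle \times \langle z \rangle \leq Z(R)$ and $z$ is weakly closed in $Z$ by Lemma~\ref{L:zweaklyclosedinZ0}, I expect to force $z^\phi \in Z$ and then $z^\phi = z$.

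The main obstacle I anticipate is bridging from ``$\phi$ is an arbitrary $\F$-morphism on $P$'' to the hypothesis of Lemma~\ref{L:zweaklyclosedinZ0}, which only controls $\Aut_\F(T_0)$-conjugacy inside $Z = \Omega_1(Z(T_0))$. The difficulty is that $z^\phi$ need not lie in $Z$ a priori, and $P^\phi$ need not be normal in $T_0$ — so I cannot directly invoke the $2$-rank bound argument from the previous lemma. I expect the resolution is to first extend $\phi$ to a map defined on $P C_S(P) \ni x$ (using that $T$, hence $x$, is suitably centralized), observe that $x^\phi$ is an involution $\F$-conjugate to $x$ hence fully centralized, note $z^\phi \in \Omega_1(Z(C_S(x^\phi)))$ lands in the analogue of $Z$ for the conjugate, and then transport back; alternatively, use that $z$ is the unique involution in $P$ that is a square (i.e. $z \in \mho^1(P)$) to constrain $z^\phi \in \mho^1(P^\phi)$, combined with the fact that the only squares among involutions in $T$ that are $\F$-conjugate to $z$ must be $z$ itself by Lemma~\ref{L:zweaklyclosedinZ0} and the structure of $T_0$. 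Once $z^\phi$ is located in $Z$, weak closure of $z$ in $Z$ finishes the proof immediately.
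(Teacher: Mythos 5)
Your instinct to reduce to Lemma~\ref{L:zweaklyclosedinZ0} by first locating $z^\phi$ inside $Z$ is exactly the right endgame, and that is also what the paper does. But the crucial step --- actually proving $z^\phi \in Z$ --- is not accomplished by any of the routes you sketch, and one of your side remarks contains an error that may be part of what is blocking you.

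The error: you write that ``$\Omega_1(P)$ generates $P$ only when $k=2$; in general $P \not\leq T_0$.'' This is false. A nonabelian dihedral $2$-group is generated by its involutions, so $\Omega_1(P) = P$ for every $k \geq 2$, and hence $P \leq T_0 = \Omega_1(T)$ always. (You are conflating ``$\Omega_1(P) = P$'' with ``$P$ is elementary abelian.'') Once you see that $P \leq T_0$, you are back in the setting where Lemma~\ref{L:zweaklyclosedinZ0} applies after you get $z^\phi$ into $Z$.

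The gap: your main suggested route (extend $\phi$ to $PC_S(P)$, track $x$ and $z$ together) is not used by the paper and does not obviously terminate, since $\phi$ need not be a $\C$-morphism and $x^\phi$ is a priori unconstrained. Your alternative --- use $z \in \mho^1(P)$ to force $z^\phi \in \mho^1(P^\phi)$ --- is closer in spirit, but $\mho^1(P^\phi)$ is just a cyclic subgroup of $T$ whose location you have not pinned down, and you have not established that ``the only square which is $\F$-conjugate to $z$ is $z$ itself.'' The paper instead exploits a \emph{commutator} characterization of $z$, which is what makes the argument close. Concretely: pick involutions $u_1, u_2 \in P$ with $[u_1,u_2] = z$ (possible since $P$ is nonabelian dihedral). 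Then $z^\phi = [u_1^\phi, u_2^\phi]$ is an involution in $[T,T]$. Now one shows $[T,T] \leq QC$ (using Proposition~\ref{P:TK/Q} to get $[T,T] \leq R$, and Lemma~\ref{L:autD}(b) to get $[T,T] \leq C_R(C) = QC$), so that $\Omega_1([T,T]) \leq \Omega_1(QC) = \langle x, z\rangle \leq Z$. Hence $z^\phi \in Z$, and Lemma~\ref{L:zweaklyclosedinZ0} gives $z^\phi = z$. The commutator route works where the square route stalls precisely because $[T,T]$ has a small, explicitly controlled $\Omega_1$, whereas $\mho^1$ of a conjugate of $P$ does not come with a comparable a priori bound.
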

\begin{proof}
First recall that $P \norm T$. Let $C$ be the cyclic maximal subgroup of $P$.
Then $C \norm T$. Now $[T,T] \leq R$ from Proposition~\ref{P:TK/Q}, and in fact
$[T,T] \leq C_R(C) = QC$ by Lemma~\ref{L:autD}(b). So $T/QC$ is abelian.
Thus 
\begin{align}
\Omega_1([T,T]) \leq \Omega_1(QC) = \gen{x,z} \leq Z.
\label{E:omega1comm}
\end{align}

Let $\phi \in \Hom_\F(P,T)$. Then $[u_1, u_2] = z$ for a pair of involutions
$u_1$ and $u_2$ of $P$, and so $z^\phi = [u_1^\phi,u_2^\phi] \leq
\Omega_1([T,T]) \leq Z$ by (\ref{E:omega1comm}). Now
Lemma~\ref{L:zweaklyclosedinZ0} shows that $z^\phi = z$.  
\end{proof}

\begin{proposition}\label{P:2central}
Suppose $\F$ satisfies Hypothesis~\ref{H:main} with $S = T$.  Then no
involution of $T$ is an $f$-element.
\end{proposition}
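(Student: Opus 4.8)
The plan is to suppose, for contradiction, that some involution $f \in T$ is an $f$-element on $\K$, and to exploit $\F = O^2(\F)$ via transfer against the weak-closure facts of Lemmas~\ref{L:zweaklyclosedinZ0} and~\ref{L:zweaklyclosedviaP}. By Lemma~\ref{L:2rank3or4} the presence of $f$ forces $S = T$ to have $2$-rank $4$ and $J(S) = J(RF_1)$.

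First I would pin down the $2$-local structure. Since $C_T(P) = F_1Z(P)$ and, with an $f$-element present, $F_1 = Q\langle f\rangle$ with $f$ centralizing a four-subgroup of $P$ (Lemma~\ref{L:l2qomnibus}), one computes that $J(S) = P \times \langle x,f\rangle \cong D_{2^k}\times C_2\times C_2$ and $W := \Omega_1(Z(J(S))) = \langle x,z,f\rangle \cong C_2^3$. By the Krull--Schmidt theorem (Proposition~\ref{P:krullschmidt}) the unique nontrivial commutator subgroup $[P,P]$ among the indecomposable direct factors of $J(S)$ is characteristic in $J(S)$, hence so is $\langle z\rangle = \Omega_1([P,P])$; thus $\Aut_\F(J(S))$ fixes $z$. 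As $J(S)$ is weakly $\F$-closed, Burnside's fusion theorem (Lemma~\ref{L:BurnsideLemma}) shows that all $\F$-fusion among involutions of $W$ is realized inside $\Aut_\F(J(S))$; in particular $z$ is weakly $\F$-closed in $W$, strengthening Lemma~\ref{L:zweaklyclosedinZ0} locally.

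Next I would invoke $\F = O^2(\F)$. Since $O_2(\F) = 1$, the central subgroup $\langle x\rangle$ is not normal in $\F$, so $x$ is not weakly $\F$-closed in $S$; and $[S,S] \le QC$ (Proposition~\ref{P:TK/Q}, Lemma~\ref{L:autD}(b)) makes $S/QC$ abelian with $\Omega_1$ of rank at most $3$. I would then run the Thompson--Lyons transfer lemma (Propositions~\ref{C:TTcyclic} and~\ref{C:TTlinind}) on normal overgroups of $J(S)$ in $S$ with cyclic, resp. elementary abelian, quotient; combined with the $\Aut_\F(J(S))$-control of fusion in $W$ and with Lemma~\ref{L:zweaklyclosedviaP}, I expect this to force the $\F$-class of $x$ to meet $W$ in a point other than $z$ or $zx$, i.e. in one of the $f$-elements $f,zf,xf,zxf$ of $W$. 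Replacing $f$ by this conjugate, we may assume $f \sim_\F x$.

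Finally I would transport the involution centralizer to a contradiction. Conjugating by a morphism realizing $f \sim_\F x$ carries $C_\F(f)$ isomorphically onto a subsystem of $\C = C_\F(x)$; since $x$ is fully $\F$-centralized with $F^*(\C) = \F_Q(Q) \times \K$ and $Q$ cyclic, this pins down the layer and centralizer of $C_\F(f)$. But $\K\langle f\rangle \cong \F_2(K\langle f\rangle)$ (Lemma~\ref{L:felements}(a)) together with Lemma~\ref{L:l2qomnibus}(g) exhibits the fusion system of $C_K(f)\langle f\rangle$, with $C_K(f)\cong PGL_2(q^{1/2})$, inside $C_\F(f)$, so the dihedral subsystem visible there has order $2^{k-1}$, not $2^k$; comparing the two pictures --- in particular the orders of the dihedral $2$-groups and the $2$-ranks involved --- yields the contradiction. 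The hard part is the middle step: with only transfer and Burnside's fusion theorem available, confining the $\F$-class of $x$ to $W$ and then separating the classes of $x$, $z$, $zx$, and the $f$-elements requires a case analysis on the structure of $S/QC$ --- notably whether $R_d > R$ and the value of $|F_1:Q|$ --- and one must rule out that $x$ is $\F$-conjugate to a noncentral involution lying outside $W$; making the final component-transport contradiction precise is the other delicate point.
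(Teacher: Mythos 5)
Your overall strategy departs from the paper's in the final step, and that is where a genuine gap appears. The paper never attempts to transport the component structure of an involution centralizer. Instead it proves two mutually contradictory statements: after establishing that $z^\F\cap R\langle f\rangle\subseteq P$ (Lemma~\ref{L:z^FcapRfinP}) and that $P$ is weakly $\F$-closed (Lemma~\ref{L:Pweaklyclosed}), it uses the transfer lemma to produce $\phi\in N_\F(P)$ with $f_0^\phi=x$ for some $f_0\in f\langle z\rangle$ (Lemma~\ref{L:forfz}); feeding this into Lemma~\ref{L:felements}(b) (non-weak-closure of $\langle f\rangle$ in $\K\langle f\rangle$) and the fact that all involutions of $P^\#x$ are $\C$-conjugate gives $xz\in x^\F$. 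Then, separately, an odd-order automorphism argument on $T$ using $P$ weakly closed forces $xz\notin x^\F$. No structure of $C_\F(f)$ is ever invoked.

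There are two concrete problems with your plan. The main one is that your final contradiction does not exist as stated. Even granting the best case that $f$ (or some $f_0\in f\langle z\rangle$) is fully $\F$-centralized and $\F$-conjugate to $x$, so that $C_\F(f)\cong\C$ with a unique component on a dihedral group of order $2^k$ and cyclic centralizer, the subsystem you exhibit, namely $\F_{P\cap O^2(C_K(f))}(O^2(C_K(f)))\cong\F_2(L_2(q^{1/2}))$ on a dihedral group of order $2^{k-1}$, is merely a saturated perfect \emph{subsystem} of $C_\F(f)$. Nothing identifies it as a component, and indeed under the isomorphism $C_\F(f)\to\C$ it maps onto a perfect subsystem of $\K$ on a dihedral group of order $2^{k-1}$, which certainly exists inside $\K\cong\F_2(L_2(q))$ (take $\F_2(L_2(q^{1/2}))\leq\F_2(L_2(q))$). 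So ``the dihedral subsystem visible there has order $2^{k-1}$, not $2^k$'' is a true observation, but it is compatible with the target structure and yields no contradiction. In addition, you have not shown that $f$ itself is fully $\F$-centralized; the transfer argument produces $f^\phi\in x\langle z\rangle$ fully centralized, not $f$, and in general $f\notin Z(T)$ (an element of $R_d-R$ already conjugates $f$ to $fz$), so $C_\F(f)$ need not be saturated.

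Secondly, the local structure you pin down at the start is not justified at this stage of the paper. You assert $J(S)=P\times\langle x,f\rangle$, but $J(S)=J(RF_1)=J(Q\langle f\rangle)\times P$ and the first factor depends on the (as yet undetermined) order of $Q$ and the action of $f$ on $Q$; if $f$ inverts $Q$ then $J(Q\langle f\rangle)=Q\langle f\rangle$ is dihedral, and when $|Q\langle f\rangle|=|P|$ your Krull--Schmidt step collapses because the two factors have commutator subgroups of the same order and an abstract automorphism of $J(S)$ can interchange them, swapping $z$ with $x$. The paper sidesteps this by working with $T_0=\Omega_1(T)$ and Burnside, then getting the weak closure of $z$ in $Z=\Omega_1(Z(T_0))$ from a $2$-rank bound (Lemma~\ref{L:zweaklyclosedinZ0}), not from Krull--Schmidt applied to $J(S)$.

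Your middle step, locating an $\F$-conjugate of $x$ among the $f$-elements of $W$, is essentially Lemma~\ref{L:forfz}, and its proof (as you anticipate) is the delicate transfer computation; that part of your outline is aligned with the paper. But the step after it, the centralizer transport, would need to be replaced by something closer to the paper's two-lemma contradiction, since comparing dihedral orders does not suffice.
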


Assume the hypotheses and notation of the proposition, but that the statement
is false. Let $f$ be an involutory $f$-element in $T$. We proceed in a series
of lemmas.
\begin{lemma}\label{L:z^FcapRfinP}
$z^\F \cap R\gen{f} \cin P$.
\end{lemma}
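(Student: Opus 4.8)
The plan is to suppose, for a contradiction, that some $w\in z^\F$ lies in $R\gen{f}\setminus P$ and to rule out every possibility for $w$. The argument rests on two facts. First, $z\in Z(S)$: since $Z(P)=\gen{z}$ is characteristic in $P\norm T=S$ and has order $2$, it is normal in $S$, hence central; consequently $\gen{z}$ is fully $\F$-centralized. Second, by Lemma~\ref{L:zweaklyclosedviaP}, $z^\phi=z$ for every $\phi\in\Hom_\F(P,T)$ — this will be the tool that detects a contradiction.

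The next step is to reduce to the case that $w$ centralizes $P$. Because $R\gen{f}=Q\gen{f}\times P$, every element of $R\gen{f}$ induces an inner automorphism of $P$. So either $w$ centralizes $P$ — and then $w\ne z$ is an involution of $C_T(P)$, i.e.\ $w\in\{x,xz\}$ or $w$ is an involutory $f$-element — or $w$ induces $c_v$ for some reflection $v$ of $P$, in which case $wv^{-1}$ is a nontrivial involution of $C_T(P)$ and $w=xv'$ or $w=w_0v'$ with $v'$ a reflection of $P$ and $w_0$ an involutory $f$-element. If $w=xv'$, then $xv'$ and $xz$ are both involutions of $P^\#x$, hence $\C$-conjugate by \eqref{E:allPxconjugate}, so $z\sim_\F xv'$ would give $z\sim_\F xz$. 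If $w=w_0v'$, then by Lemma~\ref{L:felements}(a) I identify $\K\gen{w_0}$ with $\F_{P\gen{w_0}}(K\gen{w_0})$, with $w_0$ acting as an involutory field automorphism of $K\cong L_2(q)$; since $q$ is a square and the nontrivial coset of $K\gen{w_0}=L_2(q).2_{\mathrm{field}}$ carries exactly two classes of involutions, represented by $w_0$ and $w_0z$ (both of which centralize $P$), $w_0v'$ is $K\gen{w_0}$-conjugate, hence $\F$-conjugate, to $w_0$ or to $w_0z$. In every case we have shown that $z$ is $\F$-conjugate to some involution $w'\ne z$ of $C_T(P)$.

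It remains to see this cannot happen. Given $z\sim_\F w'$ with $w'\ne z$ an involution centralizing $P$: since $\gen{z}$ is fully $\F$-centralized, the extension axiom extends an $\F$-isomorphism $\gen{w'}\to\gen{z}$ sending $w'$ to $z$ to an injective morphism $\bar\psi\in\Hom_\F(C_S(w'),S)$. Because $w'$ centralizes $P$ we have $P\le C_S(w')$, so $\bar\psi|_P\in\Hom_\F(P,T)$, and hence $z^{\bar\psi}=z$ by Lemma~\ref{L:zweaklyclosedviaP}. But $\bar\psi$ carries $w'$ to $z$ as well, so injectivity forces $w'=z$ — a contradiction. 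This completes the proof.

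The main obstacle is the $w_0v'$ case in the reduction step: it requires knowing that $L_2(q).2_{\mathrm{field}}$ (for $q$ a square) has precisely two conjugacy classes of involutions outside $L_2(q)$, with representatives the field automorphism $w_0$ and $w_0z$. I would extract this from Lemma~\ref{L:l2qomnibus}(g,h), which controls the outer involutions of $\Aut(L_2(q))$, together with the behavior of $w_0$ and $w_0z$ on the two $P$-classes of four-subgroups of $P$ recorded in Lemma~\ref{L:felements}(b). Everything else is a short manipulation with the extension axiom and Lemma~\ref{L:zweaklyclosedviaP} (whose own proof invokes Burnside's fusion theorem, Lemma~\ref{L:BurnsideLemma}, and Lemma~\ref{L:zweaklyclosedinZ0}).
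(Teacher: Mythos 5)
Your proposal is correct and follows essentially the same route as the paper. Both arguments reduce the problem to the case of an $\F$-conjugate of $z$ lying in $C_T(P)$ and then obtain a contradiction from Lemma~\ref{L:zweaklyclosedviaP} via the extension axiom (your final paragraph is exactly the paper's proof that $z^\F\cap C_T(P)=\gen z$); for the reduction, the paper writes a single uniform formula $y=uf_0v$ and invokes Lemma~\ref{L:felements}(b) together with $Q\norm\C$, whereas you decompose into explicit cases ($w=xv'$ handled by \eqref{E:allPxconjugate}, and $w=w_0v'$ handled by the conjugacy-class structure of $P\gen{w_0}$ inside $K\gen{w_0}$, which is itself a repackaging of Lemma~\ref{L:felements}(b)), so the underlying mechanism is the same.
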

\begin{proof}
We first show that 
\begin{eqnarray}
\label{E:zweaklyclosedCTP}
z^\F \cap C_T(P) = \gen{z}.
\end{eqnarray}
Let $y \in z^\F \cap C_T(P)$ and choose $\phi \in \F$ with $y^\phi = z$. Since
$z \in Z(T)$, $\phi$ extends to a morphism on $P \leq C_T(y)$. Therefore $y = z$ by
Lemma~\ref{L:zweaklyclosedviaP}, and (\ref{E:zweaklyclosedCTP}) holds.

Now we suppose the lemma fails, and we let $y \in z^{\F} \cap (R\gen{f} - P \cup
C_T(P))$ be arbitrary. We claim that 
\begin{eqnarray}
y \mbox{ is } \C\mbox{-conjugate to an element of } C_T(P).
\label{E:yCconj}
\end{eqnarray}
Together with (\ref{E:zweaklyclosedCTP}), \eqref{E:yCconj} implies that $y$ is
$\C$-conjugate to $z$. Since $P$ is strongly $\C$-closed, this will yield a
contradiction.  Recalling that $R\gen{f} = Q\gen{f} \times P$, write $y=uf_0v$
with $u \in Q$, $f_0 \in \gen{f}$, and $v \in P$.  Since $y$ is an involution
outside $P \cup C_T(P)$, we have $uf_0 \neq 1 = (uf_0)^2$, $v \nin Z(P)$, and
$v^2 = 1$. Let $V$ be the four subgroup of $P$ containing $v$.  Then
$\Aut_\K(V) \leq C_{\K\gen{f}}(f_0z^i)$ for some $i \in \set{0,1}$ by
Lemma~\ref{L:felements}, and so there exists a morphism $\phi \in
\Aut_{\K\gen{f}}(V\gen{f})$ with $(f_0v)^\phi = (f_0z^i)^\phi(z^iv)^\phi =
f_0z^i\cdot z \in \gen{f,z}$. As $Q \norm \C$, the morphism $\phi$ extends to 
$\tilde{\phi} \in \C$ fixing $Q$, and hence $y^{\tilde{\phi}} =
(uf_0v)^{\tilde{\phi}} \in Q\gen{f,z} = C_T(P)$ confirming (\ref{E:yCconj}).
\end{proof}

\begin{lemma}\label{L:Pweaklyclosed}
$P$ is weakly $\F$-closed.
\end{lemma}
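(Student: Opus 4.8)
The plan is to argue by contradiction. Suppose there is a morphism $\phi \in \Hom_\F(P,S)$ with $P_1 := P^\phi \neq P$. Since $S = T$, Lemma~\ref{L:zweaklyclosedviaP} applies to $\phi$ and gives $z^\phi = z$; as $\phi$ is an isomorphism of $P$ onto $P_1$, it follows that $P_1$ is a dihedral group of order $2^k$ with $Z(P_1) = \gen z$. Because $\K$ has a single class of involutions, every involution of $P$ is $\F$-conjugate to $z$, and hence, transporting through $\phi$, so is every involution of $P_1$; thus $\I_2(P_1) \cin z^\F$.

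Granting for the moment that $P_1 \leq R\gen f$, Lemma~\ref{L:z^FcapRfinP} yields $\I_2(P_1) \cin z^\F \cap R\gen f \cin P$, so $P_1 = \gen{\I_2(P_1)} \leq P$ since a nonabelian dihedral $2$-group is generated by its involutions; comparing orders then forces $P_1 = P$, the desired contradiction. So the task reduces to locating $P_1$ inside $R\gen f$. Here I would invoke Lemma~\ref{L:2rank3or4}: as $T$ contains the $f$-element $f$, the group $S$ has $2$-rank exactly $4$, and $R\gen f = RF_1 = P \times F_1$ contains $J(S)$ and in particular every elementary abelian subgroup of $S$ of rank $4$. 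For a four-subgroup $V_1$ of $P_1$---which contains the involution $z \in Z(S)$---one shows that $C_S(V_1)$ contains an elementary abelian subgroup $A$ of rank $4$; such an $A$ must contain $z$, since otherwise $A\gen z$ would be elementary abelian of rank $5$. Then $A \leq RF_1$, and inspecting the rank-$4$ elementary abelian subgroups of $RF_1 = P \times F_1$---each of the form $V \times W$ with $V$ a four-subgroup of $P$ and $W$ an elementary abelian subgroup of $F_1$ of rank $2$---together with the description of $\Aut(D_{2^k})$ in Lemma~\ref{L:autD}, one pins $V_1$ inside $RF_1$. Since $P_1$ is generated by its four-subgroups, this gives $P_1 \leq RF_1 = R\gen f$, as needed.

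The main obstacle is precisely this last localization step: once the central involution of the conjugate $P_1$ has been identified via Lemma~\ref{L:zweaklyclosedviaP}, one must still confine $P_1$ to the subgroup $RF_1$, and this is where the existence of an $f$-element is essential---it is what forces the $2$-rank of $S$ to equal $4$ and gives the explicit form of $J(S)$. Apart from this, the argument uses only elementary analysis inside the $2$-group $S$, the structure of automorphism groups of dihedral groups, and Lemma~\ref{L:z^FcapRfinP}, which itself rests on the Thompson--Lyons transfer lemma together with the hypothesis $\F = O^2(\F)$; no further fusion-theoretic input is required.
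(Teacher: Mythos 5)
Your reduction is sound: the paper itself observes (in the step ``Suppose that $\bar{P} = \bar{P_i}$\dots'') that once $\mathcal{I}_2(P_1) \cin P$ one gets $P_1 \leq P$ and hence $P_1 = P$, so everything hinges on locating $P_1$ inside $R\gen f$. But the localization argument you sketch cannot be made to work, and in fact your intermediate claim is false precisely in the situation that has to be ruled out.

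Suppose there is $h \in R_d - R$, so that $\bar{P}\gen{\bar h}$ is dihedral of order $2^{k+1}$ in $\bar T = T/Q$, and suppose $\bar{P_1}$ is the maximal dihedral subgroup of $\bar{P}\gen{\bar h}$ other than $\bar P$; this is exactly the configuration the bulk of the paper's proof is devoted to excluding. Then $\bar{P_1} \cap \bar P = \bar C$ and every noncentral involution $v$ of $P_1$ has image $\bar v \in \bar P\bar h - \bar P$. For such $v$ one has $C_P(v) = \gen z$ (a noncentral involution of a dihedral group has centralizer of rank $2$), and $[f,v] \neq 1$ since $[\bar f, \bar h] = \bar z$ (Lemma~\ref{L:l2qomnibus}(h)). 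Tracking $\Omega_1(C_T(v))$ through the short exact sequence with kernel $C_Q(v)$ gives $\Omega_1(C_T(v)) \leq \gen{x,z,v}$, of rank at most $3$. So for $V_1 = \gen{z,v}$ the centralizer $C_S(V_1)$ contains no rank-$4$ elementary abelian subgroup at all, and the claim ``$C_S(V_1)$ contains an elementary abelian $A$ of rank $4$'' fails exactly when you need it. Passing to a fully $\F$-centralized $\F$-conjugate of $V_1$ does not rescue this: that would locate some \emph{conjugate} of $V_1$ in $R\gen f$, not $V_1$ itself, whereas your reduction needs $P_1$ literally contained in $R\gen f$.

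The paper instead invokes Alperin's fusion theorem together with $P \norm T$ to replace the arbitrary $\phi$ with an automorphism of odd order of some overgroup $D$ of $P$, and then studies the orbit $P_0=P, P_1, P_2$ of $P$ under $\phi$. All the $\bar{P_i}$ sit inside the dihedral group $\bar P\gen{\bar h}$; the pigeonhole forces $\bar{P_1} = \bar{P_2}$; and comparing $S_0 = P_0P_1$ with $S_1 = P_1P_2 = S_0^\phi$ yields $S_1 \cong D_{2^{k+1}}$ but $\bar{S_1} \cong D_{2^k}$, which is impossible since $z \in Z(S_1)$ and $z \notin Q$. The odd-order automorphism and the orbit structure are essential to this argument; an arbitrary $\phi \in \Hom_\F(P,S)$ carries too little structure for the quotient analysis to go through, and no purely rank-theoretic confinement of $P_1$ to $R\gen f$ is available.
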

\begin{proof}
Suppose not. Choose by Alperin's fusion theorem a fully $\F$-normalized
subgroup $D \leq T$ containing $P$ and an automorphism $\phi \in \Aut_\F(D)$
with $P^\phi \neq P$. Since $P$ is normal in $T$, we can choose such a $\phi$
of odd order. Set $P_i = P^{\phi^i}$ for each $i$. Thus the subgroups $P_0=P$,
$P_1$ and $P_2$ are pairwise distinct by choice of $\phi$, whereas $Z(P_i) =
Z(P_0) = \gen{z}$ for all $i$ by Lemma~\ref{L:zweaklyclosedviaP}.

Now we examine the images of the $P_i$ in $\bar{T} = T/Q$. Since $\bar{z} \neq
1$, we have that $\bar{P_i} \cong D_{2^k}$ for all $i$. Furthermore, $P_i =
\Omega_1(P_i) \leq \gen{z^{\F}}$ as all involutions of $P$ are $\F$-conjugate.
Thus there exists $h \in R_d - R$ squaring into $Q$, and
\[
\bar{P_i} \leq \bar{P}\gen{\bar{h}}
\]
for all $i$ by Lemma~\ref{L:z^FcapRfinP} and the fact
(Lemma~\ref{L:l2qomnibus}(h)) that there are no involutions in
$\bar{P}\bar{h}\bar{f}$. 

Suppose that $\bar{P} = \bar{P_i}$ for some $i$. Then $PQ \geq P_i$, and so
$\mathcal{I}_2(P_i) \cin R$. By Lemma~\ref{L:z^FcapRfinP} then, $P_i \leq
\gen{z^\F \cap P_i} \leq P$.  So $P = P_i$. This shows that $\bar{P_0} \neq
\bar{P_1}$ and $\bar{P_0} \neq \bar{P_2}$.  But $\bar{P}\gen{\bar{h}}$ is
dihedral and the $\bar{P_i}$ are among the two dihedral maximal subgroups of
$\bar{P}\gen{\bar{h}}$ so 
\begin{eqnarray} \label{E:P1=P2}
\bar{P_1} = \bar{P_2}.
\end{eqnarray}
Set $S_0 = P_0P_1$ and $S_1 = P_1P_2$, so that $S_0^\phi = S_1$. Then
$\bar{S_0} = \bar{P}\gen{\bar{h}} \cong D_{2^{k+1}}$ but 
\begin{eqnarray}\label{E:S1dih}
    \bar{S_1} \cong D_{2^k}
\end{eqnarray}
from (\ref{E:P1=P2}). 

As $\bar{P_0}$ and $\bar{P_1}$ are distinct dihedral maximal subgroups of
$\bar{P}\gen{\bar{h}}$, we have $[\bar{P_0}, \bar{P_1}]$ is the cyclic maximal
subgroup of $\bar{P_0}$.  So $[P_0, P_1]$ is the cyclic maximal subgroup of
$P_0$. But $[P_0, P_1] \leq P_1$ because the $P_i$ normalize each other, and
hence $[P_0, P_1]$ is the cyclic maximal subgroup of $P_1$ as well. It follows
that $P_0 \cap P_1$ has index $2$ in $P_0$ and $P_1$, and $|S_0| = 2|P_0| =
2^{k+1}$. Hence, $S_0 \cong \bar{S_0} \cong D_{2^{k+1}}$ and $S_1 = S_0^\phi$
is also isomorphic to $D_{2^{k+1}}$ with center $\gen{z}^\phi = \gen{z}$. As
$\bar{z} \neq 1$, $\bar{S_1} \cong D_{2^{k+1}}$. This contradicts (\ref{E:S1dih}) and
completes the proof.  
\end{proof}

\begin{lemma}\label{L:ifufcthenuzfc}
Let $u$ be an involution in $C_T(P)$. If $u$ is fully $\F$-centralized, then so
is $uz$.
\end{lemma}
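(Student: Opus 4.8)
The plan is to compare $u$ with $uz$ not by conjugating one onto the other directly, but by routing through a fully $\F$-centralized representative of the $\F$-class of $uz$. The key preliminary observation is that $z \in Z(S)$: since $S = T$ and $P \norm T$, the order-$2$ characteristic subgroup $\langle z\rangle = Z(P)$ of $P$ is centralized by $T = S$. In particular $C_S(v) = C_S(vz)$ for every $v \in S$, so $|C_S(u)| = |C_S(uz)|$. We may assume $u \ne z$ (otherwise $uz = 1$ and there is nothing to prove), so $uz$ is an involution; moreover $uz$ centralizes $P$, since $u$ does and $z \in Z(P)$, whence $P \leq C_S(uz)$.

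First I would choose a fully $\F$-centralized $\F$-conjugate $\langle y'\rangle$ of $\langle uz\rangle$ together with $\psi \in \Iso_\F(\langle uz\rangle, \langle y'\rangle)$ with $(uz)^\psi = y'$. Since $\langle uz\rangle$ has order $2$, we have $C_S(uz)\langle uz\rangle = C_S(uz) \leq N_\psi$, so the extension axiom supplies $\tilde\psi \in \Hom_\F(C_S(uz), S)$ restricting to $\psi$ on $\langle uz\rangle$. The crucial step is to feed $P$ into $\tilde\psi$: as $P \leq C_S(uz)$ and $S = T$, the restriction $\tilde\psi|_P$ lies in $\Hom_\F(P, T)$, so Lemma~\ref{L:zweaklyclosedviaP} forces $z^{\tilde\psi} = z$. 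Because $u = (uz)z$ as a product inside $C_S(uz)$, this gives $u^{\tilde\psi} = y'z$; in particular $y' \ne z$ (else $u^{\tilde\psi} = 1$, contradicting injectivity of $\tilde\psi$), and $u$ is $\F$-conjugate to $y'z$.

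To conclude I would chain inequalities. Since $u$ is fully $\F$-centralized and $\F$-conjugate to $y'z$, we get $|C_S(u)| \geq |C_S(y'z)| = |C_S(y')|$, the last equality holding because $z \in Z(S)$. Since $y'$ is a fully $\F$-centralized $\F$-conjugate of $uz$, we have $|C_S(y')| \geq |C_S(uz)| = |C_S(u)|$. Hence all three numbers coincide, so $|C_S(uz)|$ equals the maximum of $|C_S(w)|$ over $w \in (uz)^\F$; that is, $uz$ is fully $\F$-centralized.

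The step I expect to be the real (if mild) obstacle is recognizing that one should not try to prove $u$ and $uz$ are $\F$-conjugate — which need not be visibly available — but instead exploit that any extension $\tilde\psi$ of $\psi$ is automatically defined on $C_S(uz) \geq P$, so that Lemma~\ref{L:zweaklyclosedviaP} pins down the image of $z$. This is precisely where the hypothesis $u \in C_T(P)$ enters; the remainder is bookkeeping with the centrality of $z$.
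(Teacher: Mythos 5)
Your proposal is correct and follows essentially the same path as the paper's proof: pick a morphism sending $uz$ to a fully $\F$-centralized representative, extend it to $C_T(uz) \geq P$ by the extension axiom, use Lemma~\ref{L:zweaklyclosedviaP} to see that $z$ is fixed, and then compare centralizer orders using the centrality of $z$ in $T=S$. The only cosmetic difference is the order in which the inequalities are chained; the content is identical.
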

\begin{proof}
Let $\phi \in \Hom_\F(\gen{uz}, T)$ with $(uz)^\phi$ fully $\F$-centralized.
Then $\phi$ extends to a morphism $\tilde{\phi}$ on $C_T(uz) = C_T(u) \geq P$,
and $z^{\tilde{\phi}} = z$ by Lemma~\ref{L:zweaklyclosedviaP}. Since $u$ is
fully $\F$-centralized, we have
\[
|C_T(uz)| = |C_T(u)| \geq |C_T(u^{\tilde{\phi}})| = |C_T(u^{\tilde{\phi}}z)| =
|C_T((uz)^{\tilde{\phi}})|
\]
and so $uz$ is fully $\F$-centralized as well. 
\end{proof}

\begin{lemma}\label{L:forfz}
There exists $f_0 \in f\gen{z}$ and $\phi \in N_\F(P)$ such that $f_0^\phi = x$.
\end{lemma}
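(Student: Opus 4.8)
The plan is to use transfer to move $f$ into $R$, and then exploit the weak closure of $P$ to see that the transferring morphism automatically normalizes $P$.

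First I would set up the transfer. Since $f$ is an $f$-element it centralizes $P$ and lies outside $R = QP$; moreover $R \norm S$ (as $P$ and $Q$ are strongly $\C$-closed and $S = C_S(x)$), and $S/R$ embeds in $\Out(K) \cong C_2 \times C_{2^{k-2}}$ by Lemma~\ref{L:l2qomnibus}(a), so $S/R$ is abelian. Passing to $S/Q \leq \Aut(K)$ via Proposition~\ref{P:TK/Q} and invoking Lemma~\ref{L:l2qomnibus}(e,h), every involution of $S - R$ has image in one of the two distinct nonzero cosets $fR$, $hR$ in $\Omega_1(S/R)$ (the field and diagonal cosets; the ``mixed'' coset $fhR$ contains no involution). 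Hence the set of $R$-cosets of fully $\F$-centralized involutions of $S-R$ is linearly independent, and since $\F = O^2(\F)$, Proposition~\ref{C:TTlinind} produces an $\F$-isomorphism $\phi\colon \gen{f} \to \gen{f'}$ with $f'$ a fully $\F$-centralized involution in $R$.

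Next I would pin down $f'$. As $\gen{f'}$ is fully $\F$-centralized and $N_\phi = C_S(f) \geq P$ (because $f$ centralizes $P$), the extension axiom extends $\phi$ to $\tilde\phi \in \Hom_\F(C_S(f), S)$. Since $P$ is weakly $\F$-closed (Lemma~\ref{L:Pweaklyclosed}) and $P^{\tilde\phi} \cong P$, we get $P^{\tilde\phi} = P$; thus $\phi$ lies in $N_\F(P)$ and $f' = f^{\tilde\phi}$ centralizes $P$. Therefore $f' \in C_S(P) \cap R = Q\gen{z}$, whence $f' \in \set{x, z, xz}$. If $f' = z$: then $\phi^{-1} \in N_\F(P)$ extends to a morphism on $P\gen{z} = P$ stabilizing $P$, which fixes $z$ by Lemma~\ref{L:zweaklyclosedviaP} while sending $z \mapsto f$ --- impossible as $f \nin R$. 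If $f' = x$: done, with $f_0 = f$. If $f' = xz$: extend $\phi^{-1} \in N_\F(P)$ to $\psi$ on $P\gen{xz}$ with $P^\psi = P$; then $z^\psi = z$ by Lemma~\ref{L:zweaklyclosedviaP} while $(xz)^\psi = f$, so $x^\psi = fz$, and $\psi^{-1} \in N_\F(P)$ carries $fz$ to $x$: done, with $f_0 = fz$.

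The one delicate point is the linear-independence hypothesis of Proposition~\ref{C:TTlinind}: one must know that no involution of $S$ maps to the mixed coset $fhR$ of $S/R$, which is exactly the content of Lemma~\ref{L:l2qomnibus}(h) (``all involutions of $T$ lie in $P \cup Ph \cup Pf$'') transported through the quotient of Proposition~\ref{P:TK/Q}. Everything else --- the fact that $\tilde\phi$ normalizes $P$, and the elimination of the cases $f'=z$, $f'=xz$ --- is a routine application of the extension axiom, the weak closure of $P$, and Lemma~\ref{L:zweaklyclosedviaP}.
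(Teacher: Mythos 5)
Your proof is correct and takes essentially the same route as the paper's: apply Proposition~\ref{C:TTlinind} (with linear independence supplied by Lemma~\ref{L:l2qomnibus}(h) through Proposition~\ref{P:TK/Q}) to move $f$ into $R$, extend by the extension axiom, use weak closure of $P$ (Lemma~\ref{L:Pweaklyclosed}) to land in $N_\F(P)$, and then conclude via Lemma~\ref{L:zweaklyclosedviaP}. Your case analysis for $f'=z$ and $f'=xz$ detours through $\phi^{-1}$ where it would be quicker to note directly that $z^{\tilde\phi}=z$ and $\tilde\phi$ is injective, so $f^{\tilde\phi}\in x\gen{z}$ and one of $f^{\tilde\phi}$, $(fz)^{\tilde\phi}=f^{\tilde\phi}z$ equals $x$; but this is a stylistic quibble, not a gap.
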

\begin{proof}
We have $R \norm T$ with $T/R$ abelian. By Lemma~\ref{L:l2qomnibus}(h), there
are at most two cosets of $T-R$ containing an involution, so
Theorem~\ref{T:TT} applies.  Choose $\phi \in \Hom_\F(\gen{f}, R)$
with $f^\phi$ fully $\F$-centralized. Then $\phi$ extends to a morphism
$\tilde{\phi}$ on $C_T(f) \geq P$ normalizing $P$ by
Lemma~\ref{L:Pweaklyclosed}. Thus, $\tilde{\phi} \in N_\F(P)$ and so
$f^{\tilde{\phi}} \in \Omega_1(C_R(P)) - \gen{z} = x\gen{z}$ as $[P, f] = 1$.
Since one of $f^{\tilde{\phi}}$ or $(fz)^{\tilde{\phi}} = f^{\tilde{\phi}}z$
equals $x$, we are finished. 
\end{proof}

Now the next two lemmas give a contradiction, completing the proof of
Proposition~\ref{P:2central}.

\begin{lemma}\label{L:x^F=xz}
$xz \in x^\F$.
\end{lemma}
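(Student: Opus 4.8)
The statement to prove is Lemma~\ref{L:x^F=xz}: under the standing hypothesis of Proposition~\ref{P:2central} (so $S = T$, and $f$ is an involutory $f$-element), we have $xz \in x^\F$. The plan is to leverage the fact that $f$ and $x$ are conjugate in $\F$ via $N_\F(P)$ (Lemma~\ref{L:forfz}), and to show that $fz$ (or $f$) is also $\F$-conjugate to $x$, so that transitively $xz \in x^\F$. More precisely, I would first observe via Lemma~\ref{L:ifufcthenuzfc} that if $f$ is fully $\F$-centralized then so is $fz$, and vice versa; both $f$ and $fz$ lie in $C_T(P)$. So I would pick (by $\F = O^2(\F)$ and Proposition~\ref{C:TTlinind}, exactly as in the proof of Lemma~\ref{L:forfz}) a morphism carrying $f$ to a fully $\F$-centralized element of $R$, extend it by the extension axiom over $C_T(f) \ge P$, and use that $P$ is weakly $\F$-closed (Lemma~\ref{L:Pweaklyclosed}) to land in $N_\F(P)$; the image of $f$ then lies in $\Omega_1(C_R(P)) - \gen{z} = x\gen{z}$, so $f^\phi \in \set{x, xz}$ and likewise $(fz)^\phi$ is the other one.

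The key point is then to show that $x$ and $xz$ are \emph{both} in $f^\F$, equivalently that $f$ and $fz$ are $\F$-conjugate to each other (whence $x \sim f \sim fz \sim xz$). For this I would use Lemma~\ref{L:felements}(b): given a four-subgroup $V \le P$, there is $i \in \set{0,1}$ with $\Aut_\K(V) \le C_\C(fz^i)$, and for a four-subgroup $V'$ in the other $P$-class, $\Aut_\K(V') \le C_\C(fz^{1-i})$. Combined with the existence of an $\F$-automorphism of $P$ swapping the two $P$-classes of four-subgroups — which exists because $P$ is a proper subgroup with $z \in \Omega_1(Z(\ldots))$, and one sees from the structure of $\K$ (it is the fusion system of $L_2(q)$, and $N_\K(P)$ realizes $\Out(D_{2^k})$'s relevant piece) that $N_\K(P)$ contains the outer automorphism of $P$ interchanging the two classes of fours — this conjugation carries $f$ to $fz$ modulo elements that can be absorbed. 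Concretely: the swap automorphism $c_h$ of $\K\gen{f}$ in Lemma~\ref{L:felements} interchanges $f$ and $fz$, and since $Q \norm \C$ it extends to a morphism of $\C$ fixing $Q$; restricted appropriately it conjugates $f$ to $fz$ inside $\C \le \F$.

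The main obstacle I expect is the bookkeeping showing that the swap between the two $P$-classes of four-subgroups genuinely lifts to an honest $\F$-morphism conjugating $f$ to $fz$ (as opposed to $f$ to $fz \cdot (\text{something in }Q)$, which would force further massaging) — i.e. controlling the $Q$-component of the image. Here the hypothesis that $Q$ is normal in $\C$ and the identification of $\K\gen{f}/Q$ with the fusion system of $K\gen{f}$ (Proposition~\ref{P:TK/Q}, Lemma~\ref{L:felements}(a)) are what make this work: in $\Aut(L_2(q))$ the element conjugating a field automorphism $f$ to $fz$ is explicit (Lemma~\ref{L:l2qomnibus}(h): $[f,h] = (fh)^2 = z$), and pulling this back through the quotient by $Q$ — which is injective on morphisms between subgroups of $P$-supported objects by the argument pattern of \eqref{E:thetainducesbij} — gives an $\F$-morphism with the right effect. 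Once $f \sim fz$ in $\F$ is established, transitivity with Lemma~\ref{L:forfz} finishes: $xz \in x^\F$.
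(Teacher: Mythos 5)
Your reduction is fine: by Lemma~\ref{L:forfz} there is $f_0 \in f\gen{z}$ and $\phi \in N_\F(P)$ with $f_0^\phi = x$, and since $\phi$ fixes $z$ (Lemma~\ref{L:zweaklyclosedviaP}), $\phi$ carries $\{f, fz\}$ bijectively to $\{x, xz\}$; so $xz \in x^\F$ is indeed equivalent to $f \sim_\F fz$. The gap is in your argument that $f \sim_\F fz$. You appeal to the ``swap'' $c_h$ of $P\gen{f}$ from the proof of Lemma~\ref{L:felements}(b), but the paper is explicit that $c_h$ is an \emph{abstract} fusion-preserving isomorphism; there is no element $h$ of $T$ inducing it, and no reason for it to be a morphism of $\F$. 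Your first justification — that $N_\K(P)$ contains an outer automorphism of $P$ interchanging the two $P$-classes of four-subgroups — is false: since $P$ is nonabelian dihedral of order $\geq 8$, $\Out(P)$ is a $2$-group, so $\Aut_\K(P)$ is a $2$-group, so by the Sylow axiom $\Aut_\K(P) = \Inn(P)$ and $\Aut_\K(P)$ contains no such automorphism. Your second justification — ``since $Q \norm \C$ it extends to a morphism of $\C$ fixing $Q$'' — begs the question: extension along $Q$ is only available for a map already in $\C$, and whether $c_h$ is in $\C$ (or $\F$) is precisely what is at issue.

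The paper avoids exactly this difficulty by arguing contrapositively. Assuming $xz \notin x^\F$, combined with $z \notin x^\F$ (Lemma~\ref{L:z^FcapRfinP}) and the single $\C$-class on $P^\#x$ (equation~\eqref{E:allPxconjugate}), one gets that $\gen{x}$ is weakly $\F$-closed in $\gen{x}\times P$. One then uses $\phi$ to transport this weak-closure statement to $f$, and contradicts it by observing that $\gen{f}$ is \emph{not} weakly closed in $\K\gen{f}$ (which follows from Lemma~\ref{L:felements}(b) and requires only a $\K\gen{f}$-morphism — genuinely in $\F$ — moving $f$ within $Pf$, not a morphism sending $f$ to $fz$). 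In other words, the paper needs only that $f$ is $\F$-conjugate to \emph{some} other element of $Pf$, not specifically to $fz$; after transporting back through $\phi$, the $\C$-transitivity on $P^\#x$ converts this into $x \sim_\F xz$. If you want to make your direct approach work, you would need to first replace $c_h$ with an actual $\F$-morphism moving $f$ inside $Pf$ (take $\psi \in \K\gen{f}$ as in the paper), and then use \eqref{E:allPxconjugate} after pulling back by $\phi$ — at which point you have reconstructed the paper's proof.
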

\begin{proof}
Suppose not. Recall $x$ is not $\F$-conjugate to $z$ by
Lemma~\ref{L:z^FcapRfinP}. So $x$ is not conjugate into $P$ as all involutions of $P$
are $\F$-conjugate. Further all involutions of $P^{\#}x$ are $\F$-conjugate
by \eqref{E:allPxconjugate}, so by assumption we have that 
\begin{eqnarray}\label{E:xwc}
\gen{x} \mbox{ is weakly } \F\mbox{-closed in } \gen{x} \times P.
\end{eqnarray}
Replacing $f$ by $fz$ if necessary, there exists a subgroup $D \leq T$
containing $\gen{x} \times P$ and a morphism $\phi \in \Hom_\F(D, T)$ with
$x^\phi = f$ and $P^\phi = P$ by Lemma~\ref{L:forfz}. By
Lemma~\ref{L:felements}(b), $\gen{f}$ is not weakly closed in $\K\gen{f}$ and
so there exists $\psi \in \Hom_{\F}(\gen{f}, \gen{f} \times P)$ with $f^\psi
\in Pf-\set{f}$. Then $x^{\phi\psi\phi^{-1}}
\in Px - \set{x}$, which contradicts (\ref{E:xwc}) and completes the proof.
\end{proof}

\begin{lemma}
$xz \nin x^\F$.
\end{lemma}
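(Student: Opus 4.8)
The plan is to argue by contradiction: suppose $xz \in x^\F$. Since $\gen{z} \norm S$ has order $2$, all three of $x$, $z$, $xz$ lie in $Z(S) = Z(T)$, so each is fully $\F$-centralized with centralizer all of $S$. By the extension axiom the $\F$-isomorphism $\gen{x}\to\gen{xz}$ carrying $x$ to $xz$ extends to $\tilde\psi\in\Aut_\F(S)$ with $x^{\tilde\psi}=xz$. By Lemma~\ref{L:zweaklyclosedviaP} we have $z^{\tilde\psi}=z$, and by Lemma~\ref{L:Pweaklyclosed} we have $P^{\tilde\psi}=P$; in particular $\tilde\psi$ normalizes $C_S(P)$.

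The main point is then that $\tilde\psi$ must nonetheless fix $\gen{x}$. By Theorem~\ref{T:C_S(E)}, $Q = C_T(\K) \leq C_S(P)$, so $Q^{\tilde\psi}$ is again a cyclic subgroup of $C_S(P)$ of order $|Q|$, and $\Omega_1(Q) = \gen{x}$ since $x\in Q$. Now $C_S(P) = F_1 Z(P) = F_1\times\gen{z}$, where $F_1$ contains the cyclic group $Q$ with index $2$ and, since $T$ contains the $f$-element $f$, splits over $Q$; thus $F_1$ is one of $C_{|Q|}\times C_2$, $D_{2|Q|}$, $SD_{2|Q|}$, or the modular maximal-cyclic $2$-group of order $2|Q|$. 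A direct inspection of these four groups shows that when $|Q|\geq 4$ all cyclic subgroups of $C_S(P)$ of order $|Q|$ share the same subgroup of order $2$. Hence $\gen{x} = \Omega_1(Q) = \Omega_1(Q^{\tilde\psi}) = \Omega_1(Q)^{\tilde\psi} = \gen{xz}$, a contradiction, which disposes of the case $|Q|\geq 4$.

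For $|Q| = 2$ (so $Q = \gen{x} = O_2(\C)$ and $C_S(P) = \gen{x}\times\gen{f}\times\gen{z}$ is elementary abelian) the subgroup $Q$ is no longer distinguished inside $C_S(P)$, and I would argue differently. By Lemma~\ref{L:forfz}, after replacing $f$ by $fz$ if necessary there is $\phi\in N_\F(P)$ with $f^\phi = x$, and then $(fz)^\phi = xz$ because $z^\phi = z$; consequently $xz\in x^\F$ is equivalent to $f$ being $\F$-conjugate to $fz$. Any $\F$-isomorphism realizing $f\sim_\F fz$ extends to an automorphism of $C_T(f)$ normalizing $P$ (Lemma~\ref{L:Pweaklyclosed}), fixing $z$, interchanging $f$ and $fz$, and hence interchanging $C_\C(f) = C_\F(\gen{x,f})$ with $C_\C(fz) = C_\F(\gen{x,fz})$. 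By Lemma~\ref{L:felements}(b) these two centralizers contain $\Aut_\K(V)$ for four-subgroups $V$ of $P$ lying in the two distinct $P$-classes, so the induced automorphism of $P$ must interchange those $P$-classes; tracking this through the induced action on $Z(C_S(f)) = \gen{x,f,z}$ one aims to show such an automorphism is incompatible with the fact that $x$ is not $\F$-conjugate to $z$ (Lemma~\ref{L:z^FcapRfinP}).

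The hard part is this last case $|Q| = 2$: the argument for $|Q|\geq 4$ is essentially formal once $P$ is known to be weakly closed, but for $|Q| = 2$ no group-theoretic feature of $C_S(P)$ singles out $Q$, and one is forced into a detailed analysis of how the $f$-elements and the four-subgroups of $P$ are fused, using the realization of $\K\gen{f}$ by $K\gen{f}$ with $f$ an involutory field automorphism.
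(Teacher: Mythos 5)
Your proposal has a genuine gap: the $|Q|=2$ case is not finished, and by your own admission you are ``forced into a detailed analysis'' that you do not carry out. As it stands, for $|Q|=2$ you only translate the statement (``$xz\in x^\F$ is equivalent to $f\sim_\F fz$'') and then gesture at a strategy; nothing in the sketch actually produces a contradiction. The $|Q|\geq 4$ argument, on the other hand, is essentially correct: since $\tilde\psi$ normalizes $P$ (Lemma~\ref{L:Pweaklyclosed}) it normalizes $C_S(P)=F_1\times\gen{z}$, the subgroup $Q^{\tilde\psi}$ is again cyclic of order $|Q|$ inside $C_S(P)$, and a check of the four possible isomorphism types of $F_1=Q\gen{f}$ shows every cyclic subgroup of $C_S(P)$ of order $|Q|\geq 4$ has $\Omega_1$ equal to $\gen{x}$, forcing $\gen{x}^{\tilde\psi}=\gen{x}$.

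What you are missing is a short argument that bypasses the case split entirely. Since $S=T$ in this section, the element $x$ lies in $Z(T)$, so $\Inn(T)=\Aut_T(T)$ stabilizes $x$ pointwise; by the Sylow axiom $\Aut_T(T)$ is a Sylow $2$-subgroup of $\Aut_\F(T)$, so the restriction homomorphism $\Aut_\F(T)\to\Aut(\Omega_1(Z(T)))$ has image of odd order. Equivalently, as the paper phrases it, given $x^\phi=xz$ one may take $\phi\in\Aut_\F(T)$ to be of odd order for the purposes of its action on $\gen{x,z}$. Then $z^\phi=z$ by Lemma~\ref{L:Pweaklyclosed} (as in your argument), so $\phi$ induces on $\gen{x,z}$ the nontrivial automorphism fixing $z$ and sending $x\mapsto xz$, which has order exactly $2$ --- but it must have odd order, a contradiction. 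This single observation handles $|Q|=2$ and $|Q|\geq 4$ at once and is the intended proof; your structural analysis of $C_S(P)$, while correct where it applies, is both longer and unable to close the case that actually matters.
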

\begin{proof}
Let $\phi \in \F$ with $x^\phi = xz$. Then as $x$ and $z$ lie in $Z(T)$ we may
assume $\phi \in \Aut_\F(T)$ is of odd order by the extension and Sylow axioms.
Then $z^\phi = z$ by Lemma~\ref{L:Pweaklyclosed}, and so $\phi$ induces an
automorphism of $\gen{x,z}$ of order $2$, a contradiction.
\end{proof}

This completes the proof of Proposition~\ref{P:2central}. We can now prove the
main result of this section.

\begin{proposition}\label{P:2centralfinal}
Suppose $\F$ satisfies Hypothesis~\ref{H:main}. Then $T < S$. 
\end{proposition}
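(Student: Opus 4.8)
The plan is to suppose $S=T$ and derive a contradiction with $\F=O^2(\F)$ and $O_2(\F)=1$.

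\emph{Structure of $S$.} By Proposition~\ref{P:2central} no involution of $T$ is an $f$-element, so Lemma~\ref{L:2rank3or4} gives that $S=T$ has $2$-rank $3$ and $J(S)=J(R)$ with $R=Q\times P$. Since $Q$ is cyclic with $\Omega_1(Q)=\gen{x}$ and $P\cong D_{2^k}$ is nonabelian, the maximal-rank elementary abelian subgroups of $R$ are exactly the $\gen{x}\times V$ with $V$ a four-subgroup of $P$, and these generate $\gen{x}\times P$; hence $J(S)=\gen{x}\times P$ and $\Omega_1(Z(J(S)))=\gen{x,z}$. As there is no $f$-element, $C_S(P)=F_1Z(P)$ with $F_1$ cyclic, so $\Omega_1(C_S(P))=\gen{x,z}$; and since $\gen{z}$ is the group of involutions of the characteristic cyclic maximal subgroup of $P\norm S$, it is normalized by $S$, whence $z\in Z(S)$ and $\gen{x,z}=\Omega_1(Z(S))$.

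\emph{Fusion of $z$ and of $x$.} The derivation of \eqref{E:zweaklyclosedCTP} uses only Lemma~\ref{L:zweaklyclosedviaP}, not the existence of an $f$-element, so $z^\F\cap C_S(P)=\gen{z}$; as $x,xz\in C_S(P)$ this shows $x,xz\nin z^\F$, and together with Lemma~\ref{L:zweaklyclosedinZ0} that $z$ is weakly $\F$-closed in $\gen{x,z}$. By contrast, $x$ is \emph{not} weakly $\F$-closed: otherwise, $x$ being in $Z(S)$ forces every $\F$-morphism to extend over $C_S(x)=S$ and fix $x$, so $C_\F(x)=\F$ and $\gen{x}$ is central in $\F$, contradicting $O_2(\F)=1$. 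Thus $x$ is $\F$-conjugate to some $y\neq x$, and by Alperin's fusion theorem this fusion occurs in some $\Aut_\F(U)$ with $U\in\F^{fcr}$; here $Z(S)\leq U$ since $U$ is $\F$-centric. A routine calculation with the restricted subgroup structure of $2$-groups of $2$-rank $3$ shows that $\Aut_\F(U)$ normalizes $\gen{x,z}$, and it fixes $z$ by the previous sentence, so it acts on $\gen{x,z}$ through the order-$2$ stabilizer of $z$. Hence $y=xz$; in particular $x\sim_\F xz$ and $\gen{x,z}$ is strongly $\F$-closed.

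\emph{The contradiction.} It remains to contradict $\F=O^2(\F)$, and I expect this to be the main obstacle. The plan is to produce a strongly $\F$-closed subgroup of index $2$ in $S$; by Lemma~\ref{L:O^pbasic}(a) and the triviality of $O^2$ of the fusion system of $C_2$, the existence of one contradicts $\F=O^2(\F)$. By this stage $S$ is severely constrained: $R=Q\times P$ with $Q$ cyclic, and $S/Q$ embeds in $\Syl_2(\Aut(L_2(q)))$ so as to contain the dihedral group $QP/Q\cong P$ with no involutory field automorphism, by Propositions~\ref{P:TK/Q} and \ref{P:dihtame} together with the previous steps. On each $S$ in the resulting short list one combines the strongly $\F$-closed subgroups $\gen{x,z}$ and $\gen{z^\F}$ --- the latter containing $P$ because $\K$ fuses all involutions of $P$ --- with the relation $x\sim_\F xz$, or applies the Thompson--Lyons transfer lemma (Proposition~\ref{C:TTcyclic}, resp.\ \ref{C:TTlinind}) to a maximal subgroup containing $P$, either to exhibit the desired index-$2$ strongly closed subgroup or to force $x$ into an $\F$-conjugate of $z$, contrary to $x\nin z^\F$. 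Everything outside this enumeration is routine given the structure of $S$ and the fusion facts above.
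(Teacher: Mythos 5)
Your structural setup (no $f$-element, $2$-rank $3$, $Z(S)\geq\gen{x,z}$, $P\norm S$) is correct, and so is the observation that $x$ cannot be weakly $\F$-closed since $O_2(\F)=1$. But the central claim of your fusion step --- that $x\sim_\F xz$ and that $\gen{x,z}$ is strongly $\F$-closed --- is false, and the rest of the proof collapses with it.

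First, $x$ is \emph{not} $\F$-conjugate to $xz$. Since $S=T$ is trivially fully $\F$-automized, $\Out_\F(S)$ has odd order, so the image of $\Aut_\F(S)$ in $\Aut(Z(S))$ has odd order. Since fusion in $Z(S)$ is controlled by $\Aut_\F(S)$ (Lemma~\ref{L:BurnsideLemma}) and $z$ is fixed (Lemma~\ref{L:zweaklyclosedinZ0}), the $\Aut_\F(S)$-orbit of $x$ in $\gen{x,z}$ has odd size and is contained in $\{x,xz\}$, hence equals $\{x\}$. Thus $x,xz,z$ are pairwise non-$\F$-conjugate. It follows that the $\F$-conjugate $y\neq x$ you produced must lie \emph{outside} $\gen{x,z}$. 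Your intermediate assertion that ``a routine calculation shows $\Aut_\F(U)$ normalizes $\gen{x,z}$'' is not true in general, and even if it were it would only force $y=x$, a contradiction to $y\neq x$ rather than the conclusion $y=xz$. Second, $\gen{x,z}$ cannot be strongly $\F$-closed under any circumstances here: $z$ is $\K$-conjugate (hence $\F$-conjugate) to every noncentral involution of $P$, so $\gen{z}^\F$ escapes $\gen{x,z}$.

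Because the final paragraph leans on $x\sim_\F xz$ and is itself only a sketch, it does not recover the argument. The actual engine of the paper's proof is exactly the fact you lost: taking $h=x^\alpha\nin Z$ for $\alpha\in\Aut_\F(D)$ with $D\in\F^{fcr}$, one shows $h\in R_d-R$ so that $P\gen{h}$ is dihedral, sets $A=Z\gen{h}=\Omega_1(Z(D))$, pins down $h^{\Aut_\F(A)}=\{x,h,hz\}$ by an orbit-size argument in $L_3(2)$, deduces $[Q,h]=1$ and $P\gen{h}\norm T$, and finally reaches a contradiction by applying the Thompson--Lyons transfer lemma to $T/P\gen{h}$ or to $T/R_d$ depending on whether $F$ is cyclic or quaternion. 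The explicit conjugate of $x$ outside $Z$ is indispensable; your proposal, by forcing the fusion of $x$ to remain inside $\gen{x,z}$, never gets access to it.
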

\begin{proof}
Suppose to the contrary that $T = S$.  From Proposition~\ref{P:2central}, there
is no involutory $f$-element in $T$, and hence there are no involutions in
$Rf$.  By Lemma~\ref{L:l2qomnibus}(h), it follows that 
\begin{eqnarray}
T_0 \leq R_d
\label{E:Omega1TinRd}
\end{eqnarray}
and hence $Z = \gen{x, z} \leq Z(T)$ in the present case.
Furthermore,
\begin{eqnarray}
T \mbox{ is of } 2\mbox{-rank } 3
\label{E:Tof2rank3}
\end{eqnarray}
by Lemma~\ref{L:2rank3or4}.

In view of Lemma~\ref{L:zweaklyclosedinZ0}, we know $z$ is not $\F$-conjugate
to $x$ or $xz$. Since fusion in $Z(T)$ is controlled in $\Aut_\F(T)$ and
$\Aut_T(T) \in \Syl_2(\Aut_\F(T))$,
\begin{eqnarray}
x, xz, \mbox{ and } z \mbox{ are pairwise not } \F\mbox{-conjugate}.
\label{E:Zisolated}
\end{eqnarray}
Our assumption that $O_2(\F) = 1$ yields that $x$ has an $\F$-conjugate
outside $Z$.  Apply Alperin's fusion theorem to obtain a fully
$\F$-normalized, $\F$-centric subgroup $D$ of $T$, and an automorphism $\alpha
\in \Aut_\F(D)$ with $x^\alpha \nin Z$. Set $h = x^\alpha$. 

Note that $h \nin R$, as otherwise $h$ would lie in $\Omega_1(R) = \gen{x}
\times P$. Because all involutions of $P \cup P^\#x$ are $\F$-conjugate to $xz$ or
$z$, (\ref{E:Zisolated}) would yield $h=x$, contrary to the choice of $h$.
Therefore, by (\ref{E:Omega1TinRd}):
\begin{eqnarray}
h \in R_d - R \mbox{ and } P\gen{h} \mbox{ is dihedral. }
\label{E:hinRd-R}
\end{eqnarray}
Since $D$ is $\F$-centric, it contains $Z =
\gen{x,z} \leq \Omega_1(Z(T))$, and hence $\Omega_1(Z(D)) = Z\gen{h}$ as $T$ is
of $2$-rank $3$. Set $A = Z\gen{h}$.  Then $h$ is $N_P(A)$-conjugate to $hz$ by
(\ref{E:hinRd-R}). If $h$ is $\Aut_\F(A)$-conjugate to $hx$ or $hxz$ then it is
$\Aut_\F(A)$-conjugate to both, so $x$ has exactly five conjugates under
$\Aut_\F(A)$ by (\ref{E:Zisolated}), which is not the case. So
\begin{eqnarray}
h^{\Aut_\F(A)} = \set{x, h, hz}.
\label{E:AutFAconjugatesofh}
\end{eqnarray}
Since $Q \norm T$ is cyclic, $Q\gen{h}$ is abelian, dihedral, semidihedral, or
modular.  But if $Q\gen{h}$ is nonabelian then $h$ is $N_Q(A)$-conjugate to
$hx$, contradicting \eqref{E:AutFAconjugatesofh}. So $[Q,h]=1$ and hence 
\begin{eqnarray}
\Omega_1(T) = \Omega_1(R_d) = \gen{x} \times P\gen{h}. 
\label{E:Omega1T}
\end{eqnarray}
We claim that
\begin{eqnarray}
P\gen{h} \mbox{ is normal in } T.
\label{E:P<h>normal}
\end{eqnarray}
If this does not hold, then from \eqref{E:Omega1T} and the fact that $P$ is
normal in $T$, there exists $t \in T$ with $h^t \in Phx$ (note: $h$ is not
$T$-conjugate into $\gen{x}P \norm T$). But all involutions in $Ph$ are
$P$-conjugate (\ref{E:hinRd-R}), and hence multiplying $t$ by a suitable
element of $P$, we have that $h$ is $N_T(A)$-conjugate to $hx$, contradicting
(\ref{E:AutFAconjugatesofh}).

We now complete the proof via transfer arguments. Let $F_1$ be the preimage of
$\Omega_1(F/Q)$ in $F$. Note that $F$ is cyclic or else $F_1$ is quaternion by
Proposition~\ref{P:2central}. If $F$ is cyclic, then as it covers $T/P\gen{h}$,
we can apply Theorem~\ref{T:TT} to get that $x$ is $\F$-conjugate
into $P\gen{h} \cap Z(T) =  \gen{z}$, an immediate contradiction to
(\ref{E:Zisolated}). So $F_1$ is quaternion.  Let $w \in F_1-Q$ of order $4$, so
that $w^2 = x$.  In the present situation, $R_d = Q \times P\gen{h}$, $w$ is of
least order in $T-R_d$ by (\ref{E:Omega1T}), and $T/R_d$ is cyclic.
So Theorem~\ref{T:TT} yields a morphism $\phi \in \F$ with
$w^\phi \in R_d$, and hence $(w^\phi)^2 \in Z$ by the structure of $R_d$.  As
$w^2 = x$, $x^\phi = x$ by \eqref{E:Zisolated}.  Thus $\phi \in \C$ and so
$\phi$ extends to a morphism $\tilde{\phi}$ on a subgroup of $T$ containing $Q$
because $Q \norm \C$. This forces $F_1^{\tilde{\phi}} \leq
\gen{Q,w^{\tilde{\phi}}}$ to be abelian, a final contradiction.  
\end{proof}

\section{The $2$-rank $3$ case}\label{S:2rank3}

Continuing the notation from Section~\ref{S:2central}, we prove here the
following reduction.
\begin{theorem}\label{T:2rank3}
Let $\F$ be a fusion system on $S$ satifying Hypothesis~\ref{H:main}. Then $T$
is of $2$-rank $4$. 
\end{theorem}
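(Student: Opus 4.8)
The plan is to suppose, toward a contradiction, that $T$ has $2$-rank $3$; by Lemma~\ref{L:2rank3or4} this is equivalent to the statement that $T$ contains no $f$-element on $\K$. I would first extract the same structural consequences as in the opening lines of the proof of Proposition~\ref{P:2centralfinal}: the absence of $f$-elements forces $Rf$ to contain no involutions, so by Lemma~\ref{L:l2qomnibus}(h) we get $T_0 := \Omega_1(T) \leq R_d$, and hence $T_0 = \gen{x}\times P$ or $\gen{x}\times P\gen{h}$ according as $R_d = R$ or $R_d > R$; in either case $\Omega_1(Z(T_0)) = \gen{x,z} = \Omega_1(Z(T))$ has rank $2$, and by Lemma~\ref{L:zweaklyclosedinZ0} the involution $z$ is weakly $\F$-closed in $\gen{x,z}$. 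Since $x \in \gen{x,z}$, this already yields that $x$ is not $\F$-conjugate to $z$, and Lemma~\ref{L:zweaklyclosedviaP} (applied to maps out of $P$) records that any $\F$-map defined on a subgroup containing $P$ fixes $z$.

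Next I would exploit that $T = C_S(x) < S$ by Proposition~\ref{P:2centralfinal}. Choosing $t \in N_S(T)\setminus T$, one has $T^t = T$, so $x^t \in \Omega_1(Z(T)) = \gen{x,z}$; since $t \notin C_S(x)$ and $x \not\sim_\F z$, this pins down $x^t = xz$, and then $z^t = z$. Thus $N_S(T)$ induces on $\gen{x,z}$ precisely the transposition fixing $z$, whence $|N_S(T):T| = 2$ and $z \in Z(N_S(T))$. Iterating this along the chain $T < N_S(T) < N_S(N_S(T)) < \cdots < S$ and using at each stage that the centre of the larger group lies in $C_S(x) = T$ (because each member of the chain contains $x$), one finds $\Omega_1(Z(T_i)) = \gen{z}$ all the way up, so $\Omega_1(Z(S)) = \gen{z}$ and in particular $z \in Z(S)$. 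I expect that one can in fact force $N_S(T) = S$, i.e. $|S:T| = 2$, by combining $J(S) = J(T) \leq R_d \leq T$ (from Lemma~\ref{L:2rank3or4}) with the hypothesis $\Baum(S)\leq T$ and the $2$-rank restriction; this together with the fusion facts $x \sim_\F xz$, $x \not\sim_\F z$ and the weak closure of $z$ then determines the $\F$-fusion of the relevant subgroups (the analog of Lemma~\ref{L:Pweaklyclosed}, showing $P$ — or an appropriate overgroup such as $R_d$ or $P\gen{h}$ — is weakly $\F$-closed) and the global structure of $S$.

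With the structure of $S$ in hand, I would finish by a Thompson--Lyons transfer argument modeled on the closing paragraphs of Proposition~\ref{P:2centralfinal}: since $\F = O^2(\F)$, Proposition~\ref{C:TTcyclic} applied to a suitable normal subgroup of index $2$ in $S$ produces an $\F$-conjugate of a least-order element outside it inside that subgroup, and tracking this against the relations already established (e.g. $x \not\sim_\F z$, $z$ weakly $\F$-closed in $\gen{x,z}$, and the location of the involutions of $R_d$) yields either $x \sim_\F z$, or — in the case where $F$ is generalized quaternion rather than cyclic, the dichotomy of Proposition~\ref{P:2central} — that some conjugate of $F$ is abelian, contradicting Hypothesis~\ref{H:main} in either event. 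The main obstacle will be the middle step: with no target group to aim at, pinning down $S$ (in particular establishing $|S:T| = 2$) and controlling the fusion requires treating simultaneously the two cases $R_d = R$ and $R_d > R$ and disentangling the $PGL$-type elements $h$ present in $T$, even though no field-automorphism involution occurs.
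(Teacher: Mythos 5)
Your opening moves track the paper's proof reasonably well: assuming $2$-rank $3$, using Lemma~\ref{L:2rank3or4} to rule out $f$-elements, placing $\Omega_1(T)$ inside $R_d$, identifying $\Omega_1(Z(J))=\langle x,z\rangle$, and deducing $T=\Baum(S)$ of index $2$ in $S$ from $\Baum(S)\leq T$ together with $x\notin Z(S)$. The paper establishes exactly these facts at the start of Section~\ref{S:2rank3}.

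However, there are real problems in the middle and end.

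First, you lean on Lemma~\ref{L:zweaklyclosedinZ0} (and implicitly \ref{L:zweaklyclosedviaP}, \ref{L:Pweaklyclosed}) but those lemmas are stated and proved in Section~\ref{S:2central} under the standing assumption $S=T$. In particular, the proof of Lemma~\ref{L:zweaklyclosedinZ0} applies Burnside's fusion theorem to $T_0=\Omega_1(T)$, which is weakly $\F$-closed only when $T=S$; in the $2$-rank $3$ case you are explicitly in the regime $T<S$, so $\Omega_1(T)$ has no claim to weak closure in $\F$. The paper avoids this by proving a fresh fact in this setting, Lemma~\ref{L:x^a=xz}, by a direct $2$-group argument: if $z^a\neq z$ then $P^a\neq P$, and analyzing $[P^a,P]$ against the $2$-rank bound forces $Z(P^a)=Z(P)$, a contradiction. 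That one-paragraph argument replaces your normalizer-chain iteration and gives $z\in Z(S)$, $x\not\sim_\F z$, and $x^a=xz$ in one stroke.

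Second, you omit the structural engine of the proof. The paper next proves Lemma~\ref{L:Fcyclic}: $Q=\langle x\rangle$ and $F$ is cyclic (by examining a hypothetical $u\in Q$ with $u^2=x$ and comparing with $u^a$). This kills, at the outset, your proposed end-game dichotomy ``either $x\sim_\F z$ or $F$ is quaternion'': the quaternion possibility for $F$ is a Section~\ref{S:2central} phenomenon and does not arise here because $Q=\langle x\rangle$. The paper then proves Lemma~\ref{L:aswaps} (a four-group $E=\langle x\rangle\times V$ is not $T$-conjugate to $E^a$, via a fully-automized argument and Lemma~\ref{L:autDwr2}) and Lemma~\ref{L:Rd>R}. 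These steps are not cosmetic; they pin down that $J=\langle x\rangle\times P_1$ with $P_1=P\langle h\rangle$ dihedral of order $2|P|$, that $P_1=\langle z^\F\cap T\rangle\norm S$, and that $S/P_1Z$ is abelian via Lemma~\ref{L:autD}(b).

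Third, the final contradiction is not a single transfer application: after the reductions above, $FP_1/P_1$ is cyclic of index $2$ in $S/P_1$, and the paper runs through four separate lemmas showing $S/P_1$ is not abelian (via Proposition~\ref{C:TTlinind}), contains an involution outside $T$ (ruling out quaternion), is not modular, and is not dihedral — each with its own transfer or direct $2$-group computation. Your sketch collapses all of this into ``a Thompson--Lyons transfer argument modeled on Proposition~\ref{P:2central}''; the analogy is correct in spirit but far from a proof, and you point at the wrong contradiction in the quaternion branch. The honest acknowledgment in your last sentence — that pinning down $S$ and the fusion is the hard part — is well placed, but it means the middle of the argument is a genuine gap rather than a routine adaptation.
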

Throughout this section, assume to the contrary that $T$ is of $2$-rank $3$.
By Hypothesis~\ref{H:main}, $S$ is also of $2$-rank $3$.  From
Proposition~\ref{P:2centralfinal}, 
\begin{eqnarray}
    x \nin Z(S).
    \label{E:not2central}
\end{eqnarray}
Recall $F_1$ contains $Q$ with index $1$ or $2$ and $F_1/Q$ induces field
automorphisms on $K$. Then 
\begin{eqnarray}
C_T(P) = F_1 \times \gen{z}.
    \label{E:C_T(P)}
\end{eqnarray}

By Lemma~\ref{L:2rank3or4}, there exists no involution in $T$ which is an
$f$-element. Set $J = J(S) = J(T)$ for short. We have the inclusions $P \leq J
\leq \Omega_1(T) \leq R_d$.  This shows that $Z = \Omega_1(Z(\Omega_1(T))) \leq
\Omega_1(C_{T}(P)) = \Omega_1(F_1 \times \gen{z}) = \gen{x,z}$. So $Z =
\gen{x,z}$ and $Z$ coincides with $\Omega_1(Z(J))$.  Therefore, since $\Baum(S)
\leq T$ and (\ref{E:not2central}), we have that $T = \Baum(S)$ and
\begin{eqnarray}
    T \mbox{ is of index } 2 \mbox{ in } S.
    \label{E:|S:T|=2}
\end{eqnarray}
Fix $a \in S-T$. 

\begin{lemma}\label{L:x^a=xz}
$z \nin x^\F \cup (xz)^\F$.
\end{lemma}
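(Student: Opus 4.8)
The claim is that $z$ — the generator of $Z(P)$ — is not $\F$-conjugate to $x$ or to $xz$. The plan is to argue by contradiction and exploit the rigidity coming from weak closure results already established, together with the fact that $x \notin Z(S)$ while $z$ enjoys strong closure properties inside the relevant subgroups of $T$.

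\medskip

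\textbf{Plan of attack.} First I would record what is already known about the three involutions $x$, $xz$, $z$. From Hypothesis~\ref{H:main} and the $2$-rank $3$ assumption we have $Z = \gen{x,z} = \Omega_1(Z(J))$ with $Z$ weakly $\F$-closed (it equals $\Omega_1(Z(J(S)))$), and $\Baum(S) = T$ is of index $2$ in $S$. By Lemma~\ref{L:zweaklyclosedinZ0}, $z$ is weakly $\F$-closed in $Z = \Omega_1(Z(T_0))$; combined with the fact that $\Aut_\F(T_0)$ (or $\Aut_\F(J)$) controls fusion in $Z$ by Burnside's fusion theorem (Lemma~\ref{L:BurnsideLemma}), this shows immediately that $z$ is not $\F$-conjugate, \emph{within} $T$, to $x$ or $xz$. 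So the only way $z \in x^\F$ (or $z \in (xz)^\F$) could occur is via a conjugating morphism not visible inside $T$, i.e. involving the element $a \in S-T$. The strategy is therefore: suppose $\phi \in \Hom_\F(\gen{z},S)$ with $z^\phi = x$ (the $xz$ case being handled identically, or reduced to it by composing with an automorphism of $\gen{x,z}$ realized inside $T$). Since $z \in Z(T)$ (as $T_0 \le R_d$ and $Z = \gen{x,z} \le Z(T)$ in this case — this needs checking, but $z$ is central in $P$ and $[T,T]\le QC$ by Lemma~\ref{L:autD}(b), so $z$ is central in $T$), $\phi$ extends to a morphism $\tilde\phi$ on $C_S(z)$. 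The key point is to pin down $C_S(z)$: because $x \notin Z(S)$ but one expects $z$ to be centralized by a larger piece of $S$, and then derive a contradiction with $x \in Z(T)$ — specifically, $x^{\tilde\phi^{-1}} = z$ but $x$ need not be central in $C_S(z)^{\tilde\phi}$, or alternatively $z^{\tilde\phi} = x$ forces $C_S(z)^{\tilde\phi} \le C_S(x) = T$, giving $|C_S(z)| \le |T|$, whereas if $C_S(z) = S$ this contradicts $T < S$.

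\medskip

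\textbf{The crux.} The main obstacle is controlling $C_S(z)$ and ruling out the "exterior" conjugation. The clean line is: $z$ is weakly $\F$-closed in $Z$, and in fact I expect $z$ is weakly $\F$-closed in $C_S(z)$ or at least that any $\F$-conjugate of $z$ lying in $Z$ equals $z$. If $z^\phi = x$, then applying $\phi$ and using that $x, xz, z$ are the only involutions in $Z$, together with the fact (Lemma~\ref{L:zweaklyclosedinZ0} applied to the image) that $z^\phi$ would again be weakly closed in the corresponding $Z$-subgroup, yields a contradiction because $x$ is \emph{not} weakly $\F$-closed in $Z$ (as $x \notin Z(S)$, $x$ has an $\F$-conjugate outside $Z$ — indeed $a$ or fusion involving $a$ moves $x$). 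Concretely: $z$ weakly closed in $Z$ means every $\F$-automorphism of any overgroup fixes $z$; but if $z^\phi = x$, then $\phi^{-1}$ carries some $\F$-conjugate $x'$ of $x$ with $x' \notin Z$ to a conjugate of $z$ outside $Z^{\phi^{-1}}$, and then pulling back through the weak closure of $z$ in $Z$ forces $z \in x^\F$ to also imply $x \in z^\F$ has conjugates inside $Z$ only, contradiction. I expect the actual proof to be short: it likely just observes that $\gen{z}$ is weakly $\F$-closed in $Z(T)$ (or in $T_0$) hence not $\F$-conjugate to the non-weakly-closed $x$ and $xz$, invoking Lemma~\ref{L:zweaklyclosedinZ0} and Lemma~\ref{L:BurnsideLemma} with $W = J$, and using (\ref{E:not2central}) to know $x$ is genuinely moved. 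The one subtlety to handle carefully is the extension of a morphism defined on $\gen{z}$ to a morphism seeing enough of $S$ — this is where the extension axiom and the centrality of $z$ in $T$ (not $S$!) must be used, and it is the step I would write out most carefully.
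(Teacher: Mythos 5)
Your proposal does not match the paper's proof, and as written it contains a gap; but one of the ideas you float is salvageable.

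\textbf{What the paper actually does.} The proof first reduces the lemma to showing $z \in Z(S)$: if $z$ is central in $S$ then $z$ is fully $\F$-centralized with $C_S(z) = S$, while $x$ is fully $\F$-centralized with $C_S(x) = T < S$; two fully $\F$-centralized $\F$-conjugate elements have centralizers of equal order, so $z \nin x^\F$. Moreover, $a$ acts on $\Omega_1(Z(T)) = \gen{x,z}$ fixing $z$ but not $x$, so $x^a = xz$; hence $xz \in x^\F$ and the same comparison rules out $z \in (xz)^\F$. The reduction being made, the paper proves $z \in Z(S)$ by a short $2$-group argument using nothing about fusion: if $z^a \neq z$ then $P^a \neq P$; both are normal in $T$, and $[P^a,P] \leq P^a \cap P$ is a nontrivial normal subgroup of each (nontrivial because $[P^a,P]=1$ would force $P^aP \cong P^a \times P$, of $2$-rank $4$, contradicting the rank-$3$ hypothesis), hence contains both $Z(P)$ and $Z(P^a)$; since $\gen{z^a} \norm T$ and $z^a \in P$, one gets $Z(P^a) = Z(P)$, a contradiction. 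This is a concrete use of the $2$-rank $3$ bound and has no counterpart in your sketch.

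\textbf{Where your proposal goes wrong.} In the ``crux'' paragraph you assume that $x$ has an $\F$-conjugate lying outside $Z = \gen{x,z}$ and try to transport this through $\phi$ to contradict weak closure of $z$. At this point in the argument nothing establishes that $x$ is moved outside $Z$: the element $a$ conjugates $x$ to another involution of $\Omega_1(Z(T))$, which is precisely $Z$, so the only $\F$-conjugates of $x$ you control still lie in $Z$. (The fact that $x$ is eventually conjugated outside $Z$ uses $O_2(\F)=1$ and appears only later.) So that branch of your argument does not close.

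\textbf{What is salvageable, and a genuine difference.} You are right that Lemma~\ref{L:zweaklyclosedinZ0} would settle the matter instantly once one knows it is applicable: if $z$ is weakly $\F$-closed in $Z = \gen{x,z}$ then $z$ has no $\F$-conjugate in $Z$ other than itself, and since $x, xz \in Z$ the conclusion follows with no extension-axiom gymnastics at all. But Lemma~\ref{L:zweaklyclosedinZ0} was stated and proved in the $2$-central case ($S = T$); to use it in the $2$-rank $3$ case you must check the proof still applies — the key point is that $T_0 = \Omega_1(T)$ must be weakly $\F$-closed, which holds here because $T = \Baum(S)$, and the rank bound from Lemma~\ref{L:2rank3or4} still supplies the contradiction. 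You gesture at this but treat it as uncertain (``only way $z \in x^\F$ \ldots could occur is via a conjugating morphism not visible inside $T$''); in fact the weak closure statement, once verified, already handles morphisms involving $a$, and the hedging obscures an otherwise valid alternative. The paper's route avoids re-justifying a Section~4 lemma in a new setting by instead making the $P \cap P^a$ computation directly; your route, cleaned up, trades that computation for a re-examination of Lemma~\ref{L:zweaklyclosedinZ0}'s hypotheses. Both work, but only the paper's is fully spelled out, and your write-up as it stands neither completes the weak-closure route nor finds the $2$-group route.
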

\begin{proof}
If $z \in Z(S)$, then both $x$ and $xz$ are fully $\F$-centralized and not in
$Z(S)$ by \eqref{E:not2central}, so it suffices to show that $z \in Z(S)$.
Suppose that $z \neq z^a$. As $z^a \in Z = \gen{x,z}$, $P^a \neq P$. Since
$P$ and $P^a$ are normal in $T$, we have $[P^a, P] \leq P^a \cap P$ is normal
in both $P$ and $P^a$. Furthermore, $[P^a, P] \neq 1$ since otherwise $P^aP =
P^a \times P$ is of $2$-rank $4$. Therefore $[P^a, P]$ contains both $Z(P^a)$
and $Z(P)$.  But $[Z(P^a), P] \leq [Z, P] = 1$, forcing $Z(P^a) = Z(P)$
contrary to assumption.
\end{proof}

It follows in particular that $x^a = xz$.

\begin{lemma}\label{L:Fcyclic}
$Q = \gen{x}$ and $F$ is cyclic.
\end{lemma}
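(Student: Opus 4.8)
The plan is to show both halves of the statement by analyzing the structure of $T$ imposed by the $2$-rank $3$ hypothesis and the setup already in place. Recall that $C_T(P) = F_1 \times \gen{z}$ with $F_1$ containing $Q$ with index $1$ or $2$, that $Q$ is cyclic normal in $T$, and (from Lemma~\ref{L:2rank3or4} and the absence of $f$-elements) that $T$ has $2$-rank $3$, so $\Omega_1(T) \leq R_d$ and $Z = \gen{x,z}$. The key leverage is that $x$ is a fully $\F$-centralized involution with $x^a = xz$ (from Lemma~\ref{L:x^a=xz}), and we will play this against the constraint that $F_1$ cannot split over $Q$ (no $f$-element) together with $\F = O^2(\F)$.

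First I would pin down $Q = \gen{x}$. Since $Q$ is cyclic normal in $T$ and $x \in \Omega_1(Q)$ (as $x \in C_T(P) = Q \times \gen{z}$ up to the $F_1$-issue — more precisely $x \in C_R(P)$ and $\Omega_1(R) = \gen{x} \times P$, so $\gen{x} = \Omega_1(Q)$), suppose $|Q| > 2$. Then $Q$ contains an element $w$ of order $4$ with $w^2 = x$. The idea is to derive a contradiction with $x^a = xz$ by a transfer argument: $T$ has index $2$ in $S$ by \eqref{E:|S:T|=2}, so with $a \in S - T$ one examines whether $x$ (equivalently $w^2$) must be $\F$-conjugate to $z$ via Proposition~\ref{C:TTcyclic}, contradicting Lemma~\ref{L:x^a=xz}; alternatively, since $x$ is weakly closed in $Z$ up to the conjugate $xz$, and $a$ swaps $x$ and $xz$ while $Q$ is characteristic-ish, one argues $w$ has no admissible $\F$-conjugate behavior. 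The cleanest route is probably: $Q$ cyclic of order $\geq 4$ forces $Q\gen{z} = \Omega_1(Q) \times \gen{z}$-structure to interact with $a$ badly since $z \notin x^\F$, concluding $Q = \gen{x}$.

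Next, granting $Q = \gen{x}$, I would show $F$ is cyclic. We know $F$ (the preimage of the field-automorphism part of $\bar T$) contains $Q = \gen{x}$, and since there is no $f$-element, $F$ does not split over $Q$; so $F_1$ (the preimage of $\Omega_1(\bar F)$) is cyclic or generalized quaternion of order $4$ over $Q = \gen{x}$ — but with $|Q| = 2$, the non-split condition forces $\Omega_1(\bar F)$ to lift to $C_4$ inside $F$, i.e., $F$ has cyclic $\Omega_1$. A finite $2$-group whose $\Omega_1$ is cyclic is cyclic or generalized quaternion. To rule out quaternion, I would run the transfer argument from the end of Proposition~\ref{P:2central}'s proof: if $F$ were quaternion with $w \in F - Q$ of order $4$ and $w^2 = x$, then $w$ is of least order in $T - R_d$ (using $\Omega_1(T) = \gen{x} \times P\gen{h}$ as in the $2$-central case, adapted here) and $T/R_d$ is cyclic, so Proposition~\ref{C:TTcyclic} produces $\phi \in \F$ with $w^\phi \in R_d$, hence $(w^\phi)^2 \in Z$ and $x^\phi = x$ by Lemma~\ref{L:x^a=xz}, so $\phi \in \C$ extends fixing $Q$, forcing $F^{\tilde\phi}$ abelian — a contradiction. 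Hence $F$ is cyclic.

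The main obstacle I expect is the first step: cleanly ruling out $|Q| > 2$. Unlike the $2$-central case, here $x \notin Z(S)$ and one must use the action of $a$ carefully; the subtlety is that $Q$ need not be weakly $\F$-closed, so one cannot directly quote a transfer lemma on $Q$. The resolution should be to work with $R_d$ or $\Omega_1(T)$ as the transfer target — showing $\Omega_1(T) = \gen{x} \times P\gen{h}$ for a suitable $h \in R_d - R$ as in the previous section — and then locate an element of least order in $T$ minus that target whose square is $x$, contradicting \eqref{E:Zisolated}-type isolation of $x$ from $z$ (here Lemma~\ref{L:x^a=xz}) via Proposition~\ref{C:TTcyclic}. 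Once the structure $\Omega_1(T) = \gen{x}\times P\gen{h}$ is in hand, both $Q = \gen{x}$ and the cyclicity of $F$ follow from the same transfer mechanism, so the real work is establishing that structural description under the current hypotheses.
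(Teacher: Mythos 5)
Your proposal never actually produces a proof that $Q = \gen{x}$; it offers three competing sketches (a transfer argument on some as-yet-unconstructed target, an appeal to ``$Q$ characteristic-ish,'' and a vague ``interacts with $a$ badly''), explicitly flags that the structural description $\Omega_1(T) = \gen{x} \times P\gen{h}$ needed to run the transfer is not yet in hand, and leaves it there. That is the gap: the step you correctly identify as ``the real work'' is the one you skip. Moreover the route you gesture at is not the right one. No transfer is needed here. The paper's argument is pure local group theory: take $u \in Q$ with $u^2 = x$; since $Q$ is cyclic and normal in $T$ and $a$ normalizes $T$, the subgroup $\gen{u^a}$ is cyclic of order $4$, normal in $T$, with $(u^a)^2 = x^a = xz$, so $[\gen{u^a},T] \leq \gen{xz}$; combining with $[\gen{u^a},P] \leq P$ (as $P \norm T$) and $P \cap \gen{xz} = 1$ gives $u^a \in C_T(P) = F_1 \times \gen{z}$, whence $xz = (u^a)^2 \in \mho^1(F_1 \times \gen{z}) \leq Q$, contradicting $\Omega_1(Q) = \gen{x}$. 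This is shorter and does not invoke $\F = O^2(\F)$ at all, only the group structure of $T$ together with $x^a = xz$.

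For the second half you also over-engineer. Once $Q = \gen{x}$, the subgroup $Q$ is central in $F$ (because $x \in Z(T) \geq Z(F)$) and $F/Q$ is cyclic, so $F$ is abelian; together with $\Omega_1(F) \leq \Omega_1(F_1) = \gen{x}$ (no $f$-element), $F$ is abelian with a unique involution, hence cyclic. The generalized quaternion case you set out to exclude cannot arise, so importing the transfer argument from Proposition~\ref{P:2central} (which was designed for the $2$-central case $S = T$, a setting you no longer have) is a dead end even if it could be adapted. Also, the phrase ``generalized quaternion of order $4$'' is not meaningful; the smallest such group is $Q_8$.
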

\begin{proof}
Suppose that $Q > \gen{x}$ and let $u \in Q$ with $u^2 = x$. Then $\gen{u^a}$
is a normal subgroup of $T$. Since $(u^a)^2 = xz$, we have $[\gen{u^a}, T] \leq
\gen{xz}$. But $[\gen{u^a}, P] \leq P$ as $P$ is normal, and it follows that
$u^a \in C_T(P) = F_1 \times \gen{z}$ by (\ref{E:C_T(P)}). Therefore, $xz
\in \mho^1(\gen{u^a}) \leq \mho^1(F_1 \times \gen{z}) \leq Q$, which is absurd.
So $Q = \gen{x}$. As $F/Q$ is cyclic, $F$ is abelian. Hence $F$ is cyclic by
Lemma~\ref{L:2rank3or4}.
\end{proof}

\begin{lemma}\label{L:aswaps}
Let $V$ be a four subgroup of $P$ and set $E = \gen{x} \times V$. Then $E^a$ is
not $T$-conjugate to $E$.
\end{lemma}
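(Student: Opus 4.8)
The plan is to recast the conclusion as a statement about $N_S(E)$, to reduce to a single residual configuration using the fusion restrictions already established for $z$, and then to eliminate that configuration from the structure of $P$.

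First I would note that, since $T \norm S$ with $[S:T]=2$, the coset $aT$ equals $S-T$; hence $E^a$ is $T$-conjugate to $E$ if and only if $(at)^{-1}E(at)=E$ for some $t\in T$, that is, if and only if $N_S(E)\not\leq T$. So it suffices to prove $N_S(E)\leq T$. Suppose not, and fix $b\in N_S(E)-T$. Put $Z=\gen{x,z}=\Omega_1(Z(J(S)))$ and $V=\gen{z,v}$; then $E=ZV$ and $E\cap P=V$, since $x\nin P$.

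Next I would pin down the action of $b$. As $b$ normalizes $J(S)\norm S$ it normalizes $Z$; as $z\in Z(S)$ it fixes $z$; as $b\nin T=C_S(x)$ it does not fix $x$; and $x^b\neq z$ because $z\nin x^\F$ (Lemma~\ref{L:x^a=xz}). Hence $x^b=xz$, so $b$ interchanges $x$ and $xz$ while centralizing $z$. Now $v^b$ is an involution of $E\subseteq R$, and it is $\F$-conjugate to $z$ because all involutions of the dihedral group $P$ are $\K$-conjugate, hence $\C$-conjugate. By \eqref{E:allPxconjugate} every involution of $R-P$ is $\F$-conjugate to $x$ or to $xz$, so by Lemma~\ref{L:x^a=xz} no involution of $R-P$ is $\F$-conjugate to $z$; therefore $v^b\in P$, whence $v^b\in E\cap P=V$ and $v^b\in\set{v,vz}$. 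In particular $b$ normalizes $V$.

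It remains to derive a contradiction from the residual configuration: $b\in S-T$ normalizes both $E$ and $V=\gen{z,v}\leq P$, fixes $z$, and interchanges $x$ with $xz$. Here the plan is to pass to the normal closure $\gen{V^T}$ of $V$ in $T$, a subgroup of $P$ normalized by $b$ (as $V^b=V$ and $T^b=T$), and --- using that $T$ has $2$-rank $3$, so $\Omega_1(T)\leq R_d$ --- to force $b$ to normalize $P$ itself; then $N_S(P)\not\leq T$, and combining control of fusion in $Z\leq Z(T)$ via $N_\F(T)$ (Lemma~\ref{L:BurnsideLemma}) with a transfer argument in the spirit of Proposition~\ref{C:TTcyclic} should yield that $x$ and $z$ are $\F$-conjugate, contradicting Lemma~\ref{L:x^a=xz}. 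I expect this final step --- ruling out the configuration in which $b$ normalizes $V$ --- to be the main obstacle; the reductions in the first two paragraphs are routine.
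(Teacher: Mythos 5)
Your first two paragraphs are correct and track part of the paper's argument. Reducing to $N_S(E) \leq T$ is equivalent to the paper's "modifying $a$ if necessary, we may assume $a$ normalizes $E$," and your identification of the action of $b$ on $E$ — fixing $z$, swapping $x$ and $xz$, and preserving $V$ as the set of $\F$-conjugates of $z$ inside $E$ — is exactly the content the paper extracts when it observes that $N = \gen{c_a, \Aut_\C(E)}$ stabilizes the line $V$ and does not stabilize a point.

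The gap is in the third paragraph, and you flag it yourself. Your plan is to show $b$ normalizes $\gen{V^T}$, hope this forces $P \norm S$, and then run transfer. But $\gen{V^T}$ need not be $P$: if $R_d = R$ (a possibility that is only excluded later, in Lemma~\ref{L:Rd>R}), every element of $T$ induces on $P$ either an inner automorphism or an automorphism in the subgroup $B$ of $\Out(P)$ from Lemma~\ref{L:autD}(a), which preserves each $P$-class of four-subgroups; then $\gen{V^T} = \gen{V^P}$ is the proper maximal dihedral subgroup $\gen{v, C^2}$ of $P$, not $P$ itself. Even granting $P \norm S$, the transfer contradiction you gesture at requires control on which cosets of $P$ carry fully $\F$-centralized conjugates of $x$, and this information is exactly what becomes available only under the more restrictive hypotheses of Lemma~\ref{L:Rd>R}. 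So the step you identify as "the main obstacle" genuinely is one, and your sketch does not get past it.

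The paper closes the argument by a different route: from the fact that $\gen{c_a, \Aut_\C(E)}$ fixes the line $V$ and contains both an element of order $3$ (from $\Aut_\K(V)$, extended over $Q$) and an element acting nontrivially on $\gen{x,z}$, it concludes $\Aut_\F(E) \cong S_4$. Since $|\Aut_T(E)| = 2$ and $|S:T|=2$ force $|\Aut_S(E)| \leq 4$, it suffices to show $E$ is fully $\F$-automized, which would give $|\Aut_S(E)| = 8$. If not, an odd-order morphism of Alperin type would have to carry $E$ to a fully automized conjugate $E_1$ with $\gen{E,E_1} = J$, hence restrict to a nontrivial odd-order automorphism of $J$; but $\Aut(J)$ is a $2$-group by Lemma~\ref{L:autDwr2}. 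That last appeal to the $2$-group structure of $\Aut(J)$ is the decisive ingredient, and it is missing from your proposal.
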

\begin{proof}
Suppose that $E^a$ is $T$-conjugate to $E$. Modifying $a$ if necessary, we may
assume that $a$ normalizes $E$. Now the subgroup $N = \gen{c_a, \Aut_\C(E)}$ of
$\Aut_\F(E)$ lies in $GL_3(2)$ and does not act transitively on $\mathcal{I}_2(E)$ by
Lemma~\ref{L:x^a=xz}. As $x^a = xz$, $N$ does not stabilize a point of $E$.
So $N$ must fix a line, which is then $V$. It follows that $N = \Aut_\F(E)
\cong S_4$.  Now $|\Aut_T(E)| = 2$, and we can obtain a contradiction to
(\ref{E:|S:T|=2}) by showing that $|\Aut_S(E)| = 8$, i.e. that $E$ is fully
automized in $\F$.

Suppose that $E$ is not fully $\F$-automized.  Either $J = \gen{x} \times P$ or
there exists an involution $h \in R_d - R$ and $J = \gen{x} \times P\gen{h}$.
In either case, there are exactly two $S$-classes of elementary abelian
subgroups of order $8$. Moreover, if $E_1 \in E^\F$ is fully $\F$-automized, then
$E_1 \nin E^S$, and so $\gen{E, E_1} = J$. By Alperin's fusion theorem, there
is a subgroup $D \in \F^{fc}$ and an automorphism $\alpha \in \Aut_\F(D)$ of
odd order such that $E_1 := E^\alpha$ is fully $\F$-automized. But then $J =
\gen{E, E_1} \leq D$, and consequently $\alpha$ restricts to a nontrivial (odd
order) automorphism of $J$. On the other hand, $\Aut(J)$ is a $2$-group by
Lemma~\ref{L:autDwr2}, a contradiction.
\end{proof}

\begin{lemma}\label{L:Rd>R}
$R_d > R$.
\end{lemma}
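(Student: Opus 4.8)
The plan is to argue by contradiction, so suppose $R_d = R$. Since we have already observed $P \le J(S) \le \Omega_1(T) \le R_d$ in this section, this forces $\Omega_1(T) = R = \gen{x} \times P$. Every elementary abelian subgroup of $T$ of the maximal rank $3$ then lies in $R$, and as $C_R(V) = \gen{x} \times V$ for each four-subgroup $V$ of $P$ while the four-subgroups generate $P$, I would conclude $J := J(S) = J(T) = R$. By Lemma~\ref{L:autDwr2} (applied to $R \cong C_2 \times D_{2^k}$, $k \ge 3$), $\Aut(R)$ is a $2$-group; since $R$ is weakly $\F$-closed, hence fully $\F$-automized, this gives $\Aut_\F(R) = \Aut_S(R)$.

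Next I would locate $P$ inside $R$. Fix $a \in S - T$, so that $x^a = xz$ and $z^a = z$. Because $R = J(S)$ is characteristic in $S$ and $P = \Omega_1(P) \le \Omega_1(T)$, we get $P^a \le \Omega_1(T) = R$, a dihedral subgroup of order $2^k$; and $P^{a^2} = P$ since $P \norm T$. If $P \cap P^a = 1$ then $P \times P^a$ would have $2$-rank $4$, against the standing hypothesis of this section, so $\gen{z} = Z(P) = Z(P^a) \le P \cap P^a$. Inspecting the dihedral subgroups of order $2^k$ of $C_2 \times D_{2^k}$, I would split into the cases $P^a = P$ (whence $P \norm S$) and $P^a \ne P$ (whence $a$ interchanges $P$ with a dihedral subgroup $P^a \le R$ with $PP^a = R$), and in either case examine the action of $T$ --- equivalently of $\bar{T} := T/\gen{x}$ on $\bar{P} := R/\gen{x}$ --- on the four-subgroups of $P$.

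Finally I would derive a contradiction from the isolation statements already proved in this section. By Lemma~\ref{L:autD}(a), an automorphism of $P$ inducing the outer automorphism generating the $C_2$ direct factor of $\Out(P)$ (the ``$PGL$ automorphism'') interchanges the two $P$-classes of four-subgroups of $P$, whereas the field automorphisms lie in the kernel $B$ of the action on these classes. Hence, whenever $\bar{T}$ involves the $PGL$ automorphism of $\bar{P}$, the two classes of four-subgroups of $P$ are fused in $T$; but then for any four-subgroup $V \le P$ the elementary abelian group $E := \gen{x} \times V$ is $T$-conjugate to every $\gen{x} \times V'$, in particular to $E^a$, which equals $\gen{x} \times (E^a \cap P)$ with $E^a \cap P$ a four-subgroup of $P$ (here one uses that $z \in E^a \cap P$ and that $T$ centralizes $x$); this contradicts Lemma~\ref{L:aswaps}. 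In the remaining configuration, where $\bar{T}/\bar{P}$ is a cyclic group of field automorphisms, I would instead invoke the Thompson--Lyons transfer Proposition~\ref{C:TTcyclic} applied to the proper normal subgroup $T$ of $S$ (legitimate since $S/T \cong C_2$ is cyclic and $\F = O^2(\F)$): an element of least order of $S - T$ has a fully $\F$-centralized $\F$-conjugate in $T$, and, using $\Omega_1(T) = R$, the identity $\Aut_\F(R) = \Aut_S(R)$, the action of $S$ on $Z(R) = \gen{x,z}$ and Burnside's fusion theorem (Lemma~\ref{L:BurnsideLemma}), this should be pushed to an $\F$-fusion of $x$ with $z$, which is impossible by Lemma~\ref{L:zweaklyclosedinZ0} (equivalently Lemma~\ref{L:x^a=xz}). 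I expect the main obstacle to be exactly this last step: turning the hypothesis $R_d = R$ into the forbidden fusion in the field-only case, and checking that it is compatible with the preceding case division on $P^a$.
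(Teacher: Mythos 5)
Your first paragraph correctly deduces from $R_d = R$ that $\Omega_1(T) = R = \gen{x}\times P$ and $J = R$, and your "Case~A" (an element of $T$ inducing the outer automorphism of $P$ that swaps its two classes of four-subgroups) is essentially the paper's first case: $T$ becomes transitive on $\elem_{2^3}(T)$, so $E$ and $E^a$ are $T$-conjugate, contradicting Lemma~\ref{L:aswaps}. That part is sound, although the detour through $P^a$ and the $P^a = P$ versus $P^a \ne P$ dichotomy is not needed --- the relevant split is on whether $|T:PF| = 2$ or $T = PF$, which is what your final paragraph actually uses.

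The genuine gap is in your "field-only" case $T = PF$. The paper splits this into two sub-cases with quite different arguments, and your sketch addresses neither. First, if $|F| > 2$, one gets an immediate contradiction without transfer: $Z(T) = \Omega_2(F)\times\gen{z}$, so $\mho^1(Z(T)) = \gen{x}$ is characteristic in $T\norm S$, forcing $x\in Z(S)$, contrary to $T < S$. Second, if $|F| = 2$ then $T = R = \gen{x}\times P$, and the paper's transfer argument does \emph{not} run through Proposition~\ref{C:TTcyclic} with $T$ as the normal subgroup. Instead it first shows $P = \gen{z^\F\cap T}$ is normal in $S$ (using that $z^\F\cap T = \mathcal{I}_2(P)$), observes $S/P$ is abelian of order $4$, shows every fully $\F$-centralized member of $x^\F$ lies in $Px$, and then applies Proposition~\ref{C:TTlinind} with $P$ as the normal subgroup to conclude $x$ is $\F$-conjugate into $P$, contradicting Lemma~\ref{L:x^a=xz}. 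Your proposal of applying Proposition~\ref{C:TTcyclic} to an element of least order in $S - T$, then "pushing" via $\Aut_\F(R)=\Aut_S(R)$ and Burnside to a fusion of $x$ with $z$, is not a proof: the conjugate produced in $T$ by TTcyclic need not be related to $x$ at all (e.g.\ it could land in $z^\F$), and you give no mechanism for converting that information into the forbidden $x\sim_\F z$. You flag this yourself as "the main obstacle", and it is: the needed ideas --- the direct argument for $|F|>2$, the normality of $P$ in $S$, and the use of TTlinind relative to $P$ rather than TTcyclic relative to $T$ --- are missing from the proposal.
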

\begin{proof}
Suppose on the contrary that $R_d = R$. If $|T\colon PF| = 2$, then $J = \gen{x}
\times P$, and $T$ acts transitively on $\elem_{2^3}(T)$ contrary to
Lemma~\ref{L:aswaps}.  So $T = PF$. If $|F| > 2$, then $Z(T) = \Omega_2(F)
\times \gen{z}$ and so $\mho^1(Z(T)) = \gen{x}$ is normal in $S$, at odds with
(\ref{E:not2central}). So $T = R = J = \gen{x} \times P$. 

We now obtain a contradiction by a transfer argument. Note that as all
involutions of $Px$ are $\F$-conjugate to $x$, we have $P = \gen{z^\F \cap T}$
is normal in $S$ by Lemma~\ref{L:x^a=xz}. Moreover, the quotient $S/P$ is
abelian.  If $b \in x^\F$ is fully $\F$-centralized, then $C_S(b)$ has $2$-rank
$3$, whence $b \in T \cap x^\F \cin Px$. 
Theorem~\ref{T:TT} now says that $x$ is $\F$-conjugate into $P$,
contradicting Lemma~\ref{L:x^a=xz} and completing the proof.
\end{proof}

As a consequence of the previous lemma, $T$ is transitive on $\elem_{2^3}(R) =
\elem_{2^3}(PF)$.  Fix a four subgroup $V$ of $P$. By Lemma~\ref{L:aswaps} and
the preceding remark, $V^a \leq R_d$ but $V^a \nleq R$. Fix an involution $h \in V^a - P$.
Then $P_1 := P\gen{h}$ is dihedral of order $2|P|$, and therefore is generated
by $\F$-conjugates of $z$.  We have at this point that $J = \gen{x} \times P_1
= R_d$.  As each involution in $Phx$ is $P$-conjugate to $hx$, and hence
$S$-conjugate into $Px$, it follows from Lemma~\ref{L:x^a=xz} that 
\begin{align}
P_1 =  \gen{z^\F \cap P_1} = \gen{z^\F \cap T}. 
\label{E:Tcapz^F}
\end{align}
So 
\begin{eqnarray} P_1 \mbox{ is normal in } S.
\label{E:P1norm} 
\end{eqnarray}
We now compute $C_S(P_1)$. Recall that $C_T(P) = \Omega_2(F) \times \gen{z}$
from (\ref{E:C_T(P)}) and Lemma~\ref{L:Fcyclic}.  Moreover, if $\Omega_2(F) >
\gen{x}$, then $[\Omega_2(F), h] = \gen{z}$ by (\ref{E:P1norm}) and
Lemma~\ref{L:l2qomnibus}(h). Since $P_1$ centralizes $\Omega_1Z(J) = Z =
\gen{x, z}$, we have $C_S(P_1) \leq \Baum(S) \leq T$. Hence, $C_S(P_1) =
C_T(P_1) = Z$. Thus,
\begin{eqnarray}
S/P_1Z \mbox{ is abelian}
\label{E:S/P_1Z}
\end{eqnarray}
by Lemma~\ref{L:autD} because $P_1$ is nonabelian dihedral. 

Since $S/P_1$ has a cyclic normal subgroup $FP_1/P_1$, it is abelian,
quaternion, dihedral, semidihedral, or modular. However, by \eqref{E:S/P_1Z},
$S/P_1$ is abelian or modular, or else $|F| = 4$ and $S/P_1$ is dihedral or
quaternion of order $8$. We rule out each of these cases in turn.

\begin{lemma}\label{L:notabelian}
$S/P_1$ is not abelian.
\end{lemma}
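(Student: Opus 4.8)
The plan is to suppose that $S/P_1$ is abelian and derive a contradiction. Since $P_1 \norm S$ by \eqref{E:P1norm} and is moreover strongly $\F$-closed in the present $2$-rank $3$ situation --- every $\F$-conjugate of the involution $z$ lies in $T$, hence in $P_1$ by \eqref{E:Tcapz^F} --- the quotient $\bar\F := \F/P_1$ on $\bar S := S/P_1$ is defined. Applying Lemma~\ref{L:O^pbasic}(a) to the canonical surjection $\F \to \bar\F$ gives $O^2(\bar\F) = \bar\F$, so $\bar\F$ is perfect and $\hyp(\bar\F) = \bar S$; since $x \nin P_1$ we have $\bar S \neq 1$, so $\bar\F$ is not the fusion system of $\bar S$. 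On the other hand $\bar S$ is abelian and contains the cyclic subgroup $FP_1/P_1$ with index $2$, so $\bar S$ is cyclic or $\bar S \cong C_2 \times C_{2^n}$ for some $n \geq 1$.

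Next I would observe that in each of these cases except $\bar S \cong C_2 \times C_2$ the group $\Aut(\bar S)$ is a $2$-group. Since the only $\bar\F$-centric subgroup of the abelian group $\bar S$ is $\bar S$ itself, Alperin's fusion theorem shows $\bar\F$ is generated by $\Aut_{\bar\F}(\bar S) \leq \Aut(\bar S)$; as the latter is a $2$-group, $O^2(\Aut_{\bar\F}(\bar S)) = 1$, whence $\hyp(\bar\F) = [\bar S, O^2(\Aut_{\bar\F}(\bar S))] = 1$, contrary to $\hyp(\bar\F) = \bar S \neq 1$. (Alternatively, Proposition~\ref{C:pprimeindexcor} would force $\bar\F = \F_{\bar S}(\bar S)$, which is not perfect.) This leaves the case $\bar S \cong C_2 \times C_2$.

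In that case $FP_1/P_1$ has order $2$, so $F = Q = \gen{x}$ by Lemma~\ref{L:Fcyclic}, and therefore $T = R_d = \gen{x} \times P_1$ has index $2$ in $S$ by \eqref{E:|S:T|=2}. Any saturated fusion system on $C_2 \times C_2$ has $\Aut_{\bar\F}(\bar S)$ of odd order by the Sylow axiom, hence of order $1$ or $3$ inside $\Aut(\bar S) \cong GL_2(2)$; the former is not perfect, so $\bar\F \cong \F_2(A_4)$ and the three involutions of $\bar S$ are $\bar\F$-conjugate. Lifting through $\F \to \bar\F$ an $\bar\F$-conjugation carrying $\bar x$ to an involution outside $FP_1/P_1$ yields an involution $a \in x^\F$ with $a \nin T$; since $T \norm S$ and $x \in Z(T) = \gen{x,z}$, we get $x^a \in \gen{x,z}$, and then $x^a = xz$ by Lemma~\ref{L:x^a=xz}, so that $\gen{x,a} \cong D_8$ with centre $\gen{z}$ and with all of its reflections lying in $x^\F$.

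To dispatch this last case I would exploit the $D_8$-subgroup $\gen{x,a}$ together with the known fusion inside $T = \gen{x} \times P_1$: all reflections of $P_1$, and (via $\gen{x} = C_T(\K)$ and Lemma~\ref{L:felements}) their products with $x$, lie in $z^\F$, a class disjoint from $x^\F$ by Lemma~\ref{L:x^a=xz} and contained in $P_1 \cin T$; combined with a centralizer count in $T$ (valid since $k \geq 3$, where reflections of $P_1$ and their products with $x$ have $S$-centralizer of order at most $16$, while $|C_S(x)| = |T| \geq 2^{k+2}$ and $|C_S(z)| = |S|$) and the Thompson--Lyons transfer (Proposition~\ref{C:TTcyclic}) applied to the cyclic quotient $S/T$, this should be pushed to a contradiction with $x \nin z^\F$. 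The main obstacle is exactly this case $\bar S \cong C_2 \times C_2$ (equivalently $|F| = 2$): here $\F/P_1$ carries the genuine odd-order fusion of $\F_2(A_4)$, so the clean quotient/transfer dichotomy of the generic cases breaks down and one must instead argue directly inside $S$ with the explicit structure just described; verifying that $P_1$ is strongly $\F$-closed (so that $\F/P_1$ is available at all) is the remaining point requiring a short separate argument.
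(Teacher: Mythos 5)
Your approach is genuinely different from the paper's, and it does not close all the gaps you would need to close. The paper disposes of the abelian case in one stroke: every fully $\F$-centralized $\F$-conjugate of $x$ lies in the single coset $P_1x$ (using that such a conjugate lies in $J = \gen{x}\times P_1$ and that $z^\F\cap T\subseteq P_1$ by \eqref{E:Tcapz^F} and Lemma~\ref{L:x^a=xz}), so Proposition~\ref{C:TTlinind} applies with $T = P_1$ to force an $\F$-conjugate of $x$ into $P_1$, contradicting Lemma~\ref{L:x^a=xz}. Crucially, this uses only that $P_1$ is \emph{normal in $S$}, which is \eqref{E:P1norm}. Your plan instead passes to the quotient system $\F/P_1$, which requires $P_1$ to be \emph{strongly $\F$-closed}. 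You flag this yourself, but the justification you suggest does not hold up: you claim every $\F$-conjugate of $z$ lies in $T$ and hence in $P_1$, but $T = C_S(x)$ and there is no a priori reason an $\F$-conjugate of $z$ should centralize $x$. Equation \eqref{E:Tcapz^F} only controls $z^\F\cap T$, and the paper never asserts (nor needs) strong closure of $P_1$. Establishing it would likely require transfer or Alperin-type arguments of comparable weight to what the paper uses directly.

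Once the quotient is available, the reduction to $\bar S\cong C_2\times C_2$ via $\Aut(\bar S)$ being a $2$-group otherwise is a clean observation, and correctly mirrors the dichotomy the paper meets in the other sublemmas of Section~\ref{S:2rank3} ($|F| = 2$ versus $|F|\geq 4$). But your treatment of the $C_2\times C_2$ case is only a sketch with the key contradiction unidentified. In fact there is a much quicker finish available there: the lift $\phi\in\Aut_\F(S)$ of an order-$3$ element of $\Aut_{\bar\F}(\bar S)$ satisfies $a := x^\phi\nin T$, and since $\phi$ is a group automorphism of $S$ we get $|C_S(a)| = |C_S(x)| = |T| = 2^{k+2}$. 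On the other hand $a$ centralizes $z$ and normalizes $P_1$, and because $a$ interchanges the two $P_1$-classes of noncentral involutions (Lemma~\ref{L:aswaps}), a direct computation shows $C_{P_1}(a) = \gen{z}$ and $|C_S(a)|\leq 8$, contradicting $k\geq 3$. So the hard case is fixable, but you have not supplied the argument, and the centralizer-count contradiction is considerably simpler than the transfer-based route you sketch. In sum: a viable alternative strategy, but with one unresolved foundational gap (strong closure of $P_1$) and one unresolved case, whereas the paper's transfer argument is self-contained and shorter.
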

\begin{proof}
Suppose $S/P_1$ is abelian. For any $b \in x^\F$ which is fully
$\F$-centralized, $C_S(b)$ is of $2$-rank $3$, and so $b \in J \cap x^\F \cin
P_1x$ by \eqref{E:Tcapz^F} and Lemma~\ref{L:x^a=xz}.
Now $x$ has an $\F$-conjugate in $P_1$ by Theorem~\ref{T:TT}, and this
contradicts Lemma~\ref{L:x^a=xz}.
\end{proof}

The next lemma shows that $S/P_1$ is not quaternion.
\begin{lemma}
There exists an involution $b$ in $S-T$. 
\end{lemma}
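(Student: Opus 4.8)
The plan is to argue by contradiction: assume that $S-T$ contains no involution, so the least order of an element of $S-T$ is $2^{m}$ with $m\geq 2$. Since $\F=O^{2}(\F)$ and $S/T$ is cyclic of order $2$, Proposition~\ref{C:TTcyclic} applies to an element $a$ of least order in $S-T$ and produces a fully $\F$-centralized $\F$-conjugate $a'\in T$ of $a$. The strategy is to pin down $a^{2}$ and its $\F$-conjugacy tightly enough that the existence of $a'$ --- for which $(a')^{2}$ is both an $\F$-conjugate of $a^{2}$ and the square of an element of $T$ --- leads to a contradiction.

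First I would exploit the structure of $S/P_1$: by Lemma~\ref{L:notabelian} and the enumeration preceding it, $S/P_1$ is nonabelian with cyclic maximal subgroup $T/P_1\cong F$, hence is $D_8$, $Q_8$, or a modular group $M_{2^{n}}$ ($n\geq 4$). Common to all three cases is an analysis of the action on the characteristic cyclic maximal subgroup $C_1$ of $P_1=D_{2^{k+1}}$: since $a$ normalizes $C_1$, the automorphism $a$ induces on $C_1$ has square equal to the one $a^{2}$ induces, and as no element of $\Aut(C_1)\cong C_2\times C_{2^{k-2}}$ squares to inversion, $a^{2}$ must centralize $C_1$; combining this with $x^{a}=xz$, $z^{a}=z$, $[x,P_1]=1$, and the location of $a^{2}P_1$ in $S/P_1$ confines $a^{2}$ to $\gen{x}C_1$ and pins it down up to a few possibilities. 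In the quaternion case every element of $S-T$ has order at least $4$, so the $a$ above is genuinely of least order, and I would finish this case by transfer and the fusion constraints of the next paragraph. In the cases $S/P_1\in\{D_8,M_{2^{n}}\}$, $S/P_1$ has an involution $\bar b$ outside $T/P_1$; I would lift $\bar b$ to $b_0\in S-T$, conclude $b_0^{2}\in C_1$ just as for $a$, and then adjust $b_0$ by an element of $C_1$ --- or, when that fails, by an element of $T$ lying over the central involution of $S/P_1$ --- to an honest involution of $S-T$, the adjustment failing only when $b_0$ has maximal possible order $2^{k+1}$, which is again handled by transfer.

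The step I expect to be decisive --- and hardest --- is invoking Proposition~\ref{C:TTcyclic} (or \ref{C:TTlinind}) in the remaining narrow configurations and deriving a contradiction from the resulting fully $\F$-centralized conjugate in $T$. This requires knowing which involutions of $R_d=\gen{x}\times P_1$ are $\F$-conjugate to $x$, which to $z$, and which to one another --- using \eqref{E:Zisolated}, the transitivity of $\C$-fusion on the involutions of $P^{\#}x$ and on those of $P^{\#}$, and the collapse $Q=\gen{x}$ --- i.e.\ a $2$-local reconstruction of the pertinent fragment of the involution-fusion pattern of $L_4(q)$. Throughout, the inequality $|C_S(a')|\geq|C_S(a)|=2\,|C_T(a)|$ afforded by full $\F$-centralization, weighed against the centralizers in $S$ of the elements of $T$ of the relevant order, should serve as a convenient lever.
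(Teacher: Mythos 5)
Your proposal is a by-contradiction argument built on transfer and leaves its decisive steps as placeholders, whereas the paper proves the lemma directly with a short $2$-group computation requiring neither. The paper takes any $b_1\in S-T$, modifies it by an element of $F$ so that $b_1^2\in P_1Z$ (possible since $S/P_1$ is not cyclic by the previous lemma), then modifies by an element of $P_1$ so that $b_1^2\in Z$ --- the key point being that $b_1$ induces on the nonabelian dihedral group $P_1$ an outer automorphism swapping the two $P_1$-classes of four-subgroups (Lemma~\ref{L:aswaps}), i.e.\ an automorphism of ``$\eta$-type'' in the sense of Lemma~\ref{L:autD}(a), so that the relevant extension of $P_1$ by $\gen{b_1}$ is dihedral or semidihedral --- and finally observes $b_1^2\in C_Z(b_1)=\gen{z}$ and sets $b=b_1$ or $xb_1$. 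The structural fact that $b_1$ acts on $P_1$ by an $\eta$-type automorphism, which rules out a quaternion or modular extension and makes the adjustment inside $P_1$ possible, is exactly what is absent from your sketch.

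In addition, the one step you do carry out contains an error. You infer from the fact that no element of $\Aut(C_1)\cong C_2\times C_{2^{k-2}}$ squares to inversion that $a^2$ must centralize $C_1$. What follows is only that $a^2$ does not act as inversion; elements of $T$ act on $C_1$ through $\Inn(P_1)$ and the class-stabilizing subgroup $B\cong C_{2^{k-2}}$ of $\Out(P_1)$, and $B$ contains nontrivial automorphisms such as $c\mapsto c^5$ whose squares are again nontrivial, so the claimed confinement of $a^2$ to $\gen{x}C_1$ is not established. Relatedly, adjusting $b_0$ by an element of $C_1$ accomplishes nothing when $b_0$ inverts $C_1$, since $(c_1b_0)^2=c_1c_1^{-1}b_0^2=b_0^2$; the adjustment that actually works is by a reflection in $P_1$, as in the paper. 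Finally, the cases you defer to transfer (quaternion $S/P_1$, elements of maximal order) are precisely where the weight of a contradiction proof would lie, and the direct argument shows they need not be entered at all.
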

\begin{proof}
Let $b_1 \in S-T$. Modifying $b_1$ by an element of $F$, we may assume $b_1^2 \in P_1Z$
because $S/P_1$ is not cyclic (by the previous lemma). But then $P_1Z\gen{b_1}/Z$ is
dihedral or semidihedral because $b_1$ swaps the $P_1$-classes of four-subgroups of $P_1$
by Lemma~\ref{L:aswaps}. Modifying $b_1$ by an element of $P_1$ then, we may assume
$b_1^2 \in Z$. Now $C_Z(b_1) = \gen{z}$ so $b_1^2 \in \gen{z}$. Set $b = b_1$ if
$b_1$ is an involution, and set $b = xb_1$ otherwise. Then $b$ is an involution.
\end{proof}

Fix $b \in \I_2(S - T)$ guaranteed by the previous lemma.

\begin{lemma}
$S/P_1$ is not modular.
\end{lemma}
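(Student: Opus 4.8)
The plan is to suppose $S/P_1$ is the modular group of order $2^{m+1}$ (by the preceding lemmas $S/P_1$ has a cyclic subgroup of index $2$ and is neither cyclic, abelian, nor generalized quaternion, so this is the only remaining possibility; recall $T=P_1F$ with $F$ cyclic of order $2^m$ and $F\cap P_1=1$, so $T/P_1\cong F$ is the cyclic index-$2$ subgroup and $m\ge 3$) and to contradict Lemma~\ref{L:x^a=xz} by producing an $\F$-conjugate of $x$ inside $P_1$.

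First I would set up the structure. In the standard presentation $S/P_1=\langle\bar s,\bar b\rangle$ take $\langle\bar s\rangle=T/P_1$ to be the image of $F$ and $\bar b$ the image of an involution $b\in S-T$ (which exists by the previous lemma). Then $x$ maps to the unique involution of $\langle\bar s\rangle$; since $x\in\mho^1(F)\le\mho^1(S)\le\Phi(S)$, $x$ lies in every maximal subgroup of $S$. The Frattini subgroup $\Phi(S/P_1)=\langle\bar s^2\rangle$ has preimage $W:=P_1F^2$, which is normal in $S$, satisfies $[S,W]\le P_1$, and contains both $x$ and $b$. I would next determine the $\F$-classes of involutions in $R_d$ and just outside it, using \eqref{E:allPxconjugate}, Lemma~\ref{L:x^a=xz}, Lemma~\ref{L:felements}, and Lemma~\ref{L:l2qomnibus}(g,h): the involutions of $P_1$ constitute $z^\F\cap T$, those of $xP_1$ constitute $x^\F\cap T$, and the analysis of the coset $R_db$ pins down which involutions outside $R_d$ are $\F$-fused to $z$ and which to $x$ (here the field-automorphism bookkeeping with $b$ enters, which was absent in the quaternion case of the previous lemma).

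Then I would run a Thompson--Lyons transfer argument. Depending on whether $x\in[S,S]$, one applies either Proposition~\ref{C:TTcyclic} to a normal subgroup $V\norm S$ with $S/V$ cyclic of order $\ge 4$, chosen so that $x$ is an element of least order in $S-V$, or Proposition~\ref{C:TTlinind} to $R_d$ itself (note $S/R_d\cong C_{2^{m-1}}\times C_2$ is abelian), after verifying that the fully $\F$-centralized involutions outside the chosen subgroup are linearly independent modulo it. In either case the conclusion should be that $x$ is $\F$-conjugate either directly to an involution of $P_1$ --- which is the desired contradiction --- or to a fully $\F$-centralized involution $x'\in S-T$. In the latter case $|C_S(x')|=|T|$, the extension of $\langle x\rangle\to\langle x'\rangle$ identifies $C_S(x')$ with $T$ so that $C_S(x')$ is a maximal subgroup of $S$ other than $T$, and $x'$ does not centralize $P_1$ (else $x'\in C_S(P_1)=Z\le T$); hence $x'$ centralizes an index-$2$ subgroup of $P_1\cong D_{2^{k+1}}$ but not all of $P_1$, forcing $x'$ to induce a nontrivial automorphism in the ``field'' part $B$ of $\Out(P_1)$ from Lemma~\ref{L:autD}(a). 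Re-running the involution-centralizer analysis of Section~\ref{S:centralizer} inside $C_\F(x')\cong\C$ then recovers $z$ as an $\F$-conjugate of $x$, again contradicting Lemma~\ref{L:x^a=xz}.

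I expect the main obstacle to be this last step: controlling the $\F$-fusion of the involutions outside $R_d$ precisely enough that the conjugate of $x$ supplied by transfer is pinned down, and then converting the action of such an $x'$ on $P_1$ back into a fusion of $x$ with $z$. (An alternative route for the endgame, worth keeping in reserve, is to show directly that the existence of such an $x'$ forces a nontrivial odd-order $\F$-automorphism of $J(S)=R_d$, contradicting that $\Aut(J(S))$ is a $2$-group by Lemma~\ref{L:autDwr2}.)
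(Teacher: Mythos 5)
Your approach diverges from the paper's and has a concrete gap. Applying Proposition~\ref{C:TTlinind} to $R_d$ yields fully $\F$-centralized conjugates inside $R_d$ only for involutions $u\in S-R_d$; since $x\in R_d$ (indeed $x\in Z$), it says nothing about $x$. Likewise, Proposition~\ref{C:TTcyclic} with $x$ as the element of least order in $S-V$ requires $V\norm S$ with $S/V$ cyclic and $x\nin V$; such a $V$ need not exist (if $x\in[S,S]$ it certainly does not), and even when it does, you are left with the endgame you flag as the main obstacle: excluding the possibility that the fully centralized conjugate of $x$ lies in a coset of $R_d$ other than $R_d$ itself. The fallback (an odd-order automorphism of $J(S)=R_d$) is not developed --- an $\F$-conjugation of $x$ to $x'$ does not, by itself, yield such an automorphism.

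The paper sidesteps all of this by transferring a different element. Since $S/P_1$ is modular, $\Omega_1(S/P_1)$ is the unique four-subgroup $\langle\bar x,\bar b\rangle$, so $S_0:=\Omega_1(S)=P_1Z\langle b\rangle$ and $S/S_0$ is cyclic. Take $w\in F$ of order $4$ with $w^2=x$ (possible since $F$ is cyclic of order $\geq 8$ here). Then $w$ has least order in $S-S_0$, so Proposition~\ref{C:TTcyclic} gives a fully $\F$-centralized $w^\phi\in S_0$. A centralizer count ($|S:C_S(w)|\leq 4$ versus $|S:C_S(b_1)|\geq 8$ for any order-$4$ element $b_1\in S_0-P_1Z$, using that such $b_1$ swaps the $P_1$-classes of four-subgroups and $|P_1|\geq 16$) forces $w^\phi\in P_1Z=\langle x\rangle\times P_1$, hence $w^\phi=x_0v$ with $v\in P_1$ of order $4$, and so $x^\phi=(w^\phi)^2=v^2=z$, contradicting Lemma~\ref{L:x^a=xz}. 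The decisive move is to transfer $w$ rather than $x$: $w$ being of least order outside $\Omega_1(S)$ makes Proposition~\ref{C:TTcyclic} applicable directly, and the relation $w^2=x$ converts information about $w^\phi$ into the needed fusion of $x$ to $z$ --- precisely the step your plan lacks.
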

\begin{proof}
Suppose it is. Then $S/P_1$ has a unique four subgroup, covered by $\gen{x,b}$.
Hence, $S_0 := \Omega_1(S) = P_1Z\gen{b}$ and $S/S_0$ is cyclic. Let $w \in F$
with $w^2 = x$. Then $w$ is of least order outside $S_0$ and centralizes $FP$,
whence $|S\colon C_S(w)| \leq 4$.  Apply Theorem~\ref{T:TT} to obtain a
morphism $\phi$ in $\F$ with $w^\phi$ in $S_0$ and fully $\F$-centralized. 

Any element of $S_0 - P_1Z$ interchanges the two classes of four-subgroups of
$P_1$. Hence if $b_1 \in S_0 - P_1Z$ is of order $4$, then $b_1^2 \in Z$ and
$b_1$ induces an involutory automorphism of $P_1$ interchanging the two classes
of four-subgroups of $P_1$.  So $C_{P_1}(b_1) = \gen{z}$ and $|S\colon C_S(b_1)|
\geq |P_1\colon C_{P_1}(b_1)| \geq 8$ as $|P_1| \geq 16$. 

As $w^\phi$ is fully $\F$-centralized, the preceding paragraph implies
$w^\phi \in P_1Z = \gen{x} \times P_1$, and consequently $w^\phi = x_0v$ for
some $x_0 \in \gen{x}$ and $v \in P_1$ of order $4$. Now $x^\phi = (w^\phi)^2 =
z$, contrary to Lemma~\ref{L:x^a=xz}.  
\end{proof}

Therefore by the previous three lemmas, $|F| = 4$ and $S/P_1$ is dihedral of
order $8$.  We now obtain the final contradiction, completing the proof of
Theorem~\ref{T:2rank3}.
\begin{lemma}
$S/P_1$ is not dihedral.
\end{lemma}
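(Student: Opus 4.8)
The plan is to carry the transfer analysis one step further, exactly as in the preceding three lemmas, and reach the now-familiar contradiction: either $x$ and $z$ become $\F$-conjugate (against Lemma~\ref{L:x^a=xz}), or $x^\F$ acquires an element whose centralizer in $S$ is too large. Recall the situation: $S/P_1\cong D_8$ with cyclic subgroup $FP_1/P_1$ of index $2$, $F=\gen{w}\cong C_4$ with $w^2=x$, $J=J(S)=R_d=\gen{x}\times P_1$ is normal in $S$ with $S/J\cong C_2\times C_2$, and $\Omega_1(T)=J$ (since for $v\in J$ one has $(wv)^2\in xP_1\neq 1$). Also $C_S(P_1)=Z=\gen{x,z}$, and $z\in Z(S)$: it is fixed by $P_1$, by $F$ (as $w$ centralizes $P\ni z$), and by $S-T$ (which normalizes $P_1$ and hence fixes $Z(P_1)=\gen z$), and these generate $S$. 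Finally $x^b=xz$ for every $b\in S-T$ by Lemma~\ref{L:x^a=xz} and the fact that $x,z\in Z(T)$.

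First I would pin down the $\F$-fusion of involutions of $J$. Using \eqref{E:allPxconjugate} and the $\K$-action, every reflection of $P_1$ lying in $P$ is $\F$-conjugate to $z$; applying the $\F$-morphism induced by the element $a\in S-T$ of Lemma~\ref{L:aswaps}, which sends a Klein four $V\le P$ to a Klein four of $R_d$ not contained in $R$ and sends $x\mapsto xz$, $z\mapsto z$, shows that the remaining reflections of $P_1$ are also $\F$-conjugate to $z$, and transporting \eqref{E:allPxconjugate} through the same morphism gives that $x\cdot(\text{reflection of }P_1)$ is $\F$-conjugate to $x$. Hence every involution of $J=\gen x\times P_1$ is $\F$-conjugate to exactly one of $x$ or $z$. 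Since $C_S(z)=S$, $C_S(x)=C_S(xz)=T$, and every other involution of $J$ has centralizer of order at most $2^4$ (a short direct computation using that $w$ centralizes only $P$ inside $P_1$), the fully $\F$-centralized involutions of $J$ are precisely $x$, $xz$, and $z$, with centralizer orders $2^{k+3}$, $2^{k+3}$, $2^{k+4}$.

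Next I would apply the Thompson--Lyons transfer lemma twice. For Proposition~\ref{C:TTlinind} take the normal subgroup $J$: since $S/J$ is abelian, and since $T-J$ contains no involution, the fully $\F$-centralized involutions of $S-J$ all lie in $S-T$, whose image in $S/J$ consists of the two nonzero vectors complementary to the image of $T-J$ --- a linearly independent pair; so the hypothesis holds and every involution of $S-J$, hence every involution of $S$, is $\F$-conjugate to $x$ or $z$. In particular every fully $\F$-centralized involution of $S$ has centralizer of order $2^{k+3}$ or $2^{k+4}$. Then for Proposition~\ref{C:TTcyclic} I would transfer on $w$: because $T-J$ has no involution, $w$ (of order $4$, $w^2=x$) has least order in the coset $T-J$, and --- for a suitably chosen index-$2$ subgroup $T'\ge J$ with $T'\cap T=J$ --- also least order in $S-T'$; the Proposition then yields $w'\in T'$ fully $\F$-centralized with $w'\sim_\F w$, so $w'^2\in x^\F$ and $|C_S(w'^2)|\ge|C_S(w')|\ge|C_S(w)|=|FP|=2^{k+2}$.

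The contradiction is now immediate: the only elements of $x^\F$ with centralizer of order exceeding $2^4$ are $x$ and $xz$ (using the classification of $x^\F$ above and $k\ge 3$), so $w'^2\in\{x,xz\}$; but then $w'\in C_S(w'^2)=T$, hence $w'\in T\cap T'=J$, while every element of order $4$ in $J=\gen x\times P_1$ squares to $z$, and $z\notin x^\F$ by Lemma~\ref{L:x^a=xz}. This finishes the lemma and hence the proof of Theorem~\ref{T:2rank3}. The step I expect to be the main obstacle is the one flagged above: showing that $w$ may be taken of least order in $S-T'$, i.e. that not all three nontrivial cosets of $S/J$ contain involutions (the coset $(T-J)J$ provably does not); if that fails, the exceptional configuration --- an involution over each of three of the four reflections of $S/P_1$ --- must be disposed of separately, either by a rank count against the standing $2$-rank $3$ hypothesis or by an additional transfer.
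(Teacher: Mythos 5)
You take a genuinely different and considerably longer route than the paper, which finishes with a short direct $2$-group computation: it observes $F \le Z_2(T) = F \times V$ with $V = \Omega_2$ of the cyclic maximal subgroup of $P_1$, and then the involution $b \in S-T$ produces $ww^b \in V$, which is $b$-fixed and satisfies $(ww^b)^2 = z$; since any involutory automorphism of $P_1 \cong D_{2^{k+1}}$ that interchanges the two classes of four-subgroups (as every element of $S-T$ does, by Lemma~\ref{L:aswaps}) must invert the cyclic maximal subgroup and hence $V$, one has $C_V(b) = \gen{z}$, a contradiction. Your route instead classifies $\F$-fusion of involutions of $J$ --- this part is correct, assembled from \eqref{E:allPxconjugate}, Lemma~\ref{L:x^a=xz}, and the discussion following Lemma~\ref{L:Rd>R} --- and then makes two transfer applications, ending on $w$.

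The gap is exactly the one you flag, and it is a real one. The second application of Proposition~\ref{C:TTcyclic} needs $w$ to be of least order in $S-T'$ for a maximal subgroup $T' \ge J$ with $T' \cap T = J$; equivalently, one of the two $J$-cosets inside $S-T$ must be involution-free. One of them already contains $b$, so the burden falls entirely on the other coset, and nothing established so far forces it. Neither of your proposed patches closes this: a rank count fails because $C_J(b)$ is merely cyclic of order $4$ (a short check shows $b$ fixes no involution of $J$ other than $z$), so having involutions $b$, $b'$ in both cosets does not produce a rank-$4$ elementary abelian; and an additional transfer via Proposition~\ref{C:TTcyclic} applied to $b'$, or via Proposition~\ref{C:TTlinind} again, only re-derives that every involution of $S$ is $\F$-conjugate to $x$ or $z$, which you already have. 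The missing fact --- that the other coset is indeed involution-free --- does hold, but verifying it seems to require exactly the structural computations ($c^b = c^{-1}$ and $(ww^b)^2 = z$ as elements of $V$) that the paper uses to finish in one stroke, at which point the transfer apparatus is superfluous.
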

\begin{proof}
Suppose it is. Again let $w \in F$ with $w^2 = x$. Then $F = \gen{w}$.  Since
$C_T(F) = PF$ is of index $2$ in $T$ and $[F,h] = \gen{z}$, we have $F \leq
Z_2(T)$ in the present situation. Moreover, $T/Z = F/Z \times P_1Z/Z$ with the
second factor dihedral of order at least $8$, and so $Z_2(T) = F \times V$
where $V$ is cyclic of order $4$ in $P_1$. Now $b$ inverts $P_1w$ by
assumption; hence $ww^b \in Z_2(T) \cap P_1 = V$. Since $Z_2(T) = F \times V$
is abelian and $w$, $w^b \in Z_2(T)$, we have $[w, w^b] = 1$. Thus, on the one
hand, $ww^b \in C_V(b) = \gen{z}$ because $b^2 = 1$. But as $Z \norm S$,
Lemma~\ref{L:x^a=xz} shows $x^b = xz$. So on the other hand, $(ww^b)^2 =
w^2(w^b)^2 = w^2(w^2)^b =  x(xz) = z$. These two facts are incompatible, and
the proof is complete.  
\end{proof}

\section{The $2$-rank $4$ case: $|Q| = 2$}\label{S:2rank4Q=2}

For a fusion system $\F$ on $S$ satisfying Hypothesis~\ref{H:main}, $T$ has
$2$-rank $3$ or $4$ by Lemma~\ref{L:2rank3or4}. By Theorem~\ref{T:2rank3}, $T$
is of $2$-rank $4$.  The current section will be devoted to the proof of the
following reduction. 

\begin{theorem}\label{T:Q>2}
Assume that $\F$ satisfies Hypothesis~\ref{H:main} with $T$ of $2$-rank $4$.
Then $|Q|>2$. 
\end{theorem}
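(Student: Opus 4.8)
The plan is to argue by contradiction: suppose $|Q| = 2$, so $Q = \gen{x}$. First I would pin down the structure of $T$ and of $J(S)$. Since $T$ has $2$-rank $4$, Lemma~\ref{L:2rank3or4} produces an $f$-element $f\in T$ on $\K$ and gives $J(S) = J(RF_1)$. With $Q=\gen{x}$ one has $F_1 = \gen{x,f}\cong C_2\times C_2$, so $C_T(P) = F_1\gen{z} = \gen{x,z,f}$ is elementary abelian of order $8$, $R = \gen{x}\times P$, and $RF_1 = \gen{x}\times P\times\gen{f}$. As $J(P)=P$ for the nonabelian dihedral group $P$, the elementary abelian subgroups of $RF_1$ of maximal rank are precisely the $\gen{x,f}\times V$ with $V\leq P$ a four subgroup, and they generate $RF_1$; hence $J:=J(S)=RF_1$, of order $2^{k+2}$, with $\Omega_1(Z(J))=\gen{x,z,f}$ and $\Baum(S)=C_S(\gen{x,z,f})$.

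Next I would analyze the $\F$-fusion of the involutions near $\Omega_1(Z(J))$ and constrain $S$. Using $\Baum(S)\leq T$ (Hypothesis~\ref{H:main}), $J\leq\Baum(S)\leq T$, and $T<S$ (Proposition~\ref{P:2centralfinal}), fix $a\in N_S(T) - T$; then $x^a$ is an involution of $T$ distinct from $x$ and $\F$-conjugate to $x$. First, $z\notin x^\F$: a morphism $\gen{z}\to\gen{x}$ would extend over $C_S(z)\geq P$, its image on $P$ would centralize $x$ and so lie in $T$, and Lemma~\ref{L:zweaklyclosedviaP} would then force that image of $z$ to equal $z$, not $x$. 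So $x^a\notin P$, since all involutions of $P$ are $\K$-conjugate, hence $\F$-conjugate, to $z$. The remaining cases $x^a\in R - P$, $x^a\in C_T(P) - R$, and $x^a\in R_d - R$ each force further fusion — using \eqref{E:allPxconjugate}, that $\K=E(\C)$ has a single class of involutions, that $\K\gen{f}$ is the $2$-fusion system of $K\gen{f}$ with $f$ an involutory field automorphism (Lemma~\ref{L:felements}), that $\K R_d/Q$ is the $2$-fusion system of $L_2(q)$ or $PGL_2(q)$, and the extension axiom with $Q\norm\C$ — showing respectively that $x$ is $\F$-conjugate to $xz$, or to an $f$-element, or to an involution of $R_d - R$. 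In parallel, since $\Aut_\F(J)$ controls fusion in $\gen{x,z,f}$ (Lemma~\ref{L:BurnsideLemma}) and the automorphisms of $J$ and of its relevant (semi)dihedral sections are understood (Lemmas~\ref{L:autD},~\ref{L:autDwr2}), I would bound $|S:\Baum(S)|$, hence $|S:T|$, and identify $S$ as far as needed, much as in Section~\ref{S:2central}.

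With $S$ and this fusion in hand, the endgame is a Thompson--Lyons transfer argument in the spirit of Section~\ref{S:2rank3}: I would exhibit a proper normal subgroup $T_1\norm S$ with $S/T_1$ cyclic, or with $S/T_1$ abelian and the cosets of the fully $\F$-centralized involutions of $S - T_1$ linearly independent in $\Omega_1(S/T_1)$, chosen so that the conclusion of Proposition~\ref{C:TTcyclic} or~\ref{C:TTlinind} — that an element of least order, respectively each involution, of $S - T_1$ is $\F$-conjugate into $T_1$ — forces $x$ to be $\F$-conjugate to $z$, contradicting $z\notin x^\F$. The hard part is the middle step: one must split on whether $R_d=R$ or $R_d>R$, control the field-automorphism subgroup $F$ (which here is abelian of type $C_{2^n}\times C_2$, not cyclic as in Section~\ref{S:2rank3}), and track the fusion of $x$, $z$, $f$, their central twists, and the $h$-type involutions of $R_d - R$ precisely enough both to keep $z\notin x^\F$ intact and to verify the hypotheses of the transfer lemma in whichever quotient of $S$ one ends up using.
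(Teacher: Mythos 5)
Your setup is correct: with $Q=\gen{x}$ one indeed gets $F_1=\gen{x,f}$, $J=RF_1=\gen{x,f}\times P$, and $Z(J)=\gen{x,f,z}$ of order $8$, exactly as in the paper's Section~\ref{S:2rank4Q=2}. Your instinct that $z\notin x^\F$ is also correct, and it is proved in the paper (Lemma~\ref{L:zweaklyclosedinZJ}) --- but not by the route you cite. Lemma~\ref{L:zweaklyclosedviaP} is one of the ``two lemmas which apply throughout this section'' at the head of Section~\ref{S:2central}, whose proof (via Lemma~\ref{L:zweaklyclosedinZ0}) relies on $T_0=\Omega_1(T)$ being weakly $\F$-closed; that in turn rests on $S=T$, which fails here by Proposition~\ref{P:2centralfinal}. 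The paper instead argues directly from the fact that $J$ is weakly $\F$-closed, fusion in $Z(J)$ is controlled by $\Aut_\F(J)$ (Burnside, Lemma~\ref{L:BurnsideLemma}), and $\mho^1(J)\cap Z(J)=\gen{z}$ is characteristic. So this part of your argument, while aimed at a true statement, rests on an out-of-scope citation and needs to be replaced.

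The larger problem is that your proposal explicitly defers the core of the argument (``the hard part is the middle step''). The paper's proof is not a quick pass through ``bound $|S:\Baum(S)|$, identify $S$, then transfer''; it is a careful $2$-local analysis that introduces the kernel $N$ of $S$ acting on the two $P$-classes of four-subgroups, shows $P\norm S$ and $J=PC_S(P)$, proves that $J<N$ and hence $|P|\geq 16$ (Lemmas~\ref{L:PnormS}, \ref{L:J=PCS(P)}, \ref{L:modular}), then constructs specific elements $w$, $f_1\in N$, and $h_1\in S-N$ and derives the final contradiction from the \emph{mutually incompatible} Lemmas~\ref{L:involutionsbothcosets} and the lemma showing $Jh_1f_1$ contains no involutions. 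Note in particular that the contradiction is a structural incompatibility about involutions in the cosets $Jh_1$ and $Jh_1f_1$ --- not, as you propose, that transfer forces $x\sim z$. Thompson--Lyons transfer does enter (in Lemma~\ref{L:modular} and Lemma~\ref{L:involutionsbothcosets}), but only at specific, prepared moments inside that structural analysis. A case division on $x^a\in R-P$ vs.\ $C_T(P)-R$ vs.\ $R_d-R$, as you suggest, does not by itself determine $S$ nor set up the transfer target; the combinatorics of $N/J$ inside $\Out(P)$, the element $f_1$, and the constraints \eqref{E:f1doesnotsquaretoz} and \eqref{E:nonmodularcents} are the missing substance, and nothing in your outline supplies them.
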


The notation follows that begun in Section~\ref{S:centralizer}, in particular
that of Hypothesis~\ref{H:main}.  For instance, $\K$ is the unique component of
the involution centralizer $\C = C_\F(x)$, and is a fusion system of a finite
group $K$ isomorphic with $L_2(q)$ for suitable $q \equiv \pm 1 \pmod{8}$ (as
chosen once and for all after Proposition~\ref{P:dihtame}).  The Sylow subgroup
of $\K$ is denoted by $P$, a dihedral group of order $2^k$ ($k = \nu_2(q^2 - 1)
- 1 \geq 3$).  Consistent with Sections~\ref{S:2central} and \ref{S:2rank3}, we
also set $Z(P) = \gen{z}$. Denote by $C$ the cyclic maximal subgroup of $P$.
By Hypothesis~\ref{H:main}, the Thompson subgroup $J(T) = J(S)$ and so this
common subgroup is denoted simply by $J$. 

By Lemma~\ref{L:2rank3or4},
\begin{align}\
\text{there exists an involutory $f$-element $f \in C_T(P)$}.
\end{align}
We fix such an involution $f$. 

Before beginning the proof of Theorem~\ref{T:Q>2}, we collect some facts seen
before, and which hold throughout $2$-rank $4$ case. In particular,
\begin{align}\label{E:2rank4initial(c)}
J \leq R\gen{f} = Q\gen{f} \times P
\end{align}
from Lemma~\ref{L:2rank3or4}, and 
\begin{align}\label{E:2rank4initial(b)}
C_T(P) = Q\gen{f} \times \gen{z}
\end{align}
from Lemma~\ref{P:TK/Q} and the structure of $\Aut(K)$ in
Lemma~\ref{L:l2qomnibus}. Finally,
\begin{align}\label{E:T<S}
T < S
\end{align}
by Proposition~\ref{P:2centralfinal}.

Assume for the remainder of this section that $Q = \gen{x}$ is of order $2$, as
we prove Theorem~\ref{T:Q>2} by way of contradiction in a series of lemmas.
Thus, $J = \gen{x,f} \times P$ by \eqref{E:2rank4initial(c)} and $Z(J)=
\gen{x,f,z}$ is elementary abelian of order $8$.

\begin{lemma}\label{L:zweaklyclosedinZJ}
$z^{\F} \cap Z(J) = \gen{z}$. 
\end{lemma}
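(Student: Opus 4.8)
The plan is to invoke Burnside's fusion theorem to control $\F$-fusion inside $Z(J)$ by $\Aut_\F(J)$, and then to single out $z$ among the involutions of $Z(J)$ using the characteristic subgroup $\mho^1(J)$.

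First, since $J = J(S)$ is weakly $\F$-closed, Lemma~\ref{L:BurnsideLemma} will show that any morphism in $\F$ between subgroups of $Z(J)$ is the restriction of some $\alpha \in \Aut_\F(J)$. So for $y \in z^\F \cap Z(J)$ there is $\alpha \in \Aut_\F(J)$ with $z^\alpha = y$. The key point is that $z \in \mho^1(J)$: since $J = \gen{x,f} \times P$ with $\gen{x,f}$ elementary abelian and $P \cong D_{2^k}$, one has $\mho^1(J) = \mho^1(P)$, which is the subgroup of order $2^{k-2}$ of the cyclic maximal subgroup $C$ of $P$; as $k \geq 3$ this is nontrivial and therefore contains the unique involution $z$ of $C$.

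The argument then concludes quickly. As $\mho^1(J)$ is characteristic in $J$, it is $\alpha$-invariant, so $y = z^\alpha \in \mho^1(J) \leq P$. Since $P$ is a direct factor of $J$ complementary to $\gen{x,f}$ while $Z(J) = \gen{x,f,z}$, we get $P \cap Z(J) = \gen{z}$, whence $y \in \gen{z}$; and as $y$ is an involution, $y = z$. Thus every $\F$-conjugate of $z$ lying in $Z(J)$ is $z$ itself, as claimed. I foresee no real obstacle: the only things to be careful about are the identifications $J = \gen{x,f}\times P$ and $Z(J) = \gen{x,f,z}$ (already recorded immediately before the lemma) and the use of the hypothesis $k \geq 3$, which is precisely what makes $z$ a square in $J$.
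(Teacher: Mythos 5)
Your proof is correct and follows essentially the same route as the paper's: Burnside's fusion theorem reduces to $\Aut_\F(J)$-fusion in $Z(J)$, and then $z$ is pinned down as the unique involution in the characteristic subgroup $\mho^1(J)\cap Z(J)=\gen{z}$ (you phrase this as $\mho^1(J)\leq P$ together with $P\cap Z(J)=\gen{z}$, which is the same observation). The computation $\mho^1(J)=\mho^1(C)$ of order $2^{k-2}\geq 2$ and the identifications $J=\gen{x,f}\times P$, $Z(J)=\gen{x,f,z}$ are all correctly invoked.
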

\begin{proof}
The Thompson subgroup $J = \gen{x,f} \times P$ is weakly $\F$-closed.  By
Lemma~\ref{L:BurnsideLemma} fusion in $Z(J)$ is controlled in $\Aut_\F(J)$.
But $\mho^1(J) \cap Z(J) = \gen{z}$ is characteristic in $J$, so the statement
follows.
\end{proof}

\begin{lemma}\label{L:zweaklyclosedinPinJ}
Let $y \in Z(J)$. Then each involution of $yP$ is $\C$-conjugate to $y$ or to
$yz$.  In particular, $z^\F \cap J = z^\F \cap P$. 
\end{lemma}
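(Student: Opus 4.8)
The plan is to describe, for each involution of the coset $yP$, an explicit $\C$-conjugation carrying it into $\{y,yz\}$, exploiting the three fusion mechanisms already available inside $\C$: the single class of involutions of $\K$, the description of $\K\gen{f}$ from Lemma~\ref{L:felements}, and the fact that $Q \norm \C$, which lets any $\C$-morphism be extended to one fixing $x$ (the unique involution of $Q$).

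First I would record that since $y \in Z(J)$ is an involution commuting with $P$, we have $(yv)^2 = v^2$ for $v \in P$, so the involutions of $yP$ are exactly $y$, $yz$, and the elements $yv$ with $v$ a noncentral involution of $P$; only the last case needs attention. Fix such a $v$, write $V = \gen{v,z}$ for the four-subgroup of $P$ containing $v$, and write $y = x^\epsilon f^\delta z^\gamma$ according to the decomposition $Z(J) = Q \times \gen{f} \times \gen{z}$. It suffices to produce $\beta \in \C$ with $x^\beta = x$ and $(yv)^\beta \in \{y, yz\}$. If $\delta = 0$, then $yv = x^\epsilon(z^\gamma v)$ with $z^\gamma v$ a noncentral involution of $P$; since $\K$ has a single class of involutions there is a $\K$-morphism (hence a $\C$-morphism) carrying $z^\gamma v$ to $z$, and extending it via $Q \norm \C$ to a $\beta$ with $Q^\beta = Q$ gives $(yv)^\beta = x^\epsilon z \in \{y, yz\}$. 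If $\delta = 1$, then $yv = x^\epsilon \cdot f v'$ with $v' := z^\gamma v \in \I_2(V)$; Lemma~\ref{L:felements}(b) furnishes a unique $i \in \{0,1\}$ with $\Aut_\K(V) \leq C_\C(fz^i)$, and since $\Aut_\K(V)$ is transitive on $\I_2(V)$ (the structure of $\K$ on the four-subgroups of $P$, cf.\ Lemma~\ref{L:fsmaxcl}) we may pick $\alpha \in \Aut_\K(V)$ with $(z^iv')^\alpha = z$. Using the relation $fv' = (fz^i)(z^iv')$ (valid since $z^2 = 1$), the extension of $\alpha$ inside $C_\C(fz^i)$ (which fixes $fz^i$), followed by extension via $Q \norm \C$ (which fixes $x$), sends $yv = x^\epsilon f v'$ to $x^\epsilon f z^{i+1} \in \{y, yz\}$.

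For the final assertion, $z^\F \cap P \subseteq z^\F \cap J$ is immediate; conversely an involution $w \in z^\F \cap J$ has the form $w = y_0 v$ with $y_0 \in \gen{x,f} \leq Z(J)$ and $v \in P$ with $v^2 = 1$. If $v \in \gen{z}$ then $w \in Z(J)$ and Lemma~\ref{L:zweaklyclosedinZJ} forces $w = z \in P$; if $v$ is a noncentral involution of $P$, the first part shows $w$ is $\C$-conjugate (hence $\F$-conjugate) to $y_0$ or $y_0 z$, so one of $y_0, y_0z$ lies in $z^\F \cap Z(J) = \gen{z}$ by Lemma~\ref{L:zweaklyclosedinZJ}; as $z \nin \gen{x,f}$, this forces $y_0 = 1$ and $w = v \in P$.

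\textbf{Main obstacle.} The delicate point is the bookkeeping in the case $\delta = 1$: one must keep careful track of the $f \leftrightarrow fz$ ambiguity produced by Lemma~\ref{L:felements}(b) and verify that the two successive extensions (first inside $\K\gen{f} \leq \C$ to fix $fz^i$, then via $Q \norm \C$ to fix $x$) really do compose to a single $\C$-morphism having the stated effect on $yv$. Everything else is routine once the involutions of $yP$ have been enumerated.
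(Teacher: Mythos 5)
Your argument is correct and follows essentially the same route as the paper's: the easy case $y \in \gen{x,z}$ is handled via the single $\K$-class of involutions, the $f$-element case via Lemma~\ref{L:felements}(b) and the transitivity of $\Aut_\K(V)$ on the involutions of a four-subgroup $V \leq P$ combined with extension inside $\C$, and the last assertion via Lemma~\ref{L:zweaklyclosedinZJ}. The only cosmetic difference is that you absorb the paper's $\tilde\phi$-versus-$\tilde\phi^2$ case split into the choice of $\alpha$ with $(z^i v')^\alpha = z$.
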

\begin{proof}
Since $\K$ has one class of involutions and $x \in C_T(\K)$ the lemma holds for
$y = x$, $z$, and $xz$. So we may assume that $y$ is an $f$-element on $\K$,
that is, $y$ centralizes $P$ but $y \nin \gen{x,z}$. Let $t$ be an involution
of $P$ so that $yt$ is also an involution. If $t = z$ then the statement is
obvious, so assume $t$ is a noncentral involution of $P$. Set $U = \gen{t,z}$,
and let $\phi \in \Aut_\K(U)$ of order $3$ such that $t^\phi = z$. Then $\phi$
extends to $\tilde{\phi} \in \C$ on $U\gen{y}$ and centralizes either $y$ or
$yz$ by Lemma~\ref{L:felements}. In the former case, $(yt)^{\tilde{\phi}} =
yz$, and in the latter, $(yt)^{\tilde{\phi}^2} = (yztz)^{\tilde{\phi}^2} = y$. This
completes the proof of the first statement. The second statement now follows
from Lemma~\ref{L:zweaklyclosedinZJ}.
\end{proof}

\begin{lemma}\label{L:PnormS}
The following hold.
\begin{enumerate}[label=\textup{(\alph{*})}]
\item $P$ is normal in $S$, 
\item there exists a fully $\F$-centralized four subgroup of $P$,
\item $[S,S] \leq C_S(C)$, and
\item no element of $S$ squares into $J-Z(J)C$.
\end{enumerate}
\end{lemma}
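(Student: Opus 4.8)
For part (a), the key observation is that $P$ is recovered intrinsically as $P=\langle z^{\F}\cap J\rangle$. Indeed, since $\K=E(\C)$ has a single class of involutions, every involution of $P$ is $\C$-conjugate, hence $\F$-conjugate, to $z$; as $P$ is nonabelian dihedral it is generated by its involutions, so $P=\langle z^{\F}\cap P\rangle$, and $z^{\F}\cap P=z^{\F}\cap J$ by Lemma~\ref{L:zweaklyclosedinPinJ}. Now $J=J(S)$ is weakly $\F$-closed, so for each $s\in S$ the conjugation $c_s$ restricts to an automorphism of $J$ lying in $\F$; it carries $z^{\F}\cap J$ into itself and therefore fixes $P$ setwise, giving $P\norm S$. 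I would immediately record two consequences: the cyclic maximal subgroup $C$ of $P$, being characteristic in $P$, is normal in $S$; and $\langle z\rangle$, being characteristic in $C$ (hence normal in $S$) and of order $2$, is central in $S$.

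Part (c) is then essentially free: with $C\norm S$, conjugation gives an embedding $S/C_S(C)\hookrightarrow\Aut(C)$ with abelian target, so $[S,S]\leq C_S(C)$; this is Lemma~\ref{L:autD}(b) applied with $D=P$. For part (d), first note $J=C_T(P)P\leq C_S(P)P$ because $C_T(P)=Z(J)\leq C_S(P)$. Hence any $s\in S$ with $s^2\in J$ has $s^2\in C_S(P)P$, so $s$ lies in the subgroup $S_0$ of Lemma~\ref{L:autD}(c) (taken with $D=P$, legitimate by (a)), consisting of the elements of $S$ squaring into $C_S(P)P$; that lemma yields $s^2\in\mho^1(S_0)\leq C_S(P)C$, so $s^2\in J\cap C_S(P)C$. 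Finally, writing such an element as $g=yc'$ with $y\in C_S(P)$ and $c'\in C\leq J$, we get $y=g(c')^{-1}\in J\cap C_S(P)=C_J(P)=Z(J)$, whence $g\in Z(J)C$. Thus $s^2\in Z(J)C$, i.e. no element of $S$ squares into $J-Z(J)C$.

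Part (b) is the step I expect to cost something. By saturation, among the $\F$-conjugates of a four subgroup $V$ of $P$ there is a fully $\F$-normalized---hence fully $\F$-centralized---subgroup $W$; since $\Aut_{\K}(V)\cong S_3$ acts transitively on $\I_2(V)$, all three involutions of $W$ are $\F$-conjugate to $z$, while $C_S(W)\supseteq\langle x,f\rangle\times V$ forces $|C_S(W)|\geq 16$. The plan is to show $W\leq P$, and for this it suffices to show $W\leq J$: then $\I_2(W)\subseteq z^{\F}\cap J=z^{\F}\cap P$, so $W\leq P$. To pin $W$ inside $J$ I would determine where involutions $\F$-conjugate to $z$ can sit: Lemma~\ref{L:zweaklyclosedinPinJ} gives $z^{\F}\cap R=z^{\F}\cap P$ and $z^{\F}\cap Rf=\varnothing$, and one must likewise control the cosets of the form $Rh$ (via the identification of $\K R_d/Q$ with the fusion system of $PGL_2(q)$) and, should $W$ threaten to fall outside $T$, play it off against $\Baum(S)=C_S(\Omega_1(Z(J)))\leq T$. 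The main obstacle is exactly this---ruling out that a fully $\F$-centralized four subgroup is assembled from $\F$-conjugates of $z$ lying outside $J$---and it is here that the structure built up so far ($z^{\F}\cap Z(J)=\langle z\rangle$, $z^{\F}\cap J=z^{\F}\cap P$, all involutions of $P$ being $\C$-conjugate, and $P,C\norm S$) must be deployed together.
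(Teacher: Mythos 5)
Parts (a), (c), and (d) are correct and match the paper in substance. For (a) you use weak closure of $J$ together with the intrinsic description $P=\gen{z^{\F}\cap J}$ (rather than the paper's phrasing via $P^s=\Omega_1(P^s)=\gen{z^{\K^s}}$), but the mechanism is the same; (c) is exactly the paper's appeal to Lemma~\ref{L:autD}(b); and (d) reproduces the paper's argument via Lemma~\ref{L:autD}(c) with the algebra of the final intersection spelled out.

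Part (b) is where the proposal has a genuine gap, and you flag this yourself. You reduce to showing that a fully $\F$-centralized conjugate $W$ of a four subgroup $V\leq P$ lies in $J$, and then propose to \emph{locate} the $\F$-conjugates of $z$ in the various cosets $Rf$, $Rh$, etc.\ and use $\Baum(S)\leq T$. That program is much harder than needed, and you never complete it. The idea you are missing is to make the extension axiom do the work rather than a case analysis of cosets: take $\psi\in\Hom_\F(V,S)$ with $V^\psi$ fully $\F$-centralized; since $V^\psi$ is fully centralized, $\psi$ extends to $\tilde\psi\in\Hom_\F(C_S(V),S)$. Now $E:=\gen{x,f}\times V\leq C_S(V)$ is elementary abelian of maximal rank, so $E^{\tilde\psi}$ is again elementary abelian of rank $4$, hence lies in $J=J(S)$, and it contains $V^\psi=V^{\tilde\psi}$. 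Therefore $V^\psi\leq J$ \emph{automatically}, with no need to analyze where conjugates of $z$ can sit outside $J$; your Lemma~\ref{L:zweaklyclosedinPinJ} then gives $V^\psi\leq P$ since the nonidentity elements of $V^\psi$ are all $\F$-conjugates of $z$ inside $J$. Your remark that ``$C_S(W)\supseteq\gen{x,f}\times V$'' only makes sense for $V$ itself; the extension axiom is what transports that rank-$4$ elementary abelian into $C_S(W)$ \emph{containing} $W$, and that containment is what pins $W$ inside $J$.
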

\begin{proof}
Let $s \in S$. Then $z^s \in Z(J)$, and so $z^s = z$ by
Lemma~\ref{L:zweaklyclosedinZJ}. But all involutions of $P^s$ are
$\K^s$-conjugate by Lemma~\ref{L:l2qomnibus}(f). Hence $P^s = \Omega_1(P^s) =
\gen{z^{\K^s}} \leq P$ by Lemma~\ref{L:zweaklyclosedinPinJ} and (a) holds.

Let $U$ be a four subgroup of $P$, and let $\psi \in \Hom_\F(U, S)$ such that
$U^\psi$ is fully $\F$-centralized. By the extension axiom, $\psi$ extends to
$C_S(U)$, which contains an elementary abelian subgroup of maximal rank. Thus,
$U^\psi \leq J$ and the nonidentity elements of $U^\psi$ consist of
$\F$-conjugates of $z$. It follows that $U^\psi \leq P$ by
Lemma~\ref{L:zweaklyclosedinPinJ}, proving (b).

Part (c) is Lemma~\ref{L:autD}(b). For (d), suppose that $s \in S$ with $s^2
\in J-Z(J)C$. Then $s^2$ must lie in $C_S(P)C$ by Lemma~\ref{L:autD}(c).  Thus,
$s^2 \in C_{J}(P)C = Z(J)C$. 
\end{proof}

\begin{lemma}\label{L:J=PCS(P)}
$J = PC_S(P)$.
\end{lemma}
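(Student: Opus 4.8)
The plan is to reduce the statement to $C_S(P) \leq T$. Write $W = C_S(P)$. By \eqref{E:2rank4initial(b)} we have $C_T(P) = \gen{x,f,z} \leq J$, and $z \in P$, so $PC_T(P) = J$; hence $J \leq PW$, while $J = \gen{x,f}\times P \leq PW$ is clear, so $PW \geq J$ always. Thus $PW = J \iff W \leq J$, and since $W \cap J = C_J(P) = Z(J)$ and $W \cap T = C_T(P) = Z(J)$ (again by \eqref{E:2rank4initial(b)}), this is in turn equivalent to $W = Z(J)$, equivalently $W \leq T = C_S(x)$. By Lemma~\ref{L:PnormS}(a), $P \norm S$, so $W \norm S$; note $W \cap P = \gen z$ and $Z(J) = \gen{x,f,z} \leq W$. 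So the goal becomes: prove $W = Z(J)$.

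First I would compute $\Omega_1(W)$. If $E \leq W$ is elementary abelian and $V \leq P$ is a four-subgroup, then $E$ centralizes $P$, so $EV$ is elementary abelian with $E \cap V \leq W \cap P = \gen z$; since $S$ has $2$-rank $4$ (Lemma~\ref{L:2rank3or4}) and $z \in V$, this forces $E$ to have rank at most $3$, and if $E$ is a \emph{maximal} elementary abelian subgroup of $W$ then $z \in E$ and $EV$ is elementary abelian of rank $4$. As $EV \leq J(S) = J = \gen{x,f}\times P$, we get $E \leq C_J(P) = Z(J)$, whence $E = Z(J)$ by order. Every involution of $W$ lies in a maximal elementary abelian subgroup of $W$, so $\Omega_1(W) = Z(J)$. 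In particular, if $W$ is abelian then $W$ centralizes $x \in W$, so $W \leq C_S(x) = T$ and we are done; the real problem is to rule out $W$ non-abelian.

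Assume $W > Z(J)$ and pick $w \in W$ with $wZ(J)$ of order $2$ in $W/Z(J)$; since $\Omega_1(W) = Z(J)$, $w$ has order $4$ with $w^2 \in Z(J)\setminus 1$. As $w \notin W \cap T = Z(J)$ we have $w \notin C_S(x)$, so $y := x^w$ is an involution of $Z(J)$ with $y \neq x$ and, by Lemma~\ref{L:zweaklyclosedinZJ}, $y \neq z$; thus $x$ is $\F$-conjugate to one of $xz$, $f$, $xf$, $fz$, $xfz$. To finish I would exclude each possibility: that $x$ is not $\F$-conjugate to $xz$ should follow from $C_S(xz) = C_S(x) = T$ together with the fact that $z$ is weakly $\F$-closed in $Z(T) \leq Z(J)$ (by Lemma~\ref{L:zweaklyclosedinZJ}), which constrains the action of $\Aut_\F(T)$ on $\gen{x,z}$; and that $x$ is not $\F$-conjugate to an $f$-element on $\K$ should follow from Lemma~\ref{L:felements}(a) and Lemma~\ref{L:l2qomnibus}(g), since the centralizer of an $f$-element has its pertinent component on a dihedral group of order $2^{k-1}$, whereas $E(\C) = \K$ lies on $P \cong D_{2^k}$, so $\C$ cannot be isomorphic to such a centralizer. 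Either conclusion contradicts $y = x^w$, forcing $W = Z(J)$. I expect this last step to be the main obstacle: extracting enough information about the $\F$-fusion of $x$ and about centralizers of $f$-elements to eliminate all five candidates for $x^w$.
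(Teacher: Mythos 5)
Your overall structure is a genuinely different route from the paper's. The reduction at the top is correct: since $Z(J) = C_J(P) = C_T(P) \leq W := C_S(P)$ with $W\cap J = W \cap T = Z(J)$, the assertion $J = PW$ is equivalent to $W = Z(J)$, equivalently $W \leq T$. The paper instead picks $a\in C_S(P)\setminus J$ squaring into $J$, applies a $\K$-automorphism $\phi$ of order $3$ of a fully $\F$-centralized four subgroup $U\leq P$ (extended to $C_S(U)\ni a$), and gets a contradiction from the commutator/square constraints of Lemma~\ref{L:PnormS}(c),(d). Your version tries instead to exploit the $\F$-fusion of $x$ directly. There are two gaps, one of which you flagged and which seems hard to close with the tools available at this point.

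First, a small one: to conclude $\Omega_1(W)=Z(J)$ you assert that a maximal elementary abelian subgroup $E$ of $W$ yields a rank-$4$ subgroup $EV$, but this needs $|E|=8$, which is not automatic for ``maximal.'' It does hold because $Z(J)\norm W$ is elementary abelian of rank $3$: if $E$ is maximal then $\Omega_1(C_W(E))=E$, so $C_{Z(J)}(E)\leq E$, and an index-counting argument on the action of $E$ on $Z(J)$ forces $E\leq Z(J)$, hence $E=Z(J)$. This should be spelled out, but is fixable.

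The second gap is the one you acknowledge: eliminating the five candidates for $y=x^w$. The $f$-element cases ($y\in\{f,xf,fz,xfz\}$) ought to be tractable by an argument comparing the component of $\C$ with the corresponding structure in the centralizer of a fully $\F$-centralized conjugate of $y$, though one must handle carefully that $y$ itself need not be fully $\F$-centralized. The real problem is $y=xz$. Nothing proved in Section~\ref{S:2rank4Q=2} up to this point gives $x\nsim_\F xz$; the relation $z^\F\cap Z(J)=\gen{z}$ (Lemma~\ref{L:zweaklyclosedinZJ}) does not constrain fusion between $x$ and $xz$. The statement $x\nsim_\F xz$ only appears later, as \eqref{E:xnsimxz} in Section~\ref{S:2rank4Q>2}, and its proof there (via Lemma~\ref{L:Sclassofx} and Proposition~\ref{P:krullschmidt}) uses that $J$ is a product of two \emph{nonabelian} dihedral factors---which fails here since $\gen{x,f}\cong C_2\times C_2$. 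So the Krull--Schmidt argument is unavailable, and I see no substitute in the present setting. The paper's proof sidesteps all of this precisely by working with the $\K$-action on four-subgroups of $P$ rather than with $\F$-conjugates of $x$, and I would recommend that route.
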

\begin{proof}
Suppose the lemma is false and choose $a \in C_S(P)-J$ with $a^2 \in J$.  Let
$Z_a = C_{Z(J)}(a)$, which contains $z$ and is of order $4$.  Then $a^2 \in
Z(J) \cap C_S(a) = Z_a$ since $a$ centralizes $P$. By
\eqref{E:2rank4initial(b)} and \eqref{E:2rank4initial(c)}, $J = PC_T(P)$, and
so $a$ does not centralize $x$. 

Fix a fully $\F$-centralized four subgroup $U$ of $P$ guaranteed by
Lemma~\ref{L:PnormS}(b), a $\K$-automorphism $\phi$ of $U$ of order $3$, and an
extension $\tilde{\phi} \in \Hom_\F(C_S(U),S)$ (by the extension axiom).
Observe that 
\begin{align}
\text{$Z(J)a$ contains no involution}, 
\label{E:Z(J)a^2neq1}
\end{align}
because each element of $Z(J)a$ lies outside $J$ and centralizes the subgroup
$Z_aU$, which is of $2$-rank $3$. Also, $\tilde{\phi}$ is defined on $a$; it
follows that 
\begin{align}
a^2 \neq z,
\label{E:a^2neqz}
\end{align} 
since otherwise $a^{\tilde{\phi}}$ is an element of $S$ squaring to a
noncentral involution of $P$, contrary to Lemma~\ref{L:PnormS}(d).

If $[x, a] = z$, then $[x^{\tilde{\phi}}, a^{\tilde{\phi}}] = z^\phi$ is a
noncentral involution of $P$, contradicting Lemma~\ref{L:PnormS}(c). Finally,
we consider the case in which $[x,a] = y \neq z$. Here, $a^2 \in Z_a =
\gen{y,z}$. If $a^2 = y$, then $x$ inverts $a$, and so $(xa)^2 = 1$ contrary to
\eqref{E:Z(J)a^2neq1}. Hence $a^2 = yz$ by \eqref{E:a^2neqz}.  In this case,
$(xa)^2 = xa^2x[x,a] = xyzxy = z$, so we may replace $a$ by $xa$ to obtain $a^2
= z$, again contradicting \eqref{E:a^2neqz}, and completing the proof.
\end{proof}

Let $\Omega$ be the two element set consisting of the $P$-classes of
four-subgroups of $P$. By Lemma~\ref{L:PnormS}(a), $S$ acts on $\Omega$. Let
$N$ be kernel of this action. Then $J \leq N$. By the previous two lemmas $S/J$
embeds into $\Out(P)$. Thus by Lemma~\ref{L:autD}(a), $S/J$ has a cyclic
subgroup $B$ with index $1$ or $2$ and with $B = N/J$ cyclic of order dividing
$2^{k-3}$. Thus, $N$ is of index $1$ or $2$ in $S$. 

\begin{lemma}\label{L:modular}
We have $J < N$. In particular, $|P| \geq 16$.
\end{lemma}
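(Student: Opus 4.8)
Suppose for contradiction that $N = J$. Since $S$ acts on the two-element set $\Omega$ we have $|S:N|\leq 2$, hence $|S:J|\leq 2$; as $J\leq\Baum(S)\leq T< S$ by \eqref{E:T<S}, this forces $T=J$ and $|S:T|=2$. Fix $a\in S-T$. Because $T$ has index $2$ it is normal in $S$, so $C_S(x^a)=C_S(x)^a=T$; thus $x^a$ is again a fully $\F$-centralized involution with centralizer $T$, and $x^a\neq x$ since $a\nin T=C_S(x)$. Moreover $x^a\in Z(T)=\langle x,f,z\rangle$ (a characteristic subgroup of $T$), and $x\nin z^\F$ by Lemma~\ref{L:zweaklyclosedinZJ}, so $x^a\nin\langle z\rangle$; hence $x^a$ is one of $f,\,xf,\,xz,\,fz,\,xfz$.

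Conjugation by $a$ is an automorphism $c_a$ of the ambient system $\F$, carrying $\langle x\rangle$ to $\langle x^a\rangle$, $T$ to $T$, $\C=C_\F(x)$ to $C_\F(x^a)$, and the component $\K=E(\C)$ to $E(C_\F(x^a))$. Since $P\norm S$ by Lemma~\ref{L:PnormS}(a), $c_a$ restricts to an automorphism $\theta$ of $P$. Now every proper centric–radical subgroup of $\K$ is a four-subgroup of $P$ carrying its \emph{full} automorphism group $S_3$, while $\Aut_\K(P)=\Inn(P)$ (this is the content of Lemma~\ref{L:fsmaxcl}(a) for $\K$ on the dihedral group $P$); consequently $\K^\theta$ and $\K$ have the same automizers on every centric–radical subgroup of $P$, so $\K^\theta=\K$ by Alperin's fusion theorem. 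Therefore $E(C_\F(x^a))=\K$, i.e. $\K$ is a component of $C_\F(x^a)$, and $c_a$ carries the cyclic group $Q=C_T(\K)$ — the centralizer in $T$ of the normal subsystem $\K$ of $\C$, in the sense of Theorem~\ref{T:C_S(E)} — to the corresponding centralizer of $\K\norm C_\F(x^a)$, which is therefore $\langle x^a\rangle$.

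It remains to derive a contradiction from the fact that the \emph{same} dihedral fusion system $\K$ on $P$ is a component of both $\C=C_\F(x)$ and $C_\F(x^a)$, centralized there by $\langle x\rangle$ and by $\langle x^a\rangle$ respectively, with $x\neq x^a$. Here I would argue directly from Theorem~\ref{T:C_S(E)}: unwinding its characterization, $C_T(\K)$ computed inside $\C$ is the largest $X\leq C_T(P)$ such that every morphism of $\K$ extends in $\F$ to an isomorphism fixing $X$ and $x$ pointwise, and likewise inside $C_\F(x^a)$ with $x^a$ in place of $x$. Using the precise description in Lemma~\ref{L:felements} of which $\K$-automorphism groups survive into $C_\C(u)$ for the relevant involutions $u\in C_T(P)$, together with the fact (Lemma~\ref{L:zweaklyclosedinPinJ}) that no involution of $P$ moved by $\K$ can be held rigid simultaneously with both $x$ and $x^a$, one is forced to conclude $\langle x\rangle=\langle x^a\rangle$, i.e. $x=x^a$ — contradiction. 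This last incompatibility is the main obstacle: extracting a genuine clash from ``$\K$ is centralized by $x$ inside $C_\F(x)$ but by $x^a$ inside $C_\F(x^a)$'' in the absence of a general notion of $N_\F(\K)$, and the work lies in the case analysis over $x^a\in\{f,xf,xz,fz,xfz\}$ — in particular distinguishing whether $x^a$ lies in $R=\langle x\rangle P$ (so is ``$z$-like'') or is an $f$-element on $\K$. (One could alternatively try to run this last step as a transfer argument, applying Proposition~\ref{C:TTcyclic} or~\ref{C:TTlinind} to the index-two subgroup $T\norm S$ together with the weak-closure statements for $z$.) Granting the contradiction, $N>J$, so $B=N/J\neq1$; as $B$ is cyclic of order dividing $2^{k-3}$ (recorded just before the lemma), $2^{k-3}\geq2$, whence $k\geq4$ and $|P|=2^k\geq16$.
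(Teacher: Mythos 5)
Your opening reduction matches the paper exactly: from $N=J$ and $|S:N|\leq 2$ you get $T=J$ and $|S:T|=2$, and you correctly observe that $z$ is fixed by $a$ (so $x^a$ lies in $Z(J)-\gen{x,z}$). But after that you replace the paper's transfer argument with a structural argument about ``the same component $\K$ sitting in two different involution centralizers,'' and you do not close it.

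The gap is real and you flag it yourself: ``this last incompatibility is the main obstacle,'' ``the work lies in the case analysis over $x^a$.'' As written, the claim that the two statements ``$\K$ is centralized by $\gen{x}$ inside $C_\F(x)$'' and ``$\K$ is centralized by $\gen{x^a}$ inside $C_\F(x^a)$'' force $x=x^a$ is not a theorem anyone can cite. In a finite group it would be easy (both $x$ and $x^a$ would lie in $C_T(K)$, which is cyclic, contradiction), but the whole point of the paragraph preceding Definition~\ref{D:felement} in the paper is that there is \emph{no} ambient $C_\F(\K)$ to collect both involutions into; $C_T(\K)$ is only defined relative to a fusion system in which $\K$ is normal, and $\gen{x}$ and $\gen{x^a}$ are computed in two different such systems. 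Your invocations of Lemma~\ref{L:felements} and Lemma~\ref{L:zweaklyclosedinPinJ} do not obviously bridge this: Lemma~\ref{L:felements}(b) only constrains extensions of $\Aut_\K(V)$ in $\C=C_\F(x)$ (i.e., relative to $x$), not simultaneously relative to $x^a$, and Lemma~\ref{L:zweaklyclosedinPinJ} is about $\C$-fusion of elements of $J$, not about forcing a common centralizer. Without the case analysis actually carried out, there is no argument here, only a plan.

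The paper avoids the issue entirely. After fixing $a$ it computes $[P,a]=C$ and splits into two explicit cases for $[S,S]$ (either $C$ or $Z(S)C$), then in each case runs the Thompson--Lyons transfer lemma (Proposition~\ref{C:TTlinind}) to produce an $\F$-conjugate of some element of $Z(J)$ landing where it must not: in one case $z\in y^\F$ contradicting Lemma~\ref{L:zweaklyclosedinZJ}, in the other $x$ lands in $Z(S)$ contradicting $T<S$. Your parenthetical suggestion that ``one could alternatively try to run this last step as a transfer argument'' is in fact the route the paper takes; the structural approach you led with is the one that does not go through cleanly. Your closing sentence deducing $k\geq 4$ from $N/J\neq 1$ and $|N/J|\mid 2^{k-3}$ is fine and is equivalent to what the paper does.
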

\begin{proof}
Suppose to the contrary that $J=N$. Then $N \leq T$ and $T < S$ by
\eqref{E:T<S}. Since $|S:N| \leq 2$, it follows that $N = J = T$ and $|S:T|=2$. 

Fix $a \in S-T$. As $a$ acts on $Z(J) = Z(T)$ and does not centralize $x$,
$Z(S) = \gen{y,z}$ for some $y \in Z(J)-\gen{x,z}$. Since $a$ normalizes $P$
and acts nontrivially on $\Omega$, we have $[P,a] = C$. Thus, we have two
possibilities for the commutator subgroup of $S$. Either $[x,a] = z$ and $[S,S]
= C$, or else $[S,S] = Z(S)C$. 

Assume first that $[S,S] = C$. We have that $y \in Z(S)-\gen{z}$ is not
$\F$-conjugate to $z$ by Lemma~\ref{L:zweaklyclosedinZJ}. But then
Theorem~\ref{T:TT}, applied with $C$ playing the role of $T$ there, forces $z
\in y^\F$ anyway, a contradiction.

Now suppose $[S,S] = Z(S)C$, so that $[S,S] \cap Z(J) = Z(S)$ is of order $4$.
We will show in this case that $x \nin \foc(\F)$. We claim
\begin{eqnarray}
\label{E:nonmodularcent}
\mbox{ every fully } \F\mbox{-centralized conjugate of } x \mbox{ lies in } Z(J).
\end{eqnarray}
Let $s$ be a fully $\F$-centralized conjugate of $x$. Then $s$ lies in a
elementary abelian subgroup of rank $4$ by the extension axiom, so $s \in J =
Z(J)P$. If $s \in Z(J)(P-C)$, then $s$ has at least four conjugates under
$P\gen{a}$ because $Z(J) \cap P = \gen{z}$ and $P\gen{a}$ is transitive on the
involutions in $P-C$. So $|C_S(x)|>|C_S(s)|$, and
(\ref{E:nonmodularcent}) holds.

Thus, \eqref{E:nonmodularcent} implies $Z(S)Cx$ is the unique nonidentity
element of $S/Z(S)C$ containing a fully $\F$-centralized $\F$-conjugate of $x$.
This allows us to apply Theorem~\ref{T:TT}, with $Z(S)C = [S,S]$ in
the role of $T$, to obtain an $\F$-conjugate of $x$ in $\Omega_1([S,S]) =
Z(S)$, contradicting the assumption that $x$ is fully $\F$-centralized. We
conclude that $J < N$, and the first statement of the lemma holds.

For the last statement, suppose $|P| = 8$. Every element inducing an outer
automorphism on $P$ interchanges the two classes of four-subgroups of $P$.
Thus $N$ induces inner automorphisms on $P$, i.e. $N = PC_S(P) = J$,
contrary to $J < N$. Therefore, $|P| \geq 16$. 
\end{proof}

By an earlier remark and Lemma~\ref{L:modular},  $N/J$ is nontrivial cyclic.
Choose $w \in N$ mapping to a generator of $N/J$. By the definition of $N$, we
may adjust $w$ by an element of $P$ and assume that $w$ centralizes a four
subgroup $U = \gen{e, z}$ of $P$. Replacing $w$ by $ew$ if necessary, we may
assume also that
\begin{eqnarray}
w \mbox{ centralizes } C/\mho^2(C).
\label{E:wmodular}
\end{eqnarray}

Let $f_1 \in \gen{w}$ such that $f_1 \nin J$ but $f_1^2 \in J$. Since $J =
PC_S(P)$ by Lemma~\ref{L:J=PCS(P)}, we have $f_1^2 \in C_S(C)$ by
Lemma~\ref{L:autD}(d) applied with $D = P$ there. Then $f_1$ takes a generator
$c$ of $C$ to $cz$ by choice of $f_1 \in \gen{w}$ and \eqref{E:wmodular}, and
hence $f_1$ centralizes $\mho^1(C)$. As $|P| \geq 16$ from Lemma~\ref{L:modular}, it
follows that
\begin{eqnarray}
C_P(f_1) \mbox{ is the nonabelian dihedral subgroup } P_1 := \gen{U, \mho^1(C)}
\mbox{ of } P.
\label{E:C_P(f_1)dihedral}
\end{eqnarray}
So $f_1^2 \in C_S(P_1) \cap J = Z(J)$, and $f_1$ is of order at most $4$.  But
$f_1$ is not an involution, otherwise $\gen{f_1, C_{Z(J)}(f_1), U}$ is an
elementary $16$ outside $J$, so $f_1$ is of order $4$. In fact, it is shown
below that there are no involutions in $Jf_1$. For this, we will need that
\begin{eqnarray}
f_1 \mbox{ does not square to } z.
\label{E:f1doesnotsquaretoz}
\end{eqnarray}
Assume to the contrary that $f_1^2 = z$. Then as $[\mho^1(C), f_1] = 1$ and $|P| \geq
16$ by Lemma~\ref{L:modular}, there exists an element $v \in C_P(f_1)$ with
$(vf_1)^2 = 1$.  But then $C_P(vf_1)$ contains the four subgroup $\gen{ce, z}$
of $P$, and so $vf_1 \in J$, yielding the same contradiction as before and thus
confirming (\ref{E:f1doesnotsquaretoz}).

\begin{lemma}\label{E:nomodularinvolution}
There is no involution in $Jf_1$.
\end{lemma}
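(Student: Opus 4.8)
The goal is to rule out involutions in the coset $Jf_1$, where $f_1 \in \gen{w} \leq N$ has order $4$, $f_1^2 \in Z(J)$, $f_1^2 \neq z$ by \eqref{E:f1doesnotsquaretoz}, and $C_P(f_1) = P_1 = \gen{U,\mho^1(C)}$ is nonabelian dihedral by \eqref{E:C_P(f_1)dihedral}. The plan is to argue by contradiction: suppose $jf_1$ is an involution for some $j \in J$. Write $j = x^{\epsilon_1}f^{\epsilon_2}p$ with $p \in P$ using $J = \gen{x,f} \times P$ (recall $Q = \gen{x}$ here). The first step is to understand the action of $f_1$ on $J$: since $f_1$ normalizes $P$ inducing the outer automorphism taking $c \mapsto cz$ (by choice via \eqref{E:wmodular}) and centralizes $U$, and since $f_1 \in N$ fixes each $P$-class of four subgroups, $f_1$ acts on $J$ with $[J,f_1] \leq C_S(C) \cap J = Z(J)C$ by Lemma~\ref{L:autD}(b,d). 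In particular $f_1$ centralizes $\gen{x,f}$ modulo... no — $f_1$ centralizes $Q\gen{f} = \gen{x,f} = C_T(P)/\gen{z}$-part; more precisely $[\gen{x,f}, f_1] \leq Z(J)$, and since $\gen{x,f}$ is elementary abelian and $f_1$ has $2$-power order acting on it, one checks $f_1$ centralizes $\gen{x,f}$ (as there is nothing for it to move them into inside the rank-$4$ group $Z(J)$ without creating a rank-$5$ elementary abelian — indeed if $f_1$ moved $x$ it would move it within $Z(J)$, but $f_1$ centralizes $U \leq P$ and $\gen{x,f}$, so $\gen{x,f,U,?}$ considerations pin this down). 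I would make this precise first.

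The second step is the square computation. With $jf_1$ an involution, $1 = (jf_1)^2 = j \cdot j^{f_1} \cdot f_1^2$, so $j^{f_1} = f_1^{-2} j^{-1} = f_1^2 j^{-1}$ (as $f_1^2$ is a central involution). Decomposing $j = tp$ with $t \in \gen{x,f}$ and $p \in P$, and using that $f_1$ centralizes $t$ (step one) and $[P,f_1] \leq C$, we get $t p^{f_1} = f_1^2 p^{-1} t^{-1} = f_1^2 t^{-1} p^{-1}$ (since $t$, $f_1^2 \in Z(J)$ commute with everything in $J$), hence $p^{f_1} = f_1^2 t^{-2} p^{-1} = f_1^2 p^{-1}$, i.e. $p p^{f_1} = f_1^2$. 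Now I split on whether $p \in C$ or not. If $p \in C$, then since $f_1$ takes $c \mapsto cz$, one has $p^{f_1} = p z^{a}$ for the appropriate exponent... actually $f_1$ centralizes $\mho^1(C)$ and sends $c \mapsto cz$, so $p^{f_1} = p$ if $p \in \mho^1(C)$ and $p^{f_1} = pz$ if $p \in C - \mho^1(C)$; thus $p p^{f_1} \in \set{p^2, p^2 z}$, which lies in $\gen{z} = \Omega_1(C)$ only when $p^2 = 1$ or $p^2 = z$, forcing $p p^{f_1} \in \gen{z}$, contradicting $f_1^2 \notin \gen{z}$ (as $f_1^2 \in Z(J) - \gen{z}$, being an $f$-type involution, combined with \eqref{E:f1doesnotsquaretoz}: $f_1^2 \neq z$, and $f_1^2 \neq 1$). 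Wait — I should double-check $f_1^2 \notin \gen{z}$: $f_1^2 \in Z(J)$, $f_1^2 \neq 1$ since $f_1$ has order $4$, and $f_1^2 \neq z$ by \eqref{E:f1doesnotsquaretoz}; so indeed $f_1^2 \notin \gen{z} = \set{1,z}$, i.e. $f_1^2 \in \set{x, f, xf, xz, fz, xfz}$. Good. If $p \notin C$, then $p$ is a reflection in the dihedral group $P$; since $f_1$ induces an outer automorphism of $P$ fixing both $P$-classes of reflections and centralizing $U$, $p^{f_1}$ is again a reflection and $p p^{f_1} \in C$. Then $f_1^2 = p p^{f_1} \in C \cap Z(J) = \gen{z}$ (as $Z(J) \cap P = \gen{z}$ since $Z(J) = \gen{x,f,z}$ and $P \cap \gen{x,f,z} = \gen{z}$), again contradicting $f_1^2 \notin \gen{z}$.

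The third step ties it together: in both cases we reach $f_1^2 \in \gen{z}$, contradicting \eqref{E:f1doesnotsquaretoz} together with $f_1$ having order $4$. Hence $Jf_1$ contains no involution, as claimed.

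**Main obstacle.** The delicate point — and the one I would spend the most care on — is step one: pinning down exactly how $f_1$ acts on the "field automorphism" part $\gen{x,f}$ of $J$, and verifying $f_1$ centralizes it (or at least that $[\gen{x,f},f_1] \leq \gen{z}$, which is all the square computation actually needs). One wants to use that $f_1 \in \gen{w} \leq N$ and that $w$ was chosen to centralize $U \leq P$ and $C/\mho^2(C)$, so $f_1$ centralizes $U$; combined with $[J,f_1] \leq Z(J)C$ and the rank restriction ($S$ has $2$-rank $4$, so $\gen{x,f,e,f_1^2}$ already has rank $4$ when these are independent), there is no room for $f_1$ to move $x$ or $f$ outside $\gen{z}$ without manufacturing an elementary abelian of rank $\geq 5$ or contradicting $Z(J) = C_J(P)P$-structure. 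Making that argument airtight — rather than hand-wavy — is the real work; everything after it is the short square calculation above. Alternatively, one may avoid step one entirely by noting $[\gen{x,f}, f_1] \leq [J, f_1] \cap Z(J) \leq (Z(J)C) \cap Z(J) = Z(J)$ is not yet enough, so one does genuinely need the rank or the $U$-centralizing input; I would use the latter.
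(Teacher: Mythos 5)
Your ``step one'' is the genuine gap, and I don't think you can repair it in the way you sketch. What you need for the square computation is that $[\gen{x,f},f_1]\leq\gen{z}$, equivalently $[Z(J),f_1]\leq\gen{z}$. This is not a consequence of $f_1$ centralizing $U$: a priori, $f_1$ acts on the rank-$3$ group $Z(J)=\gen{x,f,z}$ as an involution fixing $z$ and $f_1^2$, so $C_{Z(J)}(f_1)=\gen{z,f_1^2}$ and $[Z(J),f_1]$ could equally well be $\gen{f_1^2}$ or $\gen{zf_1^2}$, neither of which is $\gen{z}$. The justification you offer --- ``no room without creating a rank-$5$ elementary abelian inside the rank-$4$ group $Z(J)$'' --- is not correct: $Z(J)$ has rank $3$, it is already closed under $f_1$, and nothing of rank $5$ appears from $f_1$ permuting its elements. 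The reference to Lemma~\ref{L:autD}(b,d) controls $[J,f_1]$ modulo $C$, which only gets you $[Z(J),f_1]\leq Z(J)$, a triviality.

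The paper's proof makes exactly the case distinction you try to avoid. When $[Z(J),f_1]\leq\gen{z}$, it argues (as you do, in quotient form) that $J\gen{f_1}/P\cong C_2\times C_4$ with $J/P=\Omega_1(J\gen{f_1}/P)$, hence no involutions outside $J$; your element-wise square computation $pp^{f_1}\in C\cap Z(J)=\gen{z}$ vs.\ $f_1^2\notin\gen{z}$ is an equivalent way to say this, and in fact goes through verbatim under the weaker hypothesis $[\gen{x,f},f_1]\leq\gen{z}$ (the extra $z$ is absorbed). But when $[Z(J),f_1]=\gen{y}\neq\gen{z}$, the paper uses a genuinely different rank argument: one shows $Z(J)P_1\gen{f_1}$ contains a subgroup $D_1\times P_1$ with $D_1\cong D_8$, takes the four-subgroup $E_1$ of $D_1$ other than $D_1\cap Z(J)$, observes that $E_1\times P_1$ has $2$-rank $4$ and hence lies in $J$, and then $D_1\leq C_S(P_1)\cap J=Z(J)$, a contradiction. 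This is the step your outline is missing, and it is not a routine verification --- it is where the hypothesis that $S$ has $2$-rank exactly $4$ gets used in earnest. Until you supply an argument for $[Z(J),f_1]\leq\gen{z}$ (or handle the complementary case), the proof is incomplete.
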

\begin{proof}
By \eqref{E:f1doesnotsquaretoz}, we have $f_1^2 \in Z(J)-\gen{z}$.
If $[Z(J), f_1] \leq \gen{z}$, then $J\gen{f_1}/P \cong C_2 \times C_4$
with $Pf_1$ of order $4$. In this case, every element of order $2$ in
$J\gen{f_1}$ lies in $J$ as claimed.  Hence we may assume that $[Z(J),
f_1] = \gen{y} \neq \gen{z}$. Fix an element $y_1 \in Z(J) - \gen{y,z}$. Then
$y_1$ inverts $f_1$ and $D_1 :=  \gen{y_1, f_1}$ is dihedral of order $8$ with
center $\gen{y}$.  Hence $Z(J)P_1\gen{f_1} = D_1 \times P_1$ is of $2$-rank $4$
and equal to its Thompson subgroup. This forces $D_1 \leq J$. But then
$D_1 \leq C_J(P_1) = Z(J)$, a contradiction.
\end{proof}

In addition, we let $h_1 \in S-N$ be an element such that $h_1^2 \in J$ or set
$h_1 = 1$ if such an element does not exist.  Note that if $h_1 = 1$, then
$S/J$ is cyclic by the structure of $\Out(P)$ (Lemma~\ref{L:autD}(a)).  In any
case, $S/J\gen{h_1}$ is cyclic.  

Assume that $h_1 \neq 1$. Then 
\begin{eqnarray}
\mbox{both } h_1 \mbox{ and } h_1f_1 \mbox{ square into } J.
\label{E:h_1andh_1f_1squares}
\end{eqnarray}
Let $s \in Jh_1 \cup Jh_1f_1$. Then for any $e_1 \in P-C$, we have $e_1^s =
e_1c$ for some generator $c$ of $C$ because $s \nin N$.  Therefore,
\begin{eqnarray}
    [P,s] = C \mbox{ and } C_{J}(s) \leq Z(J)C \mbox{ is abelian.}
    \label{E:nonmodularcents}
\end{eqnarray}
Furthermore, as $e_1^{s^2}=e_1cc^s$, we have
\begin{eqnarray}
\mbox{if } s^2 \in C_S(P) \mbox{ then } s \mbox{ inverts } C.
\label{E:sinvertsC}
\end{eqnarray}

With this setup, the next two lemmas contradict each other and complete the
proof of Theorem~\ref{T:Q>2}.
\begin{lemma}\label{L:involutionsbothcosets}
$h_1 \neq 1$ and both cosets $Jh_1$ and $Jh_1f_1$ contain involutions.
\end{lemma}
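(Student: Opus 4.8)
The plan is a transfer argument, resting on $\F=O^2(\F)$ — so that Propositions~\ref{C:TTcyclic} and \ref{C:TTlinind} are available — together with Lemma~\ref{L:zweaklyclosedinZJ}, which I use in the form: no element of $Z(J)-\gen{z}$ is $\F$-conjugate to $z$; and since every element of order $4$ in $J=\gen{x,f}\times P$ squares to $z$, no element of $Z(J)-\gen{z}$ is the square of an $\F$-conjugate of an order-$4$ element of $J$. I would first rule out $h_1=1$, and then produce the two involutions.

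If $h_1=1$, then $S/J$ is cyclic, so any involution of $S-J$ lies in the unique subgroup of order $2$ of $S/J$, that is, in $Jf_1$, contrary to Lemma~\ref{E:nomodularinvolution}. Hence the order-$4$ element $f_1\notin J$ is of least order in $S-J$, and Proposition~\ref{C:TTcyclic} yields a fully $\F$-centralized $\F$-conjugate of $f_1$ in $J$; this element has order $4$, so squares to $z$, whence $f_1^2\in z^\F$. But $f_1^2\in Z(J)-\gen{z}$ by \eqref{E:f1doesnotsquaretoz} and $|f_1|=4$, contradicting Lemma~\ref{L:zweaklyclosedinZJ}. Thus $h_1\neq1$, $|S:N|=2$, and both $h_1$ and $h_1f_1$ square into $J$ by \eqref{E:h_1andh_1f_1squares}.

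To find the involutions, note the two cosets are symmetric, since \eqref{E:nonmodularcents} and \eqref{E:sinvertsC} hold with $h_1$ replaced by $h_1f_1$; so I treat $Jh_1$ and write $s=h_1$. The aim is to adjust $s$ inside $Jh_1$: first by an element of $P$ so that $s^2\in C_S(P)=Z(J)$ (here one uses \eqref{E:nonmodularcents}, which puts $s^2\in C_J(s)\leq Z(J)C$, together with \eqref{E:sinvertsC} and the dihedral structure of $P$ to clear the $C$-component, and $J=PC_S(P)$ from Lemma~\ref{L:J=PCS(P)}), and then by an element of $Z(J)$ — legitimate because $s^2\in J$ centralizes the elementary abelian group $Z(J)$, so $s$ acts on $Z(J)$ as an involution — so that $s^2\in\gen{z}$. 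If $s^2=1$ we are done. If $s^2=z$, the key remark is that for $g\in Z(J)$ with $g^s=gz$ one has $(gs)^2=(g\,g^s)\,s^2=z\cdot z=1$, so it suffices to know that $z\in[Z(J),s]$. This is forced by the structure of $\Aut(L_2(q))$: via $T/\gen{x}\hookrightarrow\Aut(K)$ (Proposition~\ref{P:TK/Q}), Lemma~\ref{L:l2qomnibus}(h) shows that an element inducing a diagonal automorphism of $P$ and squaring to $z$ cannot commute with $f$, so $[Z(J),s]\neq1$; one is then free to replace $h_1$ by a suitable element of $S-N$ to guarantee $z\in[Z(J),s]$, exhibiting the involution $gh_1$. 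The identical argument for $h_1f_1$ completes the proof.

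The main obstacle is precisely this normalization of $s$: using \eqref{E:nonmodularcents}--\eqref{E:sinvertsC} to push $s^2$ into $\gen{z}$ by coset adjustments, and then settling the residual case $s^2=z$ by pinning down, from the Sylow $2$-structure of $\Aut(L_2(q))$ in Lemma~\ref{L:l2qomnibus}(h), exactly how $h_1$ and $h_1f_1$ act on $C_S(P)=Z(J)$ — with the correct choice of the representatives $h_1$, $h_1f_1$ — before the involutions can be exhibited.
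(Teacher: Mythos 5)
Your first paragraph is correct and matches the paper's handling of the case $h_1 = 1$: assuming $h_1 = 1$, all involutions of $S$ lie in $J$ by Lemma~\ref{E:nomodularinvolution}, Proposition~\ref{C:TTcyclic} gives a fully $\F$-centralized conjugate of $f_1$ of order~$4$ in $J$, hence squaring to $z$, forcing $f_1^2 \in z^\F \cap Z(J) = \gen{z}$ and contradicting \eqref{E:f1doesnotsquaretoz}. The paper in fact runs this same transfer argument a second time to handle the remaining case, assuming for contradiction that one of $Jh_1$, $Jh_1f_1$ has no involutions, so that $\Omega_1(S) \leq J\gen{h_1}$, applying Proposition~\ref{C:TTcyclic} to the element $f_1$ of least order outside $J\gen{h_1}$, and then ruling out $f_1^\phi \in Jh_1$ by comparing $\Omega_1(C_P(f_1))$ (nonabelian dihedral, by \eqref{E:C_P(f_1)dihedral}) with $\Omega_1(C_S(s))$ (abelian for $s \in Jh_1$, by \eqref{E:nonmodularcents}).

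Your second paragraph takes a genuinely different route — a pure $2$-group normalization inside the coset — and it contains a gap that cannot be repaired. The step ``adjust $s$ by an element of $P$ so that $s^2 \in C_S(P) = Z(J)$'' is either vacuous or impossible. If $s^2$ already lies in $C_S(P)$, then $s$ inverts $C$ by \eqref{E:sinvertsC} and nothing needs to be cleared. If $s^2 \nin C_S(P)$, then $s$ does not invert $C$, and in this case no $v \in J$ brings $(vs)^2$ into $Z(J)$: for instance when $h_1^2 = 1$ and $s = h_1 f_1$, the element $s$ sends a generator $c$ of $C$ to $c^{-1}z$, so $v v^s \in \gen{z}$ for all $v \in C$ while $v v^s$ is a generator of $C$ for $v \in P - C$, and $s^2$ has a $C$-component of order~$4$ which therefore cannot be cancelled. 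This is not an accident: the subsequent lemma in the paper shows, by a $2$-group argument, that once $h_1$ is taken to be an involution, $J\gen{h_1f_1}/Z(J)$ is semidihedral and $Jh_1f_1$ contains \emph{no} involutions. So the statement that both cosets contain involutions cannot be a structural fact about $S$; it is forced by the fusion hypothesis $\F = O^2(\F)$ through transfer, and the sought contradiction with the next lemma is exactly what completes the proof of Theorem~\ref{T:Q>2}. A further, secondary problem with your sketch is the appeal to Proposition~\ref{P:TK/Q} and Lemma~\ref{L:l2qomnibus}(h) via the embedding $T/\gen{x} \hookrightarrow \Aut(K)$: the element $s$ lies in $S - N$ and need not lie in $T$, so this embedding gives you no control over the action of $s$ on $Z(J)$. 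The trivial observation $x^s \neq x$ (since $s \nin T = C_S(x)$) gives $[Z(J),s] \neq 1$, but there is no freedom to modify $h_1$ mod $J$ to guarantee $z \in [Z(J),s]$: the value of $[Z(J),s]$ depends only on $sJ \in S/J$, and the two admissible cosets are $h_1J$ and $h_1f_1J$.
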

\begin{proof}
Suppose either that $h_1 = 1$ or that there are no involutions in $Jh_1f_1$.
The argument is the same in case $h_1 \neq 1$ and $Jh_1$ contains no
involutions. (Alternatively, swap the roles of $h_1$ and $h_1f_1$ in this extra
case.) By Lemma~\ref{E:nomodularinvolution} and assumption, $\Omega_1(S) \leq
J\gen{h_1}$.  Also $S/J\gen{h_1}$ is cyclic, and $f_1$ is of least order
outside $J\gen{h_1}$. By Theorem~\ref{T:TT}, there exists a morphism
$\phi \in \F$ such that $f_1^\phi \in J\gen{h_1}$ is fully $\F$-centralized,
and $C_S(f_1)^\phi \leq C_S(f_1^\phi)$.

Now if $h_1 \neq 1$, then $f_1^\phi$ cannot lie in the coset $Jh_1$.  This is
because $\Omega_1(C_P(f_1))$ is nonabelian dihedral by
\eqref{E:C_P(f_1)dihedral}, whereas $\Omega_1(C_S(s))$ is abelian for every $s
\in Jh_1$ by (\ref{E:nonmodularcents}).  So $f_1^\phi \in J$ whether or not
$h_1=1$. Since $f_1$ is of order $4$ and $\Omega_1(\mho^1(J)) = \gen{z}$, we
have that $(f_1^2)^\phi = z$. But $z$ is weakly $\F$-closed in $Z(J)$ by
Lemma~\ref{L:zweaklyclosedinZJ} and so $f_1^2 = z$, contrary to
(\ref{E:f1doesnotsquaretoz}).  
\end{proof}

\begin{lemma}
$Jh_1f_1$ contains no involution. 
\end{lemma}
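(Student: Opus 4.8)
The plan is to argue by contradiction, so suppose that $s \in Jh_1f_1$ is an involution. I would first record the action of $s$ on $P$: since $f_1 \in N$ but $h_1 \notin N$ we have $s \notin N$, so $s$ interchanges the two $P$-classes of four subgroups of $P$; and since $s^2 = 1 \in C_S(P)$, \eqref{E:sinvertsC} shows that $s$ inverts $C$, so with $[P,s] = C$ from \eqref{E:nonmodularcents} we get $C_P(s) = \langle z\rangle$. I would then show, exactly as in the proof of Lemma~\ref{L:involutionsbothcosets}, that $\Omega_1(C_S(s))$ is abelian. Indeed $C_S(s)\cap J = C_J(s) \leq Z(J)C$ is abelian by \eqref{E:nonmodularcents}, and every involution of $S$ lies in $J \cup Jh_1 \cup Jh_1f_1$ (by (\ref{E:nomodularinvolution}), since $S/J$ embeds in $\Out(P)$ and so has at most three involutions). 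An involution $v$ of $C_S(s)$ in $Jh_1$ commutes with $s$, making $vs$ an involution in $Jh_1 \cdot Jh_1f_1 = Jf_1$ (using $h_1^2 \in J$), against (\ref{E:nomodularinvolution}); and an involution $v$ of $C_S(s)$ in $Jh_1f_1$ satisfies $vs \in J$ (using $(h_1f_1)^2 \in J$) and commutes with $s$, so $vs \in C_J(s)$ and hence $\Omega_1(C_S(s)) = \langle\Omega_1(C_J(s)), s\rangle$, which is abelian because $s$ centralizes the abelian group $C_J(s)$.

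Next I would push $s$ into $J$ by transfer. By Lemma~\ref{L:J=PCS(P)} and Lemma~\ref{L:autD}(a), $J \norm S$ with $S/J$ abelian (it embeds in $\Out(P) \cong C_2 \times C_{2^{k-3}}$), and by (\ref{E:nomodularinvolution}) the coset $Jf_1$ contains no involution, so the only cosets of $J$ carrying involutions are $J$, $Jh_1$ and $Jh_1f_1$. The images of $Jh_1$ and $Jh_1f_1$ in $\Omega_1(S/J)$ are $\overline{h_1}$ and $\overline{h_1}\,\overline{f_1}$, and these span the Klein four subgroup $\langle\overline{h_1},\overline{f_1}\rangle$ of $\Omega_1(S/J)$ (note $h_1 \notin N \ni f_1$), so any subset of them is linearly independent. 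Hence Proposition~\ref{C:TTlinind} applies with $J$ in the role of $T$ there, and the involution $s$ has a fully $\F$-centralized $\F$-conjugate $s_0 \in J$.

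It remains to derive a contradiction. By Lemma~\ref{L:zweaklyclosedinPinJ}, $s_0$ is $\C$-conjugate to some $y \in Z(J) = C_T(P)$, so $s \sim_\F y$ with $y$ centralizing $R = \langle x\rangle \times P$; extending a conjugating isomorphism $\langle y\rangle \to \langle s_0\rangle$ over $C_S(y) \supseteq R$ (valid since $s_0$ is fully $\F$-centralized) shows that $C_\F(s_0)$ contains a copy of $C_\F(y) \supseteq C_\C(y)$, and $C_\C(y)$ has a component on a nonabelian dihedral $2$-group — namely $\K$ itself when $y \in \langle x\rangle$, and its field-twisted $L_2(q^{1/2})$-analogue, on a dihedral group of order $2^{k-1}$, when $y$ is an $f$-element (by Lemma~\ref{L:felements} and Lemma~\ref{L:l2qomnibus}). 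Transporting such a component across the $\F$-isomorphism carrying $s_0$ to $s$, the centralizer $C_\F(s)$ would also have a dihedral-based component; but $\Omega_1(C_S(s))$ is abelian, $C_P(s) = \langle z\rangle$, and the $2$-rank of $S$ is $4$ (Lemma~\ref{L:2rank3or4}), so $C_S(s)$ cannot house such a component. The main obstacle is precisely this last step — making the incompatibility rigorous by comparing $F^*(C_\F(s))$ with $F^*(C_\F(s_0))$ and using the abelianness of $\Omega_1(C_S(s))$ to exclude a dihedral component of $C_\F(s)$ — and it is where the real work lies. The resulting contradiction shows $Jh_1f_1$ has no involution, which together with Lemma~\ref{L:involutionsbothcosets} completes the proof of Theorem~\ref{T:Q>2}.
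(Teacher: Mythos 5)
The paper proves this lemma by a purely $2$-group argument, completely different from your route. Having just shown in Lemma~\ref{L:involutionsbothcosets} that $h_1 \neq 1 = h_1^2$, the paper observes via \eqref{E:sinvertsC} that $h_1$ inverts $C$, so $h_1f_1$ sends a generator $c$ of $C$ to $c^{-1}z$. One then checks that $(h_1f_1)^2$ lies in $Z(J)\Omega_2(C)$ but not in $Z(J)$, and concludes that $\bar M := J\gen{h_1f_1}/Z(J)$ is of maximal class with $\bar P$ as its nonabelian dihedral maximal subgroup and $\ol{h_1f_1}$ of order $4$ squaring into $Z(\bar M)$. Hence $\bar M$ is semidihedral, which has no involutions outside its dihedral maximal subgroup. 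That is the whole proof; no further fusion or transfer is needed.

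Your proposal goes a different and ultimately incomplete way. The first two steps are fine: you correctly show $\Omega_1(C_S(s))$ is abelian (paralleling the computation used in the paper's proof of Lemma~\ref{L:involutionsbothcosets}), and the application of Proposition~\ref{C:TTlinind} with $T = J$ is legitimate, producing a fully $\F$-centralized $\F$-conjugate $s_0 \in J$. But the final step --- the one you yourself flag as ``where the real work lies'' --- has a genuine logical gap, not merely a missing detail. The extension axiom gives you an injective map $C_S(s) \hookrightarrow C_S(s_0)$ (since $s_0$ is fully $\F$-centralized), and more generally an embedding of fusion systems $C_\F(s) \hookrightarrow C_\F(s_0)$. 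This runs in the \emph{opposite} direction from what you need: a component of $C_\F(s_0)$ (or of $C_\F(y)$) need not meet the image of $C_S(s)$, so there is no transport of a dihedral component back to $C_\F(s)$. The abelianness of $\Omega_1(C_S(s))$ therefore yields no contradiction by this route. There is also a secondary issue: Lemma~\ref{L:zweaklyclosedinPinJ} only gives that $s_0$ is $\C$-conjugate to some $y \in Z(J)$, and your case analysis covers $y \in \gen{x}$ or $y$ an $f$-element, but not $y \in \{z, xz, fz, xfz\}$, where $C_\C(y)$ need not have a dihedral component. Finally, note that you never actually invoke Lemma~\ref{L:involutionsbothcosets}, yet that lemma (giving $h_1^2 = 1$) is exactly what makes the short $2$-group computation available; once $h_1$ is known to be an involution the whole statement falls out of the structure of $J\gen{h_1f_1}/Z(J)$, bypassing the transfer and centralizer comparison entirely.
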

\begin{proof}
We have $h_1 \neq 1 = h_1^2$ by Lemma~\ref{L:involutionsbothcosets}, and then
$h_1$ inverts $C$ by (\ref{E:sinvertsC}). So 
\begin{eqnarray}
h_1f_1 \mbox{ sends a generator } c \mbox{ of } C \mbox{ to } c^{-1}z.
\label{E:actionofh_1f_1}
\end{eqnarray}
In particular, $C_C(h_1f_1) = \gen{z}$ and so $C_{Z(J)C}(h_1f_1) \leq
Z(J)\Omega_2(C)$.  As $(h_1f_1)^2 \in J$ from \eqref{E:h_1andh_1f_1squares}, we
have $(h_1f_1)^2 \in C_J(h_1f_1) = C_{Z(J)C}(h_1f_1) \leq Z(J)\Omega_2(C)$ with
the equality by (\ref{E:nonmodularcents}).  But $(h_1f_1)^2$ does not lie in
$C_S(P) = Z(J) = \Omega_1(Z(J)\Omega_2(C))$ by (\ref{E:sinvertsC}), since
$h_1f_1$ does not invert $C$.  Consequently, 
\begin{eqnarray}
\label{E:h1f1order8}
h_1f_1 \text{ is of order } 8.
\end{eqnarray}
Set $M = J\gen{h_1f_1}$ and $\bar{M} = M/Z(J)$. Then $\bar{M}$ contains the
dihedral group $\bar{P}$ as a maximal subgroup, which is nonabelian as $|P|
\geq 16$. Furthermore $\bar{M}$ is of maximal class by
(\ref{E:nonmodularcents}) and (\ref{E:actionofh_1f_1}), and so $\bar{M}$ is
dihedral or semidihedral. As $\bar{h_1f_1}$ is of order $4$ squaring into the
center of $\bar{M}$ by \eqref{E:h1f1order8}, and as a dihedral group of order
at least $16$ has no such element outside a maximal dihedral subgroup, we know
$\bar{M}$ is semidihedral.  But then $\bar{M}$ contains no involutions at all
outside its dihedral maximal subgroup $\bar{P}$. It follows that $M =
J\gen{h_1f_1}$ contains no involutions outside $J$, which is what was to be
shown.
\end{proof}

\section{The $2$-rank $4$ case: $|Q|>2$}\label{S:2rank4Q>2}

For this final section, we continue to assume $\F$ is a saturated fusion system
on the $2$-group $S$ satisfying Hypothesis~\ref{H:main}. By the main results of
the previous three sections, we are reduced to the following situation in
describing $\F$.

\begin{enumerate}
\item
$T = C_S(x)$ is a proper subgroup of $S$ (Proposition~\ref{P:2centralfinal}),
\item
$S$ is of $2$-rank $4$ (Theorem~\ref{T:2rank3}), and
\item
$Q = C_T(\K)$ is of order at least $4$ (Theorem~\ref{T:Q>2}).
\end{enumerate}

\begin{theorem}\label{T:2rank4main}
Let $\F$ be a saturated fusion system on the $2$-group $S$. Assume $\F$
satisfies Hypothesis~\ref{H:main} and, in addition, the above three items. Then
$S \cong D_{2^k} \wr C_2$, and $\F$ is the fusion system of $L_4(q_1)$ for some
$q_1 \equiv 3 \pmod{4}$ with $\nu_2(q_1+1) = k-1$.
\end{theorem}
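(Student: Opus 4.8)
The plan is to follow the same template as the earlier reductions: first pin down the structure of $S$ using $2$-group analysis and transfer, then invoke Oliver's sectional rank $4$ classification to identify $\F$. Throughout we have from Hypothesis~\ref{H:main} and items (1)--(3) that $T<S$, $S$ has $2$-rank $4$, $Q$ is cyclic of order $\geq 4$, and $J=J(S)=J(T)\leq R\gen{f}=Q\gen{f}\times P$ with $f$ an $f$-element and $R=Q\times P$. The first task is to show $P$ is normal in $S$ and, more precisely, that $S\cong D_{2^k}\wr C_2$. I would argue as in Lemma~\ref{L:PnormS}: $z^\F\cap Z(J)=\gen{z}$ by Burnside (Lemma~\ref{L:BurnsideLemma}) since $\mho^1(J)\cap Z(J)=\gen{z}$ is characteristic in $J$; combined with Lemma~\ref{L:zweaklyclosedinPinJ} this forces $P^s\leq P$ for all $s\in S$, so $P\norm S$ and $J=PC_S(P)$. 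Then $S/J$ embeds in $\Out(P)$, whose structure is given by Lemma~\ref{L:autD}(a), and $C_S(P)\cap J=Z(J)=Q\gen{f}\times\gen{z}$. Since the centralizer $C_\F(x)$ has $F^*=\F_Q(Q)\times\K$ with $Q$ cyclic of order $\geq 4$, and $T<S$, the outer element $a\in S-T$ must swap $x$ with a conjugate and act on $Q$; one shows $a$ inverts $Q$ and swaps the two $P$-classes of four-subgroups, which identifies $R_d\gen{a}$ and ultimately forces $S=R_d\gen{a}$ with the wreathed structure. The key numeric constraint $\nu_2(q+1)=k-1$ comes from matching $|Q|=2^{k-1}$ (forced because $Z(S)\cong C_2$ and $Q$, being cyclic normal in $S$ with $C_S(P)=Q\gen{f}\gen{z}$, must have order exactly $2^{k-1}$ for the $2$-rank and class of $S$ to be consistent) against the order of the cyclic maximal subgroup of $P$, namely $2^{k-1}=\nu_2(q^2-1)-1\,$-determined data.

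The transfer arguments, via Propositions~\ref{C:TTcyclic} and \ref{C:TTlinind}, are used to eliminate the alternatives for $S/J$ and to show that $x\in\foc(\F)$ (which it must be, since $\F=O^2(\F)$): one checks that the fully $\F$-centralized conjugates of $x$ are controlled well enough that if $S$ were not of the wreathed form, a conjugate of $x$ would land in $\gen{z}$ or in $Z(S)$, contradicting that $x$ is fully $\F$-centralized and not $\F$-conjugate to $z$ (Lemma~\ref{L:zweaklyclosedinZJ}). Once $S\cong D_{2^k}\wr C_2$ is established, by Proposition~\ref{C:pprimeindexcor} and Lemma~\ref{L:autDwr2} we have $\F=O^{2'}(\F)$, and $\F=O^2(\F)$ by hypothesis, so $\F$ is perfect. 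Now $S$ has sectional $2$-rank $4$, so Oliver's classification \cite{OliverSectRank4} applies: among perfect saturated fusion systems on $D_{2^k}\wr C_2$, the candidates are the fusion systems of $L_4(q_1)$ ($q_1\equiv 3\pmod 4$) and $U_4(q_2)$ ($q_2\equiv 1\pmod 4$) with the appropriate Sylow group, and these coincide at the prime $2$ by \cite[Theorem~3.3]{BrotoMollerOliver2012}. The value of $\nu_2(q+1)$ is read off from $k$ as noted above. I would also double-check that Oliver's list does not contain an exotic system on this particular $S$ (it does not — the sectional rank $4$ $2$-groups supporting exotic systems are enumerated there and $D_{2^k}\wr C_2$ is not among them, or if it appears the perfect/$O^2$ condition excludes the exotic one).

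The main obstacle I anticipate is the purely $2$-group-theoretic step of forcing $S\cong D_{2^k}\wr C_2$ — in particular showing that $S=N$ (i.e.\ that no element of $S$ induces a nontrivial element of the cyclic part $B$ of $\Out(P)$) and that the outer involution $a$ acts in the wreathing fashion rather than, say, centralizing $P$ or acting diagonally. This requires carefully tracking the action of $S$ on $Q$, on $Z(J)=Q\gen{f}\gen{z}$, and on the two $P$-classes of four-subgroups simultaneously, and using transfer to kill each deviant configuration; the bookkeeping is delicate because $Q$ now has order $\geq 4$ rather than $2$, so $Z(J)$ is larger than in Section~\ref{S:2rank4Q=2} and there are more cosets and more candidate squares to rule out. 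A secondary obstacle is organizing the $f$-element fusion (via Lemma~\ref{L:felements}) so that one correctly identifies which of $f$, $fz$, etc.\ is $\F$-conjugate to $x$, since this is what ultimately glues the two dihedral factors together into the wreath product.
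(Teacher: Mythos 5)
The opening step of your reduction is false in the case at hand. You claim that, as in Lemma~\ref{L:PnormS}, the Burnside argument gives $z^\F\cap Z(J)=\gen{z}$ because $\mho^1(J)\cap Z(J)=\gen{z}$ is characteristic in $J$, and hence $P\norm S$. But here $|Q|\geq 4$, so (once one knows $J=Q\gen{f}\times P$ with $Q\gen{f}$ dihedral of the same order as $P$) one has $x\in\mho^1(Q\gen{f})$ as well, i.e.\ $\mho^1(J)\cap Z(J)=\gen{x,z}$, and $z$ is no longer singled out. In fact the paper proves the opposite: $x^S=\{x,z\}$ (via Krull--Schmidt applied to the two indecomposable dihedral factors of $J$), so $z$ is $S$-conjugate to $x$, and any $a\in S-T$ carries $P$ to the \emph{other} dihedral factor $P^a\leq Q\gen{f}\gen{z}$ of $J$. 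Thus $P$ is not normal in $S$ and the Lemma~\ref{L:zweaklyclosedinPinJ}--style argument you want to import from the $|Q|=2$ case does not apply. Your subsequent picture of $a$ normalizing $P$ and swapping the two $P$-classes of four-subgroups, with $S/J$ embedded in $\Out(P)$, is accordingly wrong; the correct picture is $J=P^a\times P$ with $a$ interchanging the two factors.

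The second, larger gap is that your sketch is silent on the real difficulty of this section: whether $T>R\gen{f}=J$, i.e.\ whether there is an element $h\in T$ inducing the diagonal ($PGL_2$) automorphism on $\K$ modulo $Q$. If such an $h$ exists, $T=J\gen{h}$ and $S$ could a priori be of type $PSp_4(q)$ (when $Q\gen{h}$ is dihedral) or $PGL_4(q)$ (when $Q\gen{h}$ is cyclic); these are concrete sectional-rank-$4$ $2$-groups on which perfect fusion systems do exist, so Oliver's classification alone cannot rule them out. Most of the section's work (Lemmas~\ref{L:htwocases}--\ref{L:h=1}) is devoted to eliminating $h\neq 1$: one writes down explicit presentations for the two candidate $S$, computes the centralizer $C_\F(xz)$ of the $2$-central involution via transfer (Propositions~\ref{C:TTcyclic},~\ref{C:TTlinind}), Oliver's splitting Theorem~\ref{T:OliverSplitting}, and the uniqueness of perfect fusion systems on maximal-class groups (Lemma~\ref{L:fsmaxcl}), and finally derives a contradiction from the fusion of ``diagonal'' involutions $ff^a$ versus $f(cf^a)$ (Lemma~\ref{L:a^2=1}(a)). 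None of this appears in your outline, and without it you cannot conclude $S\cong D_{2^k}\wr C_2$. Your final appeal to Oliver's Proposition~5.5 is essentially right, but note that the paper also uses the cyclicity of $Q$ (not just perfection) to exclude the $A_{10}$ option when $k=3$.
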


Adopt the notation of Hypothesis~\ref{H:main} and the set up at the beginning of
Section~\ref{S:2rank4Q=2}.  By Lemma~\ref{L:2rank3or4},
\[
\text{there exists an involutory $f$-element $f \in C_T(P)$}.
\]
We continue to fix such an involution $f$. As $T$ a proper subgroup of $S$, we also
\[
\mbox{fix } a \in N_S(T)-T \mbox{ with } a^2 \in T.
\]

As usual, we prove Theorem~\ref{T:2rank4main} in a sequence of lemmas.  It will
emerge quickly (after Lemma~\ref{L:Qfdihedral}) that $J = R\gen{f}$ is the
product of two dihedral groups $Q\gen{f}$ and $P$ of the same order, $T$ has
index $2$ in $S$, and $T/R$ is of exponent $2$.  Since $T/R$ embeds into
$\Out(K)$ (Proposition~\ref{P:TK/Q}), this means that either $T=J=R\gen{f}$, or
$T = R\gen{h,f}$ for some $1 \neq h \in T$ such that $h^2 \in Q$ and
$R\gen{h}/Q$ is dihedral of order $2|P|$. Thus $R\gen{h}\K/Q$ is uniquely
determined as the fusion system of $PGL_2(q)$ (see the description of $\Out(K)$
in Lemma~\ref{L:l2qomnibus}). In anticipation of this we let
\begin{align}\label{E:fixh}
h &\in T-R \mbox{ such that } h^2 \in Q \mbox{ and } R\gen{h}/Q \mbox{ is
dihedral, or}  \\ \notag
h &= 1 \mbox{ if such an element does not exist. }
\end{align}
Note in the case $h \neq 1$,
\begin{align}
[Qh, Qf] = Qz
\label{E:[h,f]}
\end{align}
by Lemma~\ref{L:l2qomnibus}(h).

Much of the $2$-group and transfer analysis will be dedicated to analyzing
whether or not $h$ exists, and if it does, whether $Q\gen{h}$ splits over $Q$.
Together with the target $\F = \F_S(L_4(q_1))$ appearing within the case $T = J$,
Table~\ref{table} lists the fusion systems (of finite groups) which nearly
satisfy the conditions of Theorem~\ref{T:2rank4main}, and why they are
eventually ruled out.

\begin{table}
\centering
\begin{tabular}[]{c|c|c|c|c}
Scenario & $T$ & $S$ & Group & Contradiction \\
\hline
$h \neq 1$ and $Q\gen{h}$ is dihedral & $J\gen{h}$ & $Q_{2^{k+1}} \wr_* C_2$ &
$PSp_4(q)$ & $C_T(\K) \cong D_{2^k}$ \\ 
$h \neq 1$ and $Q\gen{h}$ is cyclic & $J\gen{h}$ & $SD_{2^{k+1}} \wr_* C_2$ &
$PGL_4(q_1)$ & $O^2(\F) < \F$ \\
$h = 1$ & $J$ & $D_8 \wr C_2$ & $A_{10}$ & $C_T(\K) \cong C_2 \times C_2$ \\
$h = 1$ & $J$ & $D_{2^k} \wr C_2$ & $L_4(q_1)$ & \\
\end{tabular}
\caption{Outcomes based on the structure of $Q\gen{h}$}
\label{table}
\end{table}

\noindent
The $2$-group and transfer analysis is carried out through Lemma~\ref{L:h^2=Q},
where it is shown that $S$ is isomorphic to a Sylow $2$-subgroup of $PSp_4(q)$
or $PGL_4(q_1)$ when $h \neq 1$.  Then we compute the centralizer of a central
involution via an argument modeled on that of
\cite[Lemmas~3.15, 3.16]{GorensteinHarada1973} and apply transfer
(Theorem~\ref{T:TT}) to rule out the $PGL_4(q_1)$-case.  Analyzing the resulting
fusion information allows us to conclude that $h=1$, and $S$ is then
isomorphic to $D_{2^{k}} \wr C_2$.  Lastly, we appeal to a result of Oliver
\cite{OliverSectRank4} to identify $\F$ as the fusion system of $L_4(q_1)$.

We begin by pinning down the structure of $J$ in the next few lemmas. 
\begin{lemma}\label{L:Qfdihsemidih}
The following hold.
\begin{itemize}
\item[(a)] $Q\gen{f}$ is dihedral or semidihedral,
\item[(b)] $J = \Omega_1(Q\gen{f}) \times P$,
\item[(c)] no element of $Rf$ is a square in $T$, and
\item[(d)] $T = R\gen{h,f}$.
\end{itemize}
\end{lemma}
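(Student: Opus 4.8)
The plan is to establish the four parts in the order (b), (a), (c), (d). \emph{Part (b)} is purely $2$-group-theoretic and requires nothing about the isomorphism type of $Q\langle f\rangle$: by \eqref{E:2rank4initial(c)} we have $J\leq R\langle f\rangle=Q\langle f\rangle\times P$, so $J$ is generated by the elementary abelian subgroups of maximal rank (which is $4$) of this direct product. Since $Q$ is cyclic of index $2$ in $Q\langle f\rangle$ with the involution $f$ outside it, $\Omega_1(Q\langle f\rangle)$ has $2$-rank exactly $2$ (any group with a cyclic maximal subgroup has $2$-rank $\leq 2$), and $Q\langle f\rangle\cap P\leq C_T(P)\cap P=\gen z$ with $z\notin Q\langle f\rangle$ by \eqref{E:2rank4initial(b)}, so $Q\langle f\rangle\cap P=1$. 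Hence a rank-$4$ elementary abelian subgroup of $Q\langle f\rangle\times P$ is precisely a product of a fours group of $Q\langle f\rangle$ with a fours group of $P$; since $\Omega_1(Q\langle f\rangle)$ is generated by its fours groups (true of any $2$-group of $2$-rank $2$ with a cyclic maximal subgroup) and $P$, being dihedral, is generated by its fours groups, we conclude $J=\Omega_1(Q\langle f\rangle)\times P$.

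\emph{Part (a).} A $2$-group with a cyclic maximal subgroup $Q$, $|Q|\geq 4$, and an involution $f$ outside $Q$ is one of $Q\times C_2$, $D_{2|Q|}$, $SD_{2|Q|}$, or the modular group $M_{2|Q|}$; the abelian and modular cases must be ruled out, so suppose one of them holds. Then $\Omega_1(Q\langle f\rangle)=\gen{x,f}$ is a fours group and $f$ centralizes $\mho^1(Q)\neq 1$. First I would record that $x$ is not $\F$-conjugate to $z$: in this situation $\mho^1(J)=\mho^1(P)$ is cyclic with $\mho^1(J)\cap Z(J)=\gen z$, so Burnside's fusion theorem (Lemma~\ref{L:BurnsideLemma}) applied to the weakly $\F$-closed subgroup $J$ yields $z^\F\cap Z(J)=\gen z$. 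Then, exploiting that $P$, $\gen z$ and $f$ all centralize $\mho^1(Q)$ --- so that $C_T(\mho^1(Q))$ contains $R\langle f\rangle$, and also $[T,T]$ since $T/C_T(\mho^1(Q))$ is abelian --- I would identify a proper normal subgroup of $S$ with cyclic or elementary abelian quotient and apply the Thompson--Lyons transfer lemma (Proposition~\ref{C:TTcyclic} or \ref{C:TTlinind}) to force an element of $Q$ of order $\geq 4$, hence its square $x$, to be $\F$-conjugate into a subgroup containing no conjugate of $x$, against $x$ being fully $\F$-centralized with $T<S$; an alternative would be to show that the structure forces $\gen x$ to be normal in $\F$, contradicting $O_2(\F)=1$. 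Pinning down precisely which normal subgroup of $S$ to transfer onto is the main obstacle in the lemma.

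\emph{Part (c).} Since $f$ centralizes $P$, one computes directly that $(qfp)^2=q\,q^{f}\,p^{2}$ for $q\in Q$ and $p\in P$. Suppose some $t\in T$ has $t^2\in Rf$. Then $tR$ has order $4$ in $T/R$ and $(tR)^2=Rf$; by Proposition~\ref{P:TK/Q} and Lemma~\ref{L:l2qomnibus}(a) the quotient $T/R$ embeds into $\Out(K)\cong C_2\times F$ with $F$ cyclic, and since $Rf$ represents the involutory field class, an order-$4$ element squaring onto it forces the image of $F$ in $T/R$ to have order $\geq 4$. Choosing $g$ in that field part with $gQ$ of order $4$, one then has $g\notin C_T(P)$ while $g^2\in Qf\leq C_T(P)$, and $C_P(g)$ is a proper dihedral subgroup of $P$ (Lemma~\ref{L:l2qomnibus}(g) applied first to the order-$2$ field automorphism $g^2$ and then to $g$); combining the action of $g$ on $J=\Omega_1(Q\langle f\rangle)\times P$ from (b) with a transfer argument of the same kind as in (a) should again push $x$ into too small a subgroup and give a contradiction.

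\emph{Part (d)} is then immediate: if the image of $F$ in $T/R$ had order $\geq 4$, then choosing $g$ in it with $gQ$ of order $4$ would give $g^2\in fQ\subseteq Rf$, a square in $T$, contrary to (c). So that image has order at most $2$, whence $T/R$ is elementary abelian; being embedded in $\Out(K)\cong C_2\times F$ and generated by the (diagonal) class of $h$ together with the (field) class of $f$, it equals $\gen{Rh,Rf}$, that is, $T=R\gen{h,f}$.
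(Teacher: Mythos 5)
Parts (b) and (d) of your proposal are essentially correct. For (b), your direct‑product rank argument does not need (a) at all, which is fine (the paper derives (b) from (a), but your route is equally valid). For (d), modulo some $Q$/$R$ confusion (you should be speaking of $gR$ having order $4$ in $T/R$ and $g^2 \in Rf$, not $gQ$ and $fQ$), the argument is the same as the paper's.

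The real problem is that you have not actually proved (a) or (c); you have only sketched a transfer strategy and openly conceded you do not know how to close it (``Pinning down precisely which normal subgroup of $S$ to transfer onto is the main obstacle''). Neither part requires transfer. For (a), the paper's argument is a one‑line characteristic‑subgroup argument. In the abelian or modular cases $\Omega_1(Q\gen f) = \gen{x,f}$, so $J = \gen{x,f}\times P$ and therefore $D := C_T(J)$ equals $Q\times\gen{f,z}$ or $\mho^1(Q)\times\gen{f,z}$ (the latter nontrivial since $|Q|\geq 8$ in the modular case). Since $J = J(S)$ and $\Baum(S)\leq T$, one has $D = C_S(J)\norm S$. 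In either case $\Omega_1(\mho^1(D)) = \gen x$, which is therefore characteristic in $D$ and hence normal in $S$; thus $x\in Z(S)$, contradicting $T<S$. This is cleaner than anything transfer could give, and it is the key idea you are missing.

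For (c), the paper again avoids transfer entirely. Pass to $\wt T = T/P$, where $\wt Q\gen{\wt f}\cong Q\gen f$ is dihedral or semidihedral by (a) and is normal in $\wt T$. If some $f_1\in T$ squares into $Rf$, then $\wt{f_1}^2$ lies in $\wt Q\wt f$, i.e., $\wt{f_1}$ squares to a noncentral involution of (the dihedral part of) $\wt Q\gen{\wt f}$. This is exactly what Lemma~\ref{L:autD}(c) forbids: no element of a $2$‑group containing a nonabelian dihedral group $D$ as a normal subgroup can square into $D$ outside its cyclic part and centralizer. (In the semidihedral case one first passes to $\wt Q\gen{\wt f}/\gen{\wt x}$.) Your instinct to analyze the field part of $T/R$ and bring in Lemma~\ref{L:l2qomnibus}(g) is not wrong as a source of information, but it is a detour; the contradiction is local to the $2$‑group $T/P$, and the reduction you sketch (``a transfer argument of the same kind as in (a)'') never gets off the ground because (a) itself was not completed by transfer. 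In short: replace both of your transfer sketches with the characteristic‑subgroup argument in (a) and the $T/P$ computation via Lemma~\ref{L:autD}(c) in (c), and the proof is complete.
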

\begin{proof}
We claim that $Z(Q\gen{f}) = \gen{x}$. Suppose this is not the case. Then
either $f$ centralizes $Q$ or $|Q| \geq 8$ and $f$ acts on $Q$ by sending a
generator $d$ of $Q$ to $dx$.  In either case we have that $J = \gen{x, f}
\times P$ by \eqref{E:2rank4initial(c)}.  Then $D = C_T(J)$ is normal in
$S$, and $D$ is equal to $Q \times \gen{f, z}$ if $f$ centralizes $Q$ and to
$\mho^1(Q) \times \gen{f, z}$ otherwise.  In any case, $x$ is the only
involution which is a square in $D$, so $x \in Z(S)$. This contradicts $T < S$.
Thus $Q\gen{f}$ is of maximal class and $f$ is an involution outside $Q$, so
(a) holds. Now (b) follows by \eqref{E:2rank4initial(c)}. 

Suppose some element $f_1 \in T$ squares into $Rf$. Write $\tilde{T} = T/P$ and
denote images modulo $P$ similarly. Then $D_1 := \tilde{Q}\gen{\tilde{f_1}}$
contains $D := \tilde{Q}\gen{\tilde{f}}$ as a normal subgroup. Moreover as
$Q\gen{f} \cap P = 1$, $D \cong Q\gen{f}$. If $Q\gen{f}$ is dihedral in (a),
then Lemma~\ref{L:autD}(c), with $D_1$ in the role of $S$ there, gives a
contradiction.  If $Q\gen{f}$ is semidihedral, then considering the images of
$D_1$ and $D$ modulo $\gen{\tilde{x}}$, we obtain the same contradiction to
Lemma~\ref{L:autD}(c).  Therefore, (c) holds.  By the structure of $\Out(K)$
and (c), $T/R$ is a four group covered by $\gen{h,f}$, yielding (d).
\end{proof}

\begin{lemma}\label{L:Sclassofx}
$x^S = \set{x, z}$. Consequently, $|S:T|=2$.
\end{lemma}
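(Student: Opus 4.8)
The plan is to determine the $S$-conjugacy class of $x$ by analyzing the conjugation action of $S$ on the Thompson subgroup $J = J(S)$, and then to invoke the Krull--Schmidt theorem.

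First I would record the structure of $J$. By Lemma~\ref{L:Qfdihsemidih}(b), $J = \Omega_1(Q\langle f\rangle)\times P$, and since $|Q|\geq 4$ the factor $A := \Omega_1(Q\langle f\rangle)$ is a nonabelian dihedral $2$-group in both alternatives of Lemma~\ref{L:Qfdihsemidih}(a): it is all of $Q\langle f\rangle$ when that group is dihedral, and its dihedral maximal subgroup when it is semidihedral. As $P = D_{2^k}$ with $k\geq 3$ is likewise nonabelian dihedral, $J = A\times P$ is a direct product of two indecomposable nonabelian dihedral $2$-groups. A short computation in each case identifies $\langle x\rangle$ as the unique subgroup of order $2$ of the cyclic commutator subgroup $[A,A]$, and $\langle z\rangle = \Omega_1([P,P])$; note $x\neq z$ because $\langle x\rangle\leq Q$, $\langle z\rangle\leq P$, and $Q\cap P = 1$.

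Next I would use that $J$ is characteristic in $S$, hence normal, so conjugation yields an embedding $S/C_S(J)\hookrightarrow\Aut(J)$. Since $\langle x\rangle\leq J$ we have $x^g\in J^g = J$ for every $g\in S$, and because $C_S(J)$ centralizes $J$ the element $x^g$ depends only on the coset $gC_S(J)$; hence $x^S\subseteq x^{\Aut(J)}$. Now Proposition~\ref{P:krullschmidt} applies: every automorphism of $J$ permutes the commutator subgroups $\{[A,A],[P,P]\}$ of the two indecomposable factors, hence permutes their unique involutions, i.e. the set $\{x,z\}$. Therefore $x^{\Aut(J)}\subseteq\{x,z\}$, and so $x^S\subseteq\{x,z\}$.

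To finish, since $T<S$ by~\eqref{E:T<S} and $T = C_S(x)$, we get $|x^S| = |S:T|\geq 2$; combined with $x^S\subseteq\{x,z\}$ and $x\neq z$ this forces $x^S = \{x,z\}$ and $|S:T| = 2$. I do not expect a serious obstacle here beyond the bookkeeping needed to confirm that $A = \Omega_1(Q\langle f\rangle)$ is genuinely an indecomposable nonabelian dihedral group in the semidihedral case, and that $\langle x\rangle$ and $\langle z\rangle$ are correctly pinned down as the distinguished involutions of the two commutator subgroups, so that Proposition~\ref{P:krullschmidt} delivers the conclusion.
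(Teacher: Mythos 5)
Your proof is correct and follows essentially the same route as the paper's: both use Lemma~\ref{L:Qfdihsemidih} to exhibit $J$ as a direct product of two nonabelian dihedral groups, then invoke Proposition~\ref{P:krullschmidt} (Krull--Schmidt) to see that $S$ permutes $\{x,z\}$, and finally use $T<S$ to get $|S:T|=2$. Your extra bookkeeping pinning down $\langle x\rangle$ and $\langle z\rangle$ as $\Omega_1$ of the two commutator subgroups is accurate and just makes explicit what the paper leaves tacit.
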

\begin{proof}
By Lemma~\ref{L:Qfdihsemidih}, $S$ normalizes $Z(J) = \gen{x, z}$ and $J =
J(Q\gen{f}) \times P = \Omega_1(Q\gen{f}) \times P$ is the product of two
nonabelian dihedral groups. By Proposition~\ref{P:krullschmidt}, $S$ must permute
the commutator subgroups of these factors. Therefore, $x^S \cin \set{x, z}$ and
as $T = C_S(x)$ is a proper subgroup of $S$, we have equality and $|S:T|=2$. 
\end{proof}

Recall $a \in S-T$ has been fixed, squaring into $T$. By the previous lemma, $T
\norm S = T\gen{a}$ and $a$ swaps $x$ and $z$. So $Z(S) = \gen{xz}$, and
\begin{align}
x \mbox{ is not } \F\mbox{-conjugate to } xz
\label{E:xnsimxz}
\end{align}
because $x \nin Z(S)$ is fully $\F$-centralized.

\begin{lemma}\label{L:PcentralizesP^a}
$[P^a, P] = P^a \cap P = 1$.
\end{lemma}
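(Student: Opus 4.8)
The plan is a quick argument by contradiction that plays off the fact that $P$ and $P^a$ are dihedral $2$-groups with \emph{different} centres. I would first fix the positional data. Since $a \in N_S(T)-T$ and $P \norm T$, conjugating by $a$ gives $P^a \leq T$ with $P^a \norm T$; hence $P^a \cap P$ is normal in $T$, and in particular normal in each of $P$ and $P^a$. Moreover, as has already been recorded, $a$ swaps $x$ and $z$, so $Z(P^a) = \langle z\rangle^a = \langle x\rangle$, whereas $Z(P) = \langle z\rangle$.

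Next I would assume $P^a \cap P \neq 1$ and derive a contradiction. Because $P$ and $P^a$ are nonabelian dihedral ($k \geq 3$), every nontrivial normal subgroup of $P$ contains $Z(P) = \langle z\rangle$ and every nontrivial normal subgroup of $P^a$ contains $Z(P^a) = \langle x\rangle$; applying the latter to $P^a \cap P$ yields $x \in P^a \cap P \leq P$. But $1 \neq x \in Q = C_T(\K)$ and $Q \cap P = 1$ by Hypothesis~\ref{H:main}, a contradiction. Therefore $P^a \cap P = 1$, and then, since $P$ and $P^a$ are both normal in $T$, the commutator $[P^a, P]$ is contained in $P^a \cap P = 1$, which is exactly the claim.

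There is no genuine obstacle here: the lemma amounts to the observation that two normal dihedral subgroups of $T$ whose centres are the distinct groups $\langle z\rangle$ and $\langle x\rangle$ must meet trivially. The only inputs worth isolating are the elementary fact that a nontrivial normal subgroup of a nonabelian dihedral $2$-group contains the centre, together with $1 \neq x \in Q$ and $Q \cap P = 1$ and the already-established facts $P \norm T$, $T \norm S$, and "$a$ swaps $x$ and $z$."
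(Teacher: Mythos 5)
Your argument is essentially the same as the paper's: both deduce $[P^a,P]\leq P^a\cap P$ from normality of $P$ and $P^a$ in $T$, then observe that a nontrivial $P^a\cap P$, being normal in the dihedral group $P^a$, would contain $Z(P^a)=\langle x\rangle$, forcing $x\in P$, which is impossible since $x\in Q$ and $Q\cap P=1$. Your write-up just records a couple of the background facts (such as $P^a\norm T$ and $z^a=x$) more explicitly; the substance and the key observation are identical.
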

\begin{proof}
Note first that both $P$ and $P^a$ are normal in $T$, So $[P, P^a] \leq P \cap
P^a$.  Suppose that $Z_0 := P \cap P^a \neq 1$. Then $Z_0$ is nontrivial normal
in $P^a$, and so $\gen{x} = Z(P^a) \leq Z_0 \leq P$, a contradiction.  
\end{proof}

\begin{lemma}\label{L:Qfdihedral}
$Q\gen{f}$ is dihedral of the same order as $P$. 
\end{lemma}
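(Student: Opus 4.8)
The plan is to read off the isomorphism type of $Q\gen{f}$ by exploiting the fixed element $a\in S-T$. Since $\lvert S:T\rvert=2$ by Lemma~\ref{L:Sclassofx}, $a$ normalizes $T$ and hence the weakly $\F$-closed subgroup $J=J(S)$, so conjugation by $a$ is an automorphism of $J=\Omega_1(Q\gen{f})\times P$ (Lemma~\ref{L:Qfdihsemidih}(b)); it carries this decomposition to $\Omega_1(Q\gen{f})^a\times P^a$ and, by the remark following Lemma~\ref{L:Sclassofx}, interchanges $x=Z(Q\gen{f})$ with $z=Z(P)$.

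First I would locate $P^a$ inside $Q\gen{f}$. By Lemma~\ref{L:PcentralizesP^a} and \eqref{E:2rank4initial(b)} we have $P^a\leq C_T(P)=Q\gen{f}\times\gen{z}$, and $P^a\cap\gen{z}=1$ (otherwise $z\in Z(P^a)=\gen{x}$, absurd); so the projection of $P^a$ to $Q\gen{f}$ is an isomorphism onto a dihedral subgroup of order $2^k$ that is normal in $Q\gen{f}$ — normal because $P^a\norm T$ and $z\in Z(T)$. As every normal subgroup of index at least $4$ in a $2$-group of maximal class is cyclic, this image must be $Q\gen{f}$ itself or a maximal dihedral subgroup of it; with Lemma~\ref{L:Qfdihsemidih}(a) this leaves only $Q\gen{f}\cong D_{2^k}$, $D_{2^{k+1}}$, or $SD_{2^{k+1}}$. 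Write $W=\Omega_1(Q\gen{f})$. Using $P^a\leq W\times\gen{z}=C_J(P)$ one computes that the image $\wt P^a$ of $P^a$ in $W\cong J/P$ is $W$ or a maximal dihedral subgroup of $W$, so $C_J(P^a)=C_W(\wt P^a)\times P=\gen{x}\times P\cong C_2\times D_{2^k}$, of $2$-rank $3$. But $W^a$ centralizes $P^a$ and lies in $J$, so $W^a\leq C_J(P^a)$; if $Q\gen{f}\cong D_{2^{k+1}}$ then $W=Q\gen{f}$ and $\lvert W^a\rvert=2^{k+1}=\lvert C_J(P^a)\rvert$, forcing $W^a=C_J(P^a)$ and contradicting that $W^a\cong D_{2^{k+1}}$ has $2$-rank $2$. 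Hence $Q\gen{f}\cong D_{2^k}$ or $SD_{2^{k+1}}$; in both cases $\lvert W\rvert=2^k$, so $\lvert J\rvert=2^{2k}$ and in fact $J=P\times P^a\cong D_{2^k}\times D_{2^k}$.

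The remaining task — excluding $Q\gen{f}\cong SD_{2^{k+1}}$ — is what I expect to be the main obstacle. Here $C_T(P)P=Q\gen{f}\times P\cong SD_{2^{k+1}}\times D_{2^k}$ has index $\lvert T:R\rvert/2\in\{1,2\}$ in $T$, the index being $1$ exactly when $h=1$ in \eqref{E:fixh}. When $h=1$, $T$ itself is this direct product, $a$ is an automorphism of it, and Proposition~\ref{P:krullschmidt} yields a contradiction at once: the derived subgroups of the two indecomposable factors have orders $2^{k-1}$ and $2^{k-2}$, so $a$ fixes each of them and hence fixes $Z(SD_{2^{k+1}})=\gen{x}$, contrary to $x^a=z$. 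When $h\ne1$ the subgroup $C_T(P)P$ is proper and need not be $a$-invariant, and I would instead run a transfer argument as in Section~\ref{S:2rank3} — using the semidihedral structure of $C_T(P)$ together with Proposition~\ref{C:TTcyclic} or \ref{C:TTlinind} — to conclude that $x\notin\foc(\F)$, contradicting $\F=O^2(\F)$. Either way $Q\gen{f}\cong D_{2^k}\cong P$, as claimed.
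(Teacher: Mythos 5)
Your route to the short list $D_{2^k}$, $D_{2^{k+1}}$, $SD_{2^{k+1}}$ is correct but differs from the paper's, which compares the cyclic maximal subgroups $C_1 \leq \Omega_1(Q\gen{f})$ and $C_2 \leq P$ directly: since $x^a = z$, the image $C_1^a$ is a cyclic subgroup of $J$ with $z$ as its unique involution, whence $|C_1| \leq |C_2|$, and symmetrically $|C_2| \leq |C_1|$. This never even produces the $D_{2^{k+1}}$ possibility (your rank count that eliminates it is nonetheless fine) and leaves only $D_{2^k}$ and $SD_{2^{k+1}}$. The trouble is the semidihedral case: your Krull--Schmidt argument is correct when $h=1$, but when $h\neq 1$ you give only a declaration of intent --- ``run a transfer argument as in Section~\ref{S:2rank3} to conclude $x\nin\foc(\F)$'' --- without identifying the normal subgroup of $S$ you would transfer off and without any reason to believe such an argument succeeds (note $x$ is $S$-conjugate to $z\in P$, so $x$ has conjugates in every subgroup of $S$ containing $J$). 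That sub-case is a genuine gap, not a detail.

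The paper finishes without any case split on $h$. In the semidihedral case $|\Omega_1(Q\gen{f})|=|P|=2^k$, so counting orders inside $\Omega_1(C_T(P))$ gives $P^a\times\gen{z}=\Omega_1(Q\gen{f})\times\gen{z}$, and hence (using that $z\in Z(T)$)
\[
(Q\gen{f})^a\ \leq\ C_T(P^a)\ =\ C_T\bigl(\Omega_1(Q\gen{f})\bigr)\ \leq\ C_T(f).
\]
Because $f$ inverts $Q$ (Lemma~\ref{L:Qfdihsemidih}(a)) and, by \eqref{E:[h,f]}, centralizes nothing in $Rh\cup Rhf$, one computes $C_T(f)=C_{R\gen{f}}(f)=P\times\gen{f,x}$, which has exponent $2^{k-1}$; this cannot contain the semidihedral group $(Q\gen{f})^a$ of order $2^{k+1}$ and exponent $2^k$. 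You already have every ingredient for this computation in your first paragraph; substituting it for the Krull--Schmidt step and the transfer sketch would close the gap and unify the two $h$-cases.
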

\begin{proof}
Recall from Lemma~\ref{L:Qfdihsemidih} that $\Omega_1(Q\gen{f})$ is nonabelian
dihedral and $J = \Omega_1(Q\gen{f}) \times P$. Let $C_1$ and $C_2$ be the
cyclic maximal subgroups of $\Omega_1(Q\gen{f})$ and $P$, respectively. Because
$x^a = z$, we have $C_1^a$ is a cyclic subgroup of $J$ with $z$ as its unique
involution, and so $|C_1| \leq |C_2|$ by the structure of $J$.  We conclude
similarly that $|C_2| \leq |C_1|$ by considering $C_2^a$, and hence $C_1$ and
$C_2$ are of the same order. Therefore either $Q\gen{f}$ is either dihedral of
the same order as $P$, or $Q\gen{f}$ is semidihedral with $|Q\gen{f}| = 2|P|$.

Suppose $Q\gen{f}$ is semidihedral. Then $\Omega_1(Q\gen{f}) \cong P$. Set $S_1
= (Q\gen{f})^a$ for short. Then $S_1$ centralizes $P^a$.  But $P$ also
centralizes $P^a$ by Lemma~\ref{L:PcentralizesP^a}. In fact,
\[
P^a \times \gen{z} \leq \Omega_1(C_T(P)) = \Omega_1(Q\gen{f}) \times \gen{z}
\]
so the above three subgroups are equal, as the outside two are of the same
order. Taking centralizers, we get that 
\begin{align}
S_1 \leq C_T(P^a) = C_T(\Omega_1(Q\gen{f})) \leq C_T(f)
\label{E:S1}
\end{align}
Now $f$ centralizes no element in the coset $Rh$ when $h \neq 1$ by
\eqref{E:[h,f]}. So $C_T(f) = C_{R\gen{f}}(f) = P \times \gen{f,x}$ by
Lemma~\ref{L:Qfdihsemidih}(d,a), and \eqref{E:S1} is a contradiction because $P
\times \gen{f,x}$ contains no semidihedral subgroup.
\end{proof}

In view of the previous lemma, it is now determined that
\begin{align}
J = Q\gen{f} \times P = P^a \times P
\label{E:J}
\end{align}
with $a$ interchanging $P$ and $P^a$. Since $P^a \leq C_T(P) = Q\gen{f} \times
\gen{z}$, we may replace $f$ by $fz$ and assume that
\begin{align}
\label{E:finP^a}
f \in P^a.
\end{align}
We fix notation for the maximal cyclic subgroup of $P$, calling it $C$.
Then
\begin{align}
\text{$C^a \leq Q\gen{z}$ and $Q^a \leq C\gen{x}$}
\label{E:C^a=Q<z>}
\end{align}
by the above remarks.

The next lemma shows that $a$ may be chosen to be an involution. Part (a) of it
will later be shown in Lemma~\ref{L:h=1} to rule out the $PSp_4(q)$-case
mentioned above and determine that in fact $h=1$.

\begin{lemma}\label{L:a^2=1}
The following hold.
\begin{enumerate}[label=\textup{(\alph{*})}]
\item If $C = \gen{c}$, then $ff^a$ is not $\F$-conjugate to $f(cf^a)$. 
\item There exists an involution in $S-T$. 
\item Each involution in $J$ is $\F$-conjugate to an element of $\gen{x,z}$
\end{enumerate}
\end{lemma}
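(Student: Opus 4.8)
I would prove (a) first, by a fusion/parity argument, and then deduce (b) from it together with transfer.

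For (a), I argue by contradiction. Both $ff^a$ and $f(cf^a)$ are involutions of $J=P^a\times P$ of ``diagonal'' type, of the form $u'u$ with $u'\in P^a$ and $u\in P$ noncentral involutions; attach to each such involution the pair recording which of the two $P^a$-classes of four subgroups the four group $\gen{x,u'}$ lies in and which of the two $P$-classes of four subgroups $\gen{z,u}$ lies in (Lemma~\ref{L:autD}(a)). The point is to show that $\F$-fusion cannot change the class of the $P$-component without simultaneously changing the class of the $P^a$-component. To see this I would use that $J=J(S)$ is weakly $\F$-closed together with Alperin's fusion theorem to reduce any $\F$-conjugacy between two diagonal involutions to a composite of conjugations by elements of $J$ (which preserve both classes), by $h$ (which, by Lemma~\ref{L:felements}, Lemma~\ref{L:autD}(a) and the description of $\Out(K)$ in Lemma~\ref{L:l2qomnibus}, involves the diagonal automorphism interchanging the two classes of four subgroups), and by $a$ (which interchanges the two direct factors of $J$ by \eqref{E:J}, with $a^2\in T$ acting compatibly on $P$ and on $P^a$). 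Tracking these, the class-pair is an $\F$-invariant up to the group $\gen{(1,1)(2,2),\,(1,2)(2,1)}$ acting on $\set{1,2}^2$, whose orbits are $\set{(1,1),(2,2)}$ and $\set{(1,2),(2,1)}$. Since $c\notin\mho^1(C)$, the noncentral involutions $f^a$ and $cf^a$ lie in distinct $P$-classes, so the class-pairs of $ff^a$ and of $f(cf^a)$ agree in the $P^a$-coordinate and differ in the $P$-coordinate; hence they lie in different orbits, and (a) follows.

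For (b), suppose $S-T$ contains no involution. As $|S:T|=2$ (Lemma~\ref{L:Sclassofx}), $\Omega_1(S)\leq T$ and $S/T\cong C_2$. Let $w$ be an element of least order in $S-T$; then $|w|\geq 4$, and $w$ conjugates $x$ to $z$ (Lemma~\ref{L:Sclassofx}), hence $\C$ to $C_\F(z)$ and $\K=E(\C)$ to $E(C_\F(z))$, so that the latter has Sylow group $P^a$; thus $w$ interchanges $P$ and $P^a$ and may be used in place of $a$. Since $\F=O^2(\F)$, Proposition~\ref{C:TTcyclic} provides $\phi\in\F$ with $w^\phi\in T$ fully $\F$-centralized of the same order, $C_S(w)^\phi\leq C_S(w^\phi)$, and $(w^\phi)^2=(w^2)^\phi$. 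It remains to analyse $w^2$. From $(w^2)^w=w^2$ and the fact that $w$ swaps the two dihedral direct factors of $J$, if $w^2\in J$ then $w^2=e^w e$ for an involution $e\in P$; when $e\in\gen z$ we get $w^2=xz\in Z(S)$, handled by a short separate transfer argument, and when $e$ is noncentral $w^2$ is a diagonal involution of $J$, so $(w^\phi)^2=(w^2)^\phi$ is its $\F$-image — computing $C_S(w^2)$ and using that $w^\phi$ is fully $\F$-centralized of order at least $4$ then forces $w^2$ to be $\F$-conjugate to both of the $S$-classes of diagonal involutions represented by $ff^a$ and $f(cf^a)$, contradicting (a). Finally, when $h\neq 1$ it can occur that $w^2\notin J$; then $S/J$ is cyclic of order $4$ generated by $\bar w$, and running the same argument with the normal subgroup $J\gen{w^2}$ of $S$ (so that $S/J\gen{w^2}$ is cyclic) reduces to the previous case. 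In every case the assumed absence of an involution in $S-T$ is contradicted, so (b) holds.

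The main obstacle is the fusion bookkeeping underlying (a): identifying, via Alperin's fusion theorem, exactly which conjugations move the diagonal involutions of $J$, and in particular verifying the ``coupling'' between the $P$-class and the $P^a$-class — equivalently, that conjugation by $a$ acts as a genuine transposition rather than merging the two orbits above. The $e\in\gen z$ and $w^2\notin J$ subcases of (b) will each need a short dedicated transfer argument along the lines indicated, but the essential new input there is precisely part (a).
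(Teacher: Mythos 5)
Your approach to part (a) has a genuine gap, and it is precisely at the step you yourself flag as the ``main obstacle.'' You propose to use Alperin's fusion theorem to reduce $\F$-conjugacy between diagonal involutions of $J$ to a composite of conjugations by elements of $J$, by $h$, and by $a$. But Alperin's theorem does not reduce fusion to conjugation by elements of $S$: it reduces it to automorphisms in $\Aut_\F(D)$ for $D \in \F^{fcr}$, which in general contain morphisms of odd order not induced by any element of $S$. The paper's proof of (a) uses such a morphism crucially: since $f$ is an $f$-element, Lemma~\ref{L:felements}(b) produces an order-$3$ automorphism $\phi \in \Aut_\C(\gen{f}U_j)$ (for one of the two four-subgroups $U_0 = \gen{f^a,z}$, $U_1 = \gen{cf^a,z}$ of $P$) with $(f\cdot c^jf^a)^\phi = fz$, and a companion $\psi$ with $(f\cdot c^{1-j}f^a)^\psi = f$. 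Note that $\phi$ takes the diagonal involution $ff^a$ (or $f(cf^a)$) to the \emph{non-diagonal} involution $fz$, so your proposed ``class-pair'' is not preserved by $\F$-morphisms; the set of diagonal involutions is not $\F$-invariant. The paper then finishes (a) by observing that $f$ is $\K^a$-conjugate to $x$ in $C_\F(z)$ and $fz$ is $C_\F(z)$-conjugate to $xz$, so one of $ff^a$, $f(cf^a)$ lands in $x^\F$ and the other in $(xz)^\F$, which are distinct classes by \eqref{E:xnsimxz}. Your invariance argument cannot be patched by extending the invariant to non-diagonal involutions: $x$ and $xz$ themselves lie in $Z(J)$ and carry no such data.

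For part (b), you take a transfer route via Proposition~\ref{C:TTcyclic}, whereas the paper argues purely at the level of $S$. The paper splits on whether $a^2 \in J$: if so, writing $a^2 = ts^{-1}$ with $t \in P^a$, $s \in P$ and replacing $a$ by $a_0 = as$ gives $a_0^2 = ts^a \in P^a$, which is inverted into $P$ by $a_0$, so $a_0^2 \in P^a \cap P = 1$; if $a^2 \nin J$, then $S/J$ is cyclic of order $4$ and $\gen{a}$ acts as a $4$-cycle on the four $J$-classes of noncentral diagonal involutions, making them all $S$-conjugate and contradicting (a). Your version has unresolved sub-cases: you haven't pinned down the order of $w$ (so that $w^2$ is an involution admitting the decomposition $e^we$), the $w^2 = xz$ case is deferred to an unsupplied ``short separate transfer argument,'' and the reduction of the $w^2 \nin J$ case to the $w^2 \in J$ case by passing to $J\gen{w^2}$ is not justified. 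The paper's $a^2\in J$ branch needs neither (a) nor transfer, and its $a^2\nin J$ branch contradicts (a) by a clean counting argument on the $J$-classes rather than by computing centralizer orders.
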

\begin{proof}
We will show that $ff^a$ is not conjugate to $f(cf^a)$ from the fact that one
of them is $\F$-conjugate to $x$ and the other to $xz$. Recall from
\eqref{E:finP^a} we have chosen $f \in P^a$, so $f^a \in P$.  Thus, $U_0 =
\gen{f^a,z}$ and $U_1 = \gen{cf^a, z}$ are four-subgroups of $P$ which are not
$P$-conjugate.  Since $f$ is an $f$-element on $\K$
(Definition~\ref{D:felement}), $C_\C(f)$ contains $\Aut_\K(U_j)$ for some $j$,
and $C_\C(fz)$ contains $\Aut_\K(U_{1-j})$ by Lemma~\ref{L:felements}(b). Thus,
there is an element $\phi \in \Aut_\C(\gen{f}U_j)$ of order $3$ with $(f\cdot
c^jf^a)^\phi = f^\phi(c^jf^a)^\phi = fz$. On the other hand, there is a similar
element $\psi \in \Aut_\C(\gen{f}U_{1-j})$ with $(fz\cdot zc^{1-j}f^a)^\psi =
fz \cdot z = f$.  Now as $f$ is $\K^a$-conjugate to $x$ in $C_\F(z)$, $fz$ is
$C_\F(z)$-conjugate to $xz$. So \eqref{E:xnsimxz} shows $ff^a$ and $f(cf^a)$
are not $\F$-conjugate. establishing (a).

For (b), suppose that $a^2 \in J$. It will be shown first that (b) holds in
this situation.  Write $a^2 = t\inv{s}$ with $t \in P^a$ and $s \in P$.  Let
$a_0 = as$. Then $P^{a_0} = P^a$, and $a_0^2 = a^2s^as = ts^a$ as $[P, P^a] =
1$. So $a_0^2 \in P^a$ and centralizes $a_0$. Therefore $a_0^2 = 1$, as
claimed.

So it remains to prove that $a^2 \in J$. Note $T = J\gen{h}$ by \eqref{E:J} and
Lemma~\ref{L:Qfdihsemidih}(d).  Thus, if $a$ does not square into $J$, then $h
\neq 1$ and $a$ squares into the coset $Jh$, so that $S/J$ is cyclic of order
$4$. In this case, let $\mathscr{J}$ denote the set of $J$-classes of
``noncentral diagonal'' involutions of $J$, that is, those involutions in $J$
outside the set $I = P\gen{x} \cup P^a\gen{z}$.  Thus $\mathscr{J}$ has
cardinality $4$, and for any generator $c$ of $C$, the set $\set{ff^a,
(c^af)f^a, f(cf^a), c^af(cf^a)}$ is a set of representatives for the members of
$\mathscr{J}$. Since $I$ is a normal subset of $S$ and $J$ is a normal
subgroup, $S$ acts on $\mathscr{J}$ by conjugation.  Moreover, any element in
$Jh$ swaps the two $P$-classes of noncentral involutions in $P$, and so acts
nontrivally on $\mathscr{J}$. It follows that $\gen{a}$ acts transitively as a
four-cycle on $\mathscr{J}$, and hence all involutions in $J-I$ are
$S$-conjugate. This contradicts part (a) and completes the proof of (b).

For (c), keep the notation of the previous paragraph, and let $t$ be a
noncentral involution of $J$. If $t \in I$, then $t$ is $\C$ or $\C^a$
conjugate into $\gen{x,z}$. On the other hand, if $t$ is in $J-I$, then $t$ is
$J$ conjugate to some member of $\mathscr{J}$. The proof of part (a) shows that
$ff^a$ and $f(cf^a) \overset{a}{\sim} c^aff^a$ are to $f$ or $fz$ and hence
into $\gen{x,z}$.  For $t = (c^af)(cf^a)$, by \eqref{E:C^a=Q<z>} one of $c^a$
or $c^az$ lies in $Q$, which is normal in $\C$. Hence the same argument as in
(a) gives that $c^az \cdot f(zcf^a)$ is $\C$-conjugate to $c^af$ or $c^afz$,
both of which lie in $I$.
\end{proof}

From now on, we assume $a^2 = 1$.  We narrow down the structure of $T$ to two
possibilities in the next lemma, depending on whether $Q\gen{h}$ splits over
$Q$ or not, as described in the introduction to this section.
\begin{lemma}\label{L:htwocases}
Suppose $h \neq 1$. Then one of the following holds.
\begin{itemize}
\item[(a)] $h^2 = 1$ and $Q\gen{h}$ is dihedral, or
\item[(b)] $Q = \gen{h^2}$.
\end{itemize}
\end{lemma}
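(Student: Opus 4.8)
The plan is to pin down how $h$ acts on the two cyclic groups $C$ (the maximal cyclic subgroup of $P$) and $Q$, to read off the isomorphism type of $Q\gen h$ in each case, and to eliminate the unwanted types by transfer.

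First I would show $h$ inverts $C$. Since $P \norm T$ and $k \geq 3$, $C$ is characteristic in $P$, so $h$ normalizes $C$; on the other hand $\gen h$ maps to a reflection of the dihedral group $R\gen h/Q$ (this is how $h$ is chosen in \eqref{E:fixh}), so $c^h \equiv c^{-1} \pmod Q$ for a generator $c$ of $C$. As $c^h$ and $c^{-1}$ both lie in $C$ and $C \cap Q = 1$, this forces $c^h = c^{-1}$. Next, $Q = C_T(\K)$ is strongly $\C$-closed by Theorem~\ref{T:C_S(E)}, hence normal in $T$, so $h$ induces on the cyclic $2$-group $Q$ (of order $2^{k-1} \geq 4$ by Lemma~\ref{L:Qfdihedral}) an automorphism of order dividing $2$, since $h^2 \in Q$. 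Such an automorphism is the identity, inversion, or one of the two ``noncentral'' involutory automorphisms $d \mapsto d^{\pm 1 + 2^{k-2}}$ of $Q = \gen d$. Now split into cases. If $h$ inverts $Q$, then $h^2$ is inverted by $h$, so $h^2 \in \Omega_1(Q) = \gen x$; if $h^2 = 1$ then $Q\gen h$ is dihedral and (a) holds, while if $h^2 = x$ then $Q\gen h$ is generalized quaternion. If $h$ centralizes $Q$ and $h^2$ generates $Q$, then (b) holds; if $h$ centralizes $Q$ but $h^2 \in \mho^1(Q)$, then after replacing $h$ by $hq$ for a suitable $q \in Q$ -- which affects neither membership in $T-R$, nor the condition $h^2 \in Q$, nor the dihedrality of $R\gen h/Q$ -- we may assume $h^2 = 1$, so $Q\gen h \cong C_{2^{k-1}} \times C_2$. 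Finally, in the two noncentral cases a similar adjustment of $h$ by an element of $Q$ brings us to $h^2 \in \gen x$ and makes $Q\gen h$ semidihedral or of modular (resp.\ quaternion-like) type of order $2^k$. Thus it remains to rule out four possibilities: $Q\gen h$ abelian of type $C_{2^{k-1}} \times C_2$, generalized quaternion, semidihedral, or modular.

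In each of these bad cases I would combine the facts already in hand -- $h$ inverts $C$; the relation $[Qh, Qf] = Qz$ of \eqref{E:[h,f]} together with the fact that $f$ inverts $Q$ (as $Q\gen f$ is dihedral by Lemma~\ref{L:Qfdihedral} with $Q$ its cyclic maximal subgroup); Lemma~\ref{L:a^2=1}(a); and the structure $J = P^a \times P$ of \eqref{E:J} -- to determine $\Omega_1(T)$ and the $\F$-classes of involutions of $T$, and then apply the Thompson--Lyons transfer lemma. Typically one locates a normal subgroup $T_1 \norm S$ with $J \leq T_1$ and $S/T_1$ cyclic (or $S/T_1$ abelian with the fully $\F$-centralized involutions of $S-T_1$ independent modulo $T_1$), together with an element $w$ of least order in $S-T_1$ having $w^2 \in Z(J)$; then Proposition~\ref{C:TTcyclic} (resp.~\ref{C:TTlinind}) produces a fully $\F$-centralized $\F$-conjugate $w^\phi \in T_1$, and $x^\phi = (w^\phi)^2$ is forced into $\gen z$ or equal to $xz$, contradicting $z \notin x^\F$ (cf.\ Lemma~\ref{L:zweaklyclosedinZJ}) or \eqref{E:xnsimxz}. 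In the quaternion and semidihedral cases one may take $w \in Q\gen h$ itself.

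I expect the main obstacle to be the abelian case $Q\gen h \cong C_{2^{k-1}} \times C_2$: there $Q\gen h$ is abelian, so the maximal-class and ``the least-order element inverts'' arguments used elsewhere in the paper do not apply directly, and ruling it out requires a more careful analysis of which $\F$-conjugates of $x$ can be fully $\F$-centralized (they must lie in $J$, in fact in a controlled coset modulo the commutator subgroup), followed by a transfer argument forcing an $\F$-conjugate of $x$ into $\gen z$. A minor but necessary technical point along the way is checking that each replacement of $h$ by $hq$ with $q \in Q$ preserves the defining properties of $h$ in \eqref{E:fixh}.
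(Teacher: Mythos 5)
Your reduction to a finite list of possibilities for $Q\gen{h}$ is correct: $h$ inverts $C$ (your argument via $C \norm T$, $C \cap Q = 1$, and $Qh$ being a reflection of $R\gen{h}/Q$ is sound), $h$ induces on the cyclic $2$-group $Q$ an involutory automorphism, and adjusting $h$ within $Qh$ then puts $Q\gen{h}$ into one of the types cyclic, $C_{2^{k-1}}\times C_2$, dihedral, quaternion, semidihedral, modular. But the proposal stops there: the elimination of the four ``bad'' types is only described at the level of ``locate a suitable $T_1 \norm S$ and apply Thompson--Lyons,'' and you yourself flag the abelian case as a likely obstruction. As written this is a plan, not a proof, and it is not clear a transfer argument in the style of the later lemmas goes through uniformly across all four bad cases; in particular, the quaternion, semidihedral and modular types produce quite different involution structure on $T$ and would each need separate treatment. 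So the proposal has a genuine gap.

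The paper takes a shorter and entirely different route that does not touch transfer at all. It exploits the element $a$: since $T = J\gen{h}$ and $J$ is weakly closed, $h^a \in Jh$; and since $J = P^a \times P$ with $J/P^a \cong P$ dihedral of order $2^k$, one can read off the behaviour of the coset $P^a h^a$ inside $T/P^a$. The key computation is that $(h^a)^2 = (h^2)^a$ lies in $Q^a \leq C\gen{x}$, and writing $h^a = j_1 j_2 h$ with $j_1 \in P^a$, $j_2 \in P$ one finds $(h^a)^2 = j_1 j_1^{h^{-1}}\cdot (j_2 j_2^{h^{-1}})\cdot h^2$ with $j_2 j_2^{h^{-1}}$ equal to $1$ or a generator of $C$ (depending on whether $j_2 \in C$). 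Comparing the $P$-parts of both expressions for $(h^a)^2$ forces either $h^2 = 1$ (and then $h^a$ inverts $C$, whence $h$ inverts $C^a$ and hence $Q$, giving $Q\gen{h}$ dihedral, case (a)), or $h^2$ to generate $Q$ (case (b)); the quaternion, semidihedral, modular and $C_{2^{k-1}}\times C_2$ cases are excluded simultaneously by this single coset computation. So the paper gets the whole dichotomy at once from the $a$-symmetry of $J$, where your plan would require several independent transfer arguments. If you want to salvage the transfer approach, the quaternion and semidihedral cases do admit a Thompson--Lyons argument (take $w \in Q\gen{h}$ of order $4$ with $w^2 \in \{x,z\}$), but the abelian $C_{2^{k-1}}\times C_2$ case seems genuinely awkward that way and is where I would expect the argument to stall; the $a$-conjugation argument is the more robust one here.
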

\begin{proof}
Recall that $Q \norm T$ and $h^2 \in Q$ by the choice of $h$.  By \eqref{E:J}
we have that $J/P^a$ is isomorphic to $P$.  From Lemma~\ref{L:l2qomnibus}(h)
and \eqref{E:C^a=Q<z>}, $T/P^a\gen{z}$ nonabelian dihedral (of order $|P|$)
with $P^aZ_2(C)/P^a = Z(T/P^a\gen{z})$. Hence $T/P^a$ is of maximal class.

Suppose that $Q \neq \gen{h^2}$. Then $h^2 \in \mho^1(Q) = \mho^1(C^a) \leq
C^a$, with the equality by \eqref{E:C^a=Q<z>}. Hence $T/P^a$ contains $J/P^a$
and an involution $P^ah$ outside $J/P^a$ inverting $P^aC/P^a$; i.e. $T/P^a$ has
two dihedral maximal subgroups.  By inspection of the maximal subgroups of
$2$-groups of maximal class, it follows that $T/P^a$ is dihedral. 

Since $T = R\gen{h,f}$ and $J = R\gen{f}$, the coset $P^ah^a$ lies outside the
$J/P^a$. Thus either $P^ah^a$ is an involution in $T/P^a - J/P^a$, or $P^ah^a$
squares to a generator of the cyclic maximal subgroup $P^aC/P^a$ of $J/P^a$. 

Suppose that $P^ah^a$ squares to a generator of $P^aC/P^a$. Then $h^a$ and
hence $h$ has order at least $2|C|$. But $h^2 \in Q$ and $|Q| = |C|$, so we
must have $Q = \gen{h^2}$, contradicting our assumption on $Q$. 

Suppose that $P^ah^a$ is an involution in $T/P^a - J/P^a$. Then $P^ah^a$
inverts $P^aC/P^a \cong C$, and so $h^a$ inverts $C$ as $C \norm T$. It follows
that $h$ inverts $C^{\inv{a}} \leq Q \times \gen{z}$. Since $h$ normalizes $Q$,
$h$ must invert $Q$. As $P^ah^a$ is an involution, we have $(h^a)^2 \in P^a$.
So $h^2 \in P$. But $h^2 \in Q$, so $h^2 \in Q \cap P = 1$ giving (a). 
\end{proof}

Set $J_0 = QC = Q \times C = C^a \times C$, a homocyclic normal subgroup of
$S$. It will be helpful for what follows to call attention to the action of $T$
on $J_0$, and describe what this means for the structure of the quotient
$S/J_0$.  Recall that $C^a \leq Q\gen{z}$ from \eqref{E:C^a=Q<z>}, and so the
action of an element of $T$ on $C^a$ is the same as on $Q$. From
Lemma~\ref{L:Qfdihedral} and the two possibilities in Lemma~\ref{L:htwocases},
each element in $T$ centralizes $C$ or inverts it, and the same holds for $C^a$
in place of $C$. Conjugation by an element in $S-T$ swaps the actions.
Moreover, $T/J_0$ is elementary abelian of order $8$ when $h \neq 1$, and $a$
induces an automorphism of $T/J_0$ fixing pointwise a four group.  For
instance, from the actions of $h$, $f$, and $f^a$ on $J_0$, and since $J \norm
S$,
\begin{align}
\mbox{
if $h$ inverts $Q$, then $\gen{h, ff^a}$ covers $C_{T/J_0}(a)$.
}
\label{E:QhdihedralS/J0}
\end{align}
and
\begin{align}
\mbox{
if $h$ centralizes $Q$, then $\gen{fh, ff^a}$ covers $C_{T/J_0}(a)$,
}
\label{E:QhcyclicS/J0}
\end{align}
Lastly, 
\begin{align}
[S,S] = J_0\gen{ff^a}.
\label{E:[S,S]}
\end{align}

Suppose $h$ is involution as in Lemma~\ref{L:htwocases}(a) from now through the
next lemma.  From \eqref{E:QhdihedralS/J0}, we have $[h,a] \in J_0 = C^aC$ and
we may arrange to have $[h,a] \in C^a$ by replacing $h$ by an appropriate
element in $Ch$. (If $[h,a] = uv$ for $u \in C^a$ and $v \in C$, set $h_0 =
v^{-1}h$. Then $[h_0,a] = h_0h_0^a = v^{-1}v^ahh^a = uv^a \in C^a$.) Hence,
\[
[h,a] = (ha)^2 \in C^a \cap C_{J_0}(ha) = 1,
\]
and $h$ still squares to the identity.  Fix this choice for $h$ now through the
next lemma. Thus, $S$ is a split extension of $J$ by the four group
$\gen{h,a}$.

We can now write down a presentation for $S$. The rest of the following lemma
is verified by direct computation, or by appeal to
\cite[Lemma~3.5]{GorensteinHarada1973}.
\begin{lemma}\label{L:h^2=1}
Suppose $h$ is an involution. Then $S$ has presentation
\begin{align*}
\gen{\,\,d,c,f,e,h,a \mid & \,\, d^{2^{k-1}} = c^{2^{k-1}} = f^2 = e^2 = a^2 = 1, \\
    & [d,c] = [f,e] = 1, \\
    & d^f = d^{-1}, \, c^{e} = c^{-1}, \, c^a = d, \, e^a = f, \\
    & h^2 = 1, \, e^h = ec, \, h^a = h \,\,}
\end{align*}
with notation consistent with that fixed. Here, $P = \gen{c,e}$, $P^a =
\gen{d,f}$, $J = P^aP$, $J_0 = \gen{d,c}$, $x = d^{2^{k-2}}$, $z =
c^{2^{k-2}}$, $Z(S) = \gen{xz}$, and $T = \gen{d,c,f,e,h}$. Furthermore, the
following hold.
\begin{enumerate}[label=\textup{(\alph{*})}]
\item $J\gen{a} \cong D_{2^{k}} \wr C_2$. 
\item $Q = \gen{dz}$.
\item $h$ inverts $J_0$ and all involutions of $Jh$ are $J$-conjugate.
\item $C_S(h) = \gen{h} \times B_0$ where $B_0 = \gen{x,a}$ is dihedral of order $8$.
\item $C_S(a) = \gen{a} \times B_a$ where $B_a = \gen{ff^ah, h}$ is dihedral of order $2^{k+1}$.
\item $C_S(ha) = \gen{ha} \times B_{ha}$ where $B_{ha} = \gen{[h,f]ff^ah,
h}$ is dihedral of order $2^{k+1}$.
\item All involutions of $Ja$ are $J$-conjugate as are all involutions of
$Jha$.
\item $C_S(ff^a) = \gen{f,x,a} \cong (C_2 \times C_2) \wr C_2$ and all
elements of $J_0ff^a$ are $S$-conjugate.
\item $D_1 := \gen{ff^ah, xff^aa}$ and $D_2 := \gen{[h,f]ff^ah, h}$ are
quaternion of order $2^{k+1}$ with $[D_1, D_2] = 1$, $D_1 \cap D_2 = \gen{xz}$,
$D_1^f = D_2$, $D := D_1D_2 = J_0\gen{ff^a,h,a} = [S,S]\gen{h,a}$, and $S =
D\gen{f} \cong Q_{2^{k+1}} \wr_* C_2$ is isomorphic with a Sylow $2$-subgroup of
$PSp_4(q)$. 
\end{enumerate}
\end{lemma}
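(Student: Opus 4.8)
The plan is to read the structure of $S$ off from the data already assembled. By the discussion preceding the lemma we have $S = J \rtimes \langle h, a\rangle$ with $J = P^a \times P$ (each factor $\cong D_{2^k}$ by Lemma~\ref{L:Qfdihedral} and \eqref{E:J}), with $\langle h, a\rangle$ a four group, $a$ interchanging the two dihedral factors via $c \leftrightarrow d$ and $e \leftrightarrow f = e^a$ (so $a^2 = 1$), and $h$ an involution commuting with $a$. I would first pin down the action of $h$ on $J$: since $h$ lies in $T \setminus R$ with $R\langle h\rangle/Q$ dihedral, $h$ swaps the two $P$-classes of noncentral involutions of $P$, so after replacing $c$ (hence $d = c^{a}$) by a suitable power we may assume $e^h = ec$; then $h^2 = 1$ forces $c^h = c^{-1}$, and conjugating by $a$ gives $d^h = d^{-1}$ and $f^h = fd$. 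In particular $h$ inverts $J_0 = \langle d, c\rangle$, which is part of (c). With all structure constants now determined, the displayed presentation is just a transcription of these relations, and one verifies it presents a group of order $2^{2k+2} = |S|$ using the chain $J_0 \trianglelefteq J \trianglelefteq T \trianglelefteq S$; alternatively, as the statement notes, one simply invokes \cite[Lemma~3.5]{GorensteinHarada1973}, matching $x = d^{2^{k-2}}$, $z = c^{2^{k-2}}$, $Z(S) = \langle xz\rangle$. Part (a) is then immediate, $J = P^a \times P$ with $a$ the coordinate swap of two copies of $D_{2^k}$ being the definition of $D_{2^k} \wr C_2$; and (b) amounts to noting that $C^a = \langle d\rangle$ lies in $Q\langle z\rangle$ by \eqref{E:C^a=Q<z>}, that $|Q| = |C| = 2^{k-1}$ (Lemma~\ref{L:Qfdihedral}) and $z \notin Q$ (as $Q \cap P = 1$), so $Q\langle z\rangle \cong C_{2^{k-1}}\times C_2$ contains exactly the two cyclic subgroups $\langle d\rangle$ and $\langle dz\rangle$ of order $2^{k-1}$, of which $Q = \langle dz\rangle$ since $\langle d\rangle = C^a$ is $\F$-conjugate (via $c_a$) into the Sylow group $P$ of $\K$ while $Q$ centralizes $\K$.

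The remaining assertions (c)--(i) are direct computations inside the presentation. For each $g \in \{h, a, ha, ff^a\}$ I would compute $C_{J_0}(g)$ and then $C_J(g)$ from the explicit action ($h$ inverts $J_0$; $a$ swaps the two factors; $ha$ acts by $c \mapsto d^{-1}$, $d\mapsto c^{-1}$, $e\mapsto fd$, $f \mapsto ec$; and $ff^a = fe$ is a ``diagonal'' reflection), check that the only elements of $T \setminus J$, resp. $S \setminus T$, centralizing $g$ are the evident ones, and read off the order and isomorphism type; for instance $C_{J_0}(h) = \langle x, z\rangle$ gives $C_S(h) = \langle h\rangle \times \langle x, a\rangle$ with $\langle x, a\rangle \cong D_8$ (part (d)), and $C_J(ff^a) = \langle e, z\rangle \times \langle f, x\rangle \cong C_2^4$ gives $C_S(ff^a) = C_J(ff^a)\langle a\rangle = \langle f, x, a\rangle \cong (C_2 \times C_2)\wr C_2$ (part (h)); (e) and (f) are the same kind of bookkeeping. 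The ``$J$-conjugacy of involutions'' claims in (c), (g), (h) reduce to showing that for the relevant coset $Jg$ the number of involutions in $Jg$ equals $|J|/|C_J(g)|$ and they form a single $J$-orbit, i.e. the map $j \mapsto j^{-1}(j^g)$ from $J$ onto $\{k \in J : k^g = k^{-1}\}$ has the right image --- a short count, since $g$ inverts a large homocyclic subsection of $J$. For (i) I would exhibit $D_1 = \langle ff^a h,\, x ff^a a\rangle$ and $D_2 = D_1^f$: one checks $ff^a h = feh$ has order $2^k$ with $(feh)^{2^{k-1}} = xz$, while $x ff^a a = xfea$ has order $4$ with $(xfea)^2 = xz$ and inverts $\langle feh\rangle$, so $D_1 \cong Q_{2^{k+1}}$; then $[D_1, D_2] = 1$ and $D_1 \cap D_2 = \langle xz\rangle$ because $D_2 = D_1^f$ and $f$ swaps the two coordinate-types, $D_1 D_2 = [S,S]\langle h, a\rangle$ has index $2$ in $S$, and $S = D_1 D_2\langle f\rangle$, whence $S \cong Q_{2^{k+1}}\wr_* C_2$, the Sylow $2$-subgroup of $PSp_4(q)$.

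I do not expect a genuine obstacle here: no new idea enters, and the two mildly delicate normalizations --- arranging $h^2 = 1$ and $[h,a] = 1$ at once, and distinguishing $Q = \langle dz\rangle$ from $\langle d\rangle$ --- were handled in the lead-up. The only real risk is arithmetic bookkeeping: remembering that all of $P^a = \langle d, f\rangle$ commutes with all of $P = \langle c, e\rangle$, and tracking the inversions coming from $c^e = c^{-1}$ and $d^f = d^{-1}$ when pushing reflections past cyclic generators. If one prefers, the entire lemma can be matched line for line with \cite[Lemma~3.5]{GorensteinHarada1973}.
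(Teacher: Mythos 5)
Your derivation of the presentation is sound and matches the paper's approach (the paper's own proof amounts to "direct computation or appeal to \cite[Lemma~3.5]{GorensteinHarada1973}"). In particular the normalizations you perform are correct: $h$ must swap the two $P$-classes of noncentral involutions because the image of $h$ in $R\gen{h}/Q$ is a noncentral reflection of the dihedral group $P\gen{\bar h}$; choosing $c$ so that $e^h=ec$ is then possible; the relation $c^h=c^{-1}$ follows from $e^{h^2}=e(cc^h)=e$; and conjugating by $a$ (using the already-arranged $[h,a]=1$) gives $d^h=d^{-1}$ and $f^h=fd$. Parts (a) and (c)--(i) are then the kind of bookkeeping you describe, and your centralizer calculations for (d) and (h) check out against the presentation.

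The one place where I think your argument has a real gap is part (b). You correctly reduce to the dichotomy $Q=\gen{d}$ or $Q=\gen{dz}$: indeed $Q\le C_T(P)=P^a\times\gen z$ is cyclic of order $2^{k-1}$, and those are the only two such subgroups (for $k\ge 3$). But the reason you give for excluding $Q=\gen d$ --- that $\gen d=C^a$ is $\F$-conjugate via $c_a$ into the Sylow group $P$ of $\K$, whereas $Q$ centralizes $\K$ --- is not a valid deduction: ``centralizes $\K$'' in the sense of Theorem~\ref{T:C_S(E)} is not preserved under arbitrary $\F$-conjugation, and $C^a$ does centralize $P$ group-theoretically since $[P^a,P]=1$. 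Nothing you have said rules out $Q\le P^a$, which would force $Q=C^a=\gen d$ (as $C^a$ is the unique cyclic subgroup of index $2$ in the dihedral group $P^a$). Equivalently, you have not shown $Q\gen f\ne P^a$: both $P^a$ and $Q\gen f$ are direct complements to $P$ in $J$ lying inside $C_T(P)$, and a priori they could coincide. Settling which of the two candidates is $Q$ requires more information about how $\C$-fusion (specifically, the extensions of $\Aut_\K(V)$ to $\gen f V$, cf.\ Lemma~\ref{L:felements}(b)) interacts with the generator of $Q$; a purely $2$-group-theoretic argument inside $S$ cannot decide it. Since (b) is peripheral to the rest of the section this is a minor issue, but as written your justification does not establish the claim.
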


From now through the next lemma, assume $Q = \gen{h^2}$ as in
Lemma~\ref{L:htwocases}(b). We adjust $h$ slightly as follows.  As a
consequence of \eqref{E:[S,S]}, we have that $h^a \in J_0ff^ah = CC^aff^ah$.
Since $a^2 = 1$ and $ff^a$ inverts $J_0$, we may write $h^a = c_1^ac_1ff^ah$
for some $c_1 \in C$.  Replacing $h$ by $(c_1^a)^{-1}h$, which lies in
$Q\gen{z}h$ by \eqref{E:C^a=Q<z>}, we arrange (again using that $ff^a$ inverts
$J_0$) that 
\begin{align}
h^a = ff^ah, 
\label{E:h^a}
\end{align}
and $h$ still squares to a generator of $Q$. Fix this choice of $h$ through the
next lemma.  Multiply \eqref{E:h^a} by $f^a$ on the left to obtain
\begin{align} 
[fh, a] = 1.  
\label{E:[fh,a]=1}
\end{align}
As $f \in P^a$ (see \eqref{E:finP^a}), $f^a$ is a noncentral involution in $P$.
Hence $c := [f^a,h]$ of $C$ is a generator of $C$ by
Lemma~\ref{L:l2qomnibus}(c).  Fix this choice and set $d = c^a$. Then 
\begin{align} 
d = c^a = [f,h^a] = [f,ff^ah] = [f,h].
\label{E:d=[f,h]}
\end{align}
From \eqref{E:[fh,a]=1} and the fact that $(fh)^2 \in Qz$ in \eqref{E:[h,f]},
we have $(fh)^2 = xz$, and so $xz = fhfh = h^{-2}[f,h] = h^{-2}d$ as $f$
inverts $Q$.  Hence,
\begin{align}
h^2 = dxz.
\label{E:h^2}
\end{align}
Lastly, from $(fh)^2 = xz = [x,a]$ and \eqref{E:[fh,a]=1}, we have
\[
\text{$xfha \in Jha$ is an involution}.
\]

We can now write down a presentation for $S$ in case $Q = \gen{h^2}$. The rest
of the following lemma is similarly verified by direct computation.
\begin{lemma}\label{L:h^2=Q}
Suppose $Q = \gen{h^2}$. Then $S$ has presentation
\begin{align*}
\gen{\,\,d,c,f,e,h,a \mid & \,\, d^{2^{k-1}} = c^{2^{k-1}} = f^2 = e^2 = a^2 = 1, \\
    & [d,c] = [f,e] = 1, \\
    & d^f = d^{-1}, \,\, c^{e} = c^{-1}, \,\, c^a = d, \,\, e^a = f \\
    & h^2 = dd^{2^{k-2}}c^{2^{k-2}}, \,\, e^h = ec, \,\, h^a=feh \,\,}
\end{align*}
with notation consistent with that fixed. Here $P = \gen{c,e}$, $P^a =
\gen{d,f}$, $J = PP^a$, $J_0 = \gen{d,c}$, $x = d^{2^{k-2}}$, $z =
c^{2^{k-2}}$, $Z(S) = \gen{xz}$, and $T = \gen{d,c,f,e,h}$. Furthermore, the
following hold.
\begin{enumerate}[label=\textup{(\alph{*})}]
\item $J\gen{a} \cong D_{2^k} \wr C_2$.
\item $Q = \gen{dxz} = \gen{dz}$.
\item There are no involutions in $Jh$.
\item $fh$ inverts $J_0$; all elements of $J_0fh$ square to $xz$ and are $J$-conjugate.
\item $C_S(a) = \gen{a} \times B_a$ where $B_a = \gen{f^ah, ff^a}$ is semidihedral
of order $2^{k+1}$ with $Z(B_a) = \gen{xz}$.
\item Set $b_1 = xfha \in Jha$. Then $b_1^2 = 1$ and $C_S(b_1) = \gen{b_1} \times
B_{ha}$ where $B_{ha} = \gen{d^{-1}f^ah, xa}$ is semidihedral of order
$2^{k+1}$ with $Z(B_{ha}) = \gen{xz}$.
\item All involutions of $Ja$ are $J$-conjugate as are all involutions of $Jha$.
\item $C_S(ff^a) = \gen{x,f,a} \cong (C_2 \times C_2) \wr C_2$ and all elements
of $J_0ff^a$ are $S$-conjugate.
\item $D_1 = \gen{f^ah,ff^aa}$ and $D_2 = \gen{d^{-1}f^ah, a}$ are semidihedral
of order $2^{k+1}$ with $[D_1, D_2] = 1$, $D_1 \cap D_2 = \gen{xz}$, $D_1^f =
D_2$, $D := D_1D_2 = J_0\gen{ff^a, fh, a} = [S,S]\gen{fh,a}$ and $S =
D\gen{f} \cong SD_{2^{k+1}} \wr_* C_2$ is isomorphic with a Sylow $2$-subgroup
of $PGL_4(q)$ for $q \equiv 3 \pmod{4}$ with $\nu_2(q+1) = k-1$. 
\end{enumerate}
\end{lemma}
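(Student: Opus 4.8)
The plan is to read off a presentation for $S$ from the relations already established, and then derive the structural assertions (a)--(i) by routine $2$-group computation inside it, paralleling \cite[\S3]{GorensteinHarada1973}.

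Put $e = f^a$. Since $f \in P^a$ is an involution distinct from $x = d^{2^{k-2}}$, it inverts the maximal cyclic subgroup $\gen{d} = C^a$ of $P^a = \gen{d,f}$, and $a$ swaps $x$ and $z$; hence $e = f^a \notin C$ is a reflection of $P = \gen{c,e}$, inverting $C = \gen{c}$. This yields the relations $d^f = d^{-1}$, $c^e = c^{-1}$, $c^a = d$, $e^a = f$, together with $[d,c] = [f,e] = 1$ from $J = P^a \times P$ of \eqref{E:J}. The relations $e^h = ec$, $h^a = feh$, and $h^2 = dd^{2^{k-2}}c^{2^{k-2}}$ are precisely $c = [f^a,h] = [e,h]$ of \eqref{E:d=[f,h]}, equation \eqref{E:h^a} with $f^a = e$, and equation \eqref{E:h^2} (using $d^{2^{k-2}} = x$ and $c^{2^{k-2}} = z$). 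So $S$ is a quotient of the group $\widehat S$ presented by these generators and relations.

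Conversely, in $\widehat S$ the subgroup $\widehat J = \gen{d,c,f,e}$ is the product of the commuting dihedral subgroups $\gen{d,f}$ and $\gen{c,e}$, hence a quotient of $D_{2^k} \times D_{2^k}$ of order at most $2^{2k}$; it is normalized by $a$, and, since $h^2 \in \widehat J$ forces $d^h, c^h, f^h \in \widehat J$, also by $h$, so $\widehat J \norm \widehat S$. As $h^2 \in \widehat J$ and $a^2 = 1$ with $a \notin \widehat J\gen{h}$, we get $|\widehat S| \leq 4|\widehat J| \leq 2^{2k+2} = |S|$, so the natural surjection $\widehat S \to S$ is an isomorphism and $\widehat J \cong D_{2^k} \times D_{2^k} = J$. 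In particular (a) holds and $J\gen{a} \cong D_{2^k}\wr C_2$, while (b) follows since $(dz)^f = d^{-1}z = (dz)^{-1}$ and $|Q| = |C| = 2^{k-1}$ force $Q = \gen{dz} = \gen{dxz}$.

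With the presentation fixed, (c)--(i) reduce to self-contained computations formally identical to \cite[Lemmas~3.5, 3.15]{GorensteinHarada1973} with semidihedral factors replacing quaternion ones: for each listed coset representative $s$ one computes $s^2$ and $C_S(s)$ from the relations, identifying $B_a$, $B_{ha}$, $D_1$, $D_2$ as semidihedral of order $2^{k+1}$ and verifying $[D_1,D_2] = 1$, $D_1 \cap D_2 = \gen{xz} = Z(S)$, and $D_1^f = D_2$; the transitivity claims in (c), (d), (g), (h) hold because the relevant subgroup of $J\gen{a}$ acts on the coset in question with the stated orbit, exactly as in the dihedral case. Finally, from (i), $S = D\gen{f}$ with $D = D_1D_2$ a central product of two copies of $SD_{2^{k+1}}$ amalgamated over $Z(S)$ and $f$ interchanging $D_1$ with $D_2$, so $S \cong SD_{2^{k+1}}\wr_* C_2$; comparing orders and structure, this is a Sylow $2$-subgroup of $PGL_4(q)$ for $q \equiv 3 \pmod 4$ with $\nu_2(q+1) = k-1$ (for such $q$ the Sylow $2$-subgroup of $GL_2(q)$ is $SD_{2^{k+1}}$, $GL_4(q)$ contains $GL_2(q)\wr C_2$ with odd index, and passing to $PGL_4(q)$ factors out the central involution). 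The main obstacle is simply the volume of commutator identities needed for (c)--(i), together with the verification that the displayed relations are genuinely defining (that conjugation by $h$ and by $a$ stabilizes $\widehat J$ without collapse); neither point is conceptually hard, and I would organize the work by fixing the normal form $\widehat J\gen{h}\gen{a}$ at the outset and reducing every square and commutator that arises to that form.
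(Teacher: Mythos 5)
Your strategy matches the paper's terse proof: read off the presentation from the preceding identities, check it has the correct order, then verify (a)--(i) by direct computation. However, the step where you argue $\widehat J \norm \widehat S$ contains a genuine gap: ``$h^2 \in \widehat J$ forces $d^h, c^h, f^h \in \widehat J$'' is not a valid inference. One must actually derive these conjugates from the relations; for instance $c^h = c^{-1}$ follows from $e = e^{h^2} = (ec)^h = ec\cdot c^h$ once one knows $h^2 = dxz$ centralizes $e$, and then $f^h = fd$ and $d^h = d$ are extracted by conjugating the known relations by $a$ and using $h^a = feh$. The mere fact that $h^2$ lands in $\widehat J$ tells you nothing about where $d^h$, $c^h$, $f^h$ land.

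A second issue, inherited from the way the presentation is displayed, compounds this: the relations $[d,c]=[f,e]=1$ alone do not imply $[d,e]=[c,f]=1$, so $\widehat J = \gen{d,c,e,f}$ is not yet a quotient of $D_{2^k}\times D_{2^k}$, and indeed without $[d,e]=1$ one cannot even conclude that $h^2=dxz$ centralizes $e$ in $\widehat S$, which the computation above requires. These relations do hold in $S$ because $J = P^a\times P$ by \eqref{E:J}, so they should simply be understood as part of the presentation; once $[\gen{d,f},\gen{c,e}]=1$ is included explicitly, the normality of $\widehat J$ under $h$ can be verified as sketched above, the order count $|\widehat S|\leq 4\cdot 2^{2k}$ goes through as you intend, and (a)--(i) become the routine (if lengthy) coset-by-coset verifications you describe.
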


Armed with these data, the centralizer of the central involution is computed next.
\begin{lemma}\label{L:centinvcent}
Suppose $h \neq 1$. Then $C_\F(xz)$ is realizable by a finite group $G$ having
Sylow $2$-subgroup $S$ and with the property that $G$ contains a normal
subgroup isomorphic to $SL_2(q)*SL_2(q)$ of index $2$ with $f$ interchanging
the two $SL_2(q)$ factors.  In particular, $S \cong Q_{2^{k+1}} \wr_* C_2$ and
$h$ is an involution.
\end{lemma}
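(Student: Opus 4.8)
The plan is to determine the fusion system $\mathcal D := C_\F(xz)$ — a saturated system on all of $S$, since $xz \in Z(S)$ — and then to recognize it as $\F_S(G)$. The backbone is an $E$-balance and component-fusion argument inside $\mathcal D$, in the spirit of \cite[Lemmas~3.15--3.16]{GorensteinHarada1973}, fed by the explicit presentation of $S$ from Lemmas~\ref{L:h^2=1} and \ref{L:h^2=Q}; the key auxiliary is the subsystem $C_{\mathcal D}(x) = C_\F(\langle x, xz\rangle) = C_\F(\langle x, z\rangle) = C_\C(z)$, which is saturated on $C_T(z) = T$ (recall $z \in Z(T)$).

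The first main step is to produce components. In $\C = C_\F(x)$ we have $F^*(\C) = \F_Q(Q)\times\K$ with $\K \cong \F_2(L_2(q))$ on dihedral $P$, and $z \in Z(P) \le \K$; $E$-balance (\cite[Theorem~7]{AschbacherGeneralized}) forces $E(C_\C(z))$ to arise by pumping up $\K$. Comparing with the $2$-group structure of $S$ and the list of perfect fusion systems of maximal class (Lemma~\ref{L:fsmaxcl}), the only possibility consistent with $\K \cong \F_2(L_2(q))$ is a component $\cong \F_2(SL_2(q))$ carried by a generalized quaternion subgroup, which I would identify with the subgroup $D_1 \le S$ of Lemma~\ref{L:h^2=1}(i). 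This already forces $D_1 \cong Q_{2^{k+1}}$, hence by Lemma~\ref{L:htwocases} we are in the case $h^2 = 1$ and $S \cong Q_{2^{k+1}}\wr_* C_2$ — the semidihedral alternative $Q = \langle h^2\rangle$ is excluded here because there the analogous component would have semidihedral Sylow and so be $\cong \F_2(L_3(q))$ (Lemma~\ref{L:fsmaxcl}(b)), which is incompatible with $\C$ having component $\cong \F_2(L_2(q))$. With $h$ an involution I would then exhibit $\mathcal L_1$ on $D_1$ and $\mathcal L_2 := \mathcal L_1^f$ on $D_2$ as commuting components of $\mathcal D$, each $\cong \F_2(SL_2(q))$, check from the presentation that $D_1 D_2 = D_1 * D_2$ and $C_S(D_1 D_2) = \langle xz\rangle \le \mathcal L_1 \mathcal L_2$, and conclude via Aschbacher's generalized Fitting theory that $O_2(\mathcal D) = \langle xz\rangle$, $F^*(\mathcal D) = E(\mathcal D) = \mathcal L_1\mathcal L_2 \cong \F_2(SL_2(q)*SL_2(q))$, with $\mathcal D/F^*(\mathcal D)$ of order at most $2$ (generated by the class of $f$, which normalizes but does not centralize either factor).

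The second main step is realization. Since $C_S(F^*(\mathcal D)) = \langle xz\rangle \ne 1$, Proposition~\ref{P:O^ptame} does not apply to $\mathcal D$ directly, so I pass to $\bar{\mathcal D} = \mathcal D/\langle xz\rangle$. Here $O^2(\bar{\mathcal D}) = \overline{\mathcal L_1\mathcal L_2} \cong \F_2(L_2(q)\times L_2(q))$ and $C_{\bar S}(O^2(\bar{\mathcal D})) = 1$; this fusion system is strongly tamely realized by $L_2(q)\times L_2(q)$, since $L_2(q) \in \mathfrak G(2)$ by Proposition~\ref{P:l2qinL2} and direct products of groups in $\mathfrak G(2)$ again lie in $\mathfrak G(2)$. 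Proposition~\ref{P:O^ptame} then realizes $\bar{\mathcal D}$ by a group $\bar G$ with $L_2(q)\times L_2(q) \trianglelefteq \bar G \le \Aut(L_2(q)\times L_2(q))$, of index $2$ because $f$ interchanges the two simple factors. Pulling back the central extension by $\langle xz\rangle$ produces the finite group $G$ with $SL_2(q)*SL_2(q) \trianglelefteq G$ of index $2$, $f$ interchanging the two $SL_2(q)$-factors, and $C_\F(xz) \cong \F_S(G)$; a Sylow $2$-subgroup of $G$, namely $S$, is then $(Q_{2^{k+1}}*Q_{2^{k+1}})\rtimes\langle f\rangle = Q_{2^{k+1}}\wr_* C_2$, confirming the final assertions.

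The main obstacle will be the local step: getting a firm grip on $E(C_\C(z))$ and its image in $\mathcal D$ — showing the pumped-up component really is $\F_2(SL_2(q))$ on $D_1$, that $\mathcal L_1$ and $\mathcal L_2$ are all of the layer of $\mathcal D$, and that nothing else contributes to $F^*(\mathcal D)$. This is exactly the point where $E$-balance, the explicit action of $a$ and $h$ on $J_0$, $Q$, and $P$, and the fusion already recorded in the $\K$-structure (Lemma~\ref{L:felements}) must all be combined, and it is also where the semidihedral case of Lemma~\ref{L:htwocases} has to be driven to a contradiction.
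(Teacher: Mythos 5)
Your proposal takes a genuinely different route from the paper. The paper's proof never invokes $E$-balance or pumping-up; instead it first nails down $\hyp(\N)$ (for $\N = C_\F(xz)$) directly via two applications of the Thompson--Lyons transfer lemma, showing $\foc(\N) = \hyp(\N) = D = D_1 * D_2$, and then applies Oliver's splitting theorem (Theorem~\ref{T:OliverSplitting}) to $O^2(\N)/\gen{xz}$ to obtain a product of two perfect fusion systems on dihedral groups. The identification of each factor as $\F_2(SL_2(q))$ --- and hence the exclusion of the semidihedral case $Q = \gen{h^2}$ --- then follows from Lemma~\ref{L:fsmaxcl}: a perfect fusion system on a maximal-class $2$-group with nontrivial center must be on a quaternion group. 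This argument is elementary, self-contained, and fully within the tools the paper has set up.

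There is a genuine gap in your first main step. You want to "pump up'' $\K$ to a component of $\D = C_\F(xz)$ via $E$-balance applied at $C_\D(x) = C_\C(z)$. But $z$ lies in $Z(P)$, the center of the Sylow subgroup of $\K$; so $z$ is a $2$-central involution \emph{of} $\K$, and $C_\K(z)$ is a constrained (indeed $2$-local) subsystem of $\K$, not $\K$ itself. Consequently $\K$ is not a component of $C_\C(z) = C_\D(x)$, and there is no component there to pump up in the naive sense. The analysis of $E(C_\D(x))$ would first have to produce a quasisimple subsystem of $C_\C(z)$, and nothing in the hypothesis gives you one for free --- this is precisely the kind of obstacle the paper's transfer-first approach is designed to sidestep, and it is also why the semidihedral-exclusion argument you sketch (``$\F_2(L_3(q))$ is incompatible with $\C$ having component $\F_2(L_2(q))$'') has no clean foothold: the comparison would have to pass through $C_\C(z)$, where the component picture has already degenerated. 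Two secondary points that would also need care if you pursued your route: closure of the class $\mathfrak G(2)$ under direct products is not among the facts the paper cites and would need its own justification (Proposition~\ref{P:l2qinL2} covers only simple groups of small $2$-rank); and passing from a realization of $\D/\gen{xz}$ back to a realization of $\D$ is a statement about recognizing central extensions of fusion systems that is not supplied by Proposition~\ref{P:O^ptame} as stated. The second main step of your plan is plausible in spirit, but the first main step, as written, does not get off the ground.
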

\begin{proof}
Assume that $h \neq 1$. The two possibilities for $S$ in Lemmas~\ref{L:h^2=1}
and \ref{L:h^2=Q} will be treated simultaneously.  Fix $t \in J$ such that
$tha$ is an involution as follows. When in the case of Lemma~\ref{L:h^2=1}, we
take $t=1$. In the other case, we take $t = xf$ as in Lemma~\ref{L:h^2=Q}(f).
In either case $th$ commutes with $a$ and inverts $J_0$. Let $b$ be one of $a$
or $tha$.  Then $C_S(b) = \gen{b} \times B$ where $B$ is dihedral or
semidihedral of order $2^{k+1}$ with $Z(C_S(b)) \cap [C_S(b), C_S(b)] = Z(S)$
by Lemma~\ref{L:h^2=1}(e,f) and Lemma~\ref{L:h^2=Q}(e,f). Moreover, all
involutions of $Jb$ are $J$-conjugate by Lemma~\ref{L:h^2=1}(g) and
Lemma~\ref{L:h^2=Q}(g).

When $h$ is an involution, all involutions in the coset $Jh$ are $J$-conjugate
by Lemma~\ref{L:h^2=1}(c).  From Lemma~\ref{L:h^2=1}(d), $C_S(h) = \gen{h}
\times B_0$ where $B_0$ is dihedral of order $8 < 2^{k+1}$, whence $|C_S(h)| <
|C_S(b)|$. In the case where $Q = \gen{h^2}$, there are no involutions in $Jh$
by Lemma~\ref{L:h^2=Q}(c).  This shows that in either case the set of fully
$\F$-centralized $\F$-conjugates of $b$ outside $J$ lies in $Ja \cup Jha$. By
Theorem~\ref{T:TT}, there exists a morphism $\phi \in \F$ such that
$b^\phi \in J$ is fully $\F$-centralized, and $C_S(b)^\phi \leq
C_S(b^\phi)$.  Since $C_S(b)$ has nilpotence class $k \geq 3$ and $[S,S,S]
\leq J_0$ from \eqref{E:[S,S]}, we have $(xz)^\phi \in \Omega_1(J_0) =
\gen{x,z}$. It follows that $(xz)^\phi = xz$ as $xz$ is not $\F$-conjugate to
$x$ or to $z$. We may assume by Lemma~\ref{L:a^2=1}(c) that $b^\phi \in
\{x,z\}$. Composing with $c_a$ if necessary, we may assume that $b^\phi =
z$. Thus, we have shown there exist $\phi_a$, $\phi_{ha} \in C_\F(xz)$ such
that $a^{\phi_a} = (tha)^{\phi_{ha}} = z$.

Set $\N = C_\F(xz)$.  We claim that the hyperfocal subgroup $\hyp(\N)$ is of
index $2$ in $S$. We will show this by first demonstrating that the normal
closure $\gen{[a, \phi_{a}]^S, [tha, \phi_{ha}]^S}$ is the commuting product $D
:= D_1 * D_2$ of two quaternion or two semidihedral subgroups of order
$2^{k+1}$ as in Lemma~\ref{L:h^2=1}(i) or Lemma~\ref{L:h^2=Q}(i), respectively.
Then we shall use a transfer argument inside $\N$ to show that in fact
$\foc(\N) = D$ from which it will follow that $\hyp(\N) = D$ as well.

Set $D_0 =\gen{[a, \phi_{a}]^S, [tha, \phi_{ha}]^S} = \gen{(xa)^S, (xtha)^S}$.
Now $xtha\cdot xa = zth \in D_0$.  Since $th$ inverts $J_0$ and all elements of
$J_0th$ are $J$-conjugate (Lemma~\ref{L:h^2=1} and Lemma~\ref{L:h^2=Q}), we
have that $D_0$ contains $J_0$. As $x \in D_0$, $a \in D_0$. Hence $[f,a] =
ff^a \in D_0$.  This shows that $[S,S]=J_0\gen{ff^a}= J_0\gen{[f,a]} \leq
D_0$.  But $D_0$ is a proper normal subgroup of $S$ contained in $D =
[S,S]\gen{th,a}$, and so $D_0 = D$. We conclude that $\foc(\N) \geq D$ is of
index $1$ or $2$ in $S$. 

Write $D = D_1*D_2$ with $D_i$ as in Lemma~\ref{L:h^2=1} or
Lemma~\ref{L:h^2=Q}. As $f$ interchanges $D_1$ and $D_2$, $fz \nin D$.  Suppose
that $\foc(\N) = S$.  Then by Theorem~\ref{T:TT}, there exists a morphism $\eta
\in \N$ such that $(fz)^\eta \in D$ is fully $\N$-centralized and $C_S(fz)^\eta
\leq C_S((fz)^\eta)$.  Since $C_S(fz) = \gen{x, f} \times P$ is of $2$-rank $4$
we have that $(fz)^\eta \in J \cap D = J_0\gen{ff^a}$.  Suppose $(fz)^\eta \in
J_0ff^a$. By Lemma~\ref{L:h^2=1}(h) and Lemma~\ref{L:h^2=Q}(h) then,
$C_S((fz)^\eta) \cong (C_2 \times C_2) \wr C_2$ is of order $2^5$, forcing $|P|
= 8$ and $\eta|_{C_S(fz)}$ to be an isomorphism $C_S(fz) \to C_S((fz)^\eta)$.
But $|Z(C_S(fz))| = 8$ whereas $|Z(C_S((fz)^\eta))| = 4$, a contradiction.
Therefore $(fz)^\eta \in \Omega_1(J_0) = \gen{x,z}$ and $(fz)^\eta = x$ or $z$
because $\eta \in \N$. But $fz$ is $C_\F(z)$-conjugate to $xz$, another
contradiction.  We conclude that $\foc(\N) = D$ is of index $2$ in $S$.  As
$S/\foc(\N)$ is cyclic this shows that $\hyp(\N) = \foc(\N) = D$ is of index
$2$ as well by Lemma~\ref{L:fochyp}(d).

Let $\M = O^2(\N)$, a saturated fusion system on $D$. Set $\M^+ = \M/\gen{xz}$,
and let $\tau\colon \M \to \M^+$ denote the surjective morphism of fusion
systems.  Thus $\M^+$ is a saturated fusion system on $D^+ = D/Z(S)$, a product
$D^+_1 \times D^+_2$ of dihedral groups each of order $2^k$, and with $\M^+ =
O^2(\M^+)$ by Lemma~\ref{L:O^pbasic}. Since $k \geq 3$, $\Aut(D^+)$ is a
$2$-group by Lemma~\ref{L:autDwr2}, and so it follows that $\M^+ =
O^{2'}(\M^+)$ as well, by Proposition~\ref{C:pprimeindexcor}. The hypotheses of
Theorem~\ref{T:OliverSplitting} are now satisfied and therefore $\M^+ \cong
\M_1^+ \times \M_2^+$, by that theorem, for some pair $\M_i^+$ of saturated
fusion systems on $D_i^+$. Note then that the $\M_i^+ = O^2(\M_i^+)$ are
determined as the unique perfect $2$-fusion system on the dihedral group
$D_i^+$ (Lemmas~\ref{L:O^pbasic} and \ref{L:fsmaxcl}), i.e. as
$\F_{D_i^+}(M_i^+)$ with $M_i^+ \cong L_2(q)$.

Let $\M_i$ be the preimage of $\M_i^+$ under $\tau$; $\M_i$ is a saturated
fusion system on $D_i$ by \cite[Lemma~8.10(b)]{AschbacherNormal}. Then $\M_i =
O^2(\M_i)$ since $Z(S) = Z(\M_i) \leq [D_i, D_i]$, and $\M_i/Z(\M_i) = \M_i^+$
for each $i = 1, 2$. As there are no perfect fusion systems on a semidihedral
group with nontrivial center by Lemma~\ref{L:fsmaxcl}(b), each of the $D_i$ is
quaternion, and hence $\M_i$ is the $2$-fusion system of $SL_2(q)$ by (c) of
the same lemma. Furthermore as $f$ interchanges $D_1$ and $D_2$ and $\M_i =
C_\M(D_{3-i})$, $f$ interchanges $\M_1$ and $\M_2$.

In particular, we conclude that $D$ is a commuting product of quaternion groups
of the same order, and $S \cong Q_{2^{k+1}} \wr_* C_2$. Thus $S$ is not
isomorphic to $SD_{2^{k+1}} \wr_* C_2$ as the latter has an involution with
centralizer isomorphic to $C_2 \times SD_{2^{k+1}}$, whereas the former does
not. By Lemma~\ref{L:h^2=1}(i) and Lemma~\ref{L:h^2=Q}(i), $h$ is an
involution. 
\end{proof}

We now extract fusion information from the description of the centralizer
of the central involution in Lemma~\ref{L:centinvcent}.
\begin{lemma}\label{L:h=1}
$h = 1$. 
\end{lemma}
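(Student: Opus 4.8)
The plan is to assume $h \neq 1$ and derive a contradiction. By Lemma~\ref{L:centinvcent} the hypothesis $h\neq 1$ forces $h$ to be an involution and $S \cong Q_{2^{k+1}} \wr_* C_2$, so the presentation and the conjugacy/centralizer data of Lemma~\ref{L:h^2=1} are available; moreover $\N := C_\F(xz) = \F_S(G)$ for a finite group $G$ with $S \in \Syl_2(G)$ admitting a normal subgroup $L \cong SL_2(q)*SL_2(q)$ of index $2$ on which $f$ acts by interchanging the two quasisimple factors.

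The first concrete step is to pin down $C_\F(\langle x,z\rangle)$. Since $xz \in Z(S)$ we have $C_S(xz) = S$, so $C_\N(x) = C_\F(\langle xz,x\rangle) = C_\F(\langle x,z\rangle)$; as $x$ is fully $\F$-centralized with $C_S(x)=T$ it is fully $\N$-centralized, whence $C_\F(\langle x,z\rangle) = \F_T(C_G(x))$. I would then locate $x$ inside $G$ using Lemma~\ref{L:h^2=1}: there $x = d^{2^{k-2}} \in [S,S] \leq D := D_1D_2 \in \Syl_2(L)$ by \eqref{E:[S,S]}, while $x \neq xz = Z(D) = Z(L)$. Thus $x$ is a non-central involution of the central product $L = SL_2(q)*SL_2(q)$, hence ``diagonal'': $x$ is the product of two order-$4$ elements, one from each factor, and $C_L(x)$ is the central product of the two maximal tori centralizing them, extended by an element inverting both. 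In particular $C_L(x)$ is solvable, so $C_G(x)$ is solvable; since $T = C_S(x)$ is a Sylow $2$-subgroup of $C_G(x)$ and $|C_G(x)|_2 = |T|$, comparing orders shows $C_G(x)$ is $2$-nilpotent. Therefore $C_\F(\langle x,z\rangle) = \F_T(C_G(x)) = \F_T(T)$ carries no nontrivial fusion.

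It remains to confront this with the existence of $\K = E(\C)$, where $\C = C_\F(x)$. The point is that the dihedral component $\K \cong \F_2(L_2(q))$ supplies $\F$-fusion that cannot be reconciled with the data above: $\K$ identifies the two $P$-classes of four subgroups of $P$ and permutes the involutions of each $V \leq P$, and all of this $\K$-fusion necessarily moves $z$, since $C_\K(z) \norm C_\C(z) = \F_T(T)$ is fusion-free. Combining the resulting $\F$-conjugations with the explicit involution fusion and centralizer descriptions of Lemma~\ref{L:h^2=1}(c)--(i), the structure of $S$, and a transfer argument (Proposition~\ref{C:TTcyclic}, resp.\ \ref{C:TTlinind}) applied to a suitable normal subgroup of $S$, I expect to force $x$ to be $\F$-conjugate to $xz$, contradicting \eqref{E:xnsimxz}; this gives $h = 1$. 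The hard part will be exactly this last step: organizing the fusion contributed by $\K$ inside $C_\F(x)$ against the newly established facts that $C_\F(\langle x,z\rangle)$ is fusion-free and $C_\F(xz) = \F_S(G)$ has no section permitting such fusion, and bookkeeping it into a transfer argument that lands on the forbidden conjugacy $x \sim_\F xz$. The earlier steps are routine: the first is an immediate appeal to Lemma~\ref{L:centinvcent}, and the computation of $C_\F(\langle x,z\rangle)$ uses only the presentation of Lemma~\ref{L:h^2=1} together with standard facts about involution centralizers in $SL_2(q)$.
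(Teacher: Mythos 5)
Your opening move is right: assume $h\neq 1$, invoke Lemma~\ref{L:centinvcent} to get $h$ an involution, $S\cong Q_{2^{k+1}}\wr_* C_2$, and a model $G$ for $\N = C_\F(xz)$ with $L = SL_2(q)*SL_2(q)\norm G$. Your computation that $C_\F(\gen{x,z})$ is fusion-free is also essentially sound (though the phrase ``comparing orders shows $C_G(x)$ is $2$-nilpotent'' conflates solvability with $2$-nilpotence; what you actually want is that $C_L(x)$, being a pair of maximal tori of the $SL_2(q)$ factors extended by an inverting $2$-element, is already $2$-nilpotent, and $[C_G(x):C_L(x)]\leq 2$).

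The genuine gap is the end of the argument, which you explicitly flag as ``the hard part'' and leave undone: you never exhibit a fusion chain forcing $x\sim_\F xz$, and it is far from clear that the fusion-freeness of $C_\F(\gen{x,z})$ plus $\K$-fusion plus transfer gives that. The paper's proof does not aim at $x\sim_\F xz$ at all. Instead it works in $\N^+ = \N/\gen{xz}$: by Lemma~\ref{L:centinvcent}, $O^2(\N)^+\cong \M_1^+\times \M_2^+$ with each $\M_i^+$ the $2$-fusion system of $L_2(q)$ on a dihedral factor $D_i^+$ of $D^+ = D/\gen{xz}$. Every element of $J_0ff^a$ is an involution whose image in $D^+$ is ``diagonal'' (lies in neither $D_i^+$), hence is $\N^+$-conjugate to $x^+ = Z(S^+)$; pulling back, every element of $J_0ff^a$ is $\N$-conjugate to $x$. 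In particular $ff^a$ and $f(cf^a)=cff^a$ are $\F$-conjugate, contradicting Lemma~\ref{L:a^2=1}(a), which was established earlier precisely for this purpose. So the missing idea is to go \emph{through the quotient} $\N^+$ and use the product decomposition of Lemma~\ref{L:centinvcent} to see that all diagonal involutions are fused, landing the contradiction on Lemma~\ref{L:a^2=1}(a) rather than on \eqref{E:xnsimxz}. Without that, your proposal is a correct setup with a promising side observation but no completed proof.
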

\begin{proof}
Suppose $h \neq 1$. Then the structure of $S$ is that of Lemma~\ref{L:h^2=1}
and $\N = C_\F(xz)$ is given by Lemma~\ref{L:centinvcent}. Let $\N^+ =
\N/\gen{xz}$ as before, and denote passage to the quotient by pluses. Recall $S
= D_i\wr_* C_2$, with $f$ a wreathing element and with each $D_i$ quaternion
and given as in Lemma~\ref{L:h^2=1}(h). We claim
\begin{align}
\text{every element of $J_0ff^a$ is $\N$-conjugate to $x$},
\label{E:J0ff^a}
\end{align}
and once shown, this contradicts Lemma~\ref{L:a^2=1}(a). 

To see \eqref{E:J0ff^a}, note that $Z(S^+) = \gen{x^+}$. From
Lemma~\ref{L:h^2=1}(i), $D_1J_0/J_0$ is a four group generated by the images of
$f^ah$ and $ff^aa$, and $D_2J_0/J_0$ is a four group generated by the images of
$f^ah$ and $a$. Hence, the image in $S^+$ of each element of $J_0ff^a$ is an
involution which is not contained in either of the $D_i^+$ factors of the base
subgroup of $S^+$. Thus, by the structure of $\N^+$, each such element has
image in $S^+$ which is $\N^+$ conjugate to $x^+$.  It follows that each
element of $J_0ff^a$ is $\N$-conjugate into $Z(T) = \gen{x,z}$, and hence
$\N$-conjugate to $x$. This finishes the proof of \eqref{E:J0ff^a} and the
lemma.
\end{proof}

\begin{lemma}\label{L:F=L4q}
$\F$ is the fusion system of $L_4(q_1)$ for some $q_1 \equiv 3 \pmod{4}$ with
$\nu_2(q_1+1) = k-1$. 
\end{lemma}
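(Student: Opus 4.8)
The plan is first to pin down $S$ completely and then to identify $\F$ via Oliver's classification. Since $h=1$ by Lemma~\ref{L:h=1}, Lemma~\ref{L:Qfdihsemidih}(d) gives $T = R\gen{f} = J$, and $|S\colon T| = 2$ by Lemma~\ref{L:Sclassofx}. Using Lemma~\ref{L:a^2=1}(b) we may take $a \in S-T$ with $a^2 = 1$, and then \eqref{E:J} together with Lemma~\ref{L:Qfdihedral} shows that the base group $J = P^a \times P$ is a direct product of two copies of $D_{2^k}$ which conjugation by $a$ interchanges. As $a^2 = 1$, this forces $S = J\gen{a} \cong D_{2^k}\wr C_2$, a $2$-group of sectional $2$-rank $4$.

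Next I would invoke Oliver's classification of saturated fusion systems over $2$-groups of sectional $2$-rank at most $4$ \cite{OliverSectRank4} to list the saturated fusion systems on $S \cong D_{2^k}\wr C_2$ with $\F = O^2(\F)$ and $O_2(\F) = 1$. This appeal is the step I expect to be the main obstacle, since it is the one essentially external input and the relevant part of that classification must be extracted correctly; the outcome should be that the only candidates are the $2$-fusion system of $L_4(q_1)$ for an odd prime power $q_1 \equiv 3\pmod 4$ having Sylow $2$-subgroup isomorphic to $S$ (equivalently that of $U_4(q_2)$ for $q_2\equiv 1\pmod 4$, by the remark following \cite[Theorem~3.3]{BrotoMollerOliver2012}) together with, in the case $|P| = 8$, the $2$-fusion system of $A_{10}$. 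By Hypothesis~\ref{H:main} we have $\F = O^2(\F)$ and $O_2(\F) = 1$, so $\F$ is one of these.

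Finally I would rule out $\F \cong \F_S(A_{10})$: there $|P| = 8$ and the centralizer in $T = C_S(x)$ of the dihedral component $\K$ of $C_\F(x)$ is a fours group, whereas Lemma~\ref{L:Qfdihedral} already forces $Q = C_T(\K)$ to be cyclic, here of order $2^{k-1} = 4$; this contradiction leaves $\F \cong \F_S(L_4(q_1))$ with $q_1\equiv 3\pmod 4$. The congruence $\nu_2(q_1+1) = k-1$ then follows by comparing orders: $|S| = 2^{2k+1}$, while for $q_1\equiv 3\pmod 4$ a Sylow $2$-subgroup of $L_4(q_1)$ has order $2^{2\nu_2(q_1+1)+3}$.
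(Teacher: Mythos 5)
Your proof is correct and follows essentially the same route as the paper: establish $S = J\gen{a} \cong D_{2^k}\wr C_2$ from Lemma~\ref{L:h=1}, invoke \cite[Proposition~5.5(a)]{OliverSectRank4} to reduce to $\F_S(A_{10})$ or $\F_S(L_4(q_1))$, and rule out $A_{10}$ because there $Q$ would be a four group while Hypothesis~\ref{H:main} requires $Q$ cyclic (the paper cites the hypothesis directly for cyclicity rather than Lemma~\ref{L:Qfdihedral}, which gives the order $|Q|=2^{k-1}$ but not cyclicity — a harmless attribution slip). Your closing order computation recovering $\nu_2(q_1+1)=k-1$ is a nice independent check, though the paper simply reads that congruence off from Oliver's statement.
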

\begin{proof}
By Lemma~\ref{L:h=1}, $S = J\gen{a}$ is isomorphic to $D_{2^k} \wr C_2$. By
\cite[Proposition~5.5(a)]{OliverSectRank4}, either $k=3$ and $\F$ is the fusion
system of $A_{10}$, or $\F$ is the fusion system of $L_4(q_1)$ for $q_1 \equiv
3 \pmod{4}$ with $\nu_2(q_1+1) = k-1$. In the case of $A_{10}$, $x$ is a
product of two transpositions with centralizer having a unique component $\K$
isomorphic to the fusion system of $A_6 \cong L_2(9)$.  But then $Q =
C_{T}(\K)$ is a four group, contrary to hypothesis. 
\end{proof}

Lemma~\ref{L:F=L4q} completes the identification of $\F$ and the proof of
Theorem~\ref{T:2rank4main}. 

\bibliographystyle{plain}{}
\bibliography{/math/home/fac/jl1474/math/research/mybib}

\end{document}